\newtheorem{theorem}{Theorem}[section]
\newtheorem{lemma}[theorem]{Lemma}
\newtheorem{corollary}[theorem]{Corollary}
\newtheorem{proposition}[theorem]{Proposition}
\theoremstyle{definition}
\newtheorem{definition}[theorem]{Definition}
\newtheorem{example}[theorem]{Example}
\newtheorem{question}[theorem]{Question}
\newtheorem{conjecture}[theorem]{Conjecture}
\newtheorem{remark}[theorem]{Remark}
\def\L{\,{\mathcal L}\,}
\def\R{\,{\mathcal R}\,}
\def\Lless{\,\leq_{\mathcal L}\,}
\def\Rless{\,\leq_{\mathcal R}\,}
\def\D{\,{\mathcal D}\,}
\def\H{\,{\mathcal H}\,}
\def\J{\,{\mathcal J}\,}
\def\Ls{\,{\mathcal L}^\ast\,}
\def\Rs{\,{\mathcal R}^\ast\,}
\def\Ds{\,{\mathcal D}^\ast\,}
\def\Hs{\,{\mathcal H}^\ast\,}
\def\Lt{\,\widetilde{\mathcal L}\,}
\def\Rt{\,\widetilde{\mathcal R}\,}
\def\Dt{\,\widetilde{\mathcal D}\,}
\def\Ht{\,\widetilde{\mathcal H}\,}
\def\Jt{\,\widetilde{\mathcal J}\,}
\newcommand\Ltu[1]{\,\widetilde{\mathcal L}_{#1}\,}
\newcommand\Rtu[1]{\,\widetilde{\mathcal R}_{#1}\,}
\newcommand\Dtu[1]{\,\widetilde{\mathcal D}_{#1}\,}
\newcommand\Htu[1]{\,\widetilde{\mathcal H}_{#1}\,}
\newcommand\trop{\mathbb{T}} 
\newcommand\Qpos{\mathbb{Q}_{\geq0}} 
\newcommand\bool{\mathbb{B}} 
\newcommand\linz{\mathfrak{L}} 
\newcommand\lin{\mathfrak{L}^*} 
\newcommand\mat[2]{M_{#1}(#2)}  
\newcommand\upper[2]{UT_{#1}(#2)} 
\newcommand\uni[2]{U_{#1}(#2)} 
\newcommand\gl[2]{{\rm GL}_{#1}(#2)} 
\newcommand\full[2]{\overline{UT}_{#1}(#2)} 
\newcommand\lid[1]{#1^{(+)}}
\newcommand\rid[1]{#1^{(\ast)}}
\newcommand\lidlid[1]{#1^{(+)(+)}}
\newcommand\diag[1]{{\rm{D}}_#1}
\newcommand\rnorm[1]{#1^{\diamond}}
\newcommand\lnorm[1]{#1^{\bullet}}
\newcommand{\Def}{\operatorname{Def}}
\newcommand{\Supp}{\operatorname{Supp}}
\newcommand{\rank}{\operatorname{rank}}
\newcommand{\1}{\mathbf{1}}
\newcommand{\0}{\mathbf{0}}
\begin{document}
\title{Matrix semigroups over semirings}
\author{Victoria Gould}
\address[V. Gould]{Department of Mathematics, University of York, Heslington, York YO10 5DD, UK}
\email[V. Gould]{victoria.gould@york.ac.uk}
\author{Marianne Johnson}
\address[M. Johnson]{School of Mathematics, University of Manchester, Manchester M13 9PL, UK}
\email[M. Johnson]{marianne.johnson@manchester.ac.uk}
\author{Munazza Naz}
\address[M. Naz]{Department of Mathematical Sciences,
Fatima Jinnah Women University,
The Mall, Rawalpindi,
Pakistan
}
\email[M. Naz]{munazzanaz@fjwu.edu.pk}

\date{\today}

\keywords{Key words:matrix semigroups, semirings, regular, abundant, Fountain, idempotent generated}
\subjclass[2010]{20M10, 15B99, 06F20}
\thanks{The third author thanks the Schlumberger Foundation for its generous support in the form of a Faculty for the Future Postdoctoral Fellowship, and the Department of Mathematics of the University of York for hosting her during the period of her Fellowship.}

\maketitle

\begin{abstract} The multiplicative semigroup $M_n(F)$ of $n\times n$ matrices over a field $F$ is well understood, in particular, it is a regular semigroup. This paper considers semigroups of the form $M_n(S)$, where $S$ is a semiring, and the subsemigroups 
$UT_n(S)$ and $U_n(S)$ of $M_n(S)$ consisting  of upper triangular and unitriangular matrices.  Our main interest is in the case where $S$ is an idempotent semifield, where  we also consider the subsemigroups
 $UT_n(S^*)$ and $U_n(S^*)$  consisting of those matrices of $UT_n(S)$ and $U_n(S)$ having   all elements on and above the leading diagonal  non-zero. Our guiding examples
 of such $S$ are the 2-element Boolean semiring $\mathbb{B}$ and the tropical semiring $\mathbb{T}$. In the first case, $M_n(\mathbb{B})$ is isomorphic to the semigroup of binary relations on an $n$-element set, and in the second, $M_n(\mathbb{T})$ 
is the semigroup of $n\times n$ tropical matrices.

It is well known that the subsemigroup of $M_n(F)$ consisting of the singular matrices is idempotent generated. We begin consideration of the analogous questions for $M_n(S)$ and its subsemigroups. In particular we show that the idempotent generated subsemigroup of $UT_n(\mathbb{T}^*)$ is precisely the semigroup $U_n(\mathbb{T}^*)$. 

 Il'in has proved that for any semiring $R$ and   $n>2$, the semigroup $M_n(R)$ is regular if and only if $R$ is a regular {\em ring}. We therefore base our investigations for   $M_n(S)$ and its subsemigroups on the  analogous but weaker concept of 
 being Fountain (formerly, weakly abundant). 
 These notions are determined by the existence and behaviour of idempotent left and right identities for elements, lying in particular equivalence classes.
  We show that certain subsemigroups of $M_n(S)$, including several generalisations of well-studied monoids of binary relations (Hall relations, reflexive relations, unitriangular Boolean matrices), are {Fountain}. 
 We give a detailed study of a  family of Fountain semigroups arising in this way that has
 particularly interesting and unusual properties. 
 \end{abstract}

\section{Introduction}
\label{sec:intro}

The ideal structure of the multiplicative semigroup $M_n(F)$ of all $n \times n$ matrices over a field   $F$ is simple to understand. The semigroup $M_n(F)$  is   regular,
that is, for all $A\in M_n(F)$ there is a $B\in M_n(F)$ such that $A=ABA$,  and  possesses precisely $n+1$ ideals. These are the principal ideals  $I_k=\{ A\in M_n(F):\rank A\leq k\}$ where $0\leq k\leq n$.  Clearly
$\{ 0\}=I_0\subset I_1\subset\hdots\subset I_n=M_n(F)$; the resulting (Rees) quotients
$I_k/I_{k-1}$ for $1\leq k\leq n$ have a particularly pleasing structure, as we now explain. To do so, we use the language of Green's relations $\L,\R$ and $\J$ on a semigroup $S$, where two elements are $\L$-related ($\R$-related, $\J$-related) if and only if they  generate the same principal left (right, two-sided) ideal; correspondingly, the 
$\L$-classes, ($\R$-classes, $\J$-classes) are partially ordered by inclusion of left
(right, two-sided) ideals. We give more details of Green's relations in Section~\ref{sec:prelim}.

The matrices of each fixed rank $k$ form a single $\mathcal{J}$-class, so that the $\mathcal{J}$-order corresponds to the natural order on ranks, and  
the $\mathcal{L}$- and $\mathcal{R}$-  orders correspond to containment of row and column spaces,  respectively.   We have that 
$\J$ is the join $\D$ of $\L$ and $\R$, so that,  since $\D=\R\circ \L$ for any semigroup,
it follows that 
the non-zero elements of the quotients $I_k/I_{k-1}$ (for $1\leq k\leq n$) may be co-ordinatised by their $\L$-class, their $\R$-class and  a maximal subgroup of
$\rank k$ matrices. Moreover, each such subgroup is isomorphic to the general linear group $\gl{k}{F}$; (see \cite{Okninski} for further details and results). 

The subset $\upper{n}{F}$ of all upper triangular matrices (that is, those with all entries below the main diagonal equal to $0$) forms a submonoid of $\mat{n}{F}$. This submonoid is not regular, but satisfies a weaker regularity property called \emph{abundance}. The
subset $\uni{n}{F}$ of  unitriangular matrices (that is, those upper triangular matrices with all diagonal entries equal to $1$) forms a subgroup of the group of units $\gl{n}{F}$ of $\mat{n}{F}$. In the case where $F$ is an algebraically closed field, the semigroups $\mat{n}{F}$, $\upper{n}{F}$ and $\uni{n}{F}$ are examples of \emph{linear algebraic monoids} \cite{Putcha, Renner} and have been extensively studied. Certain important examples of linear algebraic monoids, including $M_n(F)$, have the property that the subsemigroup of singular elements is idempotent generated \cite{erdos,Putcha}.

Motivated by the above, in this paper we consider $n \times n$ matrices with entries in a \emph{semiring} $S$, with a focus on  {\em idempotent semifields}. The operation of matrix multiplication, defined in the usual way with respect to the operations of $S$, yields  semigroups $\mat{n}{S}, \upper{n}{S}$ and $\uni{n}{S}$
analogous to those above. Our motivating examples are that of the Boolean semiring $\mathbb{B}$ and the tropical semiring
$\mathbb{T}$.   In the first case, it is well known that $M_n(\mathbb{B})$ is 
isomorphic to the monoid of all binary relations on an $n$-element set, under composition of relations. In the second case, $M_n(\mathbb{T})$ is the monoid of $n\times n$ {\em tropical matrices}, which are a source of significant  interest (see, for example, \cite{Gaubert96,IJK16,IJK18}). Many of the tools which apply in the field case, such as arguments involving rank and invertible matrices, do not immediately carry over to the more general setting of semirings, and we are required to develop largely new strategies.  However, since $\mat{n}{S}$ can be identified with the semigroup of endomorphisms of the free module $S^n$, one can phrase several of its structural properties in terms of finitely generated submodules of $S^n$. For a general (semi)ring $S$, the relationships between such modules can be much more complicated than the corresponding situation for fields (where it is easy to reason with finite dimensional vector spaces), and as a result one finds that the structure of $\mat{n}{S}$ can be highly complex. 

It is known that the semigroup $\mat{n}{R}$ over a ring $R$ is regular if and only if $R$ is von Neumann regular\footnote{That is, the multiplicative semigroup of $R$ is regular.} (see \cite{Brown, vonNeumann}). Il'in \cite{Ilin} has generalised this result to the setting of semirings, providing a necessary and sufficient condition for $\mat{n}{S}$ to be regular; for all $n\geq 3$ the criterion is simply that $S$ is a von Neumann regular \emph{ring}.  Given that our  work concerns semigroups of matrices over  a {\em semiring} $S$,  we consider two natural generalisations of regularity, namely abundance and ``Fountainicity'' (also known as weak abundance, or semi-abundance; the term Fountain having been recently introduced by Margolis and Steinberg \cite{MS17} in honour of John Fountain's work in this area). We do this first in the context of the semigroups $\mat{n}{S}$, $\upper{n}{S}$ and $\uni{n}{S}$.   Later, in the case where $S$ is an idempotent semifield,  we make a careful study of these properties for the semigroups  $UT_n(S^*)$ and $U_n(S^*)$, where these are the subsemigroups of $\upper{n}{S}$ and $\uni{n}{S}$, respectively, in which all the entries above the leading diagonal are non-zero.  
Abundance and Fountainicity are determined by properties of the relations $\Ls$ and
$\Rs$ (for abundance) and $\Lt$ and $\Rt$ (for Fountainicity), where $\Ls$ is a natural extension of $\L$, and $\Lt$ a natural extension of $\Ls$, similar statements being true for the dual relations. It is worth remarking that the property of abundance may be phrased in terms of projectivity of monogenic acts over monoids \cite{kilp}.

For an idempotent semifield $S$, we describe two functions $^{(+)}: M_n(S)\rightarrow E(M_n(S))$ and $^{(*)} :M_n(S) \rightarrow E(M_n(S))$ which map  $A\in M_n(S) $ to
left and right identities $A^{(+)}$ and $A^{(*)}$ for $A$, respectively.  These maps are used to show that certain subsemigroups of $M_n(S)$, including several generalisations of well-studied monoids of binary relations (Hall relations, reflexive relations, unitriangular Boolean matrices), are {Fountain}. Many of the semigroups under consideration turn out to have the stronger property that each $\Rt$-class and each $\Lt$-class contains a \emph{unique} idempotent. Fountain semigroups whose idempotents commute are known to have this property, however examples of such semigroups with non-commuting idempotents are more elusive. In this paper we provide examples of such semigroups which occur naturally in our setting, and give a detailed study of some particularly interesting families of these.

In Section \ref{sec:prelim} we provide the necessary background on semirings, semimodules and generalised regularity conditions, including the details of Green's relations and their extensions mentioned above.  In Section \ref{sec:reg} we show that each of the semigroups $\mat{n}{S}, \upper{n}{S}$ and $\uni{n}{S}$ admits a natural left-right symmetry and provide module theoretic characterisations of $\R,\L,\Rs,\Ls,\Rt$ and $\Lt$ and hence of regularity, abundance and Fountainicity in the case of $\mat{n}{S}$. In contrast to Il'in's result that
$M_n(S)$ is regular if and only if $S$ is a regular ring \cite{Ilin} we show that 
$UT_n(S)$ is regular if and only if $n=1$ and the multiplicative semigroup of $S$ is a regular semigroup ({\bf Proposition~\ref{notReg}}). Many of the semirings we study in this article are exact \cite{WJK}; in the case where $S$ is exact, $M_n(S)$ is abundant if and only if it is regular ({\bf Theorem~\ref{exact}}). Thus in large part in this article we concentrate on Fountainicity. 

From Section~\ref{sec:semfield} onwards we focus our attention soley on the case where $S$ is an idempotent semifield. We make use of a particular idempotent matrix construction to prove that several natural subsemigroups of $\mat{n}{S}$, including $\uni{n}{S},$ ({\bf Corollary~\ref{uniS}}) are Fountain.  Specifically, for every
$A\in M_n(S)$ we associate idempotents $A^{(+)}$ and $A^{(*)}$ in $M_n(S)$; these idempotents, in certain situations, play the role of $AB$ and $BA$ for a regular matrix $A$ with $A=ABA$.  Section~\ref{sec:semfield} ends by considering the behaviour of idempotents in $UT_n(S^*)$. Idempotent generated semigroups are of fundamental importance in algebraic semigroup theory (see, for example, \cite{howie66,nambooripad,Putcha}).  We show that the idempotent generated subsemigroup of
$UT_n(S^*)$ is precisely $U_n(S^*)$  ({\bf Corollary~\ref{idmpgennozero}}).

Every idempotent semifield can be constructed from a lattice ordered abelian group
$\linz^*$, by adjoining a zero element and taking addition to be least upper bound: the resulting semifield is denoted by $\linz$ and (with some abuse of notation) we use
$\linz^*$ to denote its non-zero elements. In Section \ref{sec:lin} we specialise to the case where the natural partial order on $\linz$ is total. For the Boolean semiring $\mathbb{B}$ we provide a complete description of the generalised regularity properties of $\mat{n}{\bool}$, $\upper{n}{\bool}$and $\uni{n}{\bool}$, making use of our idempotent construction ({\bf Theorem~\ref{thm:B}}),  and provide a partial description for 
 $\mat{n}{\linz}$, $\upper{n}{\linz}$ and $\uni{n}{\linz}$.   
 
 For the remainder of the paper we consider one family of Fountain subsemigroups of $\mat{n}{S}$ in detail, namely the semigroups $UT_n(\linz^*)$ where $\linz$ is the semiring associated with a linearly ordered abelian group. This family has some unusual, but striking, properties as Fountain semigroups. It is known that if the idempotents of a Fountain semigroup $S$ commute, then every element is
 $\Rt$-related and $\Lt$-related to a unique idempotent. Examples of Fountain semigroups in which the latter behaviour holds without the idempotents commuting are hard to find: see
 \cite{araujo} in the case where $\Lt=\Ls$ and $\Rt=\Rs$. On the other hand, by this stage we have seen that every element $A$ of
 $\upper{n}{\linz^*}$ is $\Rt$-related to a unique idempotent, the idempotent $A^{(+)}$ described above, and dually, $A$ is $\Lt$-related to a unique idempotent, $A^{(*)}$ 
 ({\bf Corollary~\ref{full}}),  but the idempotents of $UT_n(\linz^*)$ do not commute for $n\geq 3$. 
 
 Much of the literature dealing with Fountain semigroups considers only those for which $\Rt$ and $\Lt$ are, respectively, left and right compatible with multiplication, a property held by $\Rs$ and $\Ls$, and by $\R$ and $\L$. For such semigroups the 
 $\Ht:=\Rt\cap\Lt$-classes of idempotents are subsemigroups, indeed, {\em unipotent monoids} (monoids possessing a single idempotent). A substantial theory exists for Fountain semigroups in which unipotent monoids, arising from the   $\Ht$-classes of idempotents, or as quotients, play the role held by maximal subgroups in the theory of regular semigroups (see, for example, \cite{hartmann}). In  Section \ref{sec:upper} we see that not only are
 $\Rt$ and $\Lt$ not left and right compatible, respectively, but are as far from being so as possible, in a sense we will make precise ({\bf Proposition~\ref{prop:leftcong}}).  We show that $UT_n(\linz^*)$ is regular if and only if it is abundant, and  characterise the  generalised regularity properties of  $\upper{n}{\linz^*}$ and $\uni{n}{\linz^*}$ ({\bf Corollary~\ref{regut}}). In particular, $\upper{n}{\linz^*}$ and $\uni{n}{\linz^*}$ are not regular, but are Fountain, for $n\geq 3$. The maximal subgroups of $\upper{n}{\linz^*}$ are isomorphic to the 
 underlying abelian group $\linz^*$ ({\bf Theorem~\ref{thm:upperH}}). 
 
 Finally, in Section \ref{sec:tildeclasses} we continue the theme of 
 Section \ref{sec:upper} by carefully analysing the structure of 
 $\Rt$-, $\Lt$- and $\Ht$-classes in $UT_n(\linz^*)$.  We use the notion of defect, introduced in \cite{TaylorThesis} in the special case of the tropical semiring, to determine the behaviour of the  $\Rt$- and $\Lt$-classes, and hence the
 $\Ht$-classes, in $UT_n(\linz^*)$. For $n\leq 4$ the $\Ht$-classes of idempotents are
 always  subsemigroups, and in certain cases for $n\geq 5$, depending on properties related to defect, that we refer to as being tight or loose. However, we are also able to show that for  every $n\geq 5$ there is an idempotent in $UT_n(\linz^*)$ such that its $\Ht$-class is not a subsemigroup ({\bf Proposition~\ref{prop:ht}}).  These properties of tightness and looseness derive from the fact that the conditions for a matrix $E \in UT_n(\linz^*)$ to be idempotent correspond to a certain set of inequalities holding between products of the entries of $E$; we say that $E$ is tight in a particular product, if the inequality corresponding to that product is tight. In the tropical case, these inequalities are a classical linear system of inequalties in $\mathbb{R}^{n(n+1)/2}$, and hence describe a polyhedron of idempotents; tightness of an idempotent matrix $E$ in a specified product therefore corresponds to the point of  $\mathbb{R}^{n(n+1)/2}$ corresponding to $E$ lying on the hyperplane specified by that product.  
 
  Throughout this article we pose a series of Open Questions concerning abundance and Fountainicity of matrix semigroups over semirings. Indeed, many of these questions remain unanswered even in the case of rings. We hope our article provides a catalyst for other investigators.

  \section{Preliminaries}
\label{sec:prelim}
In order to keep our paper self-contained, we briefly recall some key definitions and results. For further information on semirings the reader may consult \cite{Golan}.

\subsection{Semirings}
\label{subsesec:Semi}
A semiring is a commutative monoid $(S,+,0_S)$ with an associative (but not necessarily commutative) multiplication $S \times S \rightarrow S$ that distributes over addition from both sides, where the additive identity $0_S$ is assumed to be an absorbing element for multiplication (that is, $0_S a = a 0_S = 0_S$ for all $a \in S$). Thus a ring is a semiring in which $(S, +, 0_S)$ is an abelian group. \emph{Throughout this paper we shall assume that $S$ is unital (i.e. contains a multiplicative identity element, denoted $1_S$) and commutative}. A (semi)field is a (semi)ring in which $S\smallsetminus \{0_S\}$ is an abelian group under multiplication; we denote this group by $S^*$. We say that a semiring is \emph{idempotent} if the addition is idempotent, and \emph{anti-negative} if $a+b=0_S$ implies $a=b=0_S$. It is easy to show that every idempotent semiring is anti-negative and that idempotent semirings are endowed with a natural partial order structure given by $a \leq b$ if and only if $a+b=b$.  

It may help the reader to keep the following examples in mind:
\begin{itemize}
\item[1.] The set of non-negative integers $\mathbb{N}_0$ with the usual operations of addition and multiplication.
\item[2.] The Boolean semiring $\mathbb{B} = \{0,1\}$ with idempotent addition $1+1=1$.
\item[3.] The tropical semiring $\mathbb{T}:=\mathbb{R} \cup \{-\infty\}$ with addition $\oplus$ given by taking the maximum (where $-\infty$ is the least element and hence plays the role of the additive identity) and multiplication $\otimes$ given by usual addition of real numbers, together with the rule $-\infty \otimes a = a \otimes-\infty = -\infty$.
\item[4.] More generally, if $G$ is  lattice ordered abelian group, one may construct an idempotent semifield  from $G$ by adjoining a minimal element, to be treated as zero, to $G$ and taking addition to be least upper bound. (In fact, every idempotent semifield arises in this way -- see Lemma \ref{semifields} below.)
\end{itemize}

For ease of reference we record (with no claim of originality) a number of facts about semifields, which have been observed by many authors (see for example \cite{GJK, Hutchins, Weinert}):
\begin{lemma}
\label{semifields}
Let $S$ be a semifield.
\begin{enumerate}[\rm(i)]
\item Either $S$ is a field or $S$ is anti-negative.
\item If $S$ is anti-negative, then the multiplicative group $S^*$ is torsion-free.
\item If $S$ is finite, then $S$ is a finite field or the Boolean semiring.
\item If $x+x=x$ for some $x \in S^*$, then $S$ is idempotent.
\item If $S$ is idempotent, then $S^*$ is a lattice ordered abelian group.
\end{enumerate}
\end{lemma}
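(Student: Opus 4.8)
The plan is to prove Lemma~\ref{semifields} one part at a time, using the defining property that $S^*$ is an abelian group under multiplication and that $0_S$ is absorbing, together with the standard interplay between the additive and multiplicative structures. First I would address part~(i). The key observation is that whether or not $-1_S$ (an additive inverse of $1_S$) exists determines everything: if $1_S$ has an additive inverse, then by multiplying through by arbitrary elements and using distributivity, every element acquires an additive inverse, so $(S,+,0_S)$ becomes an abelian group and $S$ is a field. If $1_S$ has no additive inverse, I would argue that $S$ must be anti-negative. The heart of this is: suppose $a+b=0_S$ with $a\neq 0_S$; then $a\in S^*$ is invertible, so multiplying by $a^{-1}$ gives $1_S + a^{-1}b = 0_S$, producing an additive inverse for $1_S$ and forcing $S$ to be a field. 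Thus the dichotomy is clean, and this same computation simultaneously sets up part~(i) and feeds into the others.

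For part~(ii), assuming $S$ anti-negative (hence not a field), I would show $S^*$ is torsion-free. Suppose $x\in S^*$ has $x^n = 1_S$ for some $n\geq 1$; I want $x=1_S$. The natural route is to exploit the partial order: in an anti-negative semifield one can compare $x$ with $1_S$. The cleanest argument is to show that $x$ and $x^{-1}$ cannot both be ``strictly one side'' of $1_S$; more concretely, I would use that if $x\neq 1_S$ then, after possibly replacing $x$ by $x^{-1}$, one can derive a strict inequality that iterates under taking powers and contradicts $x^n = 1_S$. This requires first establishing that an anti-negative semifield is idempotent or at least carries a usable order — which is precisely what part~(iv) provides — so I would likely prove~(iv) before~(ii), or at least invoke the order-theoretic consequences carefully.

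For part~(iv), the claim is that if some $x\in S^*$ satisfies $x+x=x$, then the whole semiring is idempotent, i.e. $a+a=a$ for all $a$. The idea is to transport the idempotency of $x$ everywhere by scaling: for any $a\in S^*$, write $a = (ax^{-1})x$ and use distributivity together with $x+x=x$ to deduce $a+a = a$; for $a=0_S$ the statement is trivial by absorption. This is the most purely computational part and I expect it to go through directly. Part~(iii) then follows by combining the preceding parts with a counting/classification argument: a finite semifield is either a field (hence finite field) or anti-negative; in the anti-negative case the finite multiplicative group $S^*$ is torsion-free by~(ii), forcing $|S^*|=1$, so $S = \{0_S, 1_S\} = \mathbb{B}$.

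The main obstacle, and the part deserving the most care, is part~(v): showing that for idempotent $S$ the group $S^*$ is a lattice ordered abelian group. Here I would use the natural partial order $a\leq b \iff a+b=b$ noted in the text, restrict it to $S^*$, and verify two things: that multiplication is order-preserving (which follows from distributivity, since $a\leq b$ gives $ca + cb = cb$, i.e. $ca\leq cb$), and that $S^*$ is closed under the lattice operations inherited from $S$. The join is immediate, since $a+b = a\vee b$ and one must check $a+b\neq 0_S$ when $a,b\neq 0_S$, which holds by anti-negativity. The genuinely delicate step is producing the meet $a\wedge b$ inside $S^*$: in a general idempotent semiring meets need not exist, so I would exploit invertibility to define $a\wedge b := (a^{-1}+b^{-1})^{-1}$ and verify that this is indeed the greatest lower bound of $a$ and $b$ with respect to the order. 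Checking that this candidate is a lower bound and is the greatest such, compatibly with the group multiplication, is the crux; once done, compatibility of the lattice operations with multiplication (the lattice-ordered group axiom) follows from the order-preservation already established. This is where I would spend the most effort, since everything else reduces to short distributivity computations.
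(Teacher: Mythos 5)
The paper records this lemma ``with no claim of originality'' and gives no proof, citing \cite{GJK, Hutchins, Weinert}, so there is no in-paper argument to compare against; your proposal has to stand on its own. Parts (i), (iii), (iv) and (v) do: the dichotomy in (i) via producing an additive inverse of $1_S$ from any nontrivial relation $a+b=0_S$ is exactly right, the scaling argument for (iv) is correct, and your construction of the meet in (v) as $a\wedge b:=(a^{-1}+b^{-1})^{-1}$ is sound --- one checks $c:=(a^{-1}+b^{-1})^{-1}\leq a$ by multiplying $c+a$ through by $c^{-1}$ and using $1_S+1_S=1_S$, and maximality among lower bounds follows because inversion is order-reversing. (In (iii) you should also say a word about why $1_S+1_S=1_S$ rather than $0_S$ once $|S^*|=1$; anti-negativity gives this immediately.)

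The genuine gap is in part (ii). Your plan is to get torsion-freeness of $S^*$ from an order, and you say this order ``is precisely what part (iv) provides.'' But (iv) is a conditional: it says that \emph{if} some $x\in S^*$ satisfies $x+x=x$ then $S$ is idempotent. It does not say that every anti-negative semifield is idempotent, and that implication is false: $\mathbb{Q}_{\geq 0}$ is an anti-negative semifield with no additively idempotent nonzero element, so it carries none of the lattice structure of part (v), yet (ii) must still hold for it. Your sketch (``after possibly replacing $x$ by $x^{-1}$, derive a strict inequality that iterates'') therefore has no foundation in the general anti-negative case, and even where an order exists it implicitly assumes $x$ is comparable to $1_S$, which a lattice order does not guarantee. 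The standard repair is order-free: if $x^n=1_S$, set $y=1_S+x+\cdots+x^{n-1}$; then $xy=x+\cdots+x^{n-1}+x^n=y$, and $y\neq 0_S$ by anti-negativity (a finite sum containing the term $1_S\neq 0_S$ cannot vanish), so $y\in S^*$ is invertible and $x=yy^{-1}=1_S$. With (ii) proved this way, your deduction of (iii) goes through unchanged.
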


Examples 2 and 3 above are both idempotent semifields, in which the natural partial order is total. The respective mutiplicative groups are the trivial group, and the group of real numbers with the usual order. In general, we shall denote an idempotent semifield in which the natural partial order is total by $\linz$, since the corresponding multiplicative group $\lin$ is a linearly ordered abelian group.

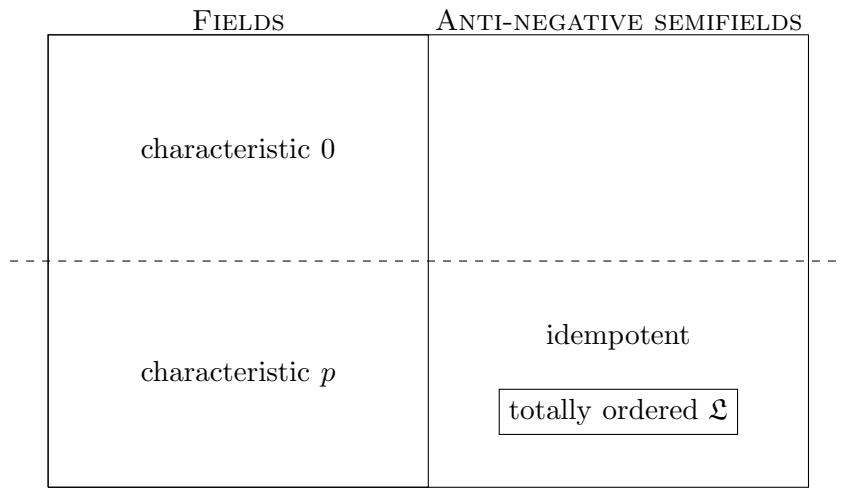
\begin{figure}
\begin{tikzpicture}
\draw (0,0) -- (10,0) -- (10,6) -- (0,6) -- (0,0);
\draw (0,0) -- (5,0) -- (5,6) -- (0,6) -- (0,0);
\draw[dashed] (-0.5,3) -- (10.5,3);
\node[align=left] at (2.5,6.2) {\sc{Fields}};
\node[align=left] at (7.5,6.2) {\sc{Anti-negative semifields}};
\node[align=left] at (2.5,4.5) {characteristic $0$};
\node[align=left] at (2.5,1.5) {characteristic $p$};
\node[align=left] at (7.5,2) {idempotent};
\node[draw,align=left] at (7.5,1) {totally ordered $\mathfrak{L}$};
\end{tikzpicture}
\caption{A rough guide to semifields.}
\label{fig:1}
\end{figure}

\subsection{Modules and matrices}
\label{subsec:ModMat}
A (left) \emph{$S$-module} is a commutative monoid $(X, + , 0_X)$ together with a left action $S \times X \rightarrow X$ satisfying for all $s,t \in S$, and all $x,y \in X$:
\begin{eqnarray*} 
1_S \cdot x = x,\; \; \; s\cdot (t \cdot x) = st\cdot x, & s\cdot 0_X = 0_X = 0_S \cdot x,\\
s \cdot (x + y) = s\cdot x + s\cdot y,& (s+t) \cdot x = s\cdot x + t \cdot x.
\end{eqnarray*}
It is clear that $S$ itself is a left $S$-module with left action given by multiplication within $S$. Let $I$ be a non-empty index set, and consider the set $S^I$ of all functions $I\mapsto S$, together with the operation of pointwise addition $(f+g)(i) = f(i)+g(i)$, and zero map $0(i)=0_S$ for all $i \in I$. This forms an $S$-module with action given by $(s\cdot f)(i) = s f(i)$, for all $i \in I$. For $i \in I$ we write $\delta_i$ for the element of $S^I$ defined by $\delta_i(j) =1_S$ if $i=j$ and $0_S$ otherwise.  The set $S^{(I)}$ consisting of all functions with finite support is a free (left) $S$-module with basis $\{\delta_i :i \in I\}$. Thus every finitely generated free $S$-module is isomorphic to a module of the form $S^n$. We shall also write $S^{m \times n}$ to denote the set of all $m\times n$ matrices over $S$, which forms an $S$-module in the obvious way.

Given $A \in \mat{n}{S}$ we write $A_{i, \star} \in S^{1 \times n}$  to denote the $i$th row of $A$ and $A_{\star, i} \in S^{n \times 1}$ to denote the $i$th column. The column space
$${\rm Col}(A)=\left\{\sum_{i=1}^n  A_{\star, i}\lambda_i : \lambda_i \in S\right\}$$
is the (right) $S$-submodule of $S^{n \times 1}$ generated by the columns of $A$. Dually, we define ${\rm Row}(A)$ to be the (left) $S$-submodule of $S^{n \times 1}$ generated by the rows of $A$.

Since an $S$-module over an arbitrary semiring has no underlying group structure, we must form quotients by considering congruences per se (rather than the congruence class of a particular element). Here by a congruence we mean an equivalence relation that is compatible with addition and the left action of $S$. The kernel of a set $X \subseteq S^{n\times 1}$ of column vectors is the left congruence on $S^{1 \times n}$ defined by 
$${\rm Ker}(X) = \{(v, v') \in S^{1 \times n} \times S^{1 \times n}: vx=v'x \mbox{ for all } x \in X\}.$$
Given a matrix $A \in \mat{n}{S}$, it is easy to see that the kernel of the  set of columns of $\{A_{\star,1},\ldots,A_{\star,n}\}$ is equal to the kernel of the column space  ${\rm Col}(A)$; this is the set-theoretic kernel of the surjective left linear function $S^{1 \times n} \rightarrow {\rm Row}(A)$ given by $v \mapsto vA$. Thus $S^{1 \times n} /{\rm Ker}({\rm Col}(A))\cong {\rm Row}(A)$ as (left) $S$-modules.

For $A \in \mat{n}{S}$ we also define 
$${\rm ColStab}(A)=\{E \in \mat{n}{S}: E^2=E, \,EA=A\} \subseteq \mat{n}{S},$$
that is, the set of idempotents which act as left identities for $A$ (hence stabilising the column space of $A$) and
$${\rm ColFix}(A) = \bigcap_{F \in \rm{ColStab}(A) }{\rm Col}(F) \subseteq S^{n \times 1}$$ 
to denote the intersection of all the column spaces of elements of ${\rm ColStab}(A)$.

\subsection{Left/right relations, regularity and generalisations}
\label{subsec:Green}
We briefly outline the basic ideas required from semigroup theory; for further reference, the reader is referred to \cite{Howie}.
Let $T$ be a semigroup.  Several equivalence relations on $T$ are defined using properties of the left and right actions of $T$ upon itself. We write $T^1$ to denote the monoid obtained by adjoining, if necessary, an identity element to $T$; $E(T)$ for the set of idempotent elements of $T$ (that is, those $e \in T$ for which $e^2=e$); and $U$ for an arbitrary (but fixed) subset of $E(T)$. For $a, b \in T$, we say that:
\begin{center}
\begin{tabular}{ l  }
$a \R b$ if $aT^1=bT^1$; $\qquad a\Rless b$ if $aT^1\subseteq bT^1$;\\
$a \L  b$ if $T^1a=T^1b$; $\qquad a\Lless b$ if $T^1a\subseteq T^1b$;\\
\\
$a\Rs b$ if for all  $x,y \in T^1$:  $xa=ya$ if and only if $xb=yb$;\\
$a\Ls b$ if  for all  $x,y \in T^1$:  $ax=ay$ if and only if $bx=by$;\\
\\
$a\Rtu{U} b$ if for all  $e \in U$:  $ea=a$ if and only if $eb=b$;\\
$a\Ltu{U} b$ if for all  $e \in U$:  $ae=a$ if and only if $be=b$.\\

\end{tabular}
\end{center}
When $U=E(T)$, we write simply $\Rt$ and $\Lt$ in place of $\Rtu{E(T)}$ and $\Ltu{E(T)}$. It is easily verified that the relations $\R, \L, \Rs, \Ls, \Rt$ and $\Lt$ are equivalence relations on $T$. The two relations $\R$ and $\L$ are the familiar Green's relations (see for example \cite{Howie}), whilst the remaining relations are much-studied \cite{ElQallaliThesis, Fountain:1982, Lawson, Pastijn}  extensions of these, in the sense that:
$$\R \subseteq \Rs\subseteq \Rt \subseteq \Rtu{U} \;\;\mbox{ and }\;\;
\L \subseteq \Ls\subseteq \Lt \subseteq \Ltu{U}.$$
The relations $\R$ and $\L$ clearly encapsulate notions of (one-sided) divisibility. The relation corresponding to the obvious two-sided version is denoted by $\J$. The meet of $\L$ and $\R$ is denoted by $\H$, whilst their join is denoted by $\D$ and the latter turns out to be equal to both $\L \circ\R$ and $\R \circ \L$. Starting with $\Ls$ and $\Rs$ (respectively, $\Lt$ and $\Rt$) one may define analogous relations $\Hs$ and $\Ds$ (respectively, $\Ht$ and $\Dt$), although the relations $\Ls$ and $\Rs$ (respectively, $\Lt$ and $\Rt$) need not commute in general.

It follows from \cite{Pastijn} that $a \Rs b$  in $T$ may be alternatively characterised as $a \R b$ in an oversemigroup $T' \supseteq T$, and dually for the relation $\Ls$. Whilst the relations $\R$ and $\Rs$ (respectively, $\L$ and $\Ls$) are well-known to be left (respectively, right) congruences on $S$, in general $\Rt$ and $\Rtu{U}$ (respectively, $\Lt$ and $\Ltu{U}$) need not be.

We say that a semigroup $T$ is \emph{regular} if for each element $a \in T$ there exists $x \in T$ such that $axa=a$. Equivalently, $T$ is regular if every $\R$-class (or every $\L$-class) of $T$ contains an idempotent. The latter characterisation in terms of the `abundance' of idempotents in $T$, has given rise to the following generalisations:

\begin{itemize}
\item $T$ is \emph{abundant} if and only if  every $\Rs$-class and every $\Ls$-class of $T$ contains an idempotent;
\item $T$ is \emph{Fountain} (or weakly abundant, or semi-abundant) if and only if  every $\Rt$-class and every $\Lt$-class of $T$ contains an idempotent;
\item $T$ is \emph{$U$-Fountain} (or weakly $U$-abundant, or $U$ semi-abundant) if and only if  every $\Rtu{U}$-class and every $\Ltu{U}$-class of $T$ contains an idempotent.
\end{itemize}
The term ``Fountain'' was coined in \cite{MS17} to highlight the contribution of John Fountain to the study of such semigroups. 

The following lemma (which will be used repeatedly throughout) is well known to specialists (see for example \cite{Gould:2010}), however we record a brief proof for completeness.

\begin{lemma}
Let $a,e,f \in T$ with $e,f \in U \subseteq E(T)$. Then $a \Rtu{U} e$ if and only if $ea=a$ and $e \Rless f$ for all idempotents $f\in U$ with $fa=a$. In particular, $e \Rtu{U} f$ if and only if $e \R f$.  
\end{lemma}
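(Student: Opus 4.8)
The plan is to prove both directions of the biconditional directly from the definition of $\Rtu{U}$, and then deduce the ``in particular'' statement as the special case $a = f$. First I would prove the forward direction. Suppose $a \Rtu{U} e$. Since $e \in U$ is idempotent we have $ee = e$, so $e$ satisfies the condition ``$ee = e$'' trivially; applying the definition of $\Rtu{U}$ with $a$ and $e$ related and testing against the idempotent $e$ itself gives that $ea = a$ if and only if $ee = e$, and the right-hand side holds, so $ea = a$. Next, to show $e \Rless f$ for every $f \in U$ with $fa = a$: fix such an $f$. Since $a \Rtu{U} e$, testing against $f$ gives $fa = a$ if and only if $fe = e$; as $fa = a$ holds, we get $fe = e$. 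This last equation says precisely that $e \in fT^1$ (indeed $e = fe$), hence $eT^1 \subseteq fT^1$, i.e. $e \Rless f$.

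For the converse, suppose $ea = a$ and that $e \Rless f$ for every idempotent $f \in U$ with $fa = a$. I must show $a \Rtu{U} e$, that is, for every $g \in U$, $ga = a$ if and only if $ge = e$. Take any $g \in U$. If $ge = e$, then $ga = g(ea) = (ge)a = ea = a$, using $ea = a$. Conversely, if $ga = a$, then $g$ is an idempotent in $U$ fixing $a$ on the left, so by hypothesis $e \Rless g$, which gives $e \in gT^1$, say $e = gt$ for some $t \in T^1$; then $ge = g(gt) = (gg)t = gt = e$ since $g$ is idempotent. Thus $ga = a$ iff $ge = e$, as required, and $a \Rtu{U} e$.

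For the final ``in particular'' claim, I would apply the just-proved equivalence with $a$ replaced by the idempotent $f \in U$. The statement becomes: $e \Rtu{U} f$ if and only if $fe = e$ (the condition ``$ea = a$'' with $a = f$) together with $e \Rless g$ for all idempotents $g \in U$ with $gf = f$. To turn this into the clean statement $e \Rtu{U} f \iff e \R f$, I would observe that $e \Rless f$ together with $f \Rless e$ is exactly $e \R f$, and show that the stated conditions are equivalent to this symmetric pair. The condition $fe = e$ gives $e \Rless f$ directly; and taking $g = f$ in the family (since $ff = f$) shows the hypotheses force $e \Rless f$ as well, while $f$ itself fixing $f$ forces $e \Rless f$, so one recovers divisibility in both directions. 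Conversely $e \R f$ yields $fe = e$ and makes the whole family condition automatic since anything above $f$ in the $\Rless$-order is above $e$.

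I expect the main subtlety — rather than a genuine obstacle — to be the careful handling of the ``in particular'' reduction, where one must verify that the asymmetric-looking divisibility condition (``$e \Rless g$ for all $g$ fixing $f$'') collapses to the symmetric relation $e \R f$; the key observation that makes this work is that $f$ itself is always an admissible choice of $g$, since $ff = f$. The two main directions of the biconditional are routine once one exploits idempotency to rewrite $fe = e$ as membership $e \in fT^1$ and hence as $e \Rless f$, and I would present them in the order above.
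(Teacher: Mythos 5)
Your proofs of the two main directions are correct and follow essentially the same route as the paper's: test the defining condition of $\Rtu{U}$ against $e$ itself to get $ea=a$, against an arbitrary $f\in U$ fixing $a$ to get $fe=e$ and hence $e\Rless f$; and for the converse, use $ge=e\Rightarrow ga=g(ea)=(ge)a=a$ in one direction and write $e=gt$ to get $ge=g(gt)=(gg)t=e$ in the other. The paper leaves the ``in particular'' clause as an immediate consequence, whereas you spell it out; that is where the one problem lies.

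In your reduction of the ``in particular'' claim you substitute $a=f$, so the condition ``$ea=a$'' becomes $ef=f$ (i.e.\ $f\Rless e$), not $fe=e$ as you wrote. Because of this transposition, every inequality you subsequently derive is $e\Rless f$ (once from the miswritten ``$fe=e$'', and again from taking $g=f$ in the family), and you never obtain $f\Rless e$; yet you assert that you ``recover divisibility in both directions.'' As written this does not establish $e\R f$. The fix is immediate: with the correct substitution, $ef=f$ gives $f\Rless e$, and taking $g=f$ (admissible since $ff=f$) in the condition ``$e\Rless g$ for all $g\in U$ with $gf=f$'' gives $e\Rless f$, so together $e\R f$. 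The converse direction you sketch is fine: $e\R f$ yields $ef=f$ via $f=ex\Rightarrow ef=e(ex)=(ee)x=f$, and for any $g$ with $gf=f$, writing $e=fy$ gives $ge=g(fy)=(gf)y=fy=e$, so $e\Rless g$.
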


\begin{proof}
If $a \Rtu{U} e$, then by definition for all $f \in U, fa=a$ if and only if $fe=e$. The forward direction yields that any idempotent of $U$ fixing $a$ is above $e$ in the $\R$-order on $T$, and since $ee=e \in U$ the converse direction gives in particular that $ea=a$. On the other hand, suppose that $ea=a$ and $e \Rless f$ for all $f \in U$ with $fa=a$. Let $g \in U$. If $ga=a$, then by assumption $e=gx$ for some $x \in T$, giving $ge=g(gx)=(gg)x=gx=e$.  On the other hand, if $ge=e$, then $ga=g(ea)=(ge)a=ea=a$.  
\end{proof}

\subsection{Involutary anti-automorphisms}
\label{subsec:Anti}
Let $T$ be a semigroup. We say that a bijective map $\varphi:T \rightarrow T$ is an \emph{involutary anti-automorphism} if $\varphi^{-1}=\varphi$ and $\varphi(ab)= \varphi(b) \varphi(a)$ for all $a, b\in T$. For example, in an inverse semigroup the map sending an element to its inverse is such a map. The existence of such a map ensures left-right symmetry in the structure of $T$.

\begin{lemma}
\label{l-r}
Let $T$ be a semigroup with involutary anti-automorphism $\varphi$.
\begin{enumerate}[\rm(i)]
\item $T$ is abundant if and only if each $\Rs$-class contains an idempotent.
\item $T$ is Fountain if and only if each $\Rt$-class contains an idempotent.
\end{enumerate}
Further, if $U \subseteq E(T)$ with $\varphi(U) =U$, then:
\begin{enumerate}[\rm(i)] \setcounter{enumi}{2}\item$T$ is $U$-Fountain if and only if each $\Rtu{U}$-class contains an idempotent.\end{enumerate}
\end{lemma}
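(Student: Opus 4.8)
The plan is to exploit the single observation that an involutary anti-automorphism $\varphi$ interchanges the ``right'' and ``left'' versions of each of the relations in question, while fixing the set of idempotents (and, in part (iii), the chosen subset $U$) setwise. Once this is established, all three statements follow by one symmetric argument, since their forward implications are immediate from the definitions: abundance, Fountainicity and $U$-Fountainicity each demand an idempotent in \emph{both} the right-handed and the left-handed classes, so in particular in the right-handed ones, which is exactly the hypothesis retained on the right-hand side of each biconditional.

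First I would record the elementary facts about $\varphi$ that drive everything. Extending $\varphi$ to $T^1$ by $\varphi(1)=1$ (and noting that if $T$ is already a monoid then necessarily $\varphi(1)=1$, since $\varphi(1)$ is a two-sided identity) keeps it an involutary anti-automorphism of $T^1$. It preserves idempotents, as $\varphi(e)^2 = \varphi(e)\varphi(e) = \varphi(ee) = \varphi(e)$, and it restricts to a bijection of $E(T)$; in part (iii) the hypothesis $\varphi(U)=U$ says it restricts to a bijection of $U$.

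The core step is the claim that $a \Rs b$ if and only if $\varphi(a) \Ls \varphi(b)$, and likewise $a \Rt b$ iff $\varphi(a) \Lt \varphi(b)$, and $a \Rtu{U} b$ iff $\varphi(a) \Ltu{U} \varphi(b)$. I would prove the first by unwinding definitions: for arbitrary $u,v \in T^1$ put $x=\varphi(u)$ and $y=\varphi(v)$, so that, $\varphi$ being an involutary anti-homomorphism,
\[
\varphi(a)u = \varphi(a)v \iff \varphi(xa) = \varphi(ya) \iff xa = ya,
\]
the last step using injectivity of $\varphi$. The analogous equivalence holds with $b$ in place of $a$, and since $a \Rs b$ gives $xa = ya \iff xb = yb$, chaining the three equivalences yields $\varphi(a)u = \varphi(a)v \iff \varphi(b)u = \varphi(b)v$; as $u,v$ were arbitrary, this is exactly $\varphi(a)\Ls\varphi(b)$. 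The tilde cases are identical, replacing the arbitrary pairs $x,y\in T^1$ by idempotents $e \in E(T)$ (respectively $e \in U$), and here the constraint $\varphi(U)=U$ is precisely what guarantees that the re-indexing $e=\varphi(f)$ runs over all of $U$ as $f$ does.

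With the claim in hand, each reverse implication is a one-line deduction. For (i): given an $\Ls$-class, represented by some $a$, the element $\varphi(a)$ lies in an $\Rs$-class, which by hypothesis contains an idempotent $e$; then $a = \varphi(\varphi(a)) \Ls \varphi(e)$ with $\varphi(e)$ idempotent, so the $\Ls$-class of $a$ contains an idempotent, and $T$ is abundant. Parts (ii) and (iii) are word-for-word the same with $\Rs,\Ls$ replaced by $\Rt,\Lt$ (respectively $\Rtu{U},\Ltu{U}$), the produced idempotent $\varphi(e)$ lying in $U$ in the last case. The only points needing genuine care — and the closest thing to an obstacle in what is otherwise a bookkeeping exercise — are the correct extension of $\varphi$ to $T^1$ so that the quantifiers in the definitions of $\Rs$ and $\Ls$ are legitimate, and the use of $\varphi(U)=U$ in (iii); everything else is a mechanical transfer of quantified conditions across the bijection $\varphi$.
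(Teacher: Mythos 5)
Your proposal is correct and follows essentially the same route as the paper: the core computation is the transfer of the quantified conditions across $\varphi$ (which the paper carries out inline for a chosen idempotent representative $\varepsilon(\varphi(a))$, concluding $a \Ls \varphi(\varepsilon(\varphi(a)))$, and which the paper also isolates as the separate "strong exchange" statement of Lemma~\ref{symmetry}). The only difference is organisational — you prove the exchange property first and then deduce all three parts — which changes nothing of substance.
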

\begin{proof}
(i) Suppose that each $\Rs$-class contains at least one idempotent. Choose one idempotent element of each $\Rs$ class as a representative and define a function $\varepsilon: T \rightarrow E(T)$ mapping each element to the idempotent representative of its $\Rs$-class. We claim that for all $a \in T$, the element $\varphi(\varepsilon(\varphi(a)))$ is an idempotent which is $\Ls$-related to $a$. First, since $\varphi$ is an anti-automorphism and $\varepsilon(\varphi(a))$ is an idempotent we clearly have
$$\varphi(\varepsilon(\varphi(a))) \cdot \varphi(\varepsilon(\varphi(a))) = \varphi (\varepsilon(\varphi(a)) \cdot \varepsilon(\varphi(a))) = \varphi(\varepsilon(\varphi(a))).$$
Let $a,x,y \in T$. Then, by applying $\varphi$, we see that $ax=ay$ if and only if $\varphi(x)\varphi(a) = \varphi(y)\varphi(a)$. Since $\varphi(a) \Rs \varepsilon(\varphi(a))$ the latter is equivalent to $\varphi(x)\varepsilon(\varphi(a)) = \varphi(y)\varepsilon(\varphi(a))$. Applying $\varphi$ once more then yields $ax=ay$ if and only if $\varphi(\varepsilon(\varphi(a))) x = \varphi(\varepsilon(\varphi(a))) y$. Thus $a \Ls  \varphi(\varepsilon(\varphi(a)))$ as required. This shows that if each $\Rs$-class contains an idempotent, then each $\Ls$-class also contains an idempotent, and hence $T$ is abundant.

(ii) Suppose that each $\Rt$-class contains at least one idempotent. Choose one idempotent element of each $\Rt$ class as a representative and define a function $\gamma: T \rightarrow E(T)$ mapping each element to the idempotent representative of its $\Rt$-class. As above, it is easy to see that $a \in T$ is $\Lt$-related to the idempotent $\varphi(\gamma(\varphi(a)))$, and so $T$ is Fountain.

(iii) Since $\varphi(U)=U$, arguing as above gives the desired result.
\end{proof}

Following the convention in \cite{GJK}, we say that a map $\varphi: T \rightarrow T$ \emph{exchanges} two binary relations $\tau$ and $\rho$ on $T$ if:
$$x \, \tau\, y \Rightarrow  \varphi(x) \, \rho \, \varphi(y) \mbox{ and }x \, \rho \, y \Rightarrow \varphi(x)\, \tau \, \varphi(y).$$
We say that $\varphi$ \emph{strongly exchanges} $\tau$ with $\rho$ if 
$$x \, \tau \, y \Leftrightarrow \varphi(x) \, \rho \, \varphi(y) \mbox{ and } x \, \rho \, y \Leftrightarrow \varphi(x) \, \tau \, \varphi(y).$$

\begin{lemma}
\label{symmetry}
Let $T$ be a semigroup and $\varphi:T \rightarrow T$  an involutary anti-automorphism.  Then:
\begin{enumerate}[\rm(i)]
\item $\varphi$ strongly exchanges $\L$ and $\R$.  
\item $\varphi$ strongly exchanges $\Ls$ and $\Rs$.  
\item $\varphi$ strongly exchanges $\Lt$ and $\Rt$.  
\item $\varphi$ acts on the $\,\mathcal{K}\,$, $\,\mathcal{K}^*\,$, and $\,\widetilde{\mathcal{K}}\,$-classes for $\mathcal{K} \in \{\H, \D\}$.
\end{enumerate} 
Further, if $U \subseteq E(T)$ with $\varphi(U) =U$, then:
\begin{enumerate}[\rm(i)] \setcounter{enumi}{4}\item $\varphi$ strongly exchanges $\Ltu{U}$ and $\Rtu{U}$, and $\varphi$ acts on the $\Htu{U}$ and $\Dtu{U}$-classes.  
\end{enumerate}
\end{lemma}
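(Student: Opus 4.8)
The plan is to prove Lemma~\ref{symmetry} by directly translating each relation through the anti-automorphism $\varphi$, exploiting the two defining properties $\varphi^{-1}=\varphi$ and $\varphi(ab)=\varphi(b)\varphi(a)$. The overarching strategy is that each of the relations $\R,\Rs,\Rt,\Rtu{U}$ is defined by a condition involving \emph{left} multiplication (or left identities), while the dual relations $\L,\Ls,\Lt,\Ltu{U}$ involve \emph{right} multiplication; since $\varphi$ reverses the order of products, it converts a left-multiplication statement into a right-multiplication statement, and since $\varphi$ is an involution the implications automatically run in both directions, giving the \emph{strong} exchange property rather than mere exchange.

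First I would prove (i). The cleanest route is to observe that $\varphi$ sends principal right ideals to principal left ideals: applying $\varphi$ to $aT^1=bT^1$ gives $\varphi(a)\,\varphi(T^1)=\varphi(b)\,\varphi(T^1)$ in reversed order, i.e. $T^1\varphi(a)=T^1\varphi(b)$ (using that $\varphi$ is a bijection fixing, or at least permuting, the adjoined identity so that $\varphi(T^1)=T^1$), which says $\varphi(a)\L\varphi(b)$. Thus $a\R b\Rightarrow\varphi(a)\L\varphi(b)$, and symmetrically $a\L b\Rightarrow\varphi(a)\R\varphi(b)$; applying $\varphi$ a second time and using $\varphi^2=\mathrm{id}$ upgrades each implication to an equivalence, establishing strong exchange. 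For (ii) and (iii) I would argue from the defining universally-quantified conditions: for $\Rs$, the statement ``for all $x,y\in T^1$, $xa=ya\Leftrightarrow xb=yb$'' becomes, after applying $\varphi$ to both sides of each equation, ``$\varphi(a)\varphi(x)=\varphi(a)\varphi(y)\Leftrightarrow\varphi(b)\varphi(x)=\varphi(b)\varphi(y)$''; since $\varphi$ is a bijection on $T^1$, quantifying over $\varphi(x),\varphi(y)$ is the same as quantifying over $x,y$, so this is precisely $\varphi(a)\Ls\varphi(b)$. The relation $\Rt$ (and $\Rtu{U}$ in case (v)) is handled identically, replacing the two-sided cancellation condition by the condition ``for all idempotents $e$, $ea=a\Leftrightarrow eb=b$,'' which $\varphi$ converts to ``$\varphi(a)\varphi(e)=\varphi(a)\Leftrightarrow\varphi(b)\varphi(e)=\varphi(b)$''; here I would note that $\varphi$ maps idempotents to idempotents (since $\varphi(e)\varphi(e)=\varphi(e^2)=\varphi(e)$) and is a bijection on $E(T)$, so the quantifier over $e$ transfers correctly, yielding $\varphi(a)\Lt\varphi(b)$.

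For part (iv), I would simply combine the previous parts: since $\H=\L\cap\R$, if $a\H b$ then $a\L b$ and $a\R b$, so $\varphi(a)\R\varphi(b)$ and $\varphi(a)\L\varphi(b)$, whence $\varphi(a)\H\varphi(b)$; this shows $\varphi$ maps each $\H$-class into an $\H$-class (and bijectively, by involutivity). The same argument applies to $\Hs$ and $\Ht$ via parts (ii) and (iii). For $\D$, I would use that $\D=\R\circ\L$: if $a\D b$ there is $c$ with $a\R c\L b$, and applying $\varphi$ gives $\varphi(a)\L\varphi(c)\R\varphi(b)$, i.e. $\varphi(b)\R\varphi(c)\L\varphi(a)$, so $\varphi(a)\D\varphi(b)$ (using that $\D=\R\circ\L=\L\circ\R$). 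The analogous composite descriptions handle $\Ds$ and $\Dt$. Part (v) is then the verbatim repetition of the $\Rt/\Lt$ argument with $E(T)$ replaced by $U$, where the hypothesis $\varphi(U)=U$ is exactly what is needed to guarantee the quantifier over idempotents in $U$ transfers under $\varphi$, and the $\Htu{U},\Dtu{U}$ statements follow as in (iv).

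I do not expect a genuine obstacle here, as the result is a routine but foundational symmetry lemma; the only points requiring care are the bookkeeping of quantifiers under the bijection $\varphi$ (ensuring that ranging over $x,y,e$ is equivalent to ranging over their $\varphi$-images), the verification that $\varphi$ restricts to a bijection of $E(T)$ (respectively $U$) so that the idempotent-quantified conditions for $\Rt,\Lt,\Rtu{U},\Ltu{U}$ survive translation, and the mild subtlety that $\varphi$ should be taken to fix or permit an identity so that $\varphi(T^1)=T^1$ in part (i). Each of these is immediate from the stated hypotheses, so the proof will be short.
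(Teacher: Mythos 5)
Your proposal follows essentially the same route as the paper's proof: apply $\varphi$ to the defining conditions, use bijectivity to transfer the quantifiers over $x,y\in T^1$ and over idempotents, note that $\varphi$ preserves idempotency (and $U$), and use the involution to upgrade each implication to an equivalence. Parts (i)--(iii) and (v) are fine as you describe them.

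The one point that needs repair is your treatment of $\Ds$ and $\Dt$ in part (iv). You propose to handle $\D$ via $\D=\R\circ\L=\L\circ\R$ and then invoke ``the analogous composite descriptions'' for $\Ds$ and $\Dt$; but the paper explicitly notes that $\Ls$ and $\Rs$ (respectively $\Lt$ and $\Rt$) need not commute, so $\Ds$ and $\Dt$ are defined as the \emph{joins} of the corresponding relations and in general are not equal to the composition $\Rs\circ\Ls$ (respectively $\Rt\circ\Lt$) --- indeed Proposition~\ref{prop:RtLtnotcommut} later in the paper exhibits the failure of commutation for $\Rt$ and $\Lt$. The argument you give therefore does not apply verbatim to $\Ds$ and $\Dt$. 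The fix is immediate: the join of two equivalence relations is the transitive closure of their union, and since $\varphi$ strongly exchanges $\Ls$ with $\Rs$ (and $\Lt$ with $\Rt$) by parts (ii) and (iii), it preserves $\Ls\cup\Rs$ and hence preserves the transitive closure, so $\varphi$ acts on the $\Ds$- and $\Dt$-classes. With that substitution your proof of (iv) is complete; the intersection argument for $\H$, $\Hs$ and $\Ht$ is correct as written.
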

\begin{proof}
We may extend $\varphi$ to an involutary anti-automorphism $T^1 \rightarrow T^1$ by setting $\varphi(1)=1$. By an abuse of notation, we denote this map also by $\varphi$.

(i) Let $a,b \in T$ and $x,y \in T^1$. By applying $\varphi$ it is easy to see that $a=xb$ and $b=ya$ if and only if $\varphi(a) = \varphi(b)\varphi(x)$ and $\varphi(b)=\varphi(a)\varphi(y)$. Since $\varphi$ is bijective, it is clear that $a \L b$ if and only if $\varphi(a) \R \varphi(b)$. Since $\varphi$ is an involution we must also have $a \R b$ if and only if $\varphi(a) \L\varphi(b)$.

(ii) Suppose that $a\Ls b$. Then for all $x,y \in T^1$, we have  $ax=ay$ if and only if $bx=by$. Applying $\varphi$ gives that $\varphi(x)\varphi(a)=\varphi(y)\varphi(a)$ if and only if $\varphi(x)\varphi(b)=\varphi(y)\varphi(b)$. Since $\varphi$ is bijective, this shows that $a\Ls b$ if and only if $\varphi(a) \Rs \varphi(b)$. Since $\varphi$ is an involution we must also have $a \Rs b$ if and only if $\varphi(a) \Ls \varphi(b)$.

(iii) For $x \in T$ it is easy to see that $x$ is idempotent if and only if $\varphi(x)$ is idempotent. Arguing as in parts (i) and (ii) one then finds that $a\Lt b$ if and only if $\varphi(a) \Rt \varphi(b)$ and vice versa.

(iv) This follows easily from parts (i)-(iii) together with the innate left-right symmetry of the definitions.

(v) Repeating the argumentation of part (iii) for $e \in U$, noting that $\varphi(e) \in U$, gives the desired result. \end{proof}

\section{Generalised regularity conditions for matrix semigroups}
\label{sec:reg}

In this section we consider the generalised regularity properties of the semigroups $\mat{n}{S}$, $\upper{n}{S}$ and $\uni{n}{S}$ over a general semiring $S$. In the case where $S$ is idempotent, we shall see that for all $n \geq 4$, the semigroups $\mat{n}{S}$ and $\upper{n}{S}$ are not Fountain.

Reflecting along the diagonals illuminates 
left-right symmetry in the ideal structures of $\mat{n}{S}$, $\upper{n}{S}$ and $\uni{n}{S}$.
\begin{lemma}
\label{delta}
Let $S$ be a semiring.
\begin{enumerate}[\rm(i)]
\item
 The transpose map is an involutary anti-automorphism of $\mat{n}{S}$.
\item The map $\Delta: \upper{n}{S} \rightarrow \upper{n}{S}$ obtained by reflecting along the anti-diagonal is an involutary anti-automorphism of $\upper{n}{S}$.
\item Restricting $\Delta$ to $\uni{n}{S}$ yields an involutary anti-automorphism of $\uni{n}{S}$.
\end{enumerate}
\end{lemma}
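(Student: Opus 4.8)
The plan is to verify, in each of the three cases, the two defining conditions for an involutary anti-automorphism: that the map squares to the identity (so that it is its own inverse, and in particular a bijection) and that it reverses products.

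For part (i), I would work directly with matrix entries. Writing $(A^{T})_{ij} = A_{ji}$, the involutivity $(A^{T})^{T} = A$ is immediate. For the anti-homomorphism property I would compute
$$(AB)^{T}_{ij} = (AB)_{ji} = \sum_{k} A_{jk} B_{ki}, \qquad (B^{T}A^{T})_{ij} = \sum_{k} (B^{T})_{ik}(A^{T})_{kj} = \sum_{k} B_{ki} A_{jk}.$$
These agree precisely because $S$ is commutative; this is the one place where commutativity of $S$ is essential, and it is the crux of part (i). Since transpose is plainly a bijection $\mat{n}{S} \to \mat{n}{S}$, this establishes (i).

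For part (ii), I would make the anti-diagonal reflection explicit and then reduce it to (i). Let $J \in \mat{n}{S}$ be the reversal matrix, with $J_{ij} = 1_S$ if $i+j = n+1$ and $0_S$ otherwise; note $J^{T}=J$ and $J^{2} = I_n$. A direct index computation gives $(JA^{T}J)_{kl} = A_{n+1-l,\,n+1-k}$, which is exactly the effect of reflecting $A$ in the anti-diagonal (sending the $(i,j)$ entry to position $(n+1-j,\,n+1-i)$); hence $\Delta(A) = J A^{T} J$. I would then observe three things. First, $\Delta$ preserves upper triangularity: if $k > l$ then $n+1-l > n+1-k$, so $A_{n+1-l,\,n+1-k} = 0_S$ whenever $A$ is upper triangular, whence $\Delta(A) \in \upper{n}{S}$. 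Second, conjugation $X \mapsto JXJ$ is an automorphism of $\mat{n}{S}$ because $J^{2} = I_n$, and composing this automorphism with the transpose anti-automorphism of (i) yields an anti-automorphism; hence $\Delta(AB) = \Delta(B)\Delta(A)$. Third, $\Delta^{2}(A) = J(JA^{T}J)^{T}J = J(JAJ)J = A$, using $J^{T}=J$ and $J^{2}=I_n$, so $\Delta$ is involutary and in particular a bijection of $\upper{n}{S}$.

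For part (iii), I would simply check that $\Delta$ preserves the unitriangular condition. The diagonal entries transform as $\Delta(A)_{kk} = A_{n+1-k,\,n+1-k}$, so if every diagonal entry of $A$ equals $1_S$ then the same holds for $\Delta(A)$; thus $\Delta$ carries $\uni{n}{S}$ into itself. Since $\Delta$ restricted to $\upper{n}{S}$ is already an involutary anti-automorphism and $\uni{n}{S}$ is a subsemigroup mapped into itself by the involution $\Delta$, the restriction is a bijection of $\uni{n}{S}$ that reverses products, as required. All the calculations are routine; the only genuine content is the use of commutativity of $S$ in part (i) and the identification $\Delta(A) = JA^{T}J$ in part (ii), which is what allows (ii) and (iii) to be deduced cleanly from (i) rather than re-derived by hand.
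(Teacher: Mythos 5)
Your proof is correct, but it takes a different route from the paper's. The paper proves part (ii) directly: it records the formula $\Delta(X)_{i,j}=X_{n-j+1,n-i+1}$ and then verifies $[\Delta(AB)]_{i,j}=[\Delta(B)\Delta(A)]_{i,j}$ by an explicit entrywise computation inside $\upper{n}{S}$ (with a change of summation index $q=n-p+1$, and commutativity of $S$ used to swap the two factors); part (i) is dismissed as well known and part (iii) as the observation $\Delta(\uni{n}{S})=\uni{n}{S}$. You instead prove (i) by the entrywise computation and then obtain (ii) formally via the factorisation $\Delta(A)=JA^{T}J$ with $J$ the reversal matrix, using $J^{2}=I_n$, $J^{T}=J$, and the fact that an automorphism composed with an anti-automorphism is an anti-automorphism. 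This identification of $\Delta$ as conjugated transpose is not in the paper and is the genuinely new ingredient; it buys a cleaner derivation in which the single use of commutativity of $S$ is isolated in part (i), and it makes (ii) and (iii) purely formal consequences once you check (as you do) that $\Delta$ preserves upper triangularity and the unitriangular condition. The paper's direct computation, by contrast, stays entirely inside $\upper{n}{S}$ and never needs to pass through the ambient monoid $\mat{n}{S}$ or the (non-triangular) matrix $J$. Both arguments are complete and correct.
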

\begin{proof}
We give the details of part (ii) only, part (i) being well known. First notice that for all $1 \leq i \leq j \leq n$ we have $\Delta(X)_{i,j} = X_{n-j+1, n-i+1}$. Then, since
$$(AB)_{i,j} = \sum_{i \leq k \leq j} A_{i,k} \cdot B_{k,j}$$
we obtain
$$[\Delta(AB)]_{i,j} = (AB)_{n-j+1, n-i+1}= \sum_{n-j+1 \leq k \leq n-i+1} A_{n-j+1,k} \cdot B_{k,n-i+1}.$$
On the other hand, 
\begin{eqnarray*}
[\Delta(B)\Delta(A)]_{i,j} &=& \sum_{i \leq p \leq j} \Delta(B)_{i,p} \cdot \Delta(A)_{p,j}=\sum_{i \leq p \leq j} B_{n-p+1,n-i+1} \cdot A_{n-j+1,n-p+1}\\
 &=& \sum_{n-j+1 \leq q \leq n-i+1}  A_{n-j+1,q}\cdot   B_{q,n-i+1}= [\Delta(AB)]_{i,j} .
\end{eqnarray*}
(iii) Clearly $\Delta(\uni{n}{S}) = \uni{n}{S}$. 
\end{proof}

Thus it follows from Lemma \ref{l-r} that in investigating generalised regularity properties of $\mat{n}{S}$, $\upper{n}{S}$ and $\uni{n}{S}$ (and many other subsemigroups of $\mat{n}{S}$) it suffices to consider the relations $\R$, $\Rs$ and $\Rt$ only.

\subsection{Green's relations}
\label{subsec:GreenMat}
The monoid $\mat{n}{S}$ is isomorphic to the monoid of endomorphisms of the free $S$-module $S^n$. The relations $\R, \Rs, \Rt$  on $\mat{n}{S}$ can be readily phrased in terms of certain submodules and congruences on $S^n$ (in the case of $\R$ this is well known-- see \cite{Okninski, Kim82, JK13} for example); for the reader's convenience we briefly outline the details. 

\begin{lemma}
\label{LR}
Let $S$ be a semiring and let $A, B \in \mat{n}{S}$. Then
\begin{enumerate}[\rm(i)]
\item $A \R B$ if and only if ${\rm Col}(A) = \rm{Col}(B)$;
\item $A \Rs B$ if and only if ${\rm Ker}({\rm Col}(A)) = {\rm Ker}({\rm Col}(B))$;
\item $A \Rt B$ if and only if ${\rm ColStab}(A) ={\rm ColStab}(B)$;
\end{enumerate}
\end{lemma}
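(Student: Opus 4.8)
The plan is to treat the three parts separately, first noting that since $\mat{n}{S}$ is a monoid (the identity matrix is a multiplicative identity), we have $\mat{n}{S}^1 = \mat{n}{S}$, so throughout the multipliers appearing in the definitions of $\R$ and $\Rs$ may be taken to be genuine $n \times n$ matrices. For part (i) I would first establish the one-sided statement that $A \Rless B$ if and only if ${\rm Col}(A) \subseteq {\rm Col}(B)$, the two-sided equivalence then following formally. The key computational fact is that for any $X$ the $j$th column of $BX$ is $\sum_k B_{\star,k}\, X_{k,j}$, an $S$-linear combination of the columns of $B$; hence ${\rm Col}(BX) \subseteq {\rm Col}(B)$, so $A = BX$ forces ${\rm Col}(A) \subseteq {\rm Col}(B)$. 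Conversely, if ${\rm Col}(A) \subseteq {\rm Col}(B)$ then each column satisfies $A_{\star,j} = \sum_i B_{\star,i}\,\lambda_{ij}$ for suitable scalars, and assembling the $\lambda_{ij}$ into a matrix $X$ yields $A = BX$. This is routine, and mirrors the classical argument over a field.

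For part (ii) the crucial preliminary observation, recorded already in Section~\ref{subsec:ModMat}, is that ${\rm Ker}({\rm Col}(A)) = \{(v,v') \in S^{1 \times n} \times S^{1 \times n} : vA = v'A\}$, since a pair of row vectors agrees on the whole column space precisely when it agrees on each generating column. With this in hand the equivalence reduces to passing between the matrix-level condition $XA = YA$ and the row-level condition $vA = v'A$. In the forward direction, given a pair $(v,v')$ and wishing to transfer it from one kernel to the other, I would substitute into the definition of $\Rs$ the matrices $X, Y$ having $v, v'$ respectively as their top row and zeros elsewhere, so that $XA = YA$ holds if and only if $vA = v'A$, and similarly for $B$; since $A \Rs B$, this transfers membership of ${\rm Ker}({\rm Col}(A))$ to ${\rm Ker}({\rm Col}(B))$ and vice versa. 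Conversely, because $XA = YA$ holds if and only if $X_{i,\star} A = Y_{i,\star} A$ for every row index $i$, equality of the two kernels immediately yields $XA = YA \Leftrightarrow XB = YB$ for all matrices $X, Y$, that is, $A \Rs B$.

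Part (iii) is essentially a definitional unwinding: by definition ${\rm ColStab}(A)$ is exactly the set of idempotents $E$ with $EA = A$, i.e.\ the set of idempotent left identities of $A$, and taking $U = E(\mat{n}{S})$ in the definition of $\Rt$ shows that $A \Rt B$ asserts precisely that an idempotent fixes $A$ on the left if and only if it fixes $B$, which is the equality ${\rm ColStab}(A) = {\rm ColStab}(B)$. I therefore expect the only genuine content to lie in part (ii), specifically in the bookkeeping that bridges the matrix formulation of $\Rs$ with the row-vector formulation of the kernel of the column space; parts (i) and (iii) are formal, the former following the familiar field-theoretic reasoning and the latter being immediate from the definitions.
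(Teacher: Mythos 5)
Your proposal is correct and follows essentially the same route as the paper: part (i) via the observation that the columns of $BX$ are right $S$-linear combinations of the columns of $B$, part (ii) by embedding row vectors into matrices to pass between the matrix-level condition $XA=YA$ and the kernel of the column space (the paper uses matrices with all rows equal to $x$ where you use a single nonzero top row, which is an immaterial difference), and part (iii) as a definitional unwinding. No gaps.
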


\begin{proof}
(i) Suppose that $A=BX$ for some $X \in \mat{n}{S}$. Then we can write each column of $A$ as a right $S$-linear combination of the columns of $B$. It then follows (from distributivity of the action) that if $x \in {\rm Col}(A)$ we may write $x$ first as a right $S$-linear combination of the columns of $A$, and hence then as a right $S$-linear combination of the columns of $B$, showing that ${\rm Col}(A) \subseteq {\rm Col}(B)$. On the other hand, suppose that ${\rm Col}(A) \subseteq {\rm Col}(B)$. Using the fact that $S$ has identity elements $0_S$ and $1_S$ which act in the appropriate manner, it is clear that each column of $A$ is an element of the column space ${\rm Col}(A)$ and so can be written as a right $S$-linear combination of the columns of $B$. Taking $X$ to be the coefficient matrix of these combinations, we see that $A=BX$. It then follows from the above that $A \R B$ if and only if the two column spaces are equal.

(ii) Suppose that whenever $XA=YA$ for $X, Y \in \mat{n}{S}$ we also have that $XB=YB$. Let $(x,y) \in {\rm Ker}({\rm Col}(A)) \subseteq S^{1 \times n} \times S^{1 \times n}$, and let $X$ be the matrix with all rows equal to $x$ and $Y$ the matrix with all rows equal to $y$. Then it follows that $XA=YA$ and hence $XB=YB$, whence $xB=yB$. This shows that ${\rm Ker}({\rm Col}(A)) \subseteq {\rm Ker}({\rm Col}(B))$. It then follows that $A \Rs B$ implies that the two column kernels agree. Conversely, suppose that the column kernels of $A$ and $B$. If $XA=YA$ for some $X,Y\in \mat{n}{S}$, then for all $1\leq i,j\leq n$ we have $X_{i, \star}A_{\star, j} = Y_{i, \star}A_{\star,j}$. Thus for each $i=1, \ldots, n$ we have $(X_{i, \star}, Y_{i,\star}) \in {\rm Ker}(\rm{Col}(A)) =  {\rm Ker}(\rm{Col}(B))$, whence $XB=YB$.

(iii) This is immediate from the definition of ${\rm ColStab}(A)$.
\end{proof}

 The relations $\L$, $\Ls$ and $\Lt$ can be dually characterised in terms of row spaces. It is clear from the above proof that the pre-order $\Rless$ corresponds to containment of column spaces. 
 Lemma~\ref{LR} and its dual  enable one description of regularity, abundance and Fountainicity, since it allows us to describe the conditions under which $A\in  \mat{n}{S}$
is related to an idempotent $E\in  \mat{n}{S}$ under the relevant relations. 

We now give the first in a series of results examining the relationship between generalisations of Green's relations in  $\mat{n}{S}$ and certain natural subsemigroups, such as $\upper{n}{S}$.

\begin{lemma}
\label{Rs}
Let $S$ be a semiring and let $A, B \in \upper{n}{S}$. Then $A \Rs B$ in $\upper{n}{S}$  if and only if $A \Rs B$ in $\mat{n}{S}$.
\end{lemma}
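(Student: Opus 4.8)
The statement asserts that the relation $\Rs$ is the same whether computed inside the subsemigroup $\upper{n}{S}$ or inside the larger semigroup $\mat{n}{S}$. One direction is essentially free: since $\upper{n}{S}$ is a submonoid of $\mat{n}{S}$ (both contain the identity matrix, so $\upper{n}{S}^1 = \upper{n}{S}$ and $\mat{n}{S}^1 = \mat{n}{S}$), the defining condition for $A \Rs B$ in $\mat{n}{S}$ quantifies over a strictly larger collection of test matrices $X, Y$ than the condition for $A \Rs B$ in $\upper{n}{S}$. Hence $A \Rs B$ in $\mat{n}{S}$ immediately restricts to give $A \Rs B$ in $\upper{n}{S}$.

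The content is in the converse. The plan is to exploit the fact that the condition $XA = YA$ decomposes row by row, together with the observation that an arbitrary row vector can be realised as the top row of an upper triangular matrix. Explicitly, given any $v, v' \in S^{1 \times n}$, let $P$ and $Q$ be the matrices whose first rows are $v$ and $v'$ respectively, and all of whose remaining rows are zero. Each of $P, Q$ lies in $\upper{n}{S}$, since the only possibly nonzero row is the first, which is subject to no upper triangularity constraint. For any $C \in \mat{n}{S}$ we have $(PC)_{1, \star} = vC$ and $(PC)_{i, \star} = 0$ for $i \geq 2$, so $PC = QC$ holds if and only if $vC = v'C$, and similarly for $Q$.

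Using this, suppose $A \Rs B$ in $\upper{n}{S}$. Taking $P, Q$ as above, the hypothesis gives $PA = QA$ if and only if $PB = QB$, which by the previous observation says precisely that $vA = v'A$ if and only if $vB = v'B$. Thus the two row-vector conditions agree for all $v, v'$; equivalently, ${\rm Ker}({\rm Col}(A)) = {\rm Ker}({\rm Col}(B))$, which is the criterion of Lemma~\ref{LR}(ii). To see directly that $A \Rs B$ in $\mat{n}{S}$, let $X, Y \in \mat{n}{S}$ be arbitrary with $XA = YA$. Reading this off row by row gives $X_{i,\star} A = Y_{i,\star} A$ for each $i$, and applying the row-vector equivalence with $v = X_{i,\star}$, $v' = Y_{i,\star}$ yields $X_{i,\star} B = Y_{i,\star} B$ for every $i$, that is, $XB = YB$. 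The symmetric argument gives the reverse implication, so $A \Rs B$ in $\mat{n}{S}$, as required.

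The only genuinely delicate point, and the step I would flag as the crux, is the realisation of an arbitrary row vector as a row of an upper triangular matrix. The naive device used in the proof of Lemma~\ref{LR}(ii), namely taking $X$ with every row equal to $v$, fails here because such a matrix need not be upper triangular. The observation that the first row is entirely unconstrained circumvents this; everything else is routine row-by-row bookkeeping, and the appeal to Lemma~\ref{LR}(ii) is optional, since the argument can be run directly at the level of the defining condition for $\Rs$.
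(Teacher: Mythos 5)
Your proof is correct and follows essentially the same route as the paper: the forward direction by specialising the test matrices, and the converse by placing arbitrary row vectors in the first row of an otherwise-zero upper triangular matrix to deduce ${\rm Ker}({\rm Col}(A)) = {\rm Ker}({\rm Col}(B))$ and hence $\Rs$ in $\mat{n}{S}$ via Lemma~\ref{LR}(ii). The "crux" you flag — that the first row of an upper triangular matrix is unconstrained — is exactly the device the paper uses.
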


\begin{proof}
If $A \Rs B$ in $\mat{n}{S}$ then by definition we have that for all $X, Y \in \mat{n}{S}$ the equality $XA=YA$ holds if and only if the equality $XB=YB$ holds. Specialising to $X, Y \in \upper{n}{S}$ then yields $A \Rs B$ in $\upper{n}{S}$. Conversely, suppose that $A \Rs B$ in $\upper{n}{S}$. We show that $(x, y) \in {\rm Ker}({\rm Col}(A))$ if and only if $(x,y) \in{\rm Ker}({\rm Col}(B))$. Let $X$ and $Y$ be the upper triangular matrices with first row $x$ and $y$ respectively, and all remaining rows zero. Since $A \Rs B$ in $\upper{n}{S}$, we have $XA=YA$ if and only if $XB=YB$, and hence in particular (by looking at the content of the first row of these two products), for each column $A_{\star, j}$ we have $x A_{\star, j} = y  A_{\star, j}$ if and only if $x B_{\star, j} = y  B_{\star, j}$, thus giving the desired rsult.
\end{proof}

\subsection{Regularity}
\label{subsec:RegMat}
The question of which monoids $\mat{n}{S}$ are regular has been answered by Il'in \cite[Theorem 4]{Ilin}, who has shown that for $n \geq 3$, $\mat{n}{S}$ is regular if and only if $S$ is a von-Neumann regular ring.

On the other hand, for an arbitrary semiring $S$ the upper triangular monoids $\upper{n}{S}$ are not regular for all $n\geq 2$.
\begin{proposition}
\label{notReg}
Let $S$ be a semiring. The monoid $\upper{n}{S}$ is regular if and only if $n=1$ and the multiplicative reduct $(S, \cdot)$ is regular.
\end{proposition}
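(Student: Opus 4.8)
The plan is to split on the value of $n$. The case $n=1$ reduces the statement to a tautology: a $1\times 1$ upper triangular matrix $(a)$ multiplies by $(a)(b)=(ab)$, so $\upper{1}{S}$ is isomorphic as a monoid to the multiplicative reduct $(S,\cdot)$ via $(a)\mapsto a$. Hence $\upper{1}{S}$ is regular if and only if $(S,\cdot)$ is, which establishes the biconditional whenever $n=1$. It then remains to show that for $n\geq 2$ the monoid $\upper{n}{S}$ is never regular; granting this, regularity of $\upper{n}{S}$ forces $n=1$, and then the previous sentence supplies regularity of $(S,\cdot)$, while the converse implication is immediate from the same isomorphism.

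For the $n\geq 2$ case I would produce a single non-regular witness that works uniformly in $n$ and $S$, namely the matrix $E_{1,2}$ having $1_S$ in position $(1,2)$ and $0_S$ elsewhere, which lies in $\upper{n}{S}$ for every $n\geq 2$. Working with the matrix units $E_{i,j}$ and the identity $E_{i,j}XE_{k,l}=X_{j,k}E_{i,l}$ (which holds over any semiring, needing only distributivity and the absorbing zero), I would compute, for an arbitrary $X\in\upper{n}{S}$,
$$E_{1,2}\,X\,E_{1,2}=X_{2,1}\,E_{1,2}.$$
Because $X$ is upper triangular we have $X_{2,1}=0_S$, so $E_{1,2}XE_{1,2}=0$ for every $X$, whereas $E_{1,2}\neq 0$. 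Thus the equation $E_{1,2}XE_{1,2}=E_{1,2}$ has no solution and $E_{1,2}$ is not regular, so neither is $\upper{n}{S}$.

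Honestly the argument has no deep obstacle, since the relevant computation is purely formal; the point I would emphasise in the write-up is conceptual rather than technical. The very same $E_{1,2}$ \emph{is} regular in $\mat{n}{S}$ (one inverts it using a matrix whose $(2,1)$-entry is $1_S$), so it is precisely the upper-triangularity constraint $X_{2,1}=0_S$ that obstructs regularity, a structural rather than arithmetic failure that persists over every semiring, fields included. The only caveat to dispatch at the outset is the degenerate case $0_S=1_S$, in which $E_{1,2}=0$ and $\upper{n}{S}$ collapses to the trivial monoid; as is standard I would assume $0_S\neq 1_S$, so that $E_{1,2}$ is genuinely nonzero.
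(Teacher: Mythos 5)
Your proof is correct, and it takes a genuinely different (and more elementary) route than the paper. The paper's proof picks the witness $A$ with first row $(0,1,\dots,1)$ and zeros elsewhere, explicitly determines its entire $\R$-class in $\upper{n}{S}$ (all non-zero entries in the first row, $B_{1,1}=0$, $B_{1,2}$ right invertible), and then observes that every member of that class is a non-zero nilpotent, hence not idempotent; non-regularity follows from the characterisation of regularity via idempotents in $\R$-classes. You instead refute the defining equation directly: for the matrix unit $E_{1,2}$ and any upper triangular $X$, the identity $E_{1,2}XE_{1,2}=X_{2,1}E_{1,2}$ together with $X_{2,1}=0_S$ kills the product, and your computation checks out over any semiring. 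Your argument is shorter, works uniformly in $n\geq 2$, and isolates the obstruction (the forced vanishing of the $(2,1)$ entry) cleanly; the paper's argument is longer but yields a description of an $\R$-class as a byproduct and exercises the ``idempotent in the $\R$-class'' criterion that organises the rest of the paper's generalisations to abundance and Fountainicity. Your caveat about $0_S=1_S$ is well taken and applies equally to the paper's proof (whose witness is likewise zero in the trivial semiring, where the stated biconditional degenerates); both arguments tacitly assume $S$ is non-trivial, which is the standard convention.
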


\begin{proof}
The monoid $\upper{1}{S}$ is clearly isomorphic to $(S, \cdot)$. Suppose then that $n \geq 2$. Consider the upper triangular matrix $A$ with entries $A_{1,j}=1$ for $2 \leq j \leq n$, and all other entries equal to $0$. We first show that an upper triangular matrix $B$ is $\R$-related to $A$ in $\upper{n}{S}$ if and only if:
\begin{enumerate}[\rm(i)]
\item All non-zero entries of n $B$ lie in the first row;
\item $B_{1,1}=0$; and
\item $B_{1,2}$ has a right inverse $b' \in S$.
\end{enumerate}
It is straightforward to verify that each such $B$ is $\R$-related to $A$; taking $X$ to be the diagonal matrix with $X_{j,j} = B_{1,j}$ for all $j \in [n]$ yields $AX=B$, whilst taking $Y$ to be the matrix with $Y_{2,j} = b'$ for all $j\geq2$ and all other entries equal to 0 yields $BY=A$.

Suppose then that $C \R A$. Thus $C=AU$ and $A=CV$ for some $U,V \in \upper{n}{S}$. Since rows $2$ to $n$ of $A$ are all zero, it is clear that all non-zero entries of $C=AU$ must lie in the first row. Moreover, since $U$ is triangular, we see that $A_{1,1}=0$ also forces $C_{1,1} =A_{1,1}U_{1,1}=0$. Since $CV=A$, then we must also have $C_{1,2}V_{2,2}=A_{1,2}=1$. Thus we have shown that each matrix $\R$-related to $A$ must satisfy conditions (i)-(iii) above.

Notice that, since the diagonal entries of each any matrix in this $\R$-class are $0$, these elements are nilpotent. Since each such matrix is non-zero, this $\R$-class does not contain an idempotent.
\end{proof}

When $S$ is a commutative ring, it is easy to see (e.g. by performing invertible row operations of the form $r_i \mapsto r_i + \lambda r_k$ for $1 \leq i \leq k \leq n$) that the unitriangular monoid $\uni{n}{S}$ is a group, and hence in particular regular. At the other extreme, when $S$ is an anti-negative semiring it is easy to see that $\uni{n}{S}$ is not regular for all $n \geq 3$, since for example it is straightforward to show that the matrix equation
$$\left(
\begin{array}{c c c}
1 & 1 & 0 \\
0 & 1 & 1\\
0 & 0 & 1
\end{array}\right)\left(
\begin{array}{c c c}
1 & x & y \\
0 & 1 & z\\
0 & 0 & 1
\end{array}\right)\left(
\begin{array}{c c c}
1 & 1 & 0 \\
0 & 1 & 1\\
0 & 0 & 1
\end{array}\right)=\left(
\begin{array}{c c c}
1 & 1 & 0 \\
0 & 1 & 1\\
0 & 0 & 1
\end{array}\right),$$
has no solution.

Thus in the case where $S$ is a semifield, it follows from the above discussion and Lemma \ref{semifields} that $\uni{n}{S}$ is regular if and only if (i) $n=1$, or (ii) $n = 2$ and for each $a \in S$ there exists $x$ such that $a+x+a=a$, or  (iii) $n \geq 3$ and $S$ is a field.

\subsection{Abundance}
\label{subsec:AbundMat}
Likewise, one can ask under what conditions on $S$ and $n$ do we have that $\mat{n}{S}$ (respectively, $\upper{n}{S}$ or $\uni{n}{S}$) is abundant?

Recall from \cite[Theorem 3.2]{WJK} that a semiring $S$ is said to be \emph{exact} (or FP-injective) if for all $A \in S^{m \times n}$:

\begin{itemize}
\item[(F1)] For each $B \in S^{p \times n}$ we have ${\rm Ker}({\rm Row}(A)) \subseteq {\rm Ker}({\rm Row}(B))$ if and only if ${\rm Row}(B) \subseteq {\rm Row}(A)$.
\item [(F2)] For each $B \in S^{m \times q}$ we have ${\rm Ker}({\rm Col}(A)) \subseteq {\rm Ker}({\rm Col}(B))$ if and only if ${\rm Col}(B) \subseteq {\rm Col}(A)$.
\end{itemize}

When $S$ is exact, abundance coincides with regularity for $\mat{n}{S}$:

\begin{theorem}
\label{exact}
Let $S$ be an exact semiring, $n \in \mathbb{N}$. Then $\R=\Rs$ in the semigroup $\mat{n}{S}$. Thus $\mat{n}{S}$ is abundant if and only if it is regular.
\end{theorem}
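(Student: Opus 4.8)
The plan is to prove that $\R = \Rs$ in $\mat{n}{S}$ when $S$ is exact, from which the equivalence of abundance and regularity follows immediately: since $\R \subseteq \Rs$ always holds, establishing the reverse inclusion $\Rs \subseteq \R$ gives equality, and then every $\Rs$-class coincides with an $\R$-class, so one contains an idempotent if and only if the other does. By Lemma~\ref{l-r} and the transpose anti-automorphism of Lemma~\ref{delta}(i), it suffices to argue about the $\R$-side, so I need not separately treat $\L$ versus $\Ls$.

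First I would invoke the module-theoretic characterisations from Lemma~\ref{LR}: for $A, B \in \mat{n}{S}$ we have $A \R B$ if and only if ${\rm Col}(A) = {\rm Col}(B)$, and $A \Rs B$ if and only if ${\rm Ker}({\rm Col}(A)) = {\rm Ker}({\rm Col}(B))$. So the whole statement reduces to showing that, for exact $S$, equality of column kernels forces equality of column spaces. Assume $A \Rs B$, i.e. ${\rm Ker}({\rm Col}(A)) = {\rm Ker}({\rm Col}(B))$. In particular ${\rm Ker}({\rm Col}(A)) \subseteq {\rm Ker}({\rm Col}(B))$, and by exactness condition (F2) (applied with the roles set up so that $A$ plays the part of the reference matrix) this yields ${\rm Col}(B) \subseteq {\rm Col}(A)$. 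By symmetry, the reverse kernel inclusion gives ${\rm Col}(A) \subseteq {\rm Col}(B)$, hence ${\rm Col}(A) = {\rm Col}(B)$, that is $A \R B$.

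The one point requiring care is the bookkeeping in applying (F2): the exactness condition as stated compares a fixed $A \in S^{m \times n}$ against an arbitrary $B \in S^{m \times q}$, whereas here both matrices are $n \times n$, so the dimensions match and (F2) applies directly with $q = n$. I would check that ${\rm Ker}({\rm Col}(A))$ as used in Lemma~\ref{LR} coincides with the kernel of the column \emph{set} of $A$, which was already noted in Subsection~\ref{subsec:ModMat} (the kernel of the columns equals the kernel of the column space). With that identification in place the two applications of (F2) are entirely symmetric in $A$ and $B$.

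I do not expect a genuine obstacle here; the content is really packaged inside the definition of exactness, and the proof is a short two-line deduction once Lemma~\ref{LR} translates the relations into module language. The only thing to be vigilant about is not conflating the row and column versions of exactness: since $\Rs$ is governed by column kernels (Lemma~\ref{LR}(ii)) and $\R$ by column spaces (Lemma~\ref{LR}(i)), it is condition (F2), the column-space condition, that is needed, and I would make sure to cite that one rather than (F1). The final sentence of the theorem is then a formal consequence: $\R = \Rs$ means the two partitions into classes are identical, so ``every $\Rs$-class contains an idempotent'' and ``every $\R$-class contains an idempotent'' are the same assertion, and the latter is precisely regularity.
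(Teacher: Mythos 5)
Your proposal is correct and follows essentially the same route as the paper's own proof: translate $\Rs$ and $\R$ into equality of column kernels and column spaces via Lemma~\ref{LR}, then apply exactness (condition (F2)) to pass from kernel inclusions to column-space inclusions. The extra care you take in identifying (F2) rather than (F1) and in checking the dimension bookkeeping is sound but does not change the argument.
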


\begin{proof}
It is clear from the definitions that $\R \subseteq \Rs$. Suppose that $A, B \in \mat{n}{S}$ are $\Rs$-related. By Lemma \ref{LR} we see that ${\rm Ker}(\rm{Col}(A)) =  {\rm Ker}(\rm{Col}(B))$. By the exactness of $S$, this yields ${\rm Col}(A) = {\rm Col}(B)$, and hence $A \R B$, by Lemma \ref{LR} again.\end{proof}

Together with \cite[Theorem 4]{Ilin}, this gives the following:
\begin{corollary}
\label{regular}
Let $S$ be an exact semiring and let $n\geq3$. The semigroup $\mat{n}{S}$ is abundant if and only if the semiring $S$ is a von-Neumann regular ring.
\end{corollary}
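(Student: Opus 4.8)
The plan is simply to chain together the two equivalences that are already in hand, since the corollary is a synthesis rather than a fresh argument. First I would invoke Theorem~\ref{exact}: as $S$ is assumed exact and $n \geq 3$ is in particular a positive integer, that result applies directly and tells us that $\mat{n}{S}$ is abundant if and only if it is regular. The key input being used here is that exactness forces the implication ${\rm Ker}({\rm Col}(A)) = {\rm Ker}({\rm Col}(B)) \Rightarrow {\rm Col}(A) = {\rm Col}(B)$, which collapses $\Rs$ onto $\R$ (and, via the transpose anti-automorphism of Lemma~\ref{delta}(i) together with Lemma~\ref{l-r}, makes the one-sided condition suffice for abundance).

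Next I would appeal to Il'in's classification \cite[Theorem 4]{Ilin}. The standing hypothesis $n \geq 3$ places us exactly in the range covered by that theorem, which asserts that $\mat{n}{S}$ is regular if and only if $S$ is a von-Neumann regular ring. Composing the two biconditionals then yields the claimed equivalence: $\mat{n}{S}$ is abundant $\Leftrightarrow$ $\mat{n}{S}$ is regular $\Leftrightarrow$ $S$ is a von-Neumann regular ring.

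I do not anticipate any real obstacle here; the only thing to verify is that the hypotheses of the two cited results hold simultaneously, namely exactness of $S$ for Theorem~\ref{exact} and $n \geq 3$ for Il'in's criterion, both of which are immediate from the statement. The genuine mathematical content resides entirely in Theorem~\ref{exact} and in Il'in's theorem, and the corollary merely records their combination.
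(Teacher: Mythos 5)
Your proposal is correct and coincides with the paper's own argument: the corollary is obtained precisely by combining Theorem~\ref{exact} (abundance equals regularity for $\mat{n}{S}$ over an exact semiring) with Il'in's theorem for $n\geq 3$. Nothing further is needed.
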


Theorem \ref{exact} also provides a simple characterisation of the relation $\Rs$ on $\upper{n}{S}$ in the case that $S$ is exact.
\begin{corollary}
\label{exactRs}
Let $S$ be an exact semiring. Then $A \Rs B$ in $\upper{n}{S}$ if and only if ${\rm Col}(A) = {\rm Col}(B)$.
\end{corollary}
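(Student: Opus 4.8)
The plan is to chain together three results already established in the excerpt, passing through the larger semigroup $\mat{n}{S}$ rather than attempting to reason directly inside $\upper{n}{S}$. The key observation is that the relation $\Rs$ on $\upper{n}{S}$ agrees with its counterpart on the ambient monoid $\mat{n}{S}$, and that over an exact semiring the starred relation collapses back to the ordinary Green's relation $\R$ on $\mat{n}{S}$, which in turn admits a clean column-space description.

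Concretely, take $A, B \in \upper{n}{S}$. First I would invoke Lemma~\ref{Rs}, which asserts that $A \Rs B$ holds in $\upper{n}{S}$ if and only if $A \Rs B$ holds in $\mat{n}{S}$; this is the step that lets us leave the triangular setting and work in the full matrix monoid. Next, since $S$ is exact, Theorem~\ref{exact} gives $\R = \Rs$ in $\mat{n}{S}$, so $A \Rs B$ in $\mat{n}{S}$ is equivalent to $A \R B$ in $\mat{n}{S}$. Finally, Lemma~\ref{LR}(i) characterises the latter as ${\rm Col}(A) = {\rm Col}(B)$. Composing these three equivalences yields exactly the claimed statement.

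The point to be careful about — and really the only substantive remark — is that one should \emph{not} attempt to characterise $\R$ inside $\upper{n}{S}$ directly: as Proposition~\ref{notReg} already illustrates, the divisibility structure of $\upper{n}{S}$ is genuinely more restrictive than that of $\mat{n}{S}$, so $\R$ computed in $\upper{n}{S}$ would in general be strictly finer than equality of column spaces. The whole force of the argument is that $\Rs$ (unlike $\R$) is insensitive to whether we work in $\upper{n}{S}$ or in $\mat{n}{S}$, by Lemma~\ref{Rs}, allowing us to apply the exactness hypothesis where it is available. I would also note that ${\rm Col}(A)$ and ${\rm Col}(B)$ are submodules of $S^{n \times 1}$ generated by the columns and hence do not depend on the ambient semigroup, so there is no ambiguity in the statement of the conclusion. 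There is no real obstacle here; the result is a direct corollary, and the proof amounts to assembling Lemmas~\ref{LR} and~\ref{Rs} with Theorem~\ref{exact} in the correct order.
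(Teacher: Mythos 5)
Your proof is correct and is essentially identical to the paper's: both chain Lemma~\ref{Rs}, Theorem~\ref{exact} and Lemma~\ref{LR}(i) in exactly that order. The additional cautionary remark about $\R$ being strictly finer inside $\upper{n}{S}$ matches the observation the paper makes immediately after the corollary.
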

\begin{proof}
By Lemma \ref{Rs} we have that $A \Rs B$ in $\upper{n}{S}$ if and only if $A \Rs B$ in $\mat{n}{S}$. Theorem \ref{exact} says that $A \Rs B$ in $\mat{n}{S}$ if and only if $A \R B$ in $\mat{n}{S}$, which by Theorem \ref{LR} happens precisely when ${\rm Col}(A) = {\rm Col}(B)$.
\end{proof}

As mentioned earlier, by \cite{Pastijn} one has that $a \Rs b$ in a semigroup $T$ if and only if $a \R b$ in \emph{some} oversemigroup $T'$ of $T$; Corollary \ref{exactRs} says that for $T=\upper{n}{S}$ over an exact semiring $S$ it suffices to consider the oversemigroup $\mat{n}{S}$.

Notice however that the $\R$-relation on $\upper{n}{S}$ is not characterised by equality of column spaces; for example $ \left(\begin{array}{cc} 1 &0\\0&0\end{array}\right)$ and $\left(\begin{array}{cc} 0 &1\\0&0\end{array}\right)$ possess the same column space but are not $\R$-related in $\upper{2}{S}$. One can formulate an alternative characterisation in terms of upper triangular column operations (see for example \cite{KambitesSteinberg}, noting that the notation used there conflicts with our own).

The class of exact semirings is known to contain all fields, \emph{proper} quotients of principal ideal domains, matrix rings and finite group rings over the above, the Boolean semiring $\mathbb{B}$, the tropical semiring $\mathbb{T}$, and some generalisations of these (see \cite{WJK, WildingThesis} for details). If $S$ is a semifield, Shitov has shown that $S$ is exact if and only if either $S$ is a field or $S$ is an idempotent semifield \cite{Shitov17}. In light of Figure \ref{fig:1} one may wonder if abundance and regularity coincide in $\mat{n}{S}$ for all semifields $S$, however, this is not the case as the following example illustrates.

 \begin{example}
Let $\Qpos$ denote the semifield of non-negative rational numers, and for $n \geq 2$ consider the block diagonal matrix
 $$A= \left( \begin{array}{c c | c} 2& 1 &  \\
 1 & 3&\\
 \hline
 & & I_{n-2}\end{array}\right) \in \mat{n}{\Qpos},$$
 where $I_{n-2}$ denotes the identity matrix of $\mat{n-2}{\Qpos}$ and omitted entries are zero. It is straightforward to verify that $A$  is not regular in $\mat{n}{\Qpos}$. However, since $A$ is invertible in $\mat{n}{\mathbb{Q}}$ one has that $A \Rs I_n$ within $\mat{n}{\Qpos}$. Thus $\R \neq \Rs$ in $\mat{n}{\Qpos}$.
 \end{example} 

\begin{example}
The previous results imply that if $S$ is a field, then $\upper{n}{S}$ is abundant, as mentioned in the introduction. Indeed, for each $A \in \upper{n}{S}$, let $\overline{A}$ denote the upper triangular matrix obtained from the reduced column echelon form of $A$ by permuting the columns of $A$ to put all leading ones on the diagonal. Thus $\overline{A}=AX$ for some $X \in {\rm GL}_n(S)$ and hence $A \R \overline{A}$ in $\mat{n}{S}$. It is straightforward to check that $\overline{A}$ is an idempotent, and Lemma \ref{Rs} yields that $A \Rs \overline{A}$ in $\upper{n}{S}$. 
\end{example}

Over idempotent semifields the monoids $\upper{n}{S}$ and $\uni{n}{S}$ are typically not abundant.

\begin{proposition}
\label{notAbund}
Let $S$ be an idempotent semiring in which the only multiplicative idempotents are $0$ and $1$ (for example, an idempotent semifield), and let $n \geq 3$. Then the monoids $\upper{n}{S}$ and $\uni{n}{S}$ are not abundant.
\end{proposition}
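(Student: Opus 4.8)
The plan is to exploit the left–right symmetry to cut the problem down to a single $\Rs$-class, to choose one concrete witness matrix that serves for both semigroups at once, and then to separate it from every candidate idempotent using explicit unitriangular multipliers. By Lemma~\ref{delta} both $\upper{n}{S}$ and $\uni{n}{S}$ carry an involutary anti-automorphism, so by Lemma~\ref{l-r}(i) it suffices to exhibit, in each case, an element whose $\Rs$-class contains no idempotent. For the witness I would take the ``path'' matrix $A$ with $A_{i,i}=1_S$ for all $i$, with $A_{1,2}=A_{2,3}=1_S$, and with all remaining entries $0_S$; this uses $n\geq 3$ and satisfies $A\in\uni{n}{S}\subseteq\upper{n}{S}$, so that handling it inside $\uni{n}{S}$ will automatically handle $\upper{n}{S}$ as well.

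Next, suppose $E$ is an idempotent with $E\Rs A$. Putting $x=E$, $y=I_n$ in the definition of $\Rs$ and using $E^2=E$ yields the familiar consequence $EA=A$. Reading off the $(2,2)$- and $(1,3)$-entries of $EA=A$ gives $E_{2,2}=1_S$ and $E_{1,2}+E_{1,3}=0_S$; since every idempotent semiring is anti-negative, the latter forces $E_{1,2}=E_{1,3}=0_S$. Thus the second column of $E$ is the basis vector $e_2$, whereas the second column of $A$ is $e_1+e_2$. (Intuitively $e_2\in{\rm Col}(E)$ but $e_2\notin{\rm Col}(A)$, the latter again by anti-negativity; this discrepancy is what the multipliers below will detect.)

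Finally I would manufacture the separating multipliers. Let $X$ be the unitriangular matrix with $X_{1,2}=1_S$ and all other off-diagonal entries $0_S$, and let $Y$ be the unitriangular matrix with $Y_{1,3}=1_S$ and all other off-diagonal entries $0_S$; both lie in $\uni{n}{S}\subseteq\upper{n}{S}$. Since the rows of $X$ and $Y$ other than the first coincide with those of $I_n$, the identity $XA=YA$ reduces to $(e_1+e_2)A=(e_1+e_3)A$, i.e.\ to the statement that the row pair $(e_1+e_2,\,e_1+e_3)$ lies in ${\rm Ker}({\rm Col}(A))$; a direct check confirms both sides equal $(1_S,1_S,1_S,0_S,\dots,0_S)$. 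On the other hand, comparing $(1,2)$-entries gives $(XE)_{1,2}=E_{1,2}+E_{2,2}=1_S$ while $(YE)_{1,2}=E_{1,2}+E_{3,2}=0_S$ (using $E_{3,2}=0_S$ by upper-triangularity), so $XE\neq YE$. This contradicts $E\Rs A$. Hence no idempotent lies in the $\Rs$-class of $A$, and neither $\upper{n}{S}$ nor $\uni{n}{S}$ is abundant.

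The step I expect to be the crux is the choice of witnessing multipliers. The definition of $\Rs$ quantifies over multipliers drawn from within the semigroup, so for $\uni{n}{S}$ one cannot fall back on the non-unitriangular ``single nonzero row'' matrices used in the proof of Lemma~\ref{Rs} to reduce to $\mat{n}{S}$. The resolution is that the elementary unitriangular matrices $X,Y$ above already separate ${\rm Col}(A)$ from ${\rm Col}(E)$ while lying simultaneously in $\uni{n}{S}$, $\upper{n}{S}$ and $\mat{n}{S}$; recognising that any idempotent left identity $E$ of $A$ must contain $e_2$ in its column space although $A$ does not is precisely what guarantees such multipliers exist, and makes a single argument work for all three semigroups.
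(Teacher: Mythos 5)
Your proof is correct. You use the same witness matrix as the paper (the $3\times 3$ ``path'' block, here padded with $\1$'s on the diagonal so that a single element of $\uni{n}{S}$ serves for both semigroups) and the same overall strategy of separating $A$ from any candidate idempotent by a pair of multipliers, but your route to the contradiction is genuinely different and shorter. The paper never invokes $EA=A$: it first shows $E_{i,i}\neq 0$ and uses the hypothesis on multiplicative idempotents to conclude $E_{i,i}=1$, then derives the lower bounds $1\leq E_{1,2}$ and $1\leq E_{2,3}$ from two separate pairs in $K(A)$, uses idempotency of $E$ to force $1\leq E_{1,3}$, and finally exhibits a pair $(I_n,V)\in K(E)\smallsetminus K(A)$. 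You instead read $E_{2,2}=1$ and (via anti-negativity of idempotent semirings) $E_{1,2}=E_{1,3}=0$ directly off $EA=A$, after which the single unitriangular pair $X=I_n+e_{1,2}$, $Y=I_n+e_{1,3}$ satisfies $XA=YA$ but $(XE)_{1,2}=1\neq 0=(YE)_{1,2}$. This buys two things: only one separating pair is needed, and the hypothesis that $0$ and $1$ are the only multiplicative idempotents of $S$ is never used, so your argument in fact proves the statement for an arbitrary idempotent semiring. The only implicit caveat, present in the paper's proof as well, is that $1_S\neq 0_S$, i.e.\ $S$ is not the trivial semiring.
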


\begin{proof}
Let $A$ be the matrix with top left corner:
$$\left(
\begin{array}{c c c}
1 & 1 & 0 \\
0 & 1 & 1\\
0 & 0 & 1
\end{array}\right),$$
and zeros elsewhere. We shall show that $A$ is not $\Rs$-related to any idempotent in $\upper{n}{S}$.

For $M \in \upper{n}{S}$ define $K(M)=\{(X,Y): XM=YM\} \subseteq \upper{n}{S} \times \upper{n}{S}$, and suppose for contradiction that $E$ is an idempotent $\Rs$-related to $A$ in $\upper{n}{S}$, so that $K(A)=K(E)$.

From the definition of $A$, it is easy to see that $(X,Y) \in K(A)$ if and only if:
\begin{itemize}
\item[(i)] $X_{i,i}=Y_{i,i}$ for $i=1,2,3$;
\item[(ii)] $X_{1,1}+X_{1,2}=Y_{1,1}+Y_{1,2}$;
\item[(iii)] $X_{2,2}+X_{2,3}=Y_{2,2}+Y_{2,3}$;
\item[(iv)] $X_{1,2}+X_{1,3}=Y_{1,2}+Y_{1,3}$.
\end{itemize}
It follows easily from condition (i) that we must have $E_{i,i}\neq 0$ for $i=1,2,3$. (Otherwise, $(E,I_n) \in K(E) \smallsetminus K(A)$.) Since $E$ is assumed to be idempotent and upper triangular, the condition that $E_{i,i}^2=E_{i,i}$ then yields $E_{i,i}=1$.

By considering the pair $(P,Q) \in K(A)$ with $P_{1,1}=P_{1,2}=Q_{1,1}=Q_{1,3}=1$ and all other entries equal to zero, we note that in position $(1,2)$ the equality $PE=QE$ yields:
\begin{eqnarray*}
(PE)_{1,2} = P_{1,1}E_{1,2} + P_{1,2}E_{2,2} &=& Q_{1,1}E_{1,2} + Q_{1,2}E_{2,2} = (QE)_{1,2}\\
E_{1,2} + 1 &=& E_{1,2} + 0\\
1 &\leq & E_{1,2}.
\end{eqnarray*}

Similarly, by considering the pair $(P,Q) \in K(A)$ with $P_{1,3}=P_{2,2}=P_{2,3}=Q_{2,2}=Q_{1,3}=1$ and all other entries equal to zero, we note that in position $(2,3)$ the equality $PE=QE$ yields:
\begin{eqnarray*}
(PE)_{2,3} = P_{2,2}E_{2,3} + P_{2,3}E_{3,3} &=& Q_{2,2}E_{2,3} + Q_{2,3}E_{3,3} = (QE)_{2,3}\\
E_{2,3} + 1 &=& E_{2,3} + 0\\
1 &\leq & E_{2,3}.
\end{eqnarray*}
Since $E$ was assumed to be idempotent we must have
\begin{eqnarray*}
E_{1,3} = (E^2)_{1,3} &=& E_{1,1}E_{1,3} + E_{1,2}E_{2,3} + E_{1,3}E_{3,3} \geq E_{1,3}+1.
\end{eqnarray*}
 But now, one can check that taking $V$ to be the matrix with $V_{1,3}=V_{i,i}=1$ for all $i$ and all remaining entries equal to zero we have $(I_n, V) \in K(E) \smallsetminus K(A)$, hence giving the desired contradiction.

Finally, if $A \in \uni{n}{S}$ is $\Rs$ related to an idempotent in $\uni{n}{S}$, then arguing exactly as above, observing that in this case $X_{i,i}=Y_{i,i}=A_{i,i}=E_{i,i}=1$ for all $i \in [n]$, we arrive at the same contradiction.
\end{proof}

For general semirings, a characterisation of abundance remains open.

\begin{question}
What conditions on $S$ and $n$ are necessary and sufficient for $\mat{n}{S}$ (respectively, $\upper{n}{S}$, $\uni{n}{S}$) to be abundant?
\end{question}

\subsection{Fountainicity}
\label{subsec:FountainMat}
\begin{proposition}
\label{Fountainicity}
The monoid $\mat{n}{S}$ is Fountain if and only if for each $A \in \mat{n}{S}$ there exists an idempotent $E$ such that ${\rm ColFix}(A) = {\rm Col}(E)$.
\end{proposition}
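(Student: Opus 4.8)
The plan is to reduce the biconditional to a statement about $\Rt$-classes alone, and then to translate the abstract condition ``$A$ is $\Rt$-related to an idempotent'' into the module-theoretic language of column spaces supplied by Lemma~\ref{LR} and its surrounding remarks. First I would invoke Lemma~\ref{l-r}(ii) together with the transpose anti-automorphism of Lemma~\ref{delta}(i): these reduce Fountainicity of $\mat{n}{S}$ to the single requirement that every $\Rt$-class contains an idempotent. So it suffices to prove that, for a fixed $A \in \mat{n}{S}$, some idempotent is $\Rt$-related to $A$ if and only if there is an idempotent $E$ with ${\rm ColFix}(A) = {\rm Col}(E)$.

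The heart of the argument is a characterisation of when an idempotent $E$ satisfies $A \Rt E$. Applying the final lemma of Section~\ref{subsec:Green} with $U = E(\mat{n}{S})$, I have $A \Rt E$ precisely when $EA = A$ and $E \Rless F$ for every idempotent $F$ with $FA = A$ --- that is, for every $F \in {\rm ColStab}(A)$. Since $\Rless$ corresponds to containment of column spaces (Lemma~\ref{LR} and the remark following it), the second condition reads ${\rm Col}(E) \subseteq {\rm Col}(F)$ for all $F \in {\rm ColStab}(A)$, i.e. ${\rm Col}(E) \subseteq {\rm ColFix}(A)$. Combining this with the fact that $E$ itself then lies in ${\rm ColStab}(A)$ (so that ${\rm ColFix}(A) \subseteq {\rm Col}(E)$), I obtain: for an idempotent $E$, $A \Rt E$ if and only if $E \in {\rm ColStab}(A)$ and ${\rm Col}(E) = {\rm ColFix}(A)$. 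The forward direction of the proposition is then immediate: if $\mat{n}{S}$ is Fountain, pick an idempotent $E$ with $A \Rt E$ and read off ${\rm ColFix}(A) = {\rm Col}(E)$.

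For the converse, suppose $E$ is idempotent with ${\rm Col}(E) = {\rm ColFix}(A)$; I must upgrade this to $A \Rt E$, for which the characterisation above shows it is enough to verify $E \in {\rm ColStab}(A)$, i.e. $EA = A$. The point --- and the one place where a short calculation is needed --- is that $EA = A$ comes for free. Indeed, every $F \in {\rm ColStab}(A)$ satisfies $FA = A$, and the columns of $FA$ lie in ${\rm Col}(F)$, so ${\rm Col}(A) = {\rm Col}(FA) \subseteq {\rm Col}(F)$; taking the intersection over all such $F$ gives ${\rm Col}(A) \subseteq {\rm ColFix}(A) = {\rm Col}(E)$. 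By Lemma~\ref{LR}(i) this yields $A = EX$ for some $X \in \mat{n}{S}$, whence $EA = E(EX) = E^2 X = EX = A$ by idempotency of $E$. Thus $E \in {\rm ColStab}(A)$, and the characterisation gives $A \Rt E$, completing the argument.

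I expect the only genuinely delicate step to be this last observation, namely that the containment ${\rm Col}(A) \subseteq {\rm ColFix}(A)$ holds automatically, because each stabilising idempotent $F$ has column space containing that of $A$. This is exactly what lets one deduce the left-identity relation $EA = A$ from the purely column-space condition ${\rm Col}(E) = {\rm ColFix}(A)$; everything else is a direct translation through Lemmas~\ref{l-r}, \ref{delta} and~\ref{LR}.
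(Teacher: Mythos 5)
Your proof is correct and follows essentially the same route as the paper's: both rest on the characterisation that for an idempotent $E$, $A \Rt E$ holds precisely when $E \in {\rm ColStab}(A)$ and ${\rm Col}(E) = {\rm ColFix}(A)$, with the converse direction hinging on the observation that ${\rm Col}(A) \subseteq {\rm Col}(F)$ for every $F \in {\rm ColStab}(A)$ forces ${\rm Col}(A) \subseteq {\rm Col}(E)$ and hence $EA = A$. The only difference is that you spell out the reduction to $\Rt$-classes via Lemma~\ref{l-r} and the transpose map explicitly, which the paper leaves implicit from its earlier remarks.
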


\begin{proof}
Let $A, E \in \mat{n}{S}$ with $E^2=E$. Then $A \Rt E$ if and only if $EA=A$ and $E \Rless F$ for any idempotent $F \in \mat{n}{S}$ satisfying $FA=A$. From earlier remarks this is equivalent to $E \in {\rm ColStab}(A)$ and ${\rm Col}(E) \subseteq {\rm Col}(F)$ for all $F \in {\rm ColStab}(A)$. Thus if $A \Rt E$ we have ${\rm Col}(E) = {\rm ColFix}(A)$ from the definition. Conversely, if ${\rm Col}(E) = {\rm ColFix}(A)$ for some $E=E^2$, then since ${\rm Col}(A) \subseteq {\rm Col}(F)$ for any $F \in {\rm ColStab}(A)$, we have ${\rm Col}(A) \subseteq {\rm Col}(E)$, whence $A \Rless E$ so that certainly $EA=A$, and by definition of ${\rm ColFix}(A)$, $E \Rless F$ for any $F=F^2$ with $FA=A$.
\end{proof}

We note that the column space of any idempotent (or indeed regular) matrix $E$ is a retract of $S^n$, and hence is a finitely (at most $n$) generated projective $S$-submodule of $S^n$ (see for example \cite[Example 17.15 and Proposition 17.16]{Golan}). Thus the previous proposition indicates that understanding the conditions under which  $\mat{n}{S}$ is Fountain boils down to understanding properties of intersections of certain projective $S$-submodules of $S^n$.

The monoids $\mat{n}{S}$ and $\upper{n}{S}$ over any idempotent semiring are seldom Fountain, as the following proposition illustrates.

\begin{proposition}
\label{notFountain}
Let $S$ be an idempotent semiring and let $n \geq 4$. Then $\mat{n}{S}$ (respectively $\upper{n}{S}$) is not Fountain.
\end{proposition}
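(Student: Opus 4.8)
The plan is to use the characterisation of Fountainicity in Proposition~\ref{Fountainicity}: it suffices to exhibit a single matrix $A$ for which ${\rm ColFix}(A)$ is not the column space of any idempotent. I will combine this with the lemma characterising $\Rtu{U}$ in Section~\ref{subsec:Green} (taking $U=E(T)$), which says that if $A \Rt E$ for an idempotent $E$, then $EA=A$ and $E \Rless F$ for every idempotent $F$ with $FA=A$; that is, $E$ is the $\Rless$-least idempotent fixing $A$. Recording that $EA=A$ is equivalent to ${\rm Col}(A) \subseteq \{v: Ev=v\} = {\rm Col}(E)$ (the fixed points of an idempotent being exactly its column space), and that $\Rless$ corresponds to inclusion of column spaces, the strategy is to produce, alongside $A$, two idempotents $E_1, E_2$ with $E_1A=A=E_2A$ whose column spaces meet in exactly ${\rm Col}(A)$, together with a proof that ${\rm Col}(A)$ is not itself the column space of any idempotent. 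Granting these, any idempotent $E$ with $A \Rt E$ would satisfy ${\rm Col}(A) \subseteq {\rm Col}(E) \subseteq {\rm Col}(E_1) \cap {\rm Col}(E_2) = {\rm Col}(A)$, forcing ${\rm Col}(E)={\rm Col}(A)$, a contradiction; hence the $\Rt$-class of $A$ contains no idempotent.

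Concretely, I would take $A$ to be supported on the top-left $4\times 4$ block, with that block equal to the bidiagonal ``path'' matrix
$$\left(\begin{array}{cccc} 1 & 1 & 0 & 0\\ 0 & 1 & 1 & 0\\ 0 & 0 & 1 & 1\\ 0 & 0 & 0 & 1\end{array}\right),$$
taking the remaining $n-4$ diagonal entries to be $1$ and all other entries $0$, so that the construction lies in $\upper{n}{S}$ for every $n \geq 4$. To show that ${\rm Col}(A)$ is not an idempotent column space, I would suppose $E^2=E$ with ${\rm Col}(E)={\rm Col}(A)$, so that $E$ fixes ${\rm Col}(A)$ pointwise and its columns lie in ${\rm Col}(A)$; expanding $E^2=E$ and using idempotency of addition then yields a system of inequalities between the entries of $E$, exactly in the spirit of Proposition~\ref{notAbund}. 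I expect the same inequality-chasing that there produced the impossible relation $E_{1,3} \geq E_{1,3}+1$ to deliver the contradiction here.

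The main obstacle, and the step requiring genuine work, is the explicit construction of the two idempotents $E_1, E_2$ whose column spaces intersect in precisely ${\rm Col}(A)$. I expect to build them so that each ``collapses'' a different one of the two overlapping length-three sub-paths of $A$, so that their common fixed submodule is forced down to ${\rm Col}(A)$ while neither alone is that small. Verifying ${\rm Col}(E_1)\cap{\rm Col}(E_2)={\rm Col}(A)$ (rather than something strictly larger) is where the hypothesis $n\geq 4$ enters: two genuinely incomparable idempotent extensions need the extra coordinate beyond the $n=3$ configuration of Proposition~\ref{notAbund}, which explains why that result already yields non-abundance for $n \geq 3$, whereas non-Fountainicity appears only from $n\geq 4$.

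Finally, a single argument handles both $\mat{n}{S}$ and $\upper{n}{S}$. Since $A$, $E_1$, $E_2$ may all be chosen upper triangular, the squeeze gives ${\rm ColFix}(A)={\rm Col}(A)$ in each semigroup, via ${\rm Col}(A)\subseteq {\rm ColFix}(A)\subseteq {\rm Col}(E_1)\cap{\rm Col}(E_2)={\rm Col}(A)$ (using the upper-triangular analogue of Proposition~\ref{Fountainicity} for $\upper{n}{S}$). It then remains to rule out an idempotent with column space ${\rm Col}(A)$ — a statement about the submodule ${\rm Col}(A)$ alone, and hence, if proved for an arbitrary idempotent of $\mat{n}{S}$, a fortiori true for upper-triangular idempotents. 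I expect to establish this by the entrywise inequality analysis above, now carried out for a not-necessarily-triangular idempotent $E$ with ${\rm Col}(E)={\rm Col}(A)$; disposing of the non-triangular idempotents is the part that goes beyond Proposition~\ref{notAbund} and is, together with the construction of $E_1$ and $E_2$, where the real effort lies.
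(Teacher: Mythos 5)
Your overall strategy --- reduce via Proposition~\ref{Fountainicity} to showing that ${\rm ColFix}(A)$ is not an idempotent column space, by squeezing it between ${\rm Col}(A)$ and the intersection of the column spaces of two idempotent stabilisers --- is sound, and is essentially a repackaging of the mechanism the paper uses (there, two idempotents $F_1,F_2$ with $F_iA=A$ impose incompatible constraints on any idempotent $E$ with $A\Rt E$). The fatal gap is your choice of witness. Take $S=\bool$ and $n=4$, and let $A$ be your bidiagonal path matrix. Solving $XA=A$ entry by entry (anti-negativity forces all the zeros) shows that the \emph{only} matrices $X$ with $XA=A$ are the identity $I$ and the matrix $E$ obtained from $I$ by placing $\1$ in position $(3,4)$; both are idempotent, ${\rm Col}(E)\subseteq{\rm Col}(I)$, and the same computation shows that $XE=E$ has exactly the same two solutions. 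Hence ${\rm ColFix}(A)={\rm Col}(E)$ and $A\Rt E$ in $\mat{4}{\bool}$; dually $A$ is $\Lt$-related to the idempotent with an extra $\1$ in position $(1,2)$. So the $\Rt$- and $\Lt$-classes of your matrix both contain idempotents, and the two incomparable stabilisers $E_1,E_2$ you plan to construct cannot exist: the idempotent stabilisers of this $A$ form a chain. This is not an accident of $\bool$: your $A$ lies in $\uni{n}{S}$, and Corollary~\ref{uniS} shows that over any idempotent semifield every unitriangular matrix is $\Rt$- and $\Lt$-related to an idempotent (namely $\lid{A}$ and $\rid{A}$), so no unitriangular matrix can witness non-Fountainicity over $\bool$ or $\trop$.

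To make your plan work you need a matrix whose idempotent left stabilisers admit two $\Rless$-incomparable minimal elements. The paper's witness is
\[
A=\begin{pmatrix} \0&\1&\1&\1\\ \0&\1&\1&\0\\ \0&\0&\1&\1\\ \0&\0&\0&\1\end{pmatrix},
\]
where the zero first column and the overlapping supports of rows $1$ and $2$ produce two genuinely incomparable idempotent stabilisers $F_1,F_2$ (first rows $(\0,\1,\1,\0)$ and $(\0,\1,\0,\1)$ respectively, zero first column, identity in the lower-right $3\times 3$ block); the conditions $F_1E=E$ and $F_2E=E$ then force contradictory values for the $(1,3)$ entry of any candidate idempotent $E$ fixing $A$. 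Starting from a matrix of this kind your ${\rm ColFix}$ squeeze would go through, but as written your proposal does not prove the proposition.
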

\begin{proof}
Consider first the case $n=4$. We shall show that the matrix
$$A:=\left(
\begin{array}{c c c c }
0 & 1 & 1 & 1 \\
0 & 1 & 1 & 0 \\
0 & 0 & 1 & 1 \\
0 & 0 & 0 & 1
\end{array}\right),$$
is not $\Rt$-related to any idempotent in $\mat{4}{S}$.

If $X \in \mat{4}{S}$, then by using the anti-negativity of $S$ it is easy to see that  satisfies $XA=A$ if and only if $X$ has the form
\begin{equation}
\label{matrix}
\left(
\begin{array}{c c c c c}
a & b & c & d \\
0 & 1 & 0 & 0 \\
0 & 0 & 1 & e \\
0 & 0 & 0 & 1
\end{array}\right),
\end{equation}
where $a,b,c,d,e\in S$ satisfy
\begin{equation}
\label{cond}
a+b=a+b+c=a+c+d=1+e=1.
\end{equation}

Consider the matrices
$$F_1:=\left(
\begin{array}{c c c c c}
0 & 1 & 1 & 0 \\
0 & 1 & 0 & 0 \\
0 & 0 & 1 & 0 \\
0 & 0 & 0 & 1
\end{array}\right), F_2:=\left(
\begin{array}{c c c c c}
0 & 1 & 0 & 1 \\
0 & 1 & 0 & 0 \\
0 & 0 & 1 & 0 \\
0 & 0 & 0 & 1
\end{array}\right),$$
which by above satisfy $F_1A=F_2A=A$. It is straightforward to check that $F_1^2=F_1$ and $F_2^2=F_2$.

Now suppose for contradiction that $E^2=E\Rt A$ in $\mat{4}{S}$. Since $EA=A$, $E$ must have the form (\ref{matrix}) for some $a,b,c,d,e,f \in S$ satisfying (\ref{cond}). Since $F_1$ and $F_2$ are idempotents fixing $A$, we must also have $F_1E=E$ (which implies that $a=0$, $d=e$ and $b=c=1$) and $F_2E=E$ (which implies that $a=c=0$ and $b=d=1$), hence giving a contradiction. We conclude that $\mat{4}{S}$ is not Fountain. Noting that $\mat{4}{S}$ naturally embeds into $\mat{n}{S}$ for all $n \geq 4$, it is easy to see that the matrix with top right hand corner $A$ is not $\Rt$-related to any idempotent of $\mat{n}{S}$. Moreover, since all matrices involved in the above reasoning were in fact upper triangular, the same argument applies to show that $\upper{n}{S}$ is not Fountain.
\end{proof}

\begin{question}What conditions on $S$ and $n$ are necessary and sufficient for $\mat{n}{S}$ to be Fountain?
\end{question}

In the following sections, we shall show that for an idempotent semifield $S$ each of the monoids $\uni{n}{S}$ is Fountain. We also investigate the $\Rt$-classes of $\mat{n}{S}$, $\upper{n}{S}$ and $\uni{n}{S}$ in this case. We begin by considering certain idempotent matrix constructions, which in turn allow us to show that several interesting subsemigroups of $\mat{n}{S}$, including $\uni{n}{S}$, are Fountain.

\section{Matrix semigroups over idempotent semifields}
\label{sec:semfield}

Throughout this section let $S$ be an idempotent semifield. We recall from Lemma \ref{semifields} that every idempotent semifield $S$ arises by adjoining a minimal element $\0$ to a lattice ordered abelian group $(S^*, \cdot, \1)$, with addition corresponding to least upper bound in $S$. We write $\wedge$ and $\vee$ to denote the operations of greatest lower bound and least upper bound in $S$. (Note that taking the $S^*$ to be the trivial group yields the Boolean semiring $\bool$, whilst taking $S^*$ to be the real numbers under addition with respect to the usual total order yields the tropical semifield $\mathbb{T}$. Since the \emph{multiplicative} identity here is the real number 0, to avoid confusion the bottom element is usually denoted by $-\infty$. It would do no harm to first think of this example in what follows, keeping in mind these conventions.)

In the case where $S=\bool$, it is well known that the monoid $\mat{n}{\bool}$ is isomorphic to the monoid $\mathcal{B}_n$ of all binary relations on $[n]$ under relational composition, via the map sending a relation $\alpha \subseteq [n] \times [n]$ to the matrix $A \in \mat{n}{\bool}$ whose $(i,j)$th entry is $1$ if and only if $(i,j) \in \alpha$. With this in mind, for $A \in \mat{n}{S}$ we shall write ${\rm dom}(A)$ to denote the subset of $[n]:=\{1,\ldots, n\}$ indexing the non-zero rows of $A$, and ${\rm im}(A)$ to denote the subset of $[n]$ indexing the non-zero columns of $A$.  For $A, B \in \mat{n}{S}$ we also write $A \preceq B$ if $A_{i,j} \leq B_{i,j}$ for all $i, j \in [n]$, with respect to the partial order on $S$, noting that in the Boolean case the order $\preceq$ corresponds to containment of relations. We make use of the following simple observations several times in our arguments.

\begin{lemma}
\label{ordered}
Let $S$ be an idempotent semiring and let $A, B , C \in \mat{n}{S}$.
\begin{enumerate}[\rm (i)]
\item If $A \preceq B$, then $CA \preceq CB$ and $AC \preceq BC$.
\item If $C_{i,i} = \1$ for all $i \in [n]$, then $A \preceq CA$ and $A \preceq AC$.
\end{enumerate}
\end{lemma}

\begin{proof}
(i) For all $i,j \in [n]$ we have
\begin{eqnarray*}
(CA)_{i,j} &=& \bigvee_{k=1}^n C_{i,k}A_{k,j} \leq \bigvee_{k=1}^n C_{i,k}B_{k,j} = (CB)_{i,j},\\
(AC)_{i,j} &=& \bigvee_{k=1}^n A_{i,k}C_{k,j} \leq \bigvee_{k=1}^n B_{i,k}C_{k,j} = (BC)_{i,j}.
\end{eqnarray*}

(ii) If $C_{i,i}=\1$ for all $i \in [n]$, then for all $i,j \in [n]$ we have
$$(CA)_{i,j} = \bigvee_{k=1}^n C_{i,k}A_{k,j} \geq C_{i,i}A_{i,j} = A_{i,j}, \;\;(AC)_{i,j} = \bigvee_{k=1}^n A_{i,k}C_{k,j} \geq A_{i,j}C_{j,j} = A_{i,j}.
$$
\end{proof}

\subsection{An idempotent construction}
\label{subsec:Plus}
Let $\overline{S}$ denote the idempotent semiring obtained from $S$ by adjoining a top element satisfying $a\vee \pmb{\top} =  \pmb{\top} = \pmb{\top} \vee a$ for all $a \in S$, $\pmb{\top} \cdot a = \pmb{\top} = a \cdot \pmb{\top}$ for all $a \in \overline{S} \setminus \0$ and $\pmb{\top} \cdot \0=\0 \cdot \pmb{\top}$. In the language of \cite{CGQ97}, the commutative semiring $\overline{S}$ is \emph{residuated}, meaning that for every pair $a,b \in \overline{S}$, the set $\{ x \in \overline{S}: ax \leq b\}=\{ x \in \overline{S}: xa \leq b\}$ admits a maximal element $a\setminus b$, given by
$$a \setminus b = \begin{cases} 
\pmb{\top} &\mbox{ if } a=\0 \mbox{ or } a=b=\top,\\
ba^{-1} &\mbox{ if }
\0 < a< \pmb{\top},\\
\0 &\mbox{ if }a=\pmb{\top} \mbox{ and } b \neq \top.\end{cases}
$$
This idea can be extended to matrices, as follows.
\begin{lemma}\cite[Proposition 2 and Theorem 14]{CGQ97}.
Let $S$ be an idempotent semifield, $A,X,Y \in \mat{n}{\overline{S}}$ and define
$$(A \setminus X)_{i,j} = \bigwedge_k A_{k,i} \setminus X_{k,j},\;\;\; (X/A)_{i,j} = \bigwedge_l A_{j,l} \setminus X_{i,l},\;\;\;
(X \setminus A / Y )_{i,j} = \bigwedge_{k,l}Y_{j,l} \setminus (X_{k,i} \setminus A_{k,l}).
$$
Then
 \begin{enumerate}[\rm (i)]
\item$X \Rless A$ in $\mat{n}{\overline{S}}$ if and only if $A(A \setminus X)=X$.
\item $X \Lless A$ in $\mat{n}{\overline{S}}$ if and only if $(X/A)A=X$
\item  $A(A \setminus A / A)A \preceq A$.
\item If $A$ is regular, then $A(A\setminus A / A)A = A$, and $AXA=A \Rightarrow X \preceq A \setminus A /A$.
\end{enumerate}
\end{lemma}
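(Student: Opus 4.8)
The plan is to recognise the three displayed operations as the \emph{residuals} of the left-, right-, and two-sided multiplication maps on the complete lattice $(\mat{n}{\overline S},\preceq)$, and then to read off parts (i)--(iv) from the standard adjunction inequalities. The foundation is the scalar adjunction built into the definition of $a\setminus b$: saying that $a\setminus b$ is the maximum of $\{x\in\overline S: ax\le b\}$ is exactly the statement that $ax\le b$ if and only if $x\le a\setminus b$, for all $a,b,x\in\overline S$ (and, by commutativity of $\overline S$, also $xa\le b$ if and only if $x\le a\setminus b$). Since joins in $\mat{n}{\overline S}$ are computed entrywise and matrix multiplication distributes over joins in each variable, the maps $Z\mapsto AZ$, $Z\mapsto ZA$ and $Z\mapsto XZY$ are sup-preserving, hence residuated; the content of the displayed formulas is precisely that their residuals are
$$A\setminus X=\max\{Z:AZ\preceq X\},\qquad X/A=\max\{Z:ZA\preceq X\},\qquad X\setminus A/Y=\max\{Z:XZY\preceq A\}.$$
I would verify the first of these by pushing a universal quantifier through a meet: $AZ\preceq X$ holds iff $A_{k,i}Z_{i,j}\le X_{k,j}$ for all $k,i,j$, iff $Z_{i,j}\le A_{k,i}\setminus X_{k,j}$ for all $k$, iff $Z_{i,j}\le\bigwedge_k A_{k,i}\setminus X_{k,j}=(A\setminus X)_{i,j}$; the other two are analogous.

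With these characterisations in hand, parts (i) and (ii) follow from the adjunction. For (i), note that $X\Rless A$ in the monoid $\mat{n}{\overline S}$ means exactly $X=AZ$ for some $Z$ (as $I_n\in\mat{n}{\overline S}$). If such a $Z$ exists then $AZ=X\preceq X$ forces $Z\preceq A\setminus X$, whence $X=AZ\preceq A(A\setminus X)$ by Lemma~\ref{ordered}(i); combined with the counit inequality $A(A\setminus X)\preceq X$ (immediate from the maximality defining $A\setminus X$) this gives $A(A\setminus X)=X$. Conversely, if $A(A\setminus X)=X$ then $Z:=A\setminus X$ witnesses $X\Rless A$. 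Part (ii) is the left-right dual, proved identically from the characterisation of $X/A$.

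Parts (iii) and (iv) are then short. Part (iii) is immediate, since $A\setminus A/A=\max\{Z:AZA\preceq A\}$, so the maximiser itself satisfies $A(A\setminus A/A)A\preceq A$. For (iv), suppose $A$ is regular, say $ABA=A$. Since $ABA=A\preceq A$, maximality gives $B\preceq A\setminus A/A$, and applying Lemma~\ref{ordered}(i) twice yields $A=ABA\preceq A(A\setminus A/A)A$; together with (iii) this forces $A(A\setminus A/A)A=A$. The final implication is the same observation applied to an arbitrary $X$: from $AXA=A\preceq A$ and maximality we obtain $X\preceq A\setminus A/A$.

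The only genuinely computational step is the verification that the displayed formulas really do compute the three residuals. I expect the two-sided operation $X\setminus A/Y$ to be the fiddliest, as it requires composing the left and right residuations, invoking the scalar law $(ab)\setminus c=b\setminus(a\setminus c)$ to rewrite $(X_{k,i}Y_{j,l})\setminus A_{k,l}$, and keeping track of the order of the meets over $k$ and $l$. Everything afterwards is formal manipulation of the Galois connection, so the substance of the lemma lies in the residuation set-up rather than in parts (i)--(iv) themselves.
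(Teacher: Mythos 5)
The paper does not include a proof of this lemma---it is quoted directly from \cite{CGQ97}---so there is no internal argument to compare against; your residuation proof is correct and is exactly the standard adjunction argument one would expect that source to use. In particular, your entrywise verification that $A\setminus X$, $X/A$ and $X\setminus A/Y$ are the maxima of $\{Z:AZ\preceq X\}$, $\{Z:ZA\preceq X\}$ and $\{Z:XZY\preceq A\}$ (which needs only the scalar adjunction $ax\le b\Leftrightarrow x\le a\setminus b$, commutativity of $\overline S$, and finite meets) is sound, and parts (i)--(iv) then follow formally as you describe.
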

We require similar constructions lying \emph{within} $\mat{n}{S}$. For $x, y \in S^n$ let $\Supp(x)$ denote the subset of $[n]$ indexing the non-zero positions of $x$ and define
$$\langle x | y \rangle:= \begin{cases}
\bigwedge_{i \in \Supp(x)}y_ix_i^{-1} &\mbox{ if } \emptyset \neq \Supp(x) \subseteq \Supp(y)\\
\0&\mbox{otherwise}.
\end{cases}.$$

The scalar product $\langle \cdot | \cdot \rangle$ is a modification of similar constructions present in the tropical literature (see for example \cite{CGQ97, JK13, JK14}), modified to our purpose, where we allow for the semiring to contain a bottom element, but do not assume that it contains a top element.

\begin{remark}
 We record some properties of $\langle \cdot | \cdot \rangle$:
\begin{enumerate}[\rm(i)]
\item It is immediate from the definition that $\langle x|y \rangle \in S$ and this is non-zero if and only if $\emptyset \neq \Supp(x) \subseteq \Supp(y)$. In particular, whenever $\langle x|y \rangle$ is non-zero, both $\Supp(x)$ and $\Supp(y)$ are non-empty, i.e. $x$ and $y$ must be non-zero vectors.
\item For each non-zero vector $x$, it is easy to see that $\langle x| x\rangle =\1$.
\item  Let $x,y,z \in S^n$. From the above observations $\langle x | y \rangle \langle y | z \rangle \neq \0$ implies that $\emptyset \neq \Supp(x) \subseteq \Supp(y) \subseteq \Supp(z)$. In this case, for each $i \in \Supp(x)$ we have
$\langle x | y \rangle \langle y | z \rangle \leq y_ix_i^{-1} z_iy_i^{-1} = z_ix_i^{-1}$, and hence
$$\langle x | y \rangle \langle y | z \rangle \leq \langle x | z \rangle.$$
\item Notice that for all $i \in [n]$ we have $\langle x |y \rangle x_i \leq y_i$. That is, $\langle x |y \rangle x \preceq y.$
\end{enumerate}
\end{remark}

For each $A \in \mat{n}{S}$ define $\lid{A} \in \mat{n}{S}$ via:
\begin{eqnarray}
\label{formula:Aplus}
(\lid{A})_{i,j} &=& \langle A_{j,\star}|A_{i,\star}\rangle\\
 &=&\nonumber \begin{cases}
\bigwedge\{A_{i,k}A_{j,k}^{-1}: k \in \Supp(A_{j,\star})\} &\mbox{ if } \emptyset \neq \Supp(A_{j,\star}) \subseteq \Supp(A_{i,\star})\\
\0&\mbox{otherwise}.
\end{cases}
\end{eqnarray}
By the previous remark, if $(\lid{A})_{i,j}$ is non-zero, then $i,j \in {\rm dom}(A)$.
\begin{proposition}
\label{prop:idmpt}
Let $S$ be an idempotent semifield and $A \in \mat{n}{S}$. Then
\begin{enumerate}[\rm(i)]
\item $\lid{A}\lid{A}=\lid{A}$.
\item $\lid{A} A = A$.
\item If ${\rm dom}(A)=[n]$ and $BA=A$, then  $B \preceq \lid{A}$ and $B\lid{A} \preceq \lid{A}$.
\end{enumerate}
\end{proposition}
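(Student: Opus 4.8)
The plan is to verify all three parts entry-by-entry, reducing everything to the recorded properties of the scalar product $\langle\,\cdot\mid\cdot\,\rangle$ collected in the Remark preceding the statement: sub-invariance $\langle x\mid y\rangle x\preceq y$, transitivity $\langle x\mid y\rangle\langle y\mid z\rangle\leq\langle x\mid z\rangle$, and normalisation $\langle x\mid x\rangle=\1$ for $x\neq\0$. The single organising observation is that, since $(\lid{A})_{i,j}=\langle A_{j,\star}\mid A_{i,\star}\rangle$, the diagonal entry $(\lid{A})_{i,i}=\langle A_{i,\star}\mid A_{i,\star}\rangle$ equals $\1$ when $i\in{\rm dom}(A)$ and $\0$ otherwise, while a row or column of $\lid{A}$ indexed outside ${\rm dom}(A)$ vanishes identically. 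The diagonal term $k=i$ will supply every lower bound, and the general properties every upper bound.

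For (i) and (ii) I would compute the $(i,j)$ entry of the relevant product as a join over $k$. For (ii), the $k$th summand $\langle A_{k,\star}\mid A_{i,\star}\rangle A_{k,j}$ is $\leq A_{i,j}$ by sub-invariance applied in coordinate $j$, giving $\lid{A}A\preceq A$; the term $k=i$ contributes exactly $A_{i,j}$ when $i\in{\rm dom}(A)$ (and $A_{i,j}=\0$ otherwise), giving $\lid{A}A\succeq A$. For (i), the $k$th summand is $\langle A_{k,\star}\mid A_{i,\star}\rangle\langle A_{j,\star}\mid A_{k,\star}\rangle$, which by transitivity (using commutativity of multiplication in $S^*$) is $\leq\langle A_{j,\star}\mid A_{i,\star}\rangle=(\lid{A})_{i,j}$, so $\lid{A}\lid{A}\preceq\lid{A}$; the term $k=i$ again recovers $(\lid{A})_{i,j}$ using $(\lid{A})_{i,i}=\1$ on ${\rm dom}(A)$, and off ${\rm dom}(A)$ the entry $(\lid{A})_{i,j}$ is already $\0$.

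For (iii) the hypothesis ${\rm dom}(A)=[n]$ is essential: if some row $A_{k,\star}$ vanished, then $B_{i,k}$ would be left unconstrained by $BA=A$ while $(\lid{A})_{i,k}=\0$, so $B\preceq\lid{A}$ could fail. Expanding $BA=A$ entrywise gives $B_{i,k}A_{k,j}\leq A_{i,j}$ for all $i,j,k$. I would split on whether $\Supp(A_{k,\star})\subseteq\Supp(A_{i,\star})$. If not, some $j\in\Supp(A_{k,\star})\setminus\Supp(A_{i,\star})$ forces $B_{i,k}A_{k,j}\leq A_{i,j}=\0$, and since $A_{k,j}$ is invertible in the semifield this yields $B_{i,k}=\0=(\lid{A})_{i,k}$. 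If the supports do nest, then dividing by the invertible $A_{k,j}$ in the lattice-ordered abelian group $S^*$ gives $B_{i,k}\leq A_{i,j}A_{k,j}^{-1}$ for each $j\in\Supp(A_{k,\star})$, and taking the meet over these $j$ gives $B_{i,k}\leq(\lid{A})_{i,k}$. Hence $B\preceq\lid{A}$. The remaining claim $B\lid{A}\preceq\lid{A}$ then follows by right-multiplying this inequality by $\lid{A}$ using Lemma~\ref{ordered}(i) and invoking part (i), since $\lid{A}\lid{A}=\lid{A}$.

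I expect the only delicate point to be the bookkeeping around supports and zero entries: ensuring the division step in (iii) is performed only where $A_{k,j}$ is invertible, and checking that the reverse inequalities in (i)--(ii) are not spoiled by indices outside ${\rm dom}(A)$, where the relevant entries degenerate to $\0$. None of this is conceptually hard, but it is precisely where a careless argument would slip.
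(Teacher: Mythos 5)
Your proposal is correct and follows essentially the same route as the paper's own proof: entry-wise computation, with the diagonal term $k=i$ supplying the lower bounds via $(\lid{A})_{i,i}=\1$ on ${\rm dom}(A)$, transitivity and sub-invariance of $\langle\,\cdot\mid\cdot\,\rangle$ supplying the upper bounds, and $B\lid{A}\preceq\lid{A}\lid{A}=\lid{A}$ closing part (iii). Your explicit case split on whether $\Supp(A_{k,\star})\subseteq\Supp(A_{i,\star})$ in (iii) is a slightly more careful rendering of a step the paper compresses into one line, but it is the same argument.
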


\begin{proof}
(i) First note that for all $i,j,k \in [n]$ we have:
$$(\lid{A})_{i,j}   = \langle A_{j, \star} | A_{i,\star} \rangle \geq \langle A_{j, \star} |A_{k, \star} \rangle \langle A_{k, \star} | A_{i, \star}\rangle = (\lid{A})_{k,j} (\lid{A})_{i,k},$$
and so
$$(\lid{A})_{i,j}   \geq \bigvee_{k=1}^n(\lid{A})_{i,k}(\lid{A})_{k,j}  = (\lid{A}\lid{A})_{i,j}.$$

If $i \not\in {\rm dom}(A)$, then row $i$ of $\lid{A}$ is zero, and so too is row $i$ of the product $\lid{A}\lid{A}$. If $i \in {\rm dom}(A)$, then $(\lid{A})_{i,i} = \langle A_{i, \star} | A_{i, \star}\rangle =\1$, and it follows that for all $j \in [n]$ we have
$$(\lid{A} \lid{A})_{i,j} = \bigvee_{k=1}^{n} (\lid{A})_{i,k} (\lid{A})_{k,j} \geq (\lid{A})_{i,i}(\lid{A})_{i,j} = (\lid{A})_{i,j}.$$

(ii) Noting that $\langle A_{k,\star} | A_{i,\star} \rangle A_{k, \star} \preceq A_{i, \star}$ yields
$$(\lid{A}A)_{i,j}  = \bigvee_{k=1}^n (\lid{A})_{i,k}A_{k,j}  = \bigvee_{k=1}^n \langle A_{k,\star} | A_{i,\star} \rangle  A_{k,j} \leq  A_{i,j}.$$
If $i \not\in {\rm dom}(A)$, then for all $j$ we have $A_{i,j}=(\lid{A})_{i,j}=\0$. If $i \in {\rm dom}(A)$, then $(\lid{A}A)_{i,j} \geq  (\lid{A})_{i,i} A_{i,j}= A_{i,j}$ for all $j \in [n]$.

(iii) Suppose that $BA=A$. Then for all $i,j,k$ we have $B_{i,k}A_{k,j} \leq A_{i,j}$, and hence for all $i,k$ we have
$$B_{i,k} \leq \bigwedge_{j: A_{k,j}\neq 0} A_{i,j}A_{k,j}^{-1} = (\lid{A})_{k,j},$$
where the equality follows from the fact that ${\rm dom}(A)=[n]$. This shows that $B \preceq \lid{A}$, and hence $B\lid{A} \preceq \lid{A}\lid{A} =\lid{A}$.
\end{proof}

As we shall see shortly, the map $A \mapsto \lid{A}$ (along with a corresponding left-right dual map $A \mapsto \rid{A}$) allows us to show that certain subsemigroups of $\mat{n}{S}$ are Fountain. The notation $\lid{A}$ and $\rid{A}$ stems from the theory of regular semigroups and their generalisations; we use brackets in the superscript to distinguish these from the \emph{Kleene plus/star operations} as applied to matrices, which arise naturally in applications of Boolean and tropical matrices, for example.

\begin{proposition}
\label{partial}
Let $U$ denote the set of idempotents of $\mat{n}{S}$ with all diagonal entries equal to $\1$. If $A,B \in \mat{n}{S}$ with ${\rm dom}(A)={\rm dom}(B)=[n]$, then
$$A \Rt B  \Rightarrow \lid{A} = \lid{B}, \mbox{ and } A \Rtu{U} B  \Leftrightarrow \lid{A} = \lid{B}.$$
\end{proposition}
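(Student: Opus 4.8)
The plan is to identify $\lid{A}$ as the $\preceq$-largest element of $U$ acting as a left identity for $A$, and then to upgrade relational equivalences to genuine matrix equalities using antisymmetry of the partial order $\preceq$. First I would record two facts that depend crucially on the hypothesis ${\rm dom}(A)={\rm dom}(B)=[n]$. Since every row of $A$ and of $B$ is non-zero, formula~\eqref{formula:Aplus} together with the observation that $\langle x|x\rangle=\1$ for non-zero $x$ gives $(\lid{A})_{i,i}=\langle A_{i,\star}|A_{i,\star}\rangle=\1$ for all $i$, and likewise for $B$; combined with Proposition~\ref{prop:idmpt}(i) this shows $\lid{A},\lid{B}\in U$. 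Next, for any $C\in U$ I would establish the equivalence
\[
CA=A \iff C\preceq\lid{A}.
\]
The forward direction is exactly Proposition~\ref{prop:idmpt}(iii). For the reverse direction, if $C\preceq\lid{A}$ then Lemma~\ref{ordered}(i) and Proposition~\ref{prop:idmpt}(ii) give $CA\preceq\lid{A}A=A$, while Lemma~\ref{ordered}(ii) gives $A\preceq CA$ because $C$ has unit diagonal; hence $CA=A$. The analogous statement holds with $B$ in place of $A$.

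With these in place, the biconditional $A\Rtu{U}B\Leftrightarrow\lid{A}=\lid{B}$ is short. For the forward implication, assume $A\Rtu{U}B$. As $\lid{A}\in U$ satisfies $\lid{A}A=A$ (Proposition~\ref{prop:idmpt}(ii)), the defining property of $\Rtu{U}$ forces $\lid{A}B=B$, and Proposition~\ref{prop:idmpt}(iii) (applied to $B$) then yields $\lid{A}\preceq\lid{B}$; the symmetric argument gives $\lid{B}\preceq\lid{A}$, so antisymmetry of $\preceq$ forces $\lid{A}=\lid{B}$. For the reverse implication, suppose $\lid{A}=\lid{B}$ and let $e\in U$. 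If $eA=A$ then $e\preceq\lid{A}=\lid{B}$ by the displayed equivalence, whence $eB=B$ by the same equivalence for $B$; by symmetry $eB=B$ implies $eA=A$, so $A\Rtu{U}B$. Finally, the first assertion follows at once: the inclusion $\Rt\subseteq\Rtu{U}$ recorded in Section~\ref{subsec:Green} shows that $A\Rt B$ implies $A\Rtu{U}B$, and the biconditional then delivers $\lid{A}=\lid{B}$.

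The one genuinely delicate point—and the step I would guard most carefully—is the passage from a weak relational equivalence to the \emph{literal} equality of the matrices $\lid{A}$ and $\lid{B}$. This is precisely where the full-domain hypothesis earns its keep: it places $\lid{A},\lid{B}$ in $U$ and validates Proposition~\ref{prop:idmpt}(iii), so that ``is fixed by'' translates faithfully into the order $\preceq$, and the two opposite inequalities collapse to equality. Were the unit-diagonal hypothesis dropped, one could at best recover an $\R$-relation (equality of column spaces), which is strictly weaker than equality of matrices, so no shortcut through $\R$ on idempotents would suffice here.
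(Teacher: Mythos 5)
Your proof is correct and follows essentially the same route as the paper's: both rest on Proposition~\ref{prop:idmpt}(ii)--(iii) and Lemma~\ref{ordered} to translate ``$C$ is a left identity for $A$'' into the order condition $C\preceq\lid{A}$, and then obtain $\lid{A}=\lid{B}$ from two opposite inequalities. The only cosmetic difference is that you deduce the $\Rt$ implication from the inclusion $\Rt\subseteq\Rtu{U}$ and the biconditional, whereas the paper argues it directly with the idempotents $\lid{A},\lid{B}\in E(\mat{n}{S})$; the substance is identical.
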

\begin{proof}
 By Proposition \ref{prop:idmpt}, we see that $\lid{A}$ is an idempotent acting as a left identity for $A$, and $\lid{B}$ is an idempotent acting as a left identity for $B$. It then follows from the definition of $\Rt$ that if $A \Rt B$, then $\lid{A}  B = B$ and $\lid{B}  A = A$. Applying Part (iii) of Proposition \ref{prop:idmpt} twice then gives $\lid{A}=\lid{B}$. Noting that ${\rm dom}(A)={\rm dom}(B)=[n]$ ensures that $\lid{A}, \lid{B} \in U$, the same argument shows that $A \Rtu{U} B$ implies $\lid{A}=\lid{B}$.

Suppose that $\lid{A} = \lid{B}$. If $X \in U$ with $XA = A$, then by Proposition \ref{prop:idmpt}(iii), $X \preceq \lid{A} = \lid{B}$. It then follows from Lemma \ref{ordered} (i) and Proposition \ref{prop:idmpt}(ii) that $XB \preceq \lid{B}B = B$, whilst $B \preceq XB$ by Lemma \ref{ordered} (ii).

\end{proof}

The matrices $A$ for which ${\rm dom}(A) =[n]$ are precisely those for which the matrix $(A/A)$ of \cite{CGQ97} is an idempotent of $M_n(S)$ acting as a left identity for $A$:
\begin{lemma}
Let $S$ be an idempotent semifield, and $A \in \mat{n}{S}$. Then the following are equivalent:
\begin{enumerate}[\rm (i)]
\item  $(A / A) \in \mat{n}{S}$;
\item  ${\rm dom}(A)=[n]$;
\item  $(A / A)  = \lid{A}$.
 \end{enumerate}\end{lemma}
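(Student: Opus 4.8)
The plan is to compute the entries of $(A/A)$ directly from the residuation formula and compare them with those of $\lid{A}$. Recall that $(A/A)_{i,j} = \bigwedge_l A_{j,l}\setminus A_{i,l}$, the meet being taken in $\mat{n}{\overline{S}}$. Since every entry of $A$ lies in $S$ and hence is never equal to $\pmb{\top}$, the residuation formula gives $A_{j,l}\setminus A_{i,l} = \pmb{\top}$ when $A_{j,l}=\0$, and $A_{j,l}\setminus A_{i,l} = A_{i,l}A_{j,l}^{-1}$ when $A_{j,l}\neq\0$. First I would observe that, because $\pmb{\top}$ is the top element, any term equal to $\pmb{\top}$ is absorbed in the meet; so, writing $\Supp(A_{j,\star})$ for the set of columns in which row $j$ is non-zero, one obtains
$$(A/A)_{i,j} = \bigwedge_{l \in \Supp(A_{j,\star})} A_{i,l}A_{j,l}^{-1}$$
whenever row $j$ of $A$ is non-zero, whereas $(A/A)_{i,j}=\pmb{\top}$ (an empty, or all-$\pmb{\top}$, meet) when row $j$ of $A$ is zero. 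Since $S^*$ is a lattice with least element $\0$, a finite non-empty meet of elements of $S$ again lies in $S$; thus this computation already shows that $(A/A)_{i,j}$ lies in $S$ exactly when row $j$ of $A$ is non-zero, and equals $\pmb{\top}$ otherwise.

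From this, the equivalence of (i) and (ii) is immediate: $(A/A)\in \mat{n}{S}$ means that no entry equals $\pmb{\top}$, which by the preceding paragraph holds if and only if every row of $A$ is non-zero, i.e. ${\rm dom}(A)=[n]$.

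Next I would prove (ii) $\Rightarrow$ (iii) by a short case analysis, recalling that $(\lid{A})_{i,j}=\langle A_{j,\star}\,|\,A_{i,\star}\rangle$. Assume ${\rm dom}(A)=[n]$, so that each $\Supp(A_{j,\star})$ is non-empty. If $\Supp(A_{j,\star})\subseteq \Supp(A_{i,\star})$, then the defining formula for $\langle A_{j,\star}\,|\,A_{i,\star}\rangle$ is precisely $\bigwedge_{l\in\Supp(A_{j,\star})}A_{i,l}A_{j,l}^{-1}$, matching the expression for $(A/A)_{i,j}$ above. If instead $\Supp(A_{j,\star})\not\subseteq\Supp(A_{i,\star})$, then $\langle A_{j,\star}\,|\,A_{i,\star}\rangle=\0$; on the other hand there is some $l\in\Supp(A_{j,\star})$ with $A_{i,l}=\0$, so the term $A_{i,l}A_{j,l}^{-1}=\0$ occurs in the meet defining $(A/A)_{i,j}$ and forces $(A/A)_{i,j}=\0$ as well. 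In both cases $(A/A)_{i,j}=(\lid{A})_{i,j}$, hence $(A/A)=\lid{A}$. Finally, (iii) $\Rightarrow$ (i) is trivial, since $\lid{A}\in \mat{n}{S}$ by construction; this closes the cycle.

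I do not anticipate a serious obstacle, as the statement is essentially a careful unwinding of definitions. The one point needing care, and the step I would flag, is the bookkeeping of the adjoined top element $\pmb{\top}$: one must check both that $\pmb{\top}$-valued terms are absorbed in a meet over a non-empty support (so that $(A/A)_{i,j}\in S$), and that a zero row yields an all-$\pmb{\top}$ meet equal to $\pmb{\top}\notin S$. Keeping the three cases ``$A_{j,l}=\0$'', ``$A_{j,l}\neq\0$ but $A_{i,l}=\0$'', and ``$A_{i,l},A_{j,l}\neq\0$'' straight is exactly what makes the three conditions line up.
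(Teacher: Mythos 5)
Your proposal is correct and follows essentially the same route as the paper: unwind the residuation formula for $(A/A)_{i,j}$, observe that a $\pmb{\top}$ entry occurs exactly when row $j$ of $A$ is zero, and then match the resulting meet over $\Supp(A_{j,\star})$ with the defining formula for $(\lid{A})_{i,j}$. Your treatment is if anything slightly more careful than the paper's (you handle the case $\Supp(A_{j,\star})\not\subseteq\Supp(A_{i,\star})$ explicitly, where both sides are $\0$), and the only nitpick is that it is $S$, not $S^*$, that is the lattice with least element $\0$.
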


\begin{proof}
 By definition, if $(A / A) \in \mat{n}{S}$, then for all $i,j$ there exists $k$ such that $A_{i,k} \setminus A_{j,k} \neq
\pmb{\top}$, or in other words, every row of $A$ is non-zero. In this case, it is easy to see that the $(i,j)$th entry of $A/A$ is given by
\begin{eqnarray*}
(A/A)_{i,j} &=& \bigwedge_k(A_{j,k} \setminus A_{i,k}) = \bigwedge_{k: A_{j,k} \neq 0} (A_{j,k} \setminus A_{i,k}) =\bigwedge_{k: A_{j,k} \neq 0} ( A_{i,k}A_{j,k}^{-1})= (\lid{A})_{i,j}
\end{eqnarray*}
This shows that (i) implies (ii) and (ii) implies (iii). Since by definition $\lid{A} \in \mat{n}{S}$, that (iii) implies (i) is trivial.
\end{proof}

\begin{theorem}
\label{subsemigroup}
Let $S$ be an idempotent semifield, $T$ a subsemigroup of $\mat{n}{S}$, $U$ the set of all idempotents of $\mat{n}{S}$ whose diagonal entries are all equal to $\1$, and write $V=T \cap U$. Assume that ${\rm dom}(A)=[n]$ and $\lid{A} \in T$ for all $A \in T$. Then for all $A, B \in T$ we have
\begin{enumerate}[\rm (i)]
\item $A \Rtu{V} \lid{A}$ in $T$;
\item if $A \in V$, then $\lid{A}=A$; and
\item $A \Rtu{V} B$ in $T$ if and only if $\lid{A}=\lid{B}$. 
\end{enumerate}
In particular, if $T$ admits an involutary anti-isomorphism $\varphi$ with $\varphi(V)=V$, then $T$ is $V$-Fountain, with each $\Rtu{V}$-class and each $\Ltu{V}$-class containing a unique idempotent of $V$.
\end{theorem}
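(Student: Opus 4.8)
The plan is to read off the final assertion from parts (i)--(iii), using the involutary anti-isomorphism $\varphi$ to dualise the one-sided statements. First I would settle existence of idempotents of $V$ in each $\Rtu{V}$-class. For any $A \in T$ the hypothesis gives $\lid{A} \in T$; by Proposition~\ref{prop:idmpt}(i) it is idempotent, and since ${\rm dom}(A) = [n]$ we have $(\lid{A})_{i,i} = \langle A_{i,\star}\,|\,A_{i,\star}\rangle = \1$ for every $i$, so $\lid{A} \in U$ and hence $\lid{A} \in T \cap U = V$. Part (i) then gives $A \Rtu{V} \lid{A}$, so the $\Rtu{V}$-class of $A$ contains an element of $V$. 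As $V \subseteq E(T)$ and $\varphi(V) = V$, Lemma~\ref{l-r}(iii) upgrades this at once to: $T$ is $V$-Fountain.

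Next I would handle uniqueness in $\Rtu{V}$-classes. Suppose $E, F \in V$ lie in a common $\Rtu{V}$-class, i.e.\ $E \Rtu{V} F$. By part (iii) this is equivalent to $\lid{E} = \lid{F}$, while part (ii) gives $\lid{E} = E$ and $\lid{F} = F$; hence $E = F$. Thus each $\Rtu{V}$-class contains exactly one element of $V$.

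Finally I would transfer everything to the $\Ltu{V}$-side through $\varphi$, which by Lemma~\ref{symmetry}(v) strongly exchanges $\Ltu{V}$ with $\Rtu{V}$. For existence, given $A \in T$ the element $\varphi(A) \in T$ again satisfies ${\rm dom}(\varphi(A)) = [n]$, so $\lid{(\varphi(A))} \in V$ and therefore $B := \varphi(\lid{(\varphi(A))}) \in V$ since $\varphi(V) = V$; applying $\varphi$ to the relation $\varphi(A) \Rtu{V} \lid{(\varphi(A))}$ supplied by part (i) yields $A \Ltu{V} B$. For uniqueness, if $E, F \in V$ with $E \Ltu{V} F$, then $\varphi(E) \Rtu{V} \varphi(F)$ with $\varphi(E), \varphi(F) \in V$, so $\varphi(E) = \varphi(F)$ by the $\Rtu{V}$-uniqueness already proved, and bijectivity of $\varphi$ forces $E = F$.

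The argument is essentially bookkeeping, with the genuine content residing in Propositions~\ref{prop:idmpt} and~\ref{partial} and in parts (i)--(iii). The only points demanding care, and which I would regard as the main obstacle, are verifying that $\lid{A}$ genuinely lands in $V$ rather than merely in $U \cap \mat{n}{S}$ --- this is exactly where ${\rm dom}(A) = [n]$ is used, to force the diagonal entries to equal $\1$ --- and invoking the strong exchange of $\Ltu{V}$ and $\Rtu{V}$ so as to dualise \emph{both} existence and uniqueness without assuming separately that a dual map $A \mapsto \rid{A}$ preserves $T$.
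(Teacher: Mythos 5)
Your handling of the final ``In particular'' assertion is correct and essentially identical to the paper's closing paragraph: existence of an idempotent of $V$ in each $\Rtu{V}$-class follows from part (i) together with Lemma~\ref{l-r}(iii), uniqueness follows from parts (ii) and (iii), and the $\Ltu{V}$-side statements are obtained by dualising through $\varphi$ via Lemma~\ref{symmetry}(v). Your verification that $\lid{A}$ actually lands in $V$ --- idempotency from Proposition~\ref{prop:idmpt}(i), unit diagonal from ${\rm dom}(A)=[n]$, membership in $T$ by hypothesis --- is exactly the right point to be careful about.

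However, parts (i), (ii) and (iii) are conclusions of the theorem, not hypotheses, and your proposal never proves them; you explicitly defer ``the genuine content'' to them, which leaves the bulk of the statement unestablished. Concretely, what is missing is the following. For (i), beyond $\lid{A}\in V$ and $\lid{A}A=A$ one must show $F\lid{A}=\lid{A}$ for every $F\in V$ with $FA=A$: since ${\rm dom}(A)=[n]$, Proposition~\ref{prop:idmpt}(iii) gives $F\preceq\lid{A}$ and hence $F\lid{A}\preceq\lid{A}\lid{A}=\lid{A}$, while $F_{i,i}=\1$ and Lemma~\ref{ordered}(ii) give $\lid{A}\preceq F\lid{A}$. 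For (ii), one needs both $A\preceq\lid{A}$ (Proposition~\ref{prop:idmpt}(iii) applied to $AA=A$) and $\lid{A}\preceq\lid{A}A=A$ (Lemma~\ref{ordered}(ii), using that $A$ has unit diagonal, together with Proposition~\ref{prop:idmpt}(ii)). For (iii), the forward implication requires extracting from $A\Rtu{V}B$ the equations $\lid{A}B=B$ and $\lid{B}A=A$ and applying Proposition~\ref{prop:idmpt}(iii) twice to obtain $\lid{A}\preceq\lid{B}$ and $\lid{B}\preceq\lid{A}$. Citing Proposition~\ref{partial} does not fill this gap, since that result concerns $\Rtu{U}$ in $\mat{n}{S}$ rather than $\Rtu{V}$ in the subsemigroup $T$.
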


\begin{proof}
(i) Let $A \in T$. By Proposition \ref{prop:idmpt} we know that $\lid{A}$ is an idempotent which acts as a left identity on $A$. By assumption, $\lid{A} \in T$, and since ${\rm dom}(A)=[n]$, it follows from the definition that all diagonal entries are equal to $\1$. Thus $\lid{A} \in V$ and it suffices to show that for all idempotents $F \in V$ with $FA=A$ we have $F\lid{A}=\lid{A}$. Since ${\rm dom}(A)=[n]$, we have $F \lid{A}\preceq \lid{A}$ by Proposition \ref{prop:idmpt}(iii). Since $F \in V$, we have $F_{i,i}=\1$ for all $i \in [n]$, and hence $\lid{A} \preceq F\lid{A}$ by Lemma \ref{ordered}. This shows that $A \Rtu{V} \lid{A}$ in $T$.

(ii) Since $A$ is idempotent, Proposition \ref{prop:idmpt} (iii) gives $A \preceq \lid{A}$. But since all diagonal entries of $A$ are equal to $\1$, Proposition \ref{prop:idmpt} and Lemma \ref{ordered}, give $\lid{A} \preceq \lid{A}A =A$. Thus $\lid{A}=A$ for all $A \in V$.

(iii) If $\lid{A}=\lid{B}$, then by Part (i) we have $A \Rtu{V} \lid{A} =\lid{B}\Rtu{V} B$. Suppose that $A \Rtu{V} B$ in $T$. Then in particular, $\lid{A}B=B$ and $\lid{B}A=A$. Applying Proposition \ref{prop:idmpt} (iii) then gives $\lid{A}=\lid{B}$.

In the case where $T$ admits an involutary anti-isomorphism fixing $V$, that $T$ is $V$-Fountain now follows from Part (i) together with Lemma \ref{l-r}. If $E, F \in V$ are $\Rtu{V}$-related then Parts (ii) and (iii) yield $E=\lid{E}=\lid{F}=F$.  
\end{proof}

\emph{From now on let $U$ denote the set of idempotents of $\mat{n}{S}$ having all diagonal entries equal to $\1$.} In the following subsections we apply Theorem \ref{subsemigroup} to exhibit several $V$-Fountain subsemigroups of $M_n(S)$, for an appropriate set of idempotents $V \subseteq U$.

\subsection{The semigroup without zero rows or zero columns is $U$-Fountain}
\label{well}
Let $W_n(S)$ denote the set of all matrices $A \in \mat{n}{S}$ with ${\rm dom}(A)={\rm im}(A)=[n]$. It is readily verified that $W_n(S)$ is a submonoid of $\mat{n}{S}$ containing $U$. (In the case where $S = \bool$, the semigroup $W_n(\bool)$ corresponds to the monoid of binary relations that are both left- and right-total.)

\begin{corollary}
\label{well-behaved}
Let $S$ be an idempotent semifield. The monoid $W_n(S)$ is $U$-Fountain. Each $\Rtu{U}$-class and each $\Ltu{U}$-class contains a unique idempotent of $U$, given by the maps $A \mapsto \lid{A}$ and $A \mapsto (\lid{(A^T)})^T$.
\end{corollary}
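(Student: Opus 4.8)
The plan is to obtain everything as a direct application of Theorem~\ref{subsemigroup} to the subsemigroup $T=W_n(S)$, so the real work is only in checking that its hypotheses hold and in identifying the explicit $\Ltu{U}$-map. First I would pin down the set $V=T\cap U$ appearing in that theorem. I claim $V=U$: if $E\in U$, then $E$ is idempotent with every diagonal entry equal to $\1$, so in particular each diagonal entry is non-zero, forcing every row and every column of $E$ to be non-zero. Hence ${\rm dom}(E)={\rm im}(E)=[n]$, i.e. $E\in W_n(S)$. Thus $U\subseteq W_n(S)$ and $V=U$, in agreement with the remark preceding the corollary that $W_n(S)$ is a submonoid of $\mat{n}{S}$ containing $U$.

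Next I would verify the two standing hypotheses of Theorem~\ref{subsemigroup}. The condition ${\rm dom}(A)=[n]$ for every $A\in W_n(S)$ is immediate from the definition of $W_n(S)$. For the condition $\lid{A}\in T$, I would argue that in fact $\lid{A}\in U$: by Proposition~\ref{prop:idmpt}(i) the matrix $\lid{A}$ is idempotent, and using the defining formula (\ref{formula:Aplus}) together with the observation that $\langle x\mid x\rangle=\1$ for every non-zero vector $x$, I compute $(\lid{A})_{i,i}=\langle A_{i,\star}\mid A_{i,\star}\rangle=\1$ for each $i$ (here ${\rm dom}(A)=[n]$ guarantees that each row $A_{i,\star}$ is non-zero). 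So $\lid{A}$ is an idempotent with all diagonal entries $\1$, whence $\lid{A}\in U\subseteq W_n(S)$. The final hypothesis is the existence of an involutary anti-isomorphism of $W_n(S)$ fixing $V=U$; for this I would take the transpose map, which is an involutary anti-automorphism of $\mat{n}{S}$ by Lemma~\ref{delta}(i). Since transposition interchanges ${\rm dom}$ and ${\rm im}$, it preserves $W_n(S)$; and since it preserves idempotency and fixes the diagonal, it preserves $U$.

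With these checks in place, Theorem~\ref{subsemigroup} yields directly that $W_n(S)$ is $U$-Fountain and that each $\Rtu{U}$-class and each $\Ltu{U}$-class contains a unique idempotent of $U$, with part~(i) identifying the one in the $\Rtu{U}$-class of $A$ as $\lid{A}$. To obtain the explicit $\Ltu{U}$-map I would unwind the symmetry argument in the proof of Lemma~\ref{l-r}: taking the representative function to be $\gamma(B)=\lid{B}$ and $\varphi$ the transpose, that argument shows $A$ is $\Ltu{U}$-related to the idempotent $\varphi(\lid{\varphi(A)})$, which is precisely $(\lid{(A^T)})^T$. The main obstacle is essentially notational rather than conceptual: all the substantive content has been packaged into Theorem~\ref{subsemigroup}, so the only genuinely new points are confirming that the construction $A\mapsto\lid{A}$ does not leave $W_n(S)$ (handled by the diagonal computation above) and correctly composing the transpose with $\lid{(\cdot)}$ to read off the $\Ltu{U}$-representative.
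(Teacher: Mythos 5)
Your proposal is correct and follows essentially the same route as the paper: both verify that ${\rm dom}(A)=[n]$ for all $A\in W_n(S)$, that $\lid{A}\in U\subseteq W_n(S)$, and that the transpose is an involutary anti-automorphism preserving $U$, then invoke Theorem~\ref{subsemigroup} with $T=W_n(S)$, $V=U$ and $\varphi$ the transpose. Your extra care in identifying $V=T\cap U$ with $U$ and in unwinding Lemma~\ref{l-r} to read off the explicit $\Ltu{U}$-representative $(\lid{(A^T)})^T$ just makes explicit what the paper leaves to the word ``dually''.
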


\begin{proof}
The transpose map restricts to give an involutary anti-automorphism of $W_n(S)$, mapping $U$ to itself. Thus in order to show that $W_n(S)$ is $U$-Fountain, by Lemma \ref{l-r} it suffices to show that each $\Rtu{U}$-class of $W_n(S)$ contains an idempotent of $U$. By definition, ${\rm dom}(A)=[n]$ for all $A \in W_n(S)$, and hence $\lid{A} \in U \subseteq W_n(S)$.  Theorem \ref{subsemigroup} with $T=W_n(S)$, $V=U$ and $\varphi$ the transpose map now yields that each $\Rtu{U}$-class contains a unique idempotent of $U$, specified by the map $A \mapsto \lid{A}$. Dually, via the transpose map, each $\Ltu{U}$-class contains a unique idempotent of $U$, giving the desired result. 
\end{proof}

It is natural to ask whether $W_n(S)$ is $K$-Fountain for some $K \supseteq U$. Indeed, is $W_n(S)$ Fountain?

By an abuse of notation, let us denote by $\mat{n}{S^*}$ the subsemigroup of $\mat{n}{S}$ whose entries are all non-zero.
\begin{corollary}
\label{fullfinite}
Let $S$ be an idempotent semifield. The semigroup $\mat{n}{S^*}$ is $V$-Fountain, where $V$ is the set of all idempotents of $\mat{n}{S^*}$ with all diagonal entries equal to $\1$.  Each $\Rtu{V}$-class and each $\Ltu{V}$-class contains a unique idempotent of $V$, given by the maps $A \mapsto \lid{A}$ and $A \mapsto (\lid{(A^T)})^T$.
\end{corollary}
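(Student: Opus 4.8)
The plan is to apply Theorem \ref{subsemigroup} with $T = \mat{n}{S^*}$ and with $\varphi$ the transpose map, mirroring the proof of Corollary \ref{well-behaved} for $W_n(S)$. Since $\mat{n}{S^*} \subseteq W_n(S)$, one is tempted to simply inherit the conclusion from Corollary \ref{well-behaved}, but $U$-Fountainicity does not pass to subsemigroups, so instead I would check the hypotheses of Theorem \ref{subsemigroup} directly for $\mat{n}{S^*}$.

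First I would confirm that $\mat{n}{S^*}$ is genuinely a subsemigroup. For $A, B \in \mat{n}{S^*}$ the entry $(AB)_{i,j} = \bigvee_k A_{i,k}B_{k,j}$ is a join of products of non-zero elements of the group $S^*$; each product lies in $S^*$, and a join of a nonempty family containing a non-zero element is non-zero because $\0$ is the bottom of $S$. Hence $AB \in \mat{n}{S^*}$. Moreover every $A \in \mat{n}{S^*}$ has all rows non-zero, so ${\rm dom}(A) = [n]$ as required.

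The only hypothesis that is not immediate is that $\lid{A} \in \mat{n}{S^*}$ for all $A \in \mat{n}{S^*}$, and this is the step I expect to carry the real content. Here I would use that each row of $A$ is fully supported, so $\Supp(A_{j,\star}) = [n] = \Supp(A_{i,\star})$ for all $i,j$, placing us in the non-trivial case of \eqref{formula:Aplus}, namely $(\lid{A})_{i,j} = \bigwedge_k A_{i,k}A_{j,k}^{-1}$. Each $A_{i,k}A_{j,k}^{-1}$ is a product in the group $S^*$, and since $S^*$ is a lattice ordered abelian group (Lemma \ref{semifields}(v)), finite meets of its elements remain in $S^*$. Thus $(\lid{A})_{i,j} \neq \0$ for every $i,j$, giving $\lid{A} \in \mat{n}{S^*}$.

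Finally, the transpose map restricts to an involutary anti-automorphism of $\mat{n}{S^*}$, since it preserves the all-non-zero property, and it fixes $V = \mat{n}{S^*} \cap U$ setwise: transposing an idempotent yields an idempotent (transpose being an anti-automorphism) and preserves the diagonal entries. With $\varphi(V) = V$ in hand, Theorem \ref{subsemigroup} immediately gives that $\mat{n}{S^*}$ is $V$-Fountain, that the unique idempotent of $V$ in the $\Rtu{V}$-class of $A$ is $\lid{A}$ (by part (i)), and that the unique idempotent in the $\Ltu{V}$-class of $A$ is the transpose-dual $\varphi(\lid{\varphi(A)}) = (\lid{(A^T)})^T$. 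I anticipate no genuine obstacle beyond the membership $\lid{A} \in \mat{n}{S^*}$, which rests entirely on full support forcing every scalar product into the non-zero regime together with the closure of $S^*$ under finite meets.
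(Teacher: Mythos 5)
Your proposal is correct and follows essentially the same route as the paper: apply Theorem \ref{subsemigroup} with $T=\mat{n}{S^*}$, $V=T\cap U$ and $\varphi$ the transpose, the only substantive point being that $\lid{A}\in\mat{n}{S^*}$, which you justify exactly as the paper does via \eqref{formula:Aplus} (full supports plus closure of $S^*$ under finite meets). The extra verifications you include (closure of $\mat{n}{S^*}$ under products, ${\rm dom}(A)=[n]$) are correct but left implicit in the paper.
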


\begin{proof}
The transpose map is an involutary anti-isomorphism on $\mat{n}{S^*}$, which maps $V$ bijectively to itself. Thus by Theorem \ref{subsemigroup} we just need to show that if $A \in \mat{n}{S^*}$ then so is $\lid{A}$. This is clearly the case, by \eqref{formula:Aplus}. \end{proof}

\begin{remark}  In the case where $S^* = (\mathbb{R}, +)$, there is a geometric way to view the previous result. We first recall that each matrix $A \in \mat{n}{\trop^*}$ has unique tropical eigenvalue, and that if this value is a non-positive real number, then the max-plus Kleene star (formed by taking the component-wise maximum of all powers of $A$ together with identity matrix of $\mat{n}{\trop}$), is a well-defined\footnote{If the eigenvalue is positive, then the powers of $A$ do not stabilise. However, in many situations it suffices to consider the Kleene star of a suitable scaing of $A$.} idempotent (see for example \cite{maxplus}). The set $V$ in Corollary \ref{fullfinite} is precisely the set of all Kleene star matrices in $\mat{n}{\trop^*}$. (Whilst $\lid{A} \in  V$, in general $\lid{A}$ is \emph{not} equal to the Kleene star of $A$; the latter need not be a left identity for $A$.)

To show that $\mat{n}{\trop^*}$ is $V$-Fountain, by arguing as in Proposition \ref{Fountainicity} it suffices to show that the intersection:
$${\rm ColFix}_V(A):=\bigcap_{\substack{A \Rless F,\\ F \in V}} {\rm Col}(F)$$
is equal to the column space of an idempotent in $V$. The column space of a matrix $A \in \mat{n}{\trop^*}$ may be naturally identified with a subset of $\mathbb{R}^n$, by removing the single element at $-\infty$ which does not lie in this set. The projectivisation map $\mathcal{P}: \mathbb{R}^n \rightarrow \mathbb{R}^{n-1}$ given by $(x_1, \ldots, x_n) \mapsto (x_1-x_n, \ldots, x_{n-1}-x_n)$ identifies those elements of ${\rm Col}(A)$ which are tropical scalings of each other, and the image of ${\rm Col}(A)$ under this map is a compact subset of $\mathbb{R}^{n-1}$. It follows from \cite[Theorem A and Theorem B]{JKconvex} that for all $F \in V$, ${\rm Col}(F)$ is max-plus convex, min-plus convex and Euclidean convex in $\mathbb{R}^n$. Thus ${\rm ColFix}_V(A)$ also has these three properties. Moreover, since this intersection has compact projectivisation, \cite[Theorem A]{JKconvex} then gives that ${\rm ColFix}_V(A)$ is the max-plus column space of a max-plus Kleene star.
\end{remark}
In light of the previous result, a very natural question is whether $\mat{n}{S^*}$ is Fountain.

Let ${\rm Sym}(n)$ denote the symmetric group acting on the set $[n]$. It is easy to see that
$$H_n(S) = \{ A \in \mat{n}{S}: \exists \sigma \in {\rm Sym}(n), \forall i \in [n],  A_{i,\sigma(i)} \neq \0\}$$
is a submonoid of $W_n(S)$.

\begin{corollary}
\label{Hall}
Let $S$ be an idempotent semifield.  The monoid $H_n(S)$ is $U$-Fountain. Each $\Rtu{U}$-class and each $\Ltu{U}$-class contains a unique idempotent of $U$, given by the maps $A \mapsto \lid{A}$ and $A \mapsto (\lid{(A^T)})^T$.
\end{corollary}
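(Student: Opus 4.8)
The plan is to deduce this directly from Theorem \ref{subsemigroup}, taking $T = H_n(S)$, the symmetry $\varphi$ to be the transpose map of Lemma \ref{delta}(i), and identifying the relevant idempotent set $V = T \cap U$. The three hypotheses to verify are that ${\rm dom}(A) = [n]$ for all $A \in T$, that $\lid{A} \in T$ for all $A \in T$, and that $\varphi$ restricts to an involutary anti-isomorphism of $T$ with $\varphi(V) = V$. Since $H_n(S)$ is already noted to be a submonoid of $W_n(S)$, the first of these is immediate: if $A \in H_n(S)$ has transversal $\sigma \in {\rm Sym}(n)$, then each row $i$ contains the non-zero entry $A_{i,\sigma(i)}$, so ${\rm dom}(A) = [n]$.

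Next I would pin down $V$ and check closure under $^{(+)}$. Any matrix all of whose diagonal entries equal $\1$ admits the identity permutation as a transversal, so $U \subseteq H_n(S)$ and hence $V = H_n(S) \cap U = U$; this already explains why the statement is phrased in terms of $\Rtu{U}$ and $\Ltu{U}$. For closure, since ${\rm dom}(A) = [n]$, Proposition \ref{prop:idmpt} shows $\lid{A}$ is idempotent with $(\lid{A})_{i,i} = \langle A_{i,\star} \mid A_{i,\star} \rangle = \1$ for every $i$, so $\lid{A} \in U \subseteq H_n(S)$; in particular $\lid{A} \in T$, as required. Finally, transposition preserves idempotency and the diagonal, so $\varphi(U) = U$, and it maps $H_n(S)$ to itself because $A \in H_n(S)$ with transversal $\sigma$ forces $(A^T)_{j,\sigma^{-1}(j)} = A_{\sigma^{-1}(j),j} \neq \0$, exhibiting $\sigma^{-1}$ as a transversal for $A^T$.

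With these checks in hand, Theorem \ref{subsemigroup} yields that $A \Rtu{U} \lid{A}$ in $H_n(S)$ and that each $\Rtu{U}$-class contains the unique idempotent $\lid{A}$ of $U$; since $\varphi(U) = U$, the same theorem (via Lemma \ref{l-r}) gives that $H_n(S)$ is $U$-Fountain and that each $\Ltu{U}$-class contains the unique idempotent $(\lid{(A^T)})^T$ of $U$. I expect no serious obstacle here: the argument runs exactly parallel to the proof of Corollary \ref{well-behaved} for $W_n(S)$, and the only genuinely new verification is that $H_n(S)$ is closed under transposition through the bijection $\sigma \mapsto \sigma^{-1}$ on transversals, together with the (immediate) observation that $\lid{A}$ lands back in $U \subseteq H_n(S)$ rather than merely in the larger semigroup $W_n(S)$.
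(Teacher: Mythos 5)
Your proposal is correct and follows essentially the same route as the paper's proof: both verify that transposition is an involutary anti-isomorphism of $H_n(S)$ via $\sigma \mapsto \sigma^{-1}$, that $U \subseteq H_n(S)$ (so $V = U$), and that $\lid{A} \in U$ automatically since $H_n(S) \subseteq W_n(S)$, before invoking Theorem \ref{subsemigroup}. The only difference is that you spell out the intermediate checks (e.g.\ $\mathrm{dom}(A) = [n]$ and the diagonal of $\lid{A}$) which the paper leaves implicit.
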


\begin{proof}
The transpose map is an involutary anti-isomorphism of $H_n(S)$, since taking transpose exchanges $\sigma$ with $\sigma^{-1}$. The set $U$ is contained in $H_n(S)$, since for each $E \in U$ we can take $\sigma$ to be the identity permutation, and $U$ is clearly closed under transpose. Noting that if $A \in H_n(S) \subseteq W_n(S)$, we automatically have $\lid{A} \in U$, the result then follows immediately from Theorem \ref{subsemigroup}.\end{proof}

\begin{remark}Suppose that $E$ is an idempotent of $H_n(\bool)$. There there exists $\sigma \in {\rm Sym}(n)$ such that $E_{i, \sigma(i)}=\1$ for all $i \in [n]$. If $\sigma^k(i)=i$, we obtain
$$E_{i,i} = (E^k)_{i,i} \geq E_{i,\sigma(i)}E_{\sigma(i), \sigma^2(i)} \cdots E_{\sigma^{k-1}(i), i} = \1,$$
showing that each idempotent of $H_n(\bool)$ lies in $U$, and hence $H_n(\bool)$ is Fountain. (For $S \neq \bool$, this argument only shows that the diagonal elements of each idempotent are non-zero; the idempotents of $H_n(S)$ do not lie in $U$ in general.) Alternatively, we note that the monoid $H_n(\bool)$ may be identified with the monoid of all Hall relations (those relations containing a perfect matching) on a set of cardinality $n$. Since the latter is known to be a finite block group, it is Fountain by \cite[Corollary 3.2]{MS17}. Another very natural question is whether $H_n(S)$ is Fountain.
\end{remark}

Let $R_n([\0, \1])$ denote the set of $n \times n$ matrices with all diagonal entries equal to $\1$, with all remaining entries lying in the interval $[\0, \1]$. Noting that $[\0, \1]$ is a subsemiring of the semifield $S$, it is easy to see that $R_n([\0, \1])$ is a submonoid of $W_n(S)$.

\begin{corollary}
\label{reflex}
Let $S$ be a linearly ordered idempotent semifield. The monoid $R_n([\0, \1])$ is Fountain. Each $\Rt$-class and each $\Lt$-class contains a unique idempotent, given by the maps $A \mapsto \lid{A}$ and $A \mapsto (\lid{(A^T)})^T$.
\end{corollary}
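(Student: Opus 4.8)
The plan is to apply Theorem~\ref{subsemigroup} to $T := R_n([\0,\1])$ in order to obtain $V$-Fountainicity, and then to upgrade this to full Fountainicity by showing that the set of idempotents of $T$ coincides with $V$. Throughout, write $U$ for the set of idempotents of $\mat{n}{S}$ with all diagonal entries equal to $\1$, and set $V = T \cap U$ as in Theorem~\ref{subsemigroup}.

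First I would verify the hypotheses of Theorem~\ref{subsemigroup} for $T$. Since every element of $T$ has all diagonal entries equal to $\1$, we have ${\rm dom}(A) = [n]$ for all $A \in T$. To see that $\lid{A} \in T$ for all $A \in T$, I would read off \eqref{formula:Aplus}: each diagonal entry satisfies $(\lid{A})_{i,i} = \langle A_{i,\star}|A_{i,\star}\rangle = \1$, while for $i \neq j$, if $(\lid{A})_{i,j} \neq \0$ then $\emptyset \neq \Supp(A_{j,\star}) \subseteq \Supp(A_{i,\star})$ and, as $A_{j,j}=\1$ forces $j \in \Supp(A_{j,\star})$, the defining infimum is bounded above by its $k=j$ term $A_{i,j}A_{j,j}^{-1} = A_{i,j} \leq \1$; hence every off-diagonal entry of $\lid{A}$ lies in $[\0,\1]$ and $\lid{A} \in T$. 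Finally, the transpose map is an involutary anti-automorphism of $\mat{n}{S}$ by Lemma~\ref{delta}(i); it restricts to $T$ (transposition fixes the diagonal and sends off-diagonal entries, all of which lie in $[\0,\1]$, to off-diagonal positions) and carries $V$ onto itself. Applying Theorem~\ref{subsemigroup} with this $T$, $V$ and $\varphi$ the transpose then yields that $T$ is $V$-Fountain, each $\Rtu{V}$-class and each $\Ltu{V}$-class containing a unique idempotent of $V$, given by $A \mapsto \lid{A}$ and $A \mapsto (\lid{(A^T)})^T$ respectively.

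The decisive step is then to observe that $E(T) = V$. Indeed, every idempotent of $T$ has all diagonal entries equal to $\1$, by the very definition of $R_n([\0,\1])$, and so lies in $U$; thus $E(T) \subseteq T \cap U = V$, and the reverse inclusion is immediate. Since $V = E(T)$, we have $\Rtu{V} = \Rtu{E(T)} = \Rt$ and $\Ltu{V} = \Lt$, so that $V$-Fountainicity is literally Fountainicity, and the unique-idempotent statements pass directly to the $\Rt$- and $\Lt$-classes, completing the proof.

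I expect this identification $E(T)=V$ to be the one genuinely substantive point, since it is exactly what separates $R_n([\0,\1])$ from the semigroups $W_n(S)$, $\mat{n}{S^*}$ and $H_n(S)$ of the preceding corollaries: there one obtains only $U$-Fountainicity because idempotents with a diagonal entry other than $\1$ may exist, whereas here the constraint that all diagonal entries equal $\1$ forces every idempotent into $U$. The remaining ingredients — closure of $T$ under multiplication (using $A_{i,k}B_{k,j} \leq \1$ for entries in $[\0,\1]$), closure under $A \mapsto \lid{A}$, and the behaviour of the transpose — are routine order-theoretic checks.
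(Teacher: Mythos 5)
Your proof is correct and follows essentially the same route as the paper's: both apply Theorem~\ref{subsemigroup} with $V = E(R_n([\0,\1]))$ after checking that $\lid{A} \in R_n([\0,\1])$ and that transposition is a suitable involutary anti-automorphism, the decisive point in each case being that every idempotent of $R_n([\0,\1])$ already has unit diagonal, so that $V=E(T)$ and $\Rtu{V}=\Rt$. The only divergence is in showing $(\lid{A})_{i,j}\leq\1$: the paper argues by contradiction from $\lid{A}A=A$ and invokes the linear order to pass from ``not $>\1$'' to ``$\leq\1$'', whereas your direct bound of the defining infimum by its $k=j$ term $A_{i,j}A_{j,j}^{-1}=A_{i,j}\leq\1$ is cleaner and, incidentally, never uses the linearity hypothesis.
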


\begin{proof}
The idempotents elements of this monoid are clearly contained in the set of idempotents $U$ from Theorem \ref{subsemigroup}. The transpose map is an involutary anti-isomorphism of $R_n([\0,\1])$. It therefore remains to show that for $A \in R_n([\0, \1])$ we have $\lid{A} \in R_n([\0, \1])$. Given $A \in R_n([\0,\1])$ it is clear that the diagonal entries of $\lid{A}$ are all equal to $\1$. Suppose for contradiction that $(\lid{A})_{i,j} > \1$ for some $i,j \in [n]$. Then
$$(\lid{A}A)_{i,j} \geq (\lid{A})_{i,j} A_{j,j}  = (\lid{A})_{i,j} >\1,$$
contradicting that $\lid{A}A = A$. Since $S$ is assumed to be linearly ordered, we must then have $(\lid{A})_{i,j} \leq 1$ for all $i,j$ and hence $\lid{A} \in R_n([\0, \1])$.
\end{proof}

\begin{remark}
In the Boolean case, notice that the monoid $R_n([\0, \1])$ is the subsemigoup of $\mat{n}{\bool}$ defined by the single condition that all diagonal entries are equal to $\1$, and hence may be identified with the monoid of all reflexive binary relations on a set of cardinality $n$. Since the latter is a finite $\J$-trivial monoid (see for example, \cite{Volkov}), the fact that it is Fountain can be seen as a consequence of \cite[Corollary 3.2]{MS17}.
\end{remark}

\subsection{Triangular matrix semigroups with full diagonal are Fountain}
\label{subsec:Uni}
Proposition \ref{notFountain} gives an indication that the failure of $\upper{n}{S}$ to be Fountain lies in the degenerate behaviour of  matrices with diagonal entries equal to $\0$. For each subset $J \subseteq [n]$ write $UT_n^J(S)$ to denote the subset of $\upper{n}{S}$ whose non-zero diagonal entries lie precisely in positions $(j,j)$ for $j \in J$. Since $S$  has no zero divisors, it is easy to see that $UT_n^J(S)$ is a subsemigroup of $\upper{n}{S}$. It is clear from the definitions that $\upper{n}{S}$  is the meet semilattice of component semigroups $UT_n^J(S)$, with partial order corresponding to inclusion of subsets. (Specifically, if $A \in UT_n^J(S)$ and $B \in UT_n^K(S)$ it is easy to see that $AB, BA \in UT_n^{J \cap K}(S)$.) These components are not usually Fountain. For example, arguing as in the proof of Proposition \ref{notFountain} the component $UT_n^J(S)$ is not Fountain for $n \geq 4$ and  $\{2,3,4\} \subseteq J \subsetneq [n]$. 

From now on, let $\full{n}{S}$ denote the component of $\upper{n}{S}$ consisting of those upper triangular matrices $A$ with $A_{i,i} \neq \0$ for all $i \in [n]$. We say that the elements of $\full{n}{S}$ have ``full diagonal''.  Suppose that $A \in \full{n}{S}$ is idempotent. Thus $A_{i,i}^2= A_{i,i}$, and since $A_{i,i} \neq \0$, we must have $A_{i,i}=\1$ for all $i \in [n]$, giving $E(\full{n}{S}) = U \cap \uni{n}{S} = E(\uni{n}{S})$.

\begin{definition}
For all $A \in \full{n}{S}$ we write $\rid{A}:=\Delta(\lid{(\Delta(A))})$, where $\Delta$ is the involutary anti-isomorphism from Lemma \ref{delta} and $\lid{A}$ is the idempotent defined by \eqref{formula:Aplus}. 
\end{definition}

\begin{corollary}
\label{full}
Let $S$ be an idempotent semifield. Then $\full{n}{S}$ is Fountain and each $\Rt$-class and each $\Lt$-class contains a unique idempotent, given by the maps $A \mapsto \lid{A}$ and $A \mapsto \rid{A}$.
\end{corollary}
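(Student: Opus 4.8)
The plan is to obtain this result as a direct application of Theorem~\ref{subsemigroup} with $T=\full{n}{S}$. The key preliminary observation, already recorded in the discussion immediately preceding the statement, is that $E(\full{n}{S})=U\cap\uni{n}{S}$; in the notation of Theorem~\ref{subsemigroup} this says precisely that $V:=T\cap U$ is equal to $E(T)$. Because \emph{every} idempotent of $T$ then lies in $V$, the relations $\Rt=\Rtu{E(T)}$ and $\Rtu{V}$ coincide on $T$, and dually $\Lt=\Ltu{V}$. Consequently the conclusion ``$V$-Fountain, with each $\Rtu{V}$- and $\Ltu{V}$-class containing a unique idempotent of $V$'' delivered by Theorem~\ref{subsemigroup} is, in this case, exactly the desired assertion that $T$ is Fountain with a unique idempotent in each $\Rt$- and each $\Lt$-class. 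It therefore suffices to check the hypotheses of Theorem~\ref{subsemigroup} for $T=\full{n}{S}$.

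Two hypotheses must be verified: that ${\rm dom}(A)=[n]$ and that $\lid{A}\in T$, for all $A\in T$. The first is immediate, since an upper triangular matrix with $A_{i,i}\neq\0$ for all $i$ has every row non-zero, so ${\rm dom}(A)=[n]$. The second is the only genuine computation, and is the step I expect to carry the real content. Using the formula $(\lid{A})_{i,j}=\langle A_{j,\star}|A_{i,\star}\rangle$ together with the observation that $\langle A_{j,\star}|A_{i,\star}\rangle\neq\0$ forces $\Supp(A_{j,\star})\subseteq\Supp(A_{i,\star})$, I would argue as follows: since $A_{j,j}\neq\0$ we have $j\in\Supp(A_{j,\star})$, whereas for $i>j$ upper triangularity gives $A_{i,j}=\0$, so $j\notin\Supp(A_{i,\star})$; hence the required containment fails and $(\lid{A})_{i,j}=\0$ whenever $i>j$. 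Thus $\lid{A}$ is upper triangular, and as ${\rm dom}(A)=[n]$ we also get $(\lid{A})_{i,i}=\langle A_{i,\star}|A_{i,\star}\rangle=\1$, so $\lid{A}\in\full{n}{S}=T$. (In fact $\lid{A}$ then lies in $V=E(T)$, consistent with Theorem~\ref{subsemigroup}.)

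Finally I would supply the involutary anti-isomorphism required to pass from the $\Rt$-side to the $\Lt$-side. The map $\Delta$ of Lemma~\ref{delta} restricts to an involutary anti-automorphism of $\full{n}{S}$, since reflection in the anti-diagonal sends diagonal entries to diagonal entries and hence preserves the full-diagonal condition; moreover $\Delta(V)=V$, as $\Delta$ preserves idempotency and sends diagonal $\1$'s to diagonal $\1$'s. Applying Theorem~\ref{subsemigroup} with $\varphi=\Delta$ then shows that each $\Rt$-class of $T$ contains the unique idempotent $\lid{A}$, and, via Lemma~\ref{l-r}, that the unique idempotent $\Lt$-related to $A$ is $\Delta\!\left(\lid{(\Delta(A))}\right)=\rid{A}$ by the preceding definition. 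This identifies the two maps as claimed and completes the argument. The reduction to Theorem~\ref{subsemigroup} is purely formal; the only place where anything must be checked by hand is the upper-triangularity of $\lid{A}$ in the previous paragraph.
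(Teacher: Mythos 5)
Your proposal is correct and follows essentially the same route as the paper's own proof: verify that $\lid{A}$ is unitriangular (hence in $\full{n}{S}$) via the support argument, note $E(\full{n}{S})\subseteq U$ so that $V=E(T)$ and $\Rtu{V}=\Rt$, and then invoke Theorem~\ref{subsemigroup} with the anti-isomorphism $\Delta$. The only difference is that you spell out a couple of steps the paper leaves implicit.
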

\begin{proof}
Since ${\rm dom}(A)=[n]$, we have $(\lid{A})_{i,i} =\1$. Moreover, it follows easily from \eqref{formula:Aplus} that for all $i>j$, $(\lid{A})_{i,j} =\0$, since in this case ${\rm Supp}(A_{j, \star}) \not\subseteq {\rm Supp}(A_{i,\star})$. Thus $\lid{A} \in \full{n}{S}$. As observed above, $E(\full{n}{S}) \subseteq U$. The result now follows from Theorem \ref{subsemigroup} via the anti-isomorphism $\Delta$, which clearly restricts to an anti-isomorphism of $\full{n}{S}$, mapping $E(\full{n}{S})$ to itself.
\end{proof}

\begin{corollary}
\label{uniS}
Let $S$ be an idempotent semifield. Then the monoid of unitriangular matrices $\uni{n}{S}$ is Fountain and each $\Rt$-class and each $\Lt$-class contains a unique idempotent, given by the maps $A \mapsto \lid{A}$ and $A \mapsto \rid{A}$.
\end{corollary}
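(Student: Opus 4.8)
The plan is to observe that $\uni{n}{S}$ is a subsemigroup of $\full{n}{S}$ and to deduce the result by a direct application of Theorem \ref{subsemigroup} with $T=\uni{n}{S}$ (equivalently, by specialising the proof of Corollary \ref{full} to the unitriangular component). First I would check that every $A \in \uni{n}{S}$ has all diagonal entries equal to $\1 \neq \0$, so that every row of $A$ is non-zero and hence ${\rm dom}(A)=[n]$; in particular $\uni{n}{S} \subseteq \full{n}{S}$, and since the product of two unitriangular matrices is unitriangular, $\uni{n}{S}$ is indeed a submonoid. Next I would verify the closure condition $\lid{A} \in \uni{n}{S}$ for all $A \in \uni{n}{S}$: this is already contained in the proof of Corollary \ref{full}, which shows that for $A$ of full diagonal one has $(\lid{A})_{i,i}=\1$ and $(\lid{A})_{i,j}=\0$ for $i>j$, so that $\lid{A}$ is upper triangular with unit diagonal, i.e.\ $\lid{A} \in \uni{n}{S}$.

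With these two points in hand, the hypotheses of Theorem \ref{subsemigroup} are satisfied for $T=\uni{n}{S}$. The relevant idempotent set there is $V = \uni{n}{S} \cap U$, where $U$ is the set of idempotents of $\mat{n}{S}$ with unit diagonal; since $\uni{n}{S}$ consists of upper triangular matrices with unit diagonal, this intersection is precisely the set of idempotent unitriangular matrices, so $V = E(\uni{n}{S})$. This identification is the one genuinely load-bearing observation: it forces $\Rtu{V}=\Rt$ and $\Ltu{V}=\Lt$ on $\uni{n}{S}$, so that the $V$-Fountain conclusion of Theorem \ref{subsemigroup} upgrades to genuine Fountainicity rather than a relative-to-$V$ statement. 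Finally, by Lemma \ref{delta}(iii) the anti-diagonal reflection $\Delta$ restricts to an involutary anti-isomorphism of $\uni{n}{S}$; as $\Delta$ preserves idempotency it satisfies $\Delta(V)=V$, supplying the left-right symmetry required by Theorem \ref{subsemigroup}.

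Applying Theorem \ref{subsemigroup} then yields at once that $\uni{n}{S}$ is Fountain, that each $\Rt$-class contains the unique idempotent $\lid{A}$, and—via $\Delta$, exactly as in Corollary \ref{full}—that each $\Lt$-class contains the unique idempotent $\rid{A}=\Delta(\lid{(\Delta(A))})$. I expect no real obstacle here: the only steps needing care are the bookkeeping identification $V=E(\uni{n}{S})$ and the closure $\lid{A}\in\uni{n}{S}$, both of which are immediate consequences of the full-diagonal hypothesis and the formula \eqref{formula:Aplus}. In effect this corollary is nothing more than the restriction of Corollary \ref{full} to the unitriangular component, so one could alternatively phrase the proof by noting that $\uni{n}{S}$ and $\full{n}{S}$ share the same idempotents ($E(\full{n}{S})=E(\uni{n}{S})$), whence the relations $\Rt$ and $\Lt$ of $\full{n}{S}$ restrict to those of $\uni{n}{S}$ and the existence and uniqueness statements transfer verbatim.
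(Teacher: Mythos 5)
Your proposal is correct and follows essentially the same route as the paper: the paper's proof likewise applies Theorem \ref{subsemigroup} with $T=\uni{n}{S}$, $\varphi=\Delta$ and $V=E(\uni{n}{S})$, citing the argument of Corollary \ref{full} for the closure $\lid{A}\in\uni{n}{S}$. Your explicit remark that $V=E(\uni{n}{S})$ upgrades the $V$-Fountain conclusion to genuine Fountainicity is left implicit in the paper but is exactly the right point to flag.
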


\begin{proof}
We consider the setup of Theorem \ref{subsemigroup} with $T = \uni{n}{S}$, $\varphi=\Delta$ and $V=E(\uni{n}{S})$. Clearly $\Delta(E(\uni{n}{S})) = E(\uni{n}{S})$, and the argument of Corollary \ref{full} shows that if $A \in \uni{n}{S}$, then $\lid{A} \in \uni{n}{S}$.
\end{proof}

\begin{remark} In the case where $S^*$ is the trivial group, we obtain the monoid of all unitriangular Boolean matrices. This is a finite $\J$-trivial monoid (see for example, \cite{Volkov}), and so the fact that it is Fountain can be seen as a consequence of \cite[Corollary 3.2]{MS17}.\\
\end{remark}

By an abuse of notation, let us write $\upper{n}{S^*}$  to denote the subsemigroup of $\upper{n}{S}$ consisting of those upper triangular matrices whose entries on and above the diagonal are non-zero. Likewise, we write $\uni{n}{S^*}$  to denote the subsemigroup of $\uni{n}{S}$ consisting of those upper triangular matrices with all diagonal entries equal to $\1$ and whose entries  above the diagonal are non-zero. (In the case where $S =\bool$, notice that $\upper{n}{S^*}=\uni{n}{S^*}$ is the trivial group.)

\begin{corollary}
\label{FountainS*}
Let $S$ be an idempotent semifield. Then the semigroups $\upper{n}{S^*}$  and $\uni{n}{S^*}$ are Fountain. Each $\Rt$-class and each $\Lt$-class $\upper{n}{S^*}$ (respectively, $\uni{n}{S^*}$) contains a unique idempotent, given by the maps $A \mapsto \lid{A}$ and $A \mapsto \rid{A}$.
\end{corollary}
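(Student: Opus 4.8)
The plan is to apply Theorem~\ref{subsemigroup} to each of the two semigroups $T=\upper{n}{S^*}$ and $T=\uni{n}{S^*}$, in both cases taking $\varphi=\Delta$, the involutary anti-isomorphism of Lemma~\ref{delta}. Both are subsemigroups of $\full{n}{S}$, so every element has nonzero diagonal and hence ${\rm dom}(A)=[n]$; thus the theorem's standing hypothesis on ${\rm dom}(A)$ holds automatically. What remains is to check that $\Delta$ restricts to an involutary anti-isomorphism of $T$ with $\Delta(V)=V$ (where $V=T\cap U$), and --- the only substantive point --- that $\lid{A}\in T$ for all $A\in T$.

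For the symmetry, recall $\Delta(X)_{i,j}=X_{n-j+1,\,n-i+1}$. Since $i\le j$ forces $n-j+1\le n-i+1$, reflection along the anti-diagonal permutes the positions on and above the main diagonal among themselves and carries diagonal positions to diagonal positions. Hence $\Delta$ preserves both the condition ``all entries on and above the diagonal are nonzero'' and the unitriangular condition, so it restricts to $\upper{n}{S^*}$ and to $\uni{n}{S^*}$; as $\Delta$ also preserves idempotency and diagonal entries, $\Delta(V)=V$ in each case.

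The main step is to show $\lid{A}\in T$. For $A\in\upper{n}{S^*}$ the nonzero entries of row $i$ occupy exactly columns $i,i+1,\dots,n$, so $\Supp(A_{i,\star})=\{i,i+1,\dots,n\}$. By \eqref{formula:Aplus}, $(\lid{A})_{i,j}=\langle A_{j,\star} | A_{i,\star}\rangle$ is nonzero precisely when $\emptyset\neq\Supp(A_{j,\star})\subseteq\Supp(A_{i,\star})$. For $i\le j$ this containment $\{j,\dots,n\}\subseteq\{i,\dots,n\}$ holds with the smaller set nonempty, giving $(\lid{A})_{i,j}\neq\0$; for $i>j$ it fails, since $j\notin\{i,\dots,n\}$, giving $(\lid{A})_{i,j}=\0$. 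Thus $\lid{A}$ is upper triangular with every entry on and above the diagonal nonzero, so $\lid{A}\in\upper{n}{S^*}$; and since ${\rm dom}(A)=[n]$ forces $(\lid{A})_{i,i}=\1$, we get $\lid{A}\in\uni{n}{S^*}$ whenever $A\in\uni{n}{S^*}$.

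Finally, any idempotent $E$ of $T\subseteq\full{n}{S}$ satisfies $E_{i,i}^2=E_{i,i}$ with $E_{i,i}\neq\0$, hence $E_{i,i}=\1$ and $E\in U$; therefore $V=T\cap U=E(T)$, so $V$-Fountainicity coincides with Fountainicity. Theorem~\ref{subsemigroup} then gives that $T$ is Fountain, with each $\Rt$-class containing the unique idempotent $\lid{A}$ and, transporting along $\Delta$, each $\Lt$-class containing the unique idempotent $\rid{A}$. I expect no real obstacle here: the support computation for $\lid{A}$ is the only genuine content, and the remaining points are a routine verification of the hypotheses of Theorem~\ref{subsemigroup}, exactly as in the proofs of Corollaries~\ref{full} and \ref{uniS}.
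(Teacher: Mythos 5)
Your proposal is correct and follows essentially the same route as the paper: both invoke Theorem~\ref{subsemigroup} with $\varphi=\Delta$ and $V=E(\upper{n}{S^*})=E(\uni{n}{S^*})$, and both reduce the matter to the support computation showing $(\lid{A})_{i,j}\in S^*$ for $i\le j$ (a finite meet of elements of the group $S^*$) and $(\lid{A})_{i,j}=\0$ for $i>j$. The remaining verifications (that ${\rm dom}(A)=[n]$, that $\Delta$ restricts appropriately, and that $V=E(T)$ so $V$-Fountainicity is Fountainicity) match the paper's argument.
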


\begin{proof}
Consider the setup of Theorem \ref{subsemigroup} with $T = \upper{n}{S^*}$, $\varphi=\Delta$ and $V=E(\upper{n}{S^*})=E(\uni{n}{S^*})$. Clearly $\Delta(V) = V$, so it suffices to show that if $A \in \upper{n}{S^*}$, then $\lid{A} \in \upper{n}{S^*}$. Arguing as before, we see that $(\lid{A})_{i,i}= \1$ for all $i$ and if $j<i$, then $(\lid{A})_{i,j}=\0$. On the other hand, for $j>i$ we have $\emptyset \neq {\rm Supp}(A_{j, \star}) \subseteq {\rm Supp}(A_{i, \star})$, so that $(\lid{A})_{i,j}$ is the least upper bound of a finite set of elements of $S^*$ and hence lies in $S^*$.

Finally, consider the setup of Theorem \ref{subsemigroup} with $T = \uni{n}{S^*}\subseteq \full{n}{S}$, $\varphi=\Delta$ and $V=E(\uni{n}{S^*})$. Arguing as above we find that $\lid{A} \in \uni{n}{S^*}$ for all $A \in \uni{n}{S^*}$. 
\end{proof}

We write $D_n(S^*)$ to denote the set of invertible diagonal matrices. For any $A\in \full{n}{S}$, we write $\diag{A}$ to denote the element of $D_n(S^*)$ with $(i,i)$ entry equal to $A_{i,i}$ for all $i$, and write $\rnorm{A}$ and $\lnorm{A}$ to denote the unique unitriangular matrices satisfying $A= \rnorm{A}\diag{A}$ and $A=\diag{A}\lnorm{A}$.

\begin{theorem}
\label{thm:uppertilde}
Let $A, B \in \full{n}{S}$ (respectively, $\upper{n}{S^*}$). Then the following are equivalent:
\begin{enumerate}[\rm (i)]
\item $A \Rt B$ in $\full{n}{S}$ (respectively, $\upper{n}{S^*}$);
\item $\rnorm{A} \Rt \rnorm{B}$ in $\full{n}{S}$ (respectively, $\upper{n}{S^*}$);
\item $\rnorm{A} \Rt \rnorm{B}$ in $\uni{n}{S}$ (respectively, $\uni{n}{S^*}$);
\item $\lid{A} = \lid{B}$;
\item $\lid{(\rnorm{A})}=\lid{(\rnorm{B})}$.
\end{enumerate}
\end{theorem}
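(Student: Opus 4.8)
The plan is to collapse all five conditions onto statements about the canonical left-identity idempotents $\lid{\cdot}$, exploiting the structural results already proved: by Corollaries \ref{full}, \ref{uniS} and \ref{FountainS*} each semigroup in play is Fountain with \emph{exactly one} idempotent per $\Rt$-class, namely $\lid{A}$. The one genuine computation underpinning everything is a \emph{scaling invariance} of the map $\lid{\cdot}$: for every $A \in \full{n}{S}$ one has $\lid{A} = \lid{(\rnorm{A})}$. I expect this to be the (mild) crux; the rest is bookkeeping built on top of it.

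First I would prove the scaling identity. Write $A = \rnorm{A}\diag{A}$ with $\diag{A} \in D_n(S^*)$ invertible, so that $A_{i,k} = (\rnorm{A})_{i,k}A_{k,k}$ for all $i,k$, with each $A_{k,k} \in S^*$. Because $A_{k,k}$ is a unit, right multiplication by $\diag{A}$ preserves the support of each row, so $\Supp(A_{i,\star}) = \Supp((\rnorm{A})_{i,\star})$; in particular the case distinction in the definition \eqref{formula:Aplus} of $\lid{\cdot}$ (and so the ``otherwise $=\0$'' clause) is triggered identically for $A$ and for $\rnorm{A}$. When $\emptyset \neq \Supp(A_{j,\star}) \subseteq \Supp(A_{i,\star})$ the diagonal factors cancel inside the meet,
$$(\lid{A})_{i,j} = \bigwedge_{k \in \Supp(A_{j,\star})} A_{i,k}A_{j,k}^{-1} = \bigwedge_{k} (\rnorm{A})_{i,k}A_{k,k}A_{k,k}^{-1}(\rnorm{A})_{j,k}^{-1} = (\lid{(\rnorm{A})})_{i,j},$$
giving $\lid{A} = \lid{(\rnorm{A})}$.

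With the lemma in hand the equivalences form a short chain. For \textbf{(i)} $\Leftrightarrow$ \textbf{(iv)}: by Corollary \ref{full}, $\lid{C}$ is the unique idempotent $\Rt$-related to $C$ in $\full{n}{S}$, so $A \Rt B$ iff $\lid{A}$ and $\lid{B}$ are $\Rt$-related, iff $\lid{A} = \lid{B}$. For \textbf{(iv)} $\Leftrightarrow$ \textbf{(v)}: this is immediate from the scaling identity, since $\lid{A} = \lid{B}$ iff $\lid{(\rnorm{A})} = \lid{(\rnorm{B})}$. For \textbf{(v)} $\Leftrightarrow$ \textbf{(ii)}: apply Corollary \ref{full} once more, now to $\rnorm{A}, \rnorm{B} \in \uni{n}{S} \subseteq \full{n}{S}$, to get $\rnorm{A} \Rt \rnorm{B}$ in $\full{n}{S}$ iff $\lid{(\rnorm{A})} = \lid{(\rnorm{B})}$. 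Finally \textbf{(ii)} $\Leftrightarrow$ \textbf{(iii)}: since $E(\full{n}{S}) = E(\uni{n}{S})$, the defining condition for $\Rt$ (which quantifies only over idempotents) is literally the same in the two semigroups, and as $\rnorm{A}, \rnorm{B}$ are unitriangular, $\rnorm{A} \Rt \rnorm{B}$ holds in $\full{n}{S}$ precisely when it holds in $\uni{n}{S}$; equivalently one reads off $\textbf{(iii)} \Leftrightarrow \textbf{(v)}$ directly from Corollary \ref{uniS}.

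The parenthetical $\upper{n}{S^*}$ version runs verbatim with $\full{n}{S}, \uni{n}{S}$ replaced by $\upper{n}{S^*}, \uni{n}{S^*}$: the Fountain property and the uniqueness of $\lid{\cdot}$ now come from Corollary \ref{FountainS*}; one checks that $\rnorm{A} = A\diag{A}^{-1} \in \uni{n}{S^*}$ whenever $A \in \upper{n}{S^*}$ (the above-diagonal entries $A_{i,j}A_{j,j}^{-1}$ remain in $S^*$); and $E(\upper{n}{S^*}) = E(\uni{n}{S^*})$ again forces the two $\Rt$-relations to agree on unitriangular matrices, closing the cycle.
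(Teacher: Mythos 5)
Your proposal is correct and follows essentially the same route as the paper: both reduce everything to the uniqueness of the idempotent $\lid{\cdot}$ in each $\Rt$-class (Corollaries \ref{full}, \ref{uniS}, \ref{FountainS*}) and then verify the scaling identity $\lid{A}=\lid{(\rnorm{A})}$ by cancelling the invertible diagonal factors inside the meet. Your extra care with supports and with the equality $E(\full{n}{S})=E(\uni{n}{S})$ only makes explicit what the paper leaves implicit.
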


\begin{proof}
The equivalence of (i) and (iv) and of (ii), (iii) and (v) follows from Corollary \ref{full}, Corollary \ref{uniS} and Corollary \ref{FountainS*}. Let $A \in \full{n}{S}$. To complete the proof, we show that $\lid{A} = \lid{(\rnorm{A})}$, hence giving that (iv) and (v) are equivalent (indeed, the same statement).  Since $A=\rnorm{A}\diag{A}$, for all $i,j \in [n]$ we have $\rnorm{A}_{i,j} = A_{i,j}A_{j,j}^{-1}$, and so by definition
\begin{eqnarray*}
(\lid{(\rnorm{A})})_{i,j} = \bigwedge_{j \leq k \leq n}\rnorm{A}_{i,k}(\rnorm{A}_{j,k})^{-1} = \bigwedge_{j \leq k \leq n}A_{i,k}A_{k,k}^{-1}(A_{j,k}A_{k,k}^{-1})^{-1} = \bigwedge_{j \leq k \leq n}A_{i,k}A_{j,k}^{-1} = (\lid{A})_{i,j}.
\end{eqnarray*}
\end{proof}

\begin{corollary}
\label{one-onecorr}
There is a one-one correspondence between:
\begin{enumerate}
  \item $\full{n}{S}/\Rt$ and $E(\uni{n}{S})$;
  \item $\uni{n}{S}/\Rt$ and $E(\uni{n}{S})$;
  \item $\upper{n}{S^*}/\Rt$ and $E(\uni{n}{S^*})$;
  \item $\uni{n}{S}/\Rt$ and $E(\uni{n}{S^*})$.
\end{enumerate}
\end{corollary}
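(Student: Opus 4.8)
The plan is to recognise Corollary~\ref{one-onecorr} as a formal repackaging of the preceding ``unique idempotent per $\Rt$-class'' results, so that the only real work is to match the correct set of idempotents to each semigroup and then verify that the assignment of an $\Rt$-class to its unique idempotent is a bijection.

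First I would pin down the idempotent set of each of the semigroups involved. For $\full{n}{S}$ the remark preceding Corollary~\ref{full} gives $E(\full{n}{S}) = E(\uni{n}{S})$; for $\uni{n}{S}$ the idempotent set is $E(\uni{n}{S})$ by definition; and the proof of Corollary~\ref{FountainS*} shows $E(\upper{n}{S^*}) = E(\uni{n}{S^*})$, with the unitriangular variant giving $E(\uni{n}{S^*})$. In each case the relevant prior result (Corollary~\ref{full}, Corollary~\ref{uniS}, or Corollary~\ref{FountainS*}) guarantees that every $\Rt$-class of the semigroup contains exactly one idempotent, and that this idempotent is $\lid{A}$ for any representative $A$ of the class.

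With this in hand, for each semigroup $T$ in the list I would define $\Phi \colon T/\Rt \to E(T)$ by sending the $\Rt$-class of $A$ to $\lid{A}$. This is well defined because if $A \Rt B$ then $\lid{A}$ and $\lid{B}$ are both the unique idempotent of the common class, hence equal. It is injective because $\lid{A} = \lid{B} =: E$ forces $A \Rt E \Rt B$, so $A$ and $B$ lie in the same $\Rt$-class; and it is surjective because every idempotent $E$ of $T$ satisfies $\lid{E} = E$ by Theorem~\ref{subsemigroup}(ii), so that $E$ equals $\Phi$ applied to its own $\Rt$-class. Thus $\Phi$ is a bijection, with inverse sending an idempotent to its own $\Rt$-class, which is exactly the asserted one-one correspondence.

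Since the underlying bijection is immediate, the only point demanding care --- rather than a genuine obstacle --- is the bookkeeping of idempotent sets, as $E(\uni{n}{S})$ and $E(\uni{n}{S^*})$ differ in general (the latter requiring all above-diagonal entries to be non-zero). In particular I would re-examine the fourth listed correspondence: as written it pairs $\uni{n}{S}/\Rt$ with $E(\uni{n}{S^*})$, but since $E(\uni{n}{S^*}) \subsetneq E(\uni{n}{S})$ in general this cannot be a bijection, and I would read it as the intended $\uni{n}{S^*}/\Rt \leftrightarrow E(\uni{n}{S^*})$, handled by the final paragraph of the proof of Corollary~\ref{FountainS*}.
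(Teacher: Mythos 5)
Your proof is correct and is exactly the argument the paper intends: the corollary is stated without proof as an immediate consequence of Corollaries~\ref{full}, \ref{uniS} and \ref{FountainS*} (each $\Rt$-class contains the unique idempotent $\lid{A}$, and $\lid{E}=E$ for idempotents), which is precisely the bijection you construct. You are also right that item (4) as printed must be a typo for $\uni{n}{S^*}/\Rt \leftrightarrow E(\uni{n}{S^*})$, since $E(\uni{n}{S^*})\subsetneq E(\uni{n}{S})$ in general and item (2) already pairs $\uni{n}{S}/\Rt$ with $E(\uni{n}{S})$.
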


\subsection{The idempotent generated subsemigroups of $\full{n}{S}$ and $\upper{n}{S^*}$}
\label{subsec:idmpt}
First observe that 
$$E \in \full{n}{S} \mbox{ is idempotent if and only if for all } i,j,k \in [n], E_{i,i}= \1 \mbox{ and } E_{i,k}E_{k,j} \leq E_{i,j}.$$
\begin{theorem}
\label{idmpgensmgp}
Let $S$ be an idempotent semifield. The semigroup $\uni{n}{S}$ is the idempotent generated subsemigroup of $\full{n}{S}$. Every element of $\uni{n}{S}$ can be written as a product of at most $n-1$ idempotent elements.
\end{theorem}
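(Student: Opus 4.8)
The plan is to prove both inclusions of the asserted equality, arranging matters so that the harder inclusion simultaneously yields the length bound.

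For the inclusion of the idempotent-generated subsemigroup into $\uni{n}{S}$, I would appeal to the observation recorded just before the theorem that $E(\full{n}{S}) = E(\uni{n}{S})$, so that every idempotent of $\full{n}{S}$ is unitriangular. Since a product of upper triangular matrices is again upper triangular and the diagonal of such a product is the entrywise product of the diagonals, any product of unitriangular matrices has all diagonal entries equal to $\1$. Hence the subsemigroup of $\full{n}{S}$ generated by its idempotents is contained in $\uni{n}{S}$.

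For the reverse inclusion together with the bound, I would introduce, for a fixed $A \in \uni{n}{S}$ and each column index $2 \leq c \leq n$, the \emph{column idempotent} $C_c$ defined by $(C_c)_{i,i} = \1$ for all $i$, $(C_c)_{i,c} = A_{i,c}$ for $i < c$, and all other entries $\0$. First I would verify, using the idempotency criterion $E_{i,k}E_{k,j} \leq E_{i,j}$ recorded immediately above the theorem, that each $C_c$ is genuinely idempotent: any nonzero product $(C_c)_{i,k}(C_c)_{k,j}$ forces at least one of the two factors to be a diagonal $\1$ (a column-$c$ entry sits only in column $c$, and $(C_c)_{c,j}$ is nonzero only on the diagonal), so each such product equals the single entry $(C_c)_{i,j}$ and the inequality holds, indeed with equality. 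Thus $C_c \in E(\uni{n}{S}) \subseteq \full{n}{S}$. The core of the argument is then the claim $A = C_n C_{n-1} \cdots C_2$, which I would establish by showing inductively that the partial product $C_m C_{m-1} \cdots C_2$ equals the matrix $A^{(m)}$ agreeing with $A$ in columns $2, \dots, m$ above the diagonal and coinciding with the identity in all columns $> m$. Granting $C_{m-1} \cdots C_2 = A^{(m-1)}$, the point is that $A^{(m-1)}$ is the identity in columns $\geq m$, so its $m$-th row is the standard basis row (with $\1$ in position $m$ and $\0$ elsewhere); left multiplication by $C_m$ replaces each row $i < m$ by its $\vee$-sum with $A_{i,m}$ times that $m$-th row, which writes $A_{i,m}$ into the previously-zero $(i,m)$ entries and alters nothing else, producing exactly $A^{(m)}$. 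Taking $m = n$ gives $A = C_n \cdots C_2$, a product of the $n-1$ idempotents $C_n, \dots, C_2$ (one may discard any that equal the identity), proving both $\uni{n}{S} \subseteq \langle E(\full{n}{S}) \rangle$ and the bound of $n-1$ factors.

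The main obstacle is precisely the verification that the ordered product $C_n \cdots C_2$ reconstructs $A$ without spurious ``cross terms''. Over an idempotent semiring, multiplying the column idempotents in the wrong order introduces extra contributions: for instance $C_2 C_3$ produces the term $A_{1,2}A_{2,3}$ in position $(1,3)$, so $C_2 C_3 \neq A^{(3)}$ in general. Choosing the decreasing order $C_n, \dots, C_2$ and isolating the invariant that the $m$-th row of each partial product $A^{(m-1)}$ is still a standard basis row is exactly what suppresses these interactions, and this is the step I expect to require the most care.
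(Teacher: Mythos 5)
Your proof is correct, but it takes a genuinely different route from the paper's. Your key move is to factor $A \in \uni{n}{S}$ into the ``column transvections'' $C_c$ (identity plus the strictly-upper part of column $c$ of $A$); these are idempotent for exactly the reason you give --- the only nonzero off-diagonal entries of $C_c$ sit in column $c$ while row $c$ of $C_c$ is a standard basis row, so no interior cross term $(C_c)_{i,k}(C_c)_{k,j}$ with $i<k<j$ can be nonzero --- and your induction showing $C_m\bigl(C_{m-1}\cdots C_2\bigr) = A^{(m)}$ is sound precisely because row $m$ of the partial product $A^{(m-1)}$ is still the $m$th standard basis row, which is the invariant that kills the spurious cross terms you rightly worry about. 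The paper proceeds quite differently: it first builds an idempotent $\mathfrak{X}$ with $(i,j)$ entry $(\lid{X})_{i,j}\wedge(\rid{X})_{i,j}$, satisfying $\mathfrak{X}\preceq X$ and $\mathfrak{X}X = X = X\mathfrak{X}$, and then factors $X = X(n)\cdots X(2)$ where $X(h)$ agrees with $X$ on the entries with $i<h\leq j$ and with $\mathfrak{X}$ elsewhere. Your argument is considerably more elementary and self-contained, and it proves the stated theorem (including the bound of $n-1$ factors) in full. What the paper's heavier construction buys is the subsequent Corollary~\ref{idmpgennozero}: when $X\in\uni{n}{S^*}$ each factor $X(h)$ again has all entries on and above the diagonal nonzero, so the same factorisation works inside $\upper{n}{S^*}$, whereas your matrices $C_c$ have many zero entries above the diagonal and therefore do not identify the idempotent generated subsemigroup of $\upper{n}{S^*}$ without a separate argument.
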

\begin{proof}
As we have already observed, the idempotents of $\full{n}{S}$ form a subset of $\uni{n}{S}$. Thus it suffices to prove the second statement.

Given $X\in \uni{n}{S}$, let $\mathfrak{X}$ denote the matrix with $(i,j)$ entry given by $(\lid{X})_{i,j} \wedge (\rid{X})_{i,j}$, where $\rid{X}$ denotes the right identity of $X$ dual to $\lid{X}$ under the involution $\Delta$. Since $\rid{X}, \lid{X} \in \uni{n}{S}$, we see that $\mathfrak{X} \in \uni{n}{S}$. Since all diagonal entries of $\mathfrak{X}$ are equal to $\1$, for all $i,j \in [n]$ we have $(\mathfrak{X}\mathfrak{X})_{i,j} \geq \mathfrak{X}_{i,i}\mathfrak{X}_{i,j} = \mathfrak{X}_{i,j}$. In fact, $\mathfrak{X}$ is idempotent since
\begin{eqnarray*}
(\mathfrak{X}\mathfrak{X})_{i,j} &=& \bigvee_{k=1}^n ((\lid{X})_{i,k} \wedge  (\rid{X})_{i,k})((\lid{X})_{k,j} \wedge (\rid{X})_{k,j})\\
&\leq & \bigvee_{k=1}^n ((\lid{X})_{i,k}(\lid{X})_{k,j} \wedge (\rid{X})_{i,k}(\rid{X})_{k,j}) \leq  \bigvee_{k=1}^n ((\lid{X})_{i,j} \wedge (\rid{X})_{i,j}) = \mathfrak{X}_{i,j},
\end{eqnarray*}
where the last inequality follows from the fact that $\lid{X}$ and $\rid{X}$ are idempotents. Moreover, since $\lid{X}$ (respectively, $\rid{X}$) is a left ( resp. right) identity for $X$, it follows that for all $i,j,k \in [n]$ we also have
\begin{eqnarray}
\label{barXX}
\mathfrak{X}_{i,k}X_{k,j}= ((\lid{X})_{i,k} \wedge (\rid{X})_{i,k})X_{k,j} \leq (\lid{X})_{i,k}X_{k,j} \leq X_{i,j},\\
\label{XbarX}
X_{i,k}\mathfrak{X}_{k,j} =X_{i,k}((\lid{X})_{k,j} \wedge (\rid{X})_{k,j}) \leq X_{i,k}(\rid{X})_{k,j} \leq X_{i,j}.
\end{eqnarray}
In particular (by taking $k=j$ in \eqref{barXX}) we have $\mathfrak{X} \preceq X$. Arguing as above, one also sees that $\mathfrak{X}X=X=X\mathfrak{X}$.

Now, for each $h\in [n]$ let $X(h) \in
\uni{n}{S}$ be the matrix with entries given by

\begin{equation*}
X(h)_{i,j}=
\begin{cases}
X_{i,j}\text{ if }i<h\leq j \\
\mathfrak{X}_{i,j}\text{ otherwise.}
\end{cases}
\end{equation*}
In other words, $X(1)=\mathfrak{X}$, and for $h \geq 2$ we have
\begin{equation*}
X(h)=\left(
\begin{array}{cccc|cccc}
\1 & \mathfrak{X}_{1,2} & \cdots  & \mathfrak{X}_{1,(h-1)}
& X_{1,h} & X_{1,(h+1)} & \cdots  & X_{1,n} \\
\0 & \1 & \cdots  & \mathfrak{X}_{2,(h-1)} & X_{2,h} &
X_{2,(h+1)} & \cdots  & X_{2,n} \\
\vdots  & \vdots  & \ddots  & \vdots  & \vdots  & \vdots  &
\vdots  & \vdots  \\
\0 & \0 & \cdots  & \1 & X_{h-1,h} &
X_{h-1,h+1} & \cdots  & X_{h-1,n} \\ \hline
\0 & \0 & \cdots  & \0 & \1 & \mathfrak{X}_{h,(h+1)} & \cdots  & \mathfrak{X}_{h,n} \\
\0 & \0 & \cdots  & \0 & \0 & \1 &
\cdots  & \mathfrak{X}_{h+1,n} \\
\vdots  & \vdots  & \cdots  & \vdots  & \vdots  & \vdots  &
\ddots  & \vdots  \\
\0 & \0 & \cdots  & \0 & \0 & \0 &
\cdots  & \1
\end{array}
\right)
\end{equation*}
We shall show that for each $h \in [n]$ the matrix $X(h)$ is idempotent, and moreover that $X = X(n)X(n-1)\cdots X(2)$. To this end for $j,h \in[n]$ let us consider the following elements of $S^n$:
$$x(j,h) =\left(
\begin{array}{c}
X_{1,j}\\
\vdots\\
X_{h-1,j}\\
\hline
\mathfrak{X}_{h,j}\\
\vdots\\
\mathfrak{X}_{n,j}\\
\end{array}
\right), \;\;\; y(j,h) =\left(
\begin{array}{c}
\mathfrak{X}_{1,j}\\
\vdots\\
\mathfrak{X}_{h-1,j}\\
\hline
\0\\
\vdots\\
\0\\
\end{array}
\right).
$$
It is easy to see that $x(j,n)$ is the $j$th column of $X$. We claim that for all $2 \leq h \leq n$ the following equations hold:
\begin{eqnarray}
X(h)y(j,h)&=&y(j,h), \;\;\; \mbox{ for all } 1 \leq j \leq h-1, \label{acty}\\
X(h)x(j,h)&=&x(j,h), \;\;\; \mbox{ for all } h \leq j \leq n,\label{actx}\\
X(h)x(j,h-1)&=&x(j,h), \;\;\; \mbox{ for all } j \in [n].\label{actx'}
\end{eqnarray}
Before proving this, let us show how these equations can be used to give the desired result. Notice that \eqref{acty} and \eqref{actx} yield that $X(h)$ is idempotent, since $y(1,h), \ldots, y(h-1,h), x(h,h), \ldots, x(n,h)$ are precisely the columns of  $X(h)$. Since $X_{1,1} = \mathfrak{X}_{1,1}=\1$, we note that the $j$th column of $X(2)$ is equal to $x(j,2)$. Thus repeated application of \eqref{actx'} shows that the $j$th column of the product $X(h) \cdots X(2)$ is equal to $x(j,h)$ for all $j \in [n]$. Noting that the final row of $\mathfrak{X}$ and $X$ agree then yields that $X=X(n) \cdots X(2)$.

To prove that \eqref{acty} holds, let $1 \leq j \leq h-1$ and consider the $i$th entry of the product $X(h)y(j,h)$, given by
$$\bigvee_{k=1}^n X(h)_{i,k}y(j,h)_k = \bigvee_{k=1}^{h-1} X(h)_{i,k}\mathfrak{X}_{k,j}.$$
If $i \geq h$, then since the first $h-1$ entries in row $i$ of $X(h)$ are zero, the supremum above is clearly equal to $\0$. If $i \leq h-1$, then the supremum above becomes $\bigvee_{k=1}^{h-1} \mathfrak{X}_{i,k}\mathfrak{X}_{k,j} = \bigvee_{i \leq k \leq j} \mathfrak{X}_{i,k}\mathfrak{X}_{k,j} =\mathfrak{X}_{i,j},$  since $\mathfrak{X}$ is upper triangular and idempotent. Thus in both cases we obtain the $i$th entry of $y(j,h)$.

To prove that \eqref{actx} holds, let $h \leq j \leq n$ and consider the $i$th entry of the product $X(h)x(j,h)$ given by
$$\bigvee_{k=1}^n X(h)_{i,k}x(j,h)_k = \bigvee_{k=1}^{h-1} X(h)_{i,k}X_{k,j} \vee \bigvee_{k=h}^{j} X(h)_{i,k}\mathfrak{X}_{k,j}.$$
If $i \leq h-1$ then this becomes $\bigvee_{k=1}^{h-1} \mathfrak{X}_{i,k}X_{k,j} \vee \bigvee_{k=h}^{j} X_{i,k}\mathfrak{X}_{k,j} \leq X_{i,j},$ where the inequality follows from \eqref{barXX} and \eqref{XbarX}; in fact the term corresponding to $k=i$ yields equality. On the other hand, if $i \geq h$, then the supremum above becomes $\bigvee_{k=h}^{n} \mathfrak{X}_{i,k}\mathfrak{X}_{k,j} = \bigvee_{i \leq k \leq j} \mathfrak{X}_{i,k}\mathfrak{X}_{k,j} =\mathfrak{X}_{i,j}$, as before. Thus in both cases we obtain the $i$th entry of $x(j,h)$.

Finally, to prove that \eqref{actx'} holds, consider the $i$th entry of the product $X(h)x(j,h-1)$ given by
$$\bigvee_{k=1}^n X(h)_{i,k}x(j,h-1)_k =  \bigvee_{k=1}^{h-2} X(h)_{i,k}X_{k,j} \vee X(h)_{i,h-1} \mathfrak{X}_{h-1,j}\vee \bigvee_{k=h}^{j} X(h)_{i,k}\mathfrak{X}_{k,j}.$$
This time there are three cases to consider. If $i \leq h-2$ then the supremum above becomes
$$\bigvee_{k=1}^{h-2} \mathfrak{X}_{i,k}X_{k,j} \vee  \mathfrak{X}_{i,h-1} \mathfrak{X}_{h-1,j} \vee \bigvee_{k=h}^{j} X_{i,k}\mathfrak{X}_{k,j} \leq X_{i,j},$$
where the inequality follows from \eqref{XbarX} and \eqref{barXX}, together with the fact that $\mathfrak{X}$ is idempotent and $\mathfrak{X} \preceq X$. The term corresponding to $k=i$ then yields equality.

If $i=h-1$, then the supremum above becomes
$$\mathfrak{X}_{h-1,j}\vee  \bigvee_{k=h}^{j} X_{h-1,k}\mathfrak{X}_{k,j} \leq X_{i,j},$$
where the inequality follows from \eqref{XbarX} and the fact that $\mathfrak{X} \preceq X$.  The term corresponding to $k=j$ then yields equality.

If $i \geq h$, then the supremum above becomes $\bigvee_{i \leq k \leq j} \mathfrak{X}_{i,k}\mathfrak{X}_{k,j} =\mathfrak{X}_{i,j}$  as before. Thus in all three cases the $i$th entry agrees with the $i$th entry of $x(j,h)$. This completes the proof.
\end{proof}

\begin{corollary}
\label{idmpgennozero}
Let $S$ be an idempotent semifield. The semigroup $\uni{n}{S^*}$ is the idempotent generated subsemigroup of $\upper{n}{S^*}$. Every element of $\uni{n}{S^*}$ can be written as a product of at most $n-1$ idempotent elements.
\end{corollary}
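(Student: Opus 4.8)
The plan is to deduce this directly from Theorem \ref{idmpgensmgp} by checking that the construction there stays within the smaller semigroup $\upper{n}{S^*}$. First I would pin down both sides of the claimed equality. Exactly as in the full-diagonal case, any idempotent $E$ of $\upper{n}{S^*}$ satisfies $E_{i,i}^2 = E_{i,i}$ with $E_{i,i} \neq \0$, forcing $E_{i,i} = \1$; and since $E \in \upper{n}{S^*}$ already has all above-diagonal entries nonzero, we obtain $E(\upper{n}{S^*}) = E(\uni{n}{S^*}) \subseteq \uni{n}{S^*}$. Moreover $\uni{n}{S^*}$ is closed under multiplication, since for $i<j$ the product $(AB)_{i,j}$ dominates the term $A_{i,i}B_{i,j} = B_{i,j} \neq \0$. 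Hence the idempotent generated subsemigroup of $\upper{n}{S^*}$ is contained in $\uni{n}{S^*}$, giving one inclusion.

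For the reverse inclusion together with the length bound, I would replay the argument of Theorem \ref{idmpgensmgp} for a fixed $X \in \uni{n}{S^*}$ essentially verbatim, defining $\mathfrak{X}$ by $\mathfrak{X}_{i,j} = (\lid{X})_{i,j} \wedge (\rid{X})_{i,j}$ and the matrices $X(h)$ precisely as before. The identities \eqref{acty}, \eqref{actx} and \eqref{actx'}, the idempotency of each $X(h)$, and the factorisation $X = X(n) \cdots X(2)$ all carry over word-for-word, because their proofs use only the semiring operations together with the idempotency and one-sided identity properties of $\lid{X}$ and $\rid{X}$, none of which is affected by the nonvanishing condition. The only genuinely new point to verify is that each $X(h)$ actually lies in $\uni{n}{S^*}$, that is, that all of its above-diagonal entries are nonzero. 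Inspecting the definition of $X(h)$, every above-diagonal entry equals either $X_{i,j}$, which is nonzero since $X \in \uni{n}{S^*}$, or $\mathfrak{X}_{i,j}$, so it suffices to show $\mathfrak{X}_{i,j} \neq \0$ for $i<j$.

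This last point is the crux of the corollary and the expected main obstacle: I must ensure that the entrywise meet defining $\mathfrak{X}$ does not introduce a zero, which would otherwise break the induction keeping the construction inside $\upper{n}{S^*}$. Here I would invoke Corollary \ref{FountainS*}, whose proof gives $\lid{X}, \rid{X} \in \uni{n}{S^*}$, so that both $(\lid{X})_{i,j}$ and $(\rid{X})_{i,j}$ lie in $S^*$ for $i<j$. The nonvanishing of their meet is exactly where the semifield structure is used: by Lemma \ref{semifields}(v), $S^*$ is a lattice ordered abelian group and hence closed under greatest lower bounds, so $(\lid{X})_{i,j} \wedge (\rid{X})_{i,j} \in S^*$ is nonzero (the bottom element $\0$ lies below every element of $S^*$, so the greatest lower bound taken in $S$ coincides with the one taken in the group $S^*$). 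Thus $\mathfrak{X} \in \uni{n}{S^*}$, each $X(h) \in \uni{n}{S^*}$ is an idempotent, and $X = X(n) \cdots X(2)$ exhibits $X$ as a product of $n-1$ idempotents of $\uni{n}{S^*}$, completing the proof.
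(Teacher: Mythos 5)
Your proposal is correct and follows essentially the same route as the paper: the paper's own proof simply observes that the idempotents of $\upper{n}{S^*}$ lie in $\uni{n}{S^*}$ and that the matrices $X(h)$ from Theorem \ref{idmpgensmgp} remain in $\uni{n}{S^*}$. You have merely filled in the details the paper leaves implicit — in particular that $\mathfrak{X}_{i,j}=(\lid{X})_{i,j}\wedge(\rid{X})_{i,j}$ is nonzero above the diagonal because $\lid{X},\rid{X}\in\uni{n}{S^*}$ (Corollary \ref{FountainS*}) and the meet of two elements of the lattice ordered group $S^*$ stays in $S^*$ — and this is exactly the check the paper's "notice that" is appealing to.
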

\begin{proof}
The idempotents of $\upper{n}{S^*}$ form a subset of $\uni{n}{S^*}$. For $X \in \uni{n}{S^*}$, notice that each of the idempotents $X(h)$ constructed in proof of the previous proposition lie in $\uni{n}{S^*}$.
\end{proof}

\begin{corollary}\label{semidirect}Every element of $\full{n}{S}$ (respectively, $\upper{n}{S^*}$) is a product of a diagonal matrix in $D_n(S^*)$ and $n-1$ idempotents of $\full{n}{S}$ (respectively, $\upper{n}{S^*}$. Moreover, we have the following decompostions as semidirect products of semigroups:
\begin{eqnarray*}
\full{n}{S}&\simeq& \uni{n}{S} \ast D_{n}(S^*) \simeq (E(\full{n}{S}))^{n-1} \ast D_{n}(S^*),\\
\upper{n}{S^*}&\simeq& \uni{n}{S^*} \ast D_{n}(S^*) \simeq (E(\upper{n}{S^*}))^{n-1}\ast D_{n}(S^*).
\end{eqnarray*}
\end{corollary}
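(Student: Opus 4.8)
The plan is to combine the unique factorisation $A=\rnorm{A}\diag{A}$ (available by definition for every $A\in\full{n}{S}$) with the idempotent-generation results already proved, and then to repackage the whole thing as a semidirect product. \textbf{First} I would dispose of the factorisation statement. Given $A\in\full{n}{S}$, write $A=\rnorm{A}\diag{A}$ with $\rnorm{A}\in\uni{n}{S}$ and $\diag{A}\in D_n(S^*)$. By Theorem~\ref{idmpgensmgp}, $\rnorm{A}$ is a product of at most $n-1$ idempotents of $\full{n}{S}$ (recall that $E(\full{n}{S})=E(\uni{n}{S})$), so $A$ itself is a product of $n-1$ such idempotents followed by $\diag{A}\in D_n(S^*)$. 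The identical argument, using Corollary~\ref{idmpgennozero} in place of Theorem~\ref{idmpgensmgp}, settles the $\upper{n}{S^*}$ case. This yields the first assertion.

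\textbf{Next} I would set up the action. The group $D_n(S^*)$ acts on $\uni{n}{S}$ by conjugation: for $D\in D_n(S^*)$ and $N\in\uni{n}{S}$ one has $(DND^{-1})_{i,j}=D_{i,i}N_{i,j}D_{j,j}^{-1}$, which is again unitriangular (the diagonal entries stay equal to $\1$ and the matrix remains upper triangular), and conjugation by $D^{-1}$ is the inverse map, so this is an automorphism of $\uni{n}{S}$. In the $\upper{n}{S^*}$ case the same formula shows conjugation preserves $\uni{n}{S^*}$, since $S^*$ is a group and hence the above-diagonal entries stay in $S^*$. Form the semidirect product $\uni{n}{S}\ast D_n(S^*)$ with multiplication $(N_1,D_1)(N_2,D_2)=(N_1\cdot D_1N_2D_1^{-1},\,D_1D_2)$, and define $\Phi\colon\full{n}{S}\to\uni{n}{S}\ast D_n(S^*)$ by $\Phi(A)=(\rnorm{A},\diag{A})$.

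\textbf{The heart of the proof} is checking that $\Phi$ is an isomorphism. Bijectivity is immediate from uniqueness of the factorisation $A=\rnorm{A}\diag{A}$. For the homomorphism property I would compute, for $A,B\in\full{n}{S}$,
$$AB=\rnorm{A}\diag{A}\rnorm{B}\diag{B}=\bigl(\rnorm{A}\cdot\diag{A}\rnorm{B}\diag{A}^{-1}\bigr)\bigl(\diag{A}\diag{B}\bigr),$$
observe that the first bracket lies in $\uni{n}{S}$ and the second in $D_n(S^*)$, and then invoke uniqueness of factorisation to conclude $\rnorm{AB}=\rnorm{A}\cdot\diag{A}\rnorm{B}\diag{A}^{-1}$ and $\diag{AB}=\diag{A}\diag{B}$, which is exactly the semidirect product multiplication applied to $\Phi(A)$ and $\Phi(B)$. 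The second isomorphism is then a formality: Theorem~\ref{idmpgensmgp} gives $(E(\full{n}{S}))^{n-1}=\uni{n}{S}$ as a set (every element is a product of $n-1$ idempotents, and any such product lies in $\uni{n}{S}$), so the two semidirect products coincide; the $\upper{n}{S^*}$ version uses Corollary~\ref{idmpgennozero} identically.

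\textbf{The main obstacle} is essentially bookkeeping rather than depth: one must keep the direction of the conjugation action consistent with the chosen multiplication convention, and confirm that conjugation by a diagonal matrix genuinely preserves $\uni{n}{S}$ (and $\uni{n}{S^*}$, where one additionally needs $S^*$ to be a group so that above-diagonal entries remain nonzero). Everything else reduces to the uniqueness of the factorisation $A=\rnorm{A}\diag{A}$ already granted in the text, and the $\upper{n}{S^*}$ case runs in parallel by substituting $\uni{n}{S^*}$ for $\uni{n}{S}$ and $\upper{n}{S^*}$ for $\full{n}{S}$ throughout.
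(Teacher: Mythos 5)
Your proposal is correct and follows essentially the same route as the paper: the factorisation claim via $A=\rnorm{A}\diag{A}$ together with Theorem~\ref{idmpgensmgp} (resp.\ Corollary~\ref{idmpgennozero}), and the semidirect product isomorphism via the bijection $A\mapsto(\rnorm{A},\diag{A})$ and the identity $AB=(\rnorm{A}\diag{A}\rnorm{B}\diag{A}^{-1})(\diag{A}\diag{B})$. The only difference is that you make explicit the routine checks (conjugation by a diagonal matrix preserves $\uni{n}{S}$ and $\uni{n}{S^*}$, and the identification of $(E(\full{n}{S}))^{n-1}$ with $\uni{n}{S}$) that the paper leaves implicit.
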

\begin{proof}
Since each $A \in \full{n}{S}$ may be written as $A=\rnorm{A}\diag{A}$ where $\rnorm{A}\in \uni{n}{S}$ and $\diag{A} \in D_{n}(S^*)$, the first statement follows from Theorem \ref{idmpgensmgp} (respectively, Corollary \ref{idmpgennozero}). Recall that the semidirect product $M \ast N$ of (not necessarily commutative)semigroups $(M, +)$ and $(N, \boxplus )$ with respect to a left action $\cdot: N\times M \rightarrow M$ is the set $M \times N$ with multiplication given by $(m_1,n_2)(m_2, n_2) = (m_1 + n_1\cdot m_2, n_1\boxplus n_2)$. The elements of $\full{n}{S}$ are easily seen to be in one to one correspondence with the elements of $\uni{n}{S} \times D_{n}(S^*)$ via the map identifying $A$ with the pair $(\rnorm{A}, \diag{A})$. For $A, B \in \full{n}{S}$ one then has that $AB= \rnorm{A}\diag{A}\rnorm{B}\diag{B}=(\rnorm{A}\diag{A}\rnorm{B}\diag{A}^{-1})(\diag{A}\diag{B})$, showing that $\full{n}{S}$ is isomorphic to the semidirect product defined by the conjugation action of $D_{n}(S^*)$ on $\uni{n}{S}$. Similarly, $\upper{n}{S^*}$ is isomorphic to the semidirect product defined by the conjugation action of $D_{n}(S^*)$ on $\uni{n}{S^*}$.
\end{proof}

\begin{question} What is the idempotent generated subsemigroup of $M_n(S)$,$ UT_n(S)$,
$U_n(S)$, $UT_n(S^*)$ and $U_n(S^*)$ for an arbitrary (semi)ring?
\end{question} 

Both $\uni{n}{S}$ and $\uni{n}{S^*}$ are $\J$-trivial (see for example, \cite[Lemma 4.1]{JF}), and so each of the semigroups $\full{n}{S}$ and $\upper{n}{S^*}$ is a semidirect product of a $\J$-trivial semigroup and a group acting by conjugation. We compare Corollary \ref{semidirect} with \cite[Corollary 3.2]{MS17}, which states that each finite monoid whose idempotents generate an $\R$-trivial monoid is right-Fountain; the canonical example of such a monoid being the semidirect product of a finite $\R$-trivial monoid with a finite group acting by automorphisms.
\begin{remark}
If $X \in \uni{n}{S}$, then it is easy to see that for all $s \in \mathbb{N}$ and all $i,j \in [n]$ we have
\begin{eqnarray*}
(X^s)_{i,j} &=& \bigvee_{i \leq  r_1 \leq \cdots \leq r_{s-1} \leq j} X_{i,r_1} X_{r_1,r_2}\cdots X_{r_{s-1},j}= X_{i,j} \vee  \bigvee_{t=1}^{m}\bigvee_{i < p_1 < \cdots < p_{t} < j} X_{i,p_1} X_{p_1,p_2}\cdots X_{p_{t},j},
\end{eqnarray*}
where $m = {\rm min}(|i-j|-1, s-1)$. It follows from this that $X \preceq X^2\preceq \cdots \preceq X^{n-1} = X^n$, and hence $\uni{n}{S}$ is aperiodic.  
\end{remark}

\begin{proposition}
Let $E, F \in \full{n}{S}$ be idempotents. Then for all $m \in \mathbb{N}$ such that $2m \geq n+1$ one has:
$$(EF)^m = (EF)^m E= E(FE)^m = (FE)^m F= F(EF)^m = (FE)^m.$$
\end{proposition}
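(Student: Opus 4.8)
The plan is to reduce all six equalities to a single uniform computation of matrix entries. First I would record the structural facts. Since $E,F \in E(\full{n}{S}) = E(\uni{n}{S})$, both are unitriangular: $E_{i,i}=F_{i,i}=\1$, all sub-diagonal entries are $\0$, and idempotency is precisely the family of inequalities $E_{i,k}E_{k,j}\le E_{i,j}$ (and likewise for $F$) for $i\le k\le j$, as noted just before the statement. I would then dispose of two of the six equalities by the trivial regrouping identities $(EF)^mE = E(FE)^m$ and $(FE)^mF = F(EF)^m$, so that it remains to prove that the four matrices $(EF)^m$, $(EF)^mE$, $(FE)^m$, $(FE)^mF$ coincide.

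The heart of the argument is a uniform formula for the entries of an arbitrary alternating product. For a word $w = w_1\cdots w_L$ with each $w_t\in\{E,F\}$ and $i\le j$, I would write $w_{i,j}$ as the join over chains $i=r_0\le r_1\le\cdots\le r_L=j$ of $\prod_t (w_t)_{r_{t-1},r_t}$; flat steps contribute the diagonal entry $\1$, so the product depends only on the strict increases. Grouping the strict increases yields a strictly increasing index sequence $i=b_0<b_1<\cdots<b_s=j$ (hence $s\le j-i\le n-1$) together with the word $\sigma=\ell_1\cdots\ell_s$ of letters read off at those increases, which is a subsequence of $w$, and the value $\prod_u(\ell_u)_{b_{u-1},b_u}$. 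Using idempotency in the form $X_{a,b}X_{b,c}\le X_{a,c}$ I would merge any two consecutive equal letters, deleting the intermediate index; this can only increase the product, and a subsequence of a subsequence of $w$ is again a subsequence of $w$, so the merged $\sigma$ still embeds in $w$. Conversely every such pair $(\sigma,\beta)$ is realised by a genuine chain, padding with flat steps. Thus $w_{i,j}=\bigvee\prod_u(\ell_u)_{b_{u-1},b_u}$, the join taken over \emph{alternating} words $\sigma$ of length $s\le n-1$ that embed in $w$ and strictly increasing index sequences $\beta$ from $i$ to $j$.

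The final step is the observation that makes the four words interchangeable: the quantity being joined depends only on $(\sigma,\beta)$ and not on $w$, so it suffices to show that each of the four words admits \emph{exactly the same} admissible set of $\sigma$, namely all alternating words of length $\le n-1$. For this I would prove the elementary embedding lemma that an alternating word of length $s$ is a subsequence of any alternating word of length $\ge s+1$ (match the first available letter of the correct type, then proceed consecutively). Each of the four words is alternating of length $L\in\{2m,2m+1\}$, and the hypothesis $2m\ge n+1$ gives $L\ge 2m\ge n+1\ge s+2$ for every relevant $s\le n-1$, so every alternating $\sigma$ of length $\le n-1$ embeds in all four words. Hence the four entrywise joins agree, the four matrices are equal, and with the regrouping identities all six expressions coincide.

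I expect the main obstacle to be setting up the entry formula and the merging reduction cleanly, in particular verifying that restricting the join to alternating $\sigma$ is legitimate in both directions (the idempotency merge for one inequality, padding by flat steps for the other) and that embeddability survives merging. Once the entries are expressed as a join indexed by $(\sigma,\beta)$ independently of $w$, the role of the hypothesis is merely the short subsequence-length count, which is routine; indeed the argument shows the slightly stronger sufficiency of $2m\ge n$, so the stated bound leaves a little room to spare.
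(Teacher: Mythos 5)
Your proof is correct, and it reaches the conclusion by a genuinely different organisation of the same two raw ingredients. The paper works directly with the four entrywise suprema $\Sigma_1,\dots,\Sigma_4$ for $(EF)^m$, $(EF)^mE$, $(FE)^m$, $(FE)^mF$: the easy inequalities such as $\Sigma_1\leq\Sigma_2$ come from appending the diagonal factor $E_{j,j}=\1$ to a chain, and the reverse inequalities come from a pigeonhole argument — a chain $i\leq r_1\leq\cdots\leq r_{2m}\leq j$ inside $[n]$ with $2m\geq n$ must contain an interior flat step $r_a=r_{a+1}$, whose factor is $\1$, after which idempotency in the form $X_{a,b}X_{b,c}\leq X_{a,c}$ absorbs the redundant letter; this contraction is then repeated separately for each pair of words. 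You instead package the idempotency-merge and the length-versus-$n$ count into a word-independent normal form: the $(i,j)$ entry of any alternating word $w$ equals the join of $\prod_u(\ell_u)_{b_{u-1},b_u}$ over alternating subsequences $\sigma$ of $w$ of length at most $n-1$ and strictly increasing index chains $\beta$ from $i$ to $j$, so the whole proposition collapses to the elementary fact that every alternating word of length at most $n-1$ embeds as a subsequence in every alternating word of length at least $n$. Your version buys uniformity — one lemma treats all four (indeed all sufficiently long alternating) products simultaneously, and it makes transparent that the hypothesis can be weakened to $2m\geq n$, something the paper's proof also implicitly uses but does not state; the paper's pairwise comparison is shorter to write but repeats essentially the same contraction for each pair. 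The only points needing care in your write-up are the ones you already identify: that merging preserves the subsequence property (deleting a letter from a subsequence of $w$ yields a subsequence of $w$) and that padding with flat steps realises every pair $(\sigma,\beta)$; both go through without difficulty.
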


\begin{proof}
Let $m \in \mathbb{N}$, $1 \leq i \leq j \leq n$ and set
\begin{eqnarray*}
\Sigma_1 &=& ((EF)^m)_{i,j} = \bigvee_{i \leq  r_1 \leq \cdots \leq r_{2m-1} \leq j} E_{i,r_1} F_{r_1,r_2}\cdots F_{r_{2m-1},j},\\
\Sigma_2 &=& ((EF)^mE)_{i,j} = \bigvee_{i \leq  r_1 \leq \cdots \leq r_{2m} \leq j} E_{i,r_1} F_{r_1,r_2}\cdots F_{r_{2m-1},r_{2m}}E_{r_{2m},j},\\
\Sigma_3 &=& ((FE)^m)_{i,j} = \bigvee_{i \leq  r_1 \leq \cdots \leq r_{2m-1} \leq j} F_{i,r_1} E_{r_1,r_2}\cdots E_{r_{2m-1},j}, \;\;\;
\mbox{ and}\\
\Sigma_4 &=& ((FE)^mF)_{i,j} = \bigvee_{i \leq  r_1 \leq \cdots \leq r_{2m} \leq j} F_{i,r_1} E_{r_1,r_2}\cdots E_{r_{2m-1},r_{2m}}F_{r_{2m},j}.\end{eqnarray*}

Now let $t$ denote a term in the supremum $\Sigma_1$. Noting that $t=tE_{j,j}$, by setting $r_{2m}=j$ in the expression for $\Sigma_2$ above we see that $t$ is also a term in the supremum $\Sigma_2$. This shows that $\Sigma_1 \leq \Sigma_2$. Since $(FE)^mE = F(EF)^m$  and $(EF)^mE = E(FE)^m$, similar arguments show that $\Sigma_1 \leq \Sigma_4$ and $\Sigma_3 \leq \Sigma_2, \Sigma_4$.

Conversely, suppose that $s$ is a term of the supremum $\Sigma_2$. If $2m \geq n$, then for $1 \leq i \leq r_1 \leq \cdots \leq r_{2m} \leq j \leq n$, we must have that $r_a=r_{a+1}$ for at least two distinct $a$ in the range $0 \leq a \leq 2m$ (with $r_0=i$ and $r_{2m+1}=j$) so as not to contradict $2m+2 \geq n+2 $. At least one of these values of $a$ must satisfy $a>0$. If $a=2m$, then clearly $s$ is a term of $\Sigma_1$, and we obtain that $\Sigma_1=\Sigma_2$. Suppose then that $0 <a < 2m$, then one of $E_{r_a, r_{a+1}}$ or $F_{r_a, r_{a+1}}$ is a factor of $s$. By idempotency of $E$ and $F$ together with the fact that $E_{r_a, r_{a+1}}=F_{r_a, r_{a+1}}=\1$ one has
\begin{eqnarray*}
F_{r_{a-1}, r_{a}}E_{r_a, r_{a+1}} F_{r_{a+1}, r_{a+2}} &=& F_{r_{a-1}, r_{a}}F_{r_a, r_{a+1}} F_{r_{a+1}, r_{a+2}} \leq F_{r_{a-1},r_{a+2}}, \mbox{ and }\\
E_{r_{a-1}, r_{a}}F_{r_a, r_{a+1}} E_{r_{a+1}, r_{a+2}} &=& E_{r_{a-1}, r_{a}}E_{r_a, r_{a+1} }E_{r_{a+1}, r_{a+2}} \leq E_{r_{a-1}r_{a+2}},
\end{eqnarray*}
from which it is then easy to see that $s \leq \Sigma_1$ and hence $\Sigma_1=\Sigma_2$. Switching the roles of $E$ and $F$ in the above yields $\Sigma_3=\Sigma_4$, and hence $\Sigma_1 \leq \Sigma_4 = \Sigma_3 \leq \Sigma_2 = \Sigma_1.$
\end{proof}

In the following section we consider in more detail the generalised regularity properties of the semigroups $\mat{n}{S}$, $\upper{n}{S}$ and $\uni{n}{S}$ in the case where the idempotent semifield $S$ has a linear order.

\section{Matrices over linearly ordered idempotent semifields}
\label{sec:lin}
Throughout this section let $\linz$ be a linearly ordered idempotent semifield.

\subsection{Monoids of binary relations}
\label{bin}
Consider first the case where the underlying group is trivial, that is, where $\linz$ is the Boolean semifield $\mathbb{B} = \{0,1\}$. As noted in Lemma \ref{semifields}, $\bool$ is the unique idempotent semifield containing finitely many elements.  We recall from \cite[Theorem 1.1.1]{Kim82} that each submodule $M$ of $\bool^n$ has a unique minimal generating set (basis), consisting of those non-zero elements $x \in M$ which cannot be expressed as a Boolean sum of elements of $M$ occurring beneath $x$ (with respect to the obvious partial order on Boolean vectors).

\begin{proposition}
\label{basis}
If every non-zero row of $A \in \mat{n}{\bool}$ is contained in the unique basis of ${\rm Row}(A)$, then $A \Rt \lid{A}$.
\end{proposition}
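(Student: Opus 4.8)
The plan is to apply the definition of $\Rt$ directly, exploiting that over $\bool$ the matrix $\lid A$ has a transparent description. Write $R_i := \Supp(A_{i,\star})$ for the support of the $i$th row. Since the underlying group of $\bool$ is trivial, formula \eqref{formula:Aplus} collapses to
$$(\lid A)_{i,j} = \begin{cases} \1 & \text{if } \emptyset \ne R_j \subseteq R_i,\\ \0 & \text{otherwise.}\end{cases}$$
By Proposition~\ref{prop:idmpt} we already know that $\lid A$ is idempotent and that $\lid A A = A$. Hence, by the definition of $\Rt$, to prove $A \Rt \lid A$ it suffices to show that an idempotent $E \in \mat{n}{\bool}$ satisfies $EA = A$ if and only if $E\lid A = \lid A$. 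One implication is automatic: if $E\lid A = \lid A$ then $EA = (E\lid A)A = \lid A A = A$. The whole content therefore lies in proving that $EA=A$ implies $E\lid A = \lid A$, which I would establish by proving $E\lid A \preceq \lid A$ and $\lid A \preceq E\lid A$ separately.

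The single fact driving both inclusions is extracted from $EA = A$: reading off row $i$ gives $A_{i,\star} = \bigvee_{k}E_{i,k}A_{k,\star} = \bigvee_{k: E_{i,k}=\1}A_{k,\star}$, so that $R_i = \bigcup_{k:E_{i,k}=\1}R_k$ and in particular $E_{i,k} = \1 \Rightarrow R_k \subseteq R_i$. For the inclusion $E\lid A \preceq \lid A$ I would simply compute $(E\lid A)_{i,j} = \bigvee_{k}E_{i,k}(\lid A)_{k,j} = \bigvee_{\{k\,:\,\emptyset \ne R_j \subseteq R_k\}}E_{i,k}$: any nonzero term has $E_{i,k}=\1$, whence $R_j \subseteq R_k \subseteq R_i$ and so $(\lid A)_{i,j}=\1$. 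Note that this direction uses no hypothesis on $A$.

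The reverse inclusion $\lid A \preceq E\lid A$ is where the basis hypothesis is essential, and is the crux of the argument. Suppose $(\lid A)_{i,j} = \1$, so that $\emptyset \ne R_j \subseteq R_i$; in particular row $i$ of $A$ is non-zero, hence by hypothesis $A_{i,\star}$ lies in the basis of ${\rm Row}(A)$. Now $A_{i,\star} = \bigvee_{k:E_{i,k}=\1}A_{k,\star}$ expresses $A_{i,\star}$ as a Boolean sum of elements of ${\rm Row}(A)$, each lying beneath $A_{i,\star}$. Since a basis element cannot be written as a sum of elements of ${\rm Row}(A)$ strictly beneath it \cite{Kim82}, at least one summand must equal $A_{i,\star}$: there is some $k$ with $E_{i,k}=\1$ and $R_k = R_i$. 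For this $k$ we have $\emptyset \ne R_j \subseteq R_i = R_k$, so $(\lid A)_{k,j}=\1$ and therefore $(E\lid A)_{i,j} \geq E_{i,k}(\lid A)_{k,j} = \1$, as required.

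The main obstacle is precisely this last inclusion. The naive attempt, taking $k=i$ so that $(E\lid A)_{i,j} \geq E_{i,i}(\lid A)_{i,j}$, requires $E_{i,i}=\1$, which an arbitrary idempotent left identity $E$ need not satisfy once $A$ is allowed to have zero rows; this is exactly the situation in which the full-domain machinery behind Proposition~\ref{prop:idmpt}(iii) is unavailable. Join-irreducibility of the non-zero rows of $A$ is the device that supplies a surrogate index $k$ with $R_k = R_i$ and $E_{i,k}=\1$ in place of the missing diagonal entry, and I expect the entire proof to hinge on this observation.
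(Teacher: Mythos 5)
Your proof is correct and follows essentially the same route as the paper's: both directions $E\lid{A}\preceq\lid{A}$ and $\lid{A}\preceq E\lid{A}$ are established from the observation that $E_{i,k}=\1$ forces $A_{k,\star}\preceq A_{i,\star}$, with the basis (join-irreducibility) hypothesis supplying an index $k$ with $E_{i,k}=\1$ and $A_{k,\star}=A_{i,\star}$ exactly as in the paper. The only differences are cosmetic: you phrase everything uniformly in terms of row supports rather than splitting off the case $j\notin{\rm dom}(A)$, and you make the trivial converse implication explicit.
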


\begin{proof}
Suppose that $XA=A$. We first show that whenever an entry of $\lid{A}$ is zero, the corresponding entry of the product $X\lid{A}$ must also be zero (that is, $X\lid{A} \preceq \lid{A}$). For all $i,j \in [n]$ we have
\begin{equation}
\label{XA+}
(X\lid{A})_{j,i} = \bigvee_{k=1}^n X_{j,k} (\lid{A})_{k,i} = \bigvee_{k: X_{j,k}=1} (\lid{A})_{k,i}.
\end{equation}
If $j \not\in {\rm dom}(A)$, then it follows from the definition that both column and row $j$ of $\lid{A}$ are zero, and so we require that column and row $j$ of the product $X \lid{A}$ should be zero. This is immediate for columns.  Since row $j$ of $A$ is zero, it follows from $XA=A$ that whenever $X_{j,k}=1$ we must have that row $k$ of $A$ is zero, which in turn yields that row $k$ of $\lid{A}$ is zero, and hence the supremum in \eqref{XA+} is equal to zero.

All remaining zeros in $\lid{A}$ lie in positions $(i,j)$ such that $0 \neq A_{j, \star} \not\preceq A_{i,\star}$. Let $(i,j)$ be such an index. If $X_{i,k}=1$, we note that it follows from the fact that $XA=A$ that $A_{i,t} \geq X_{i,k} A_{k,t} = A_{k,t}$  for all $t$. Or in other words, $A_{k, \star} \preceq A_{i, \star}$. Thus if $(\lid{A})_{k,j}=1$ for some $k$ with $X_{i,k}=1$ we obtain $A_{j, \star} \preceq A_{k, \star} \preceq A_{i,\star}$, contradicting that $A_{j, \star} \not\preceq A_{i,\star}$. So we must have that  the supremum in \eqref{XA+} is equal to zero. This completes the proof that $X\lid{A} \preceq \lid{A}$.

From the above argument, if $j \not\in {\rm dom}(A)$, then $\lid{A}$ and $X\lid{A}$ clearly agree in row $j$. Suppose then that $j \in {\rm dom}(A)$ and $A_{j, \star}$ is contained in the basis for ${\rm Row}(A)$. Since $XA=A$ we require in particular that the $j$th row of $XA$ is equal to the $j$th row of $A$. The $j$th row of $XA$ is a linear combination of the rows of $A$, namely,
$(XA)_{j, \star} = \bigvee_{k=1}^n X_{j,k} A_{k, \star}$. Since this linear combination is equal to a basis element $A_{j, \star}$ of ${\rm Row}(A)$, it follows in particular that there exists $s \in [n]$ with $X_{j,s}=1$ and $A_{s, \star}=A_{j, \star}$. Thus $X_{j,s}=1$ and $(\lid{A})_{j,s}=(\lid{A})_{s,j}=1$. But then for all $i$ we have:
$$(X\lid{A})_{j,i} = \bigvee_{k: X_{j,k}=1} (\lid{A})_{k,i} \geq  (\lid{A})_{s,i}  \geq (\lid{A})_{s,j}(\lid{A})_{j,i} = (\lid{A})_{j,i},$$
where the final inequality is due to the fact that $\lid{A}$ is idempotent. Since we have already proved that $X\lid{A} \preceq \lid{A}$, it now follows that the two matrices must agree in row $j$.
\end{proof}

\begin{theorem}\label{thm:B}
The generalised regularity properties of the monoids $\mat{n}{\bool}$, $\upper{n}{\bool}$ and $\uni{n}{\bool}$ can be summarised as follows:

\begin{center}
\begin{tabular}{ l |l| l| l| l }
& $n=1$ & $n=2$ & $n=3$ & $n \geq 4$\\
\hline
$\mat{n}{\bool}$& Regular (band) & Regular & Fountain & Not Fountain\\
$\upper{n}{\bool}$& Regular(band) & Abundant & Fountain & Not Fountain\\
$\uni{n}{\bool}$& Regular (group) & Regular (band) & Fountain &  Fountain\\
\end{tabular}
\end{center}
\end{theorem}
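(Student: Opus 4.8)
The plan is to verify the table cell by cell, relying on the general results established earlier for all but the two genuinely delicate entries, namely that $\mat{3}{\bool}$ and $\upper{3}{\bool}$ are Fountain. Everything else is either an instance of an existing theorem or a finite inspection.

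First I would dispose of the routine cells. For $n=1$ each monoid is a multiplicative band (the trivial group in the unitriangular case), so regularity is immediate. For $n \geq 4$, since $\bool$ is an idempotent semiring, Proposition \ref{notFountain} shows at once that $\mat{n}{\bool}$ and $\upper{n}{\bool}$ are not Fountain, while Corollary \ref{uniS} gives that $\uni{n}{\bool}$ is Fountain; the latter also covers $\uni{3}{\bool}$. The failure of abundance across the whole $n \geq 3$ column follows from the general theory: $\mat{n}{\bool}$ is not regular for $n \geq 3$ by Il'in \cite{Ilin} (as $\bool$ is not a ring), whence it is not abundant by Corollary \ref{regular}, and $\upper{n}{\bool}$, $\uni{n}{\bool}$ are not abundant for $n \geq 3$ by Proposition \ref{notAbund}. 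That $\upper{n}{\bool}$ is not regular for $n \geq 2$ is Proposition \ref{notReg}.

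Next the $n=2$ column, where each monoid is small enough to treat by hand. The monoid $\mat{2}{\bool}$ has $16$ elements and one checks directly (or invokes the classical fact that the monoid of binary relations on a two-element set is regular) that every $A$ admits $B$ with $ABA = A$. In $\upper{2}{\bool}$ (eight elements) every element except the nilpotent $\left(\begin{smallmatrix}0&1\\0&0\end{smallmatrix}\right)$ is idempotent; since that nilpotent has the same column space as the idempotent $\left(\begin{smallmatrix}1&0\\0&0\end{smallmatrix}\right)$, Corollary \ref{exactRs} gives that the two are $\Rs$-related, so every $\Rs$-class contains an idempotent, and abundance follows from the anti-diagonal symmetry (Lemma \ref{delta}(ii) and Lemma \ref{l-r}); non-regularity is Proposition \ref{notReg}. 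Finally $\uni{2}{\bool} = \{I, \left(\begin{smallmatrix}1&1\\0&1\end{smallmatrix}\right)\}$ is a two-element semilattice, hence a regular band.

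The crux is to show that $\mat{3}{\bool}$ and $\upper{3}{\bool}$ are Fountain. By the transpose and anti-diagonal involutions (Lemma \ref{delta}) together with Lemma \ref{l-r}, it suffices to show that each $\Rt$-class contains an idempotent, and by Proposition \ref{Fountainicity} (and the same reasoning carried out inside $\upper{3}{\bool}$) this amounts to proving that for every matrix $A$ the submodule ${\rm ColFix}(A) = \bigcap \{ {\rm Col}(F) : F^2 = F,\ FA = A \}$ is itself the column space of an idempotent fixing $A$ on the left. Proposition \ref{basis} already supplies such an idempotent, namely $\lid{A}$, whenever every non-zero row of $A$ lies in the basis of its row space; the work lies with the matrices possessing a redundant non-zero row, for which $\lid{A}$ need not be $\Rt$-minimal among idempotent left identities. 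My plan is to argue that in dimension three one can nevertheless always realise ${\rm ColFix}(A)$ as a retract, by classifying the finitely generated submodules (join-subsemilattices) of $\bool^3$ and checking that any intersection of column spaces of idempotent left identities of a fixed $A$ is again the column space of an idempotent acting as the identity on it. I expect this finite-but-structured verification to be the main obstacle, and it is precisely here that dimension three is special: the explicit $4 \times 4$ matrix of Proposition \ref{notFountain} exhibits an intersection of two such retracts in $\bool^4$ that fails to be a retract, which is exactly why the argument cannot survive past $n = 3$. The non-abundance of $\mat{3}{\bool}$ and $\upper{3}{\bool}$ recorded above then confirms that ``Fountain'' is the sharp descriptor for these two entries.
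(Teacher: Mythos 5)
The routine cells are handled correctly and essentially as in the paper: $n=1$ by inspection, the $n\geq 4$ row via Proposition \ref{notFountain} and Corollary \ref{uniS}, non-abundance for $n\geq 3$ via Theorem \ref{exact}/Corollary \ref{regular} and Proposition \ref{notAbund}, non-regularity of $\upper{n}{\bool}$ via Proposition \ref{notReg}, and the $n=2$ column by finite inspection together with Corollary \ref{exactRs} and the anti-diagonal symmetry. Your treatment of $\upper{2}{\bool}$ is a legitimate minor variant of the paper's (you pair the nilpotent with $\left(\begin{smallmatrix}1&0\\0&0\end{smallmatrix}\right)$ rather than $\left(\begin{smallmatrix}1&1\\0&0\end{smallmatrix}\right)$; both have the right column space).

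However, for the two genuinely delicate entries --- that $\mat{3}{\bool}$ and $\upper{3}{\bool}$ are Fountain --- you have announced a strategy rather than executed a proof. You say you ``plan to argue'' that ${\rm ColFix}(A)$ is always a retract in dimension three by classifying the submodules of $\bool^3$, and you ``expect this finite-but-structured verification to be the main obstacle''; that verification is precisely the content of the theorem for these cells and it is missing. The paper closes this gap concretely: for $\mat{3}{\bool}$ it observes that Proposition \ref{basis} covers every matrix except those with one non-zero row equal to the join of the other two, and for those it exhibits the $\Rt$-related idempotent explicitly (rows $\sigma(2),\sigma(3)$ of the identity and their join in row $\sigma(1)$). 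For $\upper{3}{\bool}$ your proposed reduction has an additional flaw: Proposition \ref{Fountainicity} and the ${\rm ColFix}$ formalism rest on the fact that $\Rless$ in $\mat{n}{S}$ is containment of column spaces, and the paper explicitly notes that this fails in $\upper{n}{S}$ (two upper triangular matrices can share a column space without being $\R$-related there), so ``the same reasoning carried out inside $\upper{3}{\bool}$'' does not go through; one must work with the $\Rless$-order of $\upper{3}{\bool}$ itself. This is why the paper instead enumerates the forty-one idempotents of $\upper{3}{\bool}$, disposes of most elements via $\Rs$ and Corollary \ref{exactRs}, and then checks the four exceptional matrices $X_1,\dots,X_4$ by hand. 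Until you either carry out your classification (for $\mat{3}{\bool}$) and replace the column-space argument with one adapted to the triangular order (for $\upper{3}{\bool}$), or perform the finite checks directly, the two ``Fountain'' entries in the $n=3$ column remain unproved.
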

\begin{proof}
(i) The monoid $\mat{n}{\bool}$, or equivalently the monoid $\mathcal{B}_n$ of binary relations on an $n$ element set, is well known to be regular if and only if $n \leq 2$; for $n=1$ this is isomorphic to the multiplicative monoid of $\bool$ consisting of two idempotents. Since $\mathbb{B}$ is exact, it follows from Theorem \ref{exact} above, that $\mat{n}{\bool}$ is also not abundant for $n \geq3$. Moreover, Proposition \ref{notFountain} shows that $\mathcal{B}_n$ is not Fountain for all $n \geq4$. By means of Lemma \ref{l-r}, Lemma \ref{delta} and Proposition \ref{basis}, it is straightforward to verify that $\mat{3}{\bool}$ is Fountain; one just has to check that any $A \in \mat{3}{\bool}$ with ${\rm dom}(A)=3$ and all rows distinct is $\Rt$-related to an idempotent (since a submodule of $\bool^n$  generated by fewer than three non-zero elements must contain each of these elements in its basis). Thus it suffices to consider the case where $A_{\sigma(1), \star} = A_{\sigma(2),\star} \vee A_{\sigma(3),\star}$ for some permutation $\sigma$ of $[3]$. In thhis case one finds that $A$ is $\Rt$-related to the matrix with rows $\sigma(2)$ and $\sigma(3)$ equal to the corresponding rows of the identity matrix, and row $\sigma(1)$ equal to the join of these two.

(ii) Since $\upper{1}{\bool}=\mat{1}{\bool}$, it is clearly regular. Proposition \ref{notReg} shows that $\upper{n}{\bool}$ is not regular for $n \geq 2$, Proposition \ref{notAbund} shows that $\upper{n}{\bool}$ is not abundant for $n\geq 3$ and Proposition \ref{notFountain} shows that $\upper{n}{\bool}$ is not Fountain for $n \geq 4$. This leaves only the cases $n=2$ and $n=3$ to consider in more detail.

The monoid $\upper{2}{\bool}$ is easily seen to be abundant; seven of the eight elements are actually idempotents. The remaining element is the non-regular element considered in the proof of Proposition \ref{notReg}, and it is easily verified that:
$$\left(
\begin{array}{c c}
1 & 1 \\
0 & 0\end{array}\right)^2=\left(
\begin{array}{c c}
1 & 1 \\
0 & 0\end{array}\right) \Rs\left(
\begin{array}{c c}
0 & 1 \\
0 & 0\end{array}\right) \Ls \left(
\begin{array}{c c}
0 & 1 \\
0 & 1\end{array}\right) = \left(
\begin{array}{c c}
0 & 1 \\
0 & 1\end{array}\right)^2.$$

For $n=3$, first notice that we cannot hope to use the idempotent construction $A \mapsto \lid{A}$ to show that $\upper{3}{\bool}$ is Fountain, since in general this map does not restrict to a map $\upper{n}{S} \rightarrow \upper{n}{S}$. For example,
$$A=\left(
\begin{array}{c c c}
1 & 1 & 1 \\
0 & 0 & 1\\
0 & 0 & 1
\end{array}\right), \lid{A}=\left(
\begin{array}{c c c}
1 & 1 & 1 \\
0 & 1 & 1\\
0 & 1 & 1
\end{array}\right).$$

By direct computation we find that forty-one of the sixty-four elements of $\upper{3}{\bool}$ are idempotents. They have the form
\begin{eqnarray*}
&&\left(
\begin{array}{c c c}
1 & x & y \\
0 & 1 & z\\
0 & 0 & 1
\end{array}\right), \left(
\begin{array}{c c c}
1 & x & y \\
0 & 1 & z\\
0 & 0 & 0
\end{array}\right), \left(
\begin{array}{c c c}
0 & x & y \\
0 & 1 & z\\
0 & 0 & 1
\end{array}\right), \left(
\begin{array}{c c c}
1 & x & y \\
0 & 0 & z\\
0 & 0 & 1
\end{array}\right),\\
&&\left(
\begin{array}{c c c}
1 & u & w \\
0 & 0 & 0\\
0 & 0 & 0
\end{array}\right), \left(
\begin{array}{c c c}
0 & 0 & w \\
0 & 0 & v\\
0 & 0 & 1
\end{array}\right), \left(
\begin{array}{c c c}
0 & u & uv \\
0 & 1 & v\\
0 & 0 & 0
\end{array}\right), \left(
\begin{array}{c c c}
0 & 0 & 0 \\
0 & 0 & 0\\
0 & 0 & 0
\end{array}\right),\\
\end{eqnarray*}
where $u,v,w,x,y,z \in \bool$ with $xz \leq y$.

By Lemma \ref{Rs}, Theorem \ref{exact} and Theorem \ref{LR}, any element of $\upper{3}{\bool}$ with column space equal to the column space of one of the above matrices is $\Rs$-related to the same, and in this way one finds that all but the following four matrices are $\Rs$-related to an idempotent: 
$$X_1=\left(
\begin{array}{c c c}
1 & 1 & 0 \\
0 & 1 & 1\\
0 & 0 & 1
\end{array}\right), X_2=\left(
\begin{array}{c c c}
0 & 1 & 0 \\
0 & 1 & 1\\
0 & 0 & 1
\end{array}\right), X_3=\left(
\begin{array}{c c c}
0 & 0 & 1 \\
0 & 1 & 1\\
0 & 0 & 0
\end{array}\right), X_4=\left(
\begin{array}{c c c}
0 & 1 & 0 \\
0 & 1 & 1\\
0 & 0 & 0
\end{array}\right).$$
It is then straightforward to verify that:
$$X_1 \Rt \left(
\begin{array}{c c c}
1 & 0 & 0 \\
0 & 1 & 1\\
0 & 0 & 1
\end{array}\right) \Rt X_2\qquad \mbox{and} \qquad X_3 \Rt \left(
\begin{array}{c c c}
1 & 1 & 1 \\
0 & 1 & 0\\
0 & 0 & 0
\end{array}\right) \Rt X_4.$$

(iii) Since $\uni{1}{\bool}$ is the trivial group, it is obviously regular. The monoid $\uni{2}{\bool}$ contains two idempotent elements, and hence is also regular. Proposition \ref{notAbund} shows that for all $n \geq 3$, $\uni{n}{\bool}$ is not abundant, whilst Theorem \ref{uniS} shows that $\uni{n}{\bool}$ is Fountain for all $n$.
\end{proof}

\begin{remark}
The representation theory of monoid algebras $KM$ of certain\footnote{Namely, those for which all sandwich matrices indexed by idempotents are right invertible over the group algebra of the corresponding group $\H$-class} finite (right) Fountain monoids $M$ over a field $K$  is elucidated in \cite{MS17}, where descriptions of the projective indecomposables and quiver of such a monoid algebra are given. Thus the results of \cite{MS17} cannot be applied to the monoid of binary relations for $n>3$, although there is scope for these results to apply for $n\leq 3$. A preliminary study of the (rational) representation theory in the case $n=3$ can already be found in \cite{Bremner}.
\end{remark}

\subsection{Infinite linearly ordered semifields}
 We recall from \cite{Wagneur} that $V =\{v_1, \ldots, v_k\} \subset \linz^n$ is said to be \emph{linearly independent} if
$$v_i \not\in \left\{ \bigvee_{j \neq i} \lambda_j v_j: \lambda_j \in S\right\} \mbox{ for } 1 \leq i \leq k,$$
and that a \emph{basis} for an $\linz$-semimodule $M$ is a linearly independent generating set for $M$. By \cite[Corollary 4.7 and Theorem 5]{Wagneur}, every finitely generated $\linz$-submodule $M \subseteq \linz^n$ admits a basis, and moreover bases are unique up to scaling (that is, if $\{v_1, \ldots, v_k\}$ and $\{w_1, \ldots, w_j\}$ are two bases for $M$, then $k=j$ and for $i=1, \dots, j$ there exists $\lambda_i \in \lin$ such that $w_i=\lambda_iv_i$ ). In particular, the column space (respectively, row space)  of $A \in \mat{n}{\linz}$ admits a unique up to scaling basis; by the proof of \cite[Theorem 6]{Wagneur} this can be taken to be a subset of the set of columns (respectively, rows) of $A$.

\begin{proposition}
\label{prop:lin}
Let $A,B \in \mat{n}{\linz}$. Then
\begin{enumerate}[\rm(i)]

\item $\lidlid{A}=\lid{A}$.
\item $(\lid{A})_{i,i}  = \1$ for all $i \in {\rm dom}(A)$.
\item Suppose that ${\rm dom}(A)=[n]$ and $BA=A$. If $A_{j, \star}$ is contained in a basis of ${\rm Row}(A)$, then $(B\lid{A})_{j, \star}=(\lid{A})_{j, \star}$.
\end{enumerate}
\end{proposition}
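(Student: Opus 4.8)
The plan is to handle the three parts in increasing order of difficulty, the totality of the order on $\linz$ being genuinely needed only for part~(iii). Part~(ii) is immediate: for $i \in {\rm dom}(A)$ the row $A_{i,\star}$ is non-zero, so by the recorded properties of the scalar product $\langle\cdot\,|\,\cdot\rangle$ we have $(\lid{A})_{i,i} = \langle A_{i,\star}\,|\,A_{i,\star}\rangle = \1$. For part~(i), I would first pin down the support of $E:=\lid{A}$. Writing $D={\rm dom}(A)$, the observation following \eqref{formula:Aplus} forces $(\lid{A})_{i,j}\neq\0$ to imply $i,j\in D$, while part~(ii) gives $(\lid{A})_{i,i}=\1$ for $i\in D$; hence $E$ is supported exactly on $D\times D$, has unit diagonal on $D$, and is idempotent by Proposition~\ref{prop:idmpt}. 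The same support computation applied to $\lidlid{A}=\lid{E}$ shows $\lid{E}$ is supported on $D\times D$ too, so it suffices to prove $\lid{E}=E$ on this block. This is the situation of Theorem~\ref{subsemigroup}(ii) for the restricted idempotent with full unit diagonal; concretely, $\lid{E}\preceq E$ follows from $\lid{E}E=E$ (Proposition~\ref{prop:idmpt}(ii)) together with $E_{j,j}=\1$, while $E\preceq\lid{E}$ follows by unwinding the idempotency inequalities $E_{i,k}E_{k,j}\leq E_{i,j}$ into the defining meet for $\lid{E}$.

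Part~(iii) is the crux, and one inequality is free: since ${\rm dom}(A)=[n]$ and $BA=A$, Proposition~\ref{prop:idmpt}(iii) already gives $B\lid{A}\preceq\lid{A}$, hence $(B\lid{A})_{j,\star}\preceq(\lid{A})_{j,\star}$. For the reverse inequality the strategy is to produce a \emph{single} index $s$ that dominates the whole $j$th row. The key lemma I would isolate is a join-irreducibility property of basis elements: if $v$ lies in a basis of a finitely generated submodule $M\subseteq\linz^n$ and $v=\bigvee_k\lambda_k v_k$ with $v_k\in M$, then $v=\lambda_s v_s$ for some single $s$. I would prove this by expanding each $v_k$ in the basis, collecting coefficients, and using uniqueness of bases up to scaling \cite{Wagneur} to force the coefficient of $v$ itself to equal $\1$; the linear order is exactly what guarantees that the resulting finite join attains its maximum at one term, which reads off the index $s$.

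Applying this lemma to $A_{j,\star}=(BA)_{j,\star}=\bigvee_k B_{j,k}A_{k,\star}$ (legitimate because $A_{j,\star}$ is a basis row of ${\rm Row}(A)$, and $j\in{\rm dom}(A)=[n]$ forces $A_{j,\star}\neq\0$, so the achieving scalar $B_{j,s}$ is non-zero) yields $A_{s,\star}=B_{j,s}^{-1}A_{j,\star}$; that is, row $s$ is a non-zero scalar multiple of the basis row $j$, with the same support. Since $\langle x\,|\,\cdot\,\rangle$ is homogeneous in its second argument and supports are unchanged under non-zero scaling, this gives $(\lid{A})_{s,i}=\langle A_{i,\star}\,|\,A_{s,\star}\rangle=B_{j,s}^{-1}\langle A_{i,\star}\,|\,A_{j,\star}\rangle=B_{j,s}^{-1}(\lid{A})_{j,i}$ for every $i$, with the $\0$-cases matching precisely because $\Supp(A_{s,\star})=\Supp(A_{j,\star})$. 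Consequently $(B\lid{A})_{j,i}\geq B_{j,s}(\lid{A})_{s,i}=(\lid{A})_{j,i}$, which is the required reverse inequality; combining it with the free inequality from Proposition~\ref{prop:idmpt}(iii) gives $(B\lid{A})_{j,\star}=(\lid{A})_{j,\star}$.

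The main obstacle is the join-irreducibility lemma: everything else is support bookkeeping together with homogeneity of the scalar product, but extracting a single term $s$ realising the basis element as $B_{j,s}A_{s,\star}$ is precisely where linear independence and totality of the order must be used \emph{in tandem}. Over a non-linearly-ordered idempotent semifield the maximum in the defining join need not be attained at one term, so a single dominating $s$ need not exist; this is why the hypothesis that $\linz$ is linearly ordered is essential to part~(iii).
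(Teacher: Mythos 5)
Your proof is correct and follows essentially the same route as the paper's: part (i) via the two inequalities $\lid{E}\preceq E$ (from $\lid{E}E=E$ and the unit diagonal) and $E\preceq\lid{E}$ (from the idempotency inequalities), and part (iii) via extracting a single index $s$ with $A_{j,\star}=B_{j,s}A_{s,\star}$ and then dominating the $j$th row by the $s$th. The only differences are cosmetic: you isolate as an explicit join-irreducibility lemma the fact the paper simply asserts when producing $s$ (and you correctly keep track of the scaling $A_{s,\star}=B_{j,s}^{-1}A_{j,\star}$ rather than normalising it away), and you close part (iii) by homogeneity of $\langle\cdot\,|\,\cdot\rangle$ where the paper uses idempotency of $\lid{A}$.
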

\begin{proof}
(i) Suppose that $(\lid{A})_{i,j}\neq \0$. Then by definition $j \in {\rm dom}(A)$ and $\Supp(A_{j, \star}) \subseteq \Supp(A_{i, \star})$. It follows easily from the definition of $\lid{A}$ that $\Supp((\lid{A})_{j, \star}) \subseteq \Supp((\lid{A})_{i, \star})$. Also, since $j \in {\rm dom}(A)$ we note that $(\lid{A})_{j,j}=1$ and hence $j\in {\rm dom}(\lid{A})$. This shows that  $(\lidlid{A})_{i,j}\neq \0$. Thus there exists $s \in [n]$ such that $(\lidlid{A})_{i,j}(\lid{A})_{js} = (\lid{A})_{i,s}$. Since the inequality $(\lid{A})_{i,j} >(\lidlid{A})_{i,j}$ then contradicts that $\lid{A}$ is idempotent, we must have $(\lid{A})_{i,j} \leq (\lidlid{A})_{i,j}$. This shows that $\lid{A} \preceq \lidlid{A}$. On the other hand suppose that $(\lid{A})_{i,j} < (\lidlid{A})_{i,j}$ for some $i,j$. From the definition, we must have that $j \in {\rm dom}(\lidlid{A})$ and hence $j \in {\rm dom}(A)$. But then $(\lid{A})_{j,j}=1$ and
$$(\lidlid{A}\lid{A})_{i,j} \geq (\lidlid{A})_{i,j} (\lid{A})_{j,j} > (\lid{A})_{i,j},$$
contradicting Part (ii).

(ii) This is immediate from the definition.

(iii) Suppose that $A_{j, \star}$ is contained in a basis for ${\rm Row}(A)$. Since $BA=A$ we require in particular that the $j$th row of $BA$ is equal to the $j$th row of $A$. The $j$th row of $BA$ is a linear combination of the rows of $A$, namely,
$(BA)_{j, \star} = \bigvee_{k=1}^n B_{j,k} A_{k, \star}$. Since this linear combination is equal to a basis element $A_{j, \star}$ of ${\rm Row}(A)$, it follows in particular that there exists $s \in [n]$ with $B_{j,s}=1$ and $A_{s, \star}=A_{j, \star}$. Thus $B_{j,s}=1$ and $(\lid{A})_{j,s}=(\lid{A})_{s,j}=1$. But then for all $i$ we have:
$$(B\lid{A})_{j,i} = \bigvee_{k=1}^n B_{j,k}(\lid{A})_{k,i} \geq  (\lid{A})_{s,i}  \geq (\lid{A})_{s,j}(\lid{A})_{j,i} = (\lid{A})_{j,i},$$
where the final inequality is due to the fact that $\lid{A}$ is idempotent. By Propositon \ref{prop:idmpt} we have $B\lid{A} \preceq \lid{A}$, and so it follows that the two matrices 
 in row $j$.
\end{proof}

\begin{conjecture}
The generalised regularity properties of the monoids $\mat{n}{\linz}$, $\upper{n}{\linz}$ and $\uni{n}{\linz}$ can be summarised as follows:
\begin{center}
\begin{tabular}{ l |l| l| l| l }
& $n=1$ & $n=2$ & $n=3$ & $n \geq 4$\\
\hline
$\mat{n}{\linz}$& Regular & Regular & Fountain (?) & Not Fountain\\
$\upper{n}{\linz}$& Regular & Abundant & Fountain (?) & Not Fountain\\
$\uni{n}{\linz}$& Regular (group) & Regular (band) & Fountain &  Fountain\\
\end{tabular}
\end{center}
\end{conjecture}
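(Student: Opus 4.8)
The plan is to treat the table one row at a time, observing that the entire $\uni{n}{\linz}$ row and the extremal columns ($n=1$ and $n\geq 4$) are already settled by results established above, so that the only genuinely new work lies in the cells $\mat{2}{\linz}$, $\upper{2}{\linz}$ and the two conjectural cells $\mat{3}{\linz}$, $\upper{3}{\linz}$. First I would dispose of the easy entries. Since $\mat{1}{\linz}=\upper{1}{\linz}\cong(\linz,\cdot)$ and every non-zero element of a semifield is invertible, this monoid is regular, giving the $n=1$ column of the first two rows; Proposition~\ref{notReg} shows $\upper{n}{\linz}$ fails to be regular for $n\geq 2$. For $n\geq 4$, Proposition~\ref{notFountain} immediately yields that $\mat{n}{\linz}$ and $\upper{n}{\linz}$ are not Fountain, completing the last column of the top two rows.

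For the $\uni{n}{\linz}$ row: $\uni{1}{\linz}$ is the trivial group, and $\uni{2}{\linz}\cong(\linz,\vee)$ (the $(1,2)$-entry of a product being the join of the two off-diagonal entries), which is a commutative band and hence regular. For all $n$, Corollary~\ref{uniS} gives that $\uni{n}{\linz}$ is Fountain, while for $n\geq 3$ the explicit $3\times 3$ computation recorded after Proposition~\ref{notReg} shows $\uni{n}{\linz}$ is not regular and Proposition~\ref{notAbund} shows it is not abundant. Thus every $\uni{n}{\linz}$ cell is accounted for without appeal to any conjecture.

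The first substantive step is the pair $\mat{2}{\linz}$ (regular) and $\upper{2}{\linz}$ (abundant), which lie outside the range $n\geq 3$ of Il'in's theorem. By Lemma~\ref{LR}(i), regularity of $\mat{2}{\linz}$ reduces to showing that every column space ${\rm Col}(A)$ of a matrix $A\in\mat{2}{\linz}$ is the column space of an idempotent. Since $\linz$ is exact (an idempotent semifield is exact by Shitov~\cite{Shitov17}), Corollary~\ref{exactRs} gives that $A\Rs B$ in $\upper{2}{\linz}$ precisely when ${\rm Col}(A)={\rm Col}(B)$, so abundance of $\upper{2}{\linz}$ (using the anti-automorphism $\Delta$ of Lemma~\ref{delta} together with Lemma~\ref{l-r} to pass between $\Rs$ and $\Ls$) reduces to the same statement restricted to upper-triangular column spaces. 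I would prove this by a short case analysis on the support pattern of a generating set: a rank-one column space $\linz v$ is the column space of the idempotent $E$ with $E_{i,j}=v_iv_j^{-1}$ when $v_j\neq\0$ (and $\0$ otherwise), the column-space analogue of the construction \eqref{formula:Aplus}; and for two independent generators one checks directly that the resulting ``sector'' of $\linz^2$ is the column space of an explicit upper-triangular idempotent. This is routine but must be carried out for each zero pattern.

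The hard part, and the reason for the ``$(?)$'' markings, is the $n=3$ column for $\mat{3}{\linz}$ and $\upper{3}{\linz}$. By Proposition~\ref{Fountainicity}, $\mat{3}{\linz}$ is Fountain if and only if ${\rm ColFix}(A)$ is the column space of an idempotent for every $A$, and one would attempt to mimic the Boolean argument of Theorem~\ref{thm:B}: when every non-zero row of $A$ lies in a basis of ${\rm Row}(A)$ one uses Proposition~\ref{prop:lin}(iii) (the linearly ordered analogue of Proposition~\ref{basis}) to conclude $A\Rt\lid{A}$, leaving only the degenerate configurations, in which some row is a linear combination of the others, to be handled by exhibiting a suitable idempotent directly. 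The obstacle is that, in contrast to $\bool^3$ (where there are finitely many submodules and the degenerate cases can be enumerated exhaustively), the finitely generated submodules of $\linz^3$ form a continuum, and it is not clear that the intersection ${\rm ColFix}(A)$ of the relevant projective submodules is again projective, i.e.\ again a retract of $\linz^3$. Deciding whether this intersection is always the column space of an idempotent is exactly the content of the conjecture in the $n=3$ cells, and I expect this to be the genuinely difficult point; the same difficulty, restricted to upper-triangular idempotents, governs the $\upper{3}{\linz}$ cell.
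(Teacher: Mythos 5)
Your proposal covers the same ground as the paper's own treatment (the remark following the conjecture): the $n=1$ cells, the $n\geq 4$ cells via Proposition~\ref{notFountain}, the whole $\uni{n}{\linz}$ row via Corollary~\ref{uniS} together with the band/group observations, and the honest acknowledgement that the two cells marked ``(?)'' are genuinely open --- which is correct, since the statement is a conjecture precisely because $\mat{3}{\linz}$ and $\upper{3}{\linz}$ being Fountain is not settled. Where you diverge is at $n=2$. The paper proves regularity of $\mat{2}{\linz}$ by exhibiting explicit inner inverses (splitting into the cases $ad\geq bc$, $bc\geq ad$, and the anti-diagonal case), and proves abundance of $\upper{2}{\linz}$ by noting that $\uni{2}{\linz}$ consists of idempotents and then invoking Lemma~\ref{Rs} and Theorem~\ref{exact} for the components with a zero on the diagonal. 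You instead reduce both claims, via Lemma~\ref{LR}(i) and Corollary~\ref{exactRs}, to showing that every (upper-triangular) column space in $\linz^2$ is the column space of an (upper-triangular) idempotent, to be verified by a case analysis on zero patterns. This route is valid and arguably more conceptual, but it is only sketched, and it carries a wrinkle you should not gloss over: your rank-one idempotent $E_{i,j}=v_iv_j^{-1}$ is generally \emph{not} upper triangular (for $v=(v_1,v_2)^T$ with both entries non-zero one gets $E_{2,1}\neq\0$), so the upper-triangular case needs a separate idempotent such as the one with first column zero and second column $(bd^{-1},\1)^T$; the two-generator ``sector'' case likewise still has to be written out. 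The paper's explicit inverses buy a complete, checkable argument at the cost of looking ad hoc; your reduction buys uniformity at the cost of deferring the actual verification.

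One small omission: to justify the entry ``Fountain~(?)'' rather than something stronger in the $n=3$ column of the top two rows, you must also record that $\mat{3}{\linz}$ and $\upper{3}{\linz}$ are \emph{not} abundant. For $\upper{3}{\linz}$ this is Proposition~\ref{notAbund} (which you cite only for the $\uni{}$ row); for $\mat{3}{\linz}$ it follows from exactness of $\linz$ via Theorem~\ref{exact} combined with Il'in's theorem (i.e.\ Corollary~\ref{regular}), since $\linz$ is not a von Neumann regular ring. These are one-line citations, but they are needed to pin down the table.
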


\begin{remark}
(i) For $n=1$, $\mat{n}{\linz}$ is isomorphic to the multiplicative monoid of $\linz$;  a group with zero adjoined, which is plainly regular. For $a \in \linz$ define $\bar{a} = a^{-1}$ if $a \neq \0$ and $\bar{a}=\0$ otherwise. Thus $a \bar{a} a = a$ for all $a \in \linz$ and $a \bar{a} \leq \1$ with equality if $a \neq \0$. From this it is clear that
$$\left(\begin{array}{c c} \0 & b\\c & \0 \end{array}\right)\left(\begin{array}{c c} \0 & \bar{c}\\\bar{b} & \0 \end{array}\right)\left(\begin{array}{c c} \0 & b\\c & \0 \end{array}\right) = \left(\begin{array}{c c} \0 & b\\c & \0 \end{array}\right).$$
Suppose now that $a,b,c,d \in \linz$ with at least one of $a$ or $d$ non-zero. If $ad \geq bc$ we have $\1 \geq bc \bar{a} \bar{d}$, $a \geq bc\bar{d}$ and $d\geq bc\bar{a}$. From this one readily verifies that
$$\left(\begin{array}{c c} a & b\\c & d \end{array}\right)\left(\begin{array}{c c} \bar{a} & b\bar{a}\bar{d}\\c\bar{a}\bar{d} & \bar{d} \end{array}\right)\left(\begin{array}{c c} a & b\\c & d \end{array}\right) = \left(\begin{array}{c c} a & b\\c & d \end{array}\right).$$
Likewise, if $bc \geq ad$ one finds that
$$\left(\begin{array}{c c} a & b\\c & d \end{array}\right)\left(\begin{array}{c c}\bar{c} & d\bar{c}\bar{b}\\a\bar{c}\bar{b} & \bar{b} \end{array}\right)\left(\begin{array}{c c} a & b\\c & d \end{array}\right) = \left(\begin{array}{c c} a & b\\c & d \end{array}\right).$$
 Thus we conclude that $\mat{2}{\linz}$ is regular. Since $\linz$ is exact, it follows from Corollary \ref{exact} above, that $\mat{n}{\linz}$ is not abundant for $n \geq3$. Moreover, Proposition \ref{notFountain} shows that $\mat{n}{\linz}$ is not Fountain for all $n \geq4$.

(ii) Since $\upper{1}{\linz}=\mat{1}{\linz}$, it is clearly regular. Propositions \ref{notReg}, \ref{notAbund} and \ref{notFountain} show that $\upper{n}{\linz}$ is not regular for $n \geq 2$, not abundant for $n\geq 3$ and not Fountain for $n \geq 4$. Let us show that the monoid $\upper{2}{\linz}$ is abundant. The elements of $\uni{2}{S}$ are easily seen to be idempotent, and so it is clear that each element of $\full{2}{S}$ is $\R$-related to an idempotent in $\upper{2}{S}$. For the remaining three components of $\upper{2}{S}$, notice that one can apply Lemma \ref{Rs} and Theorem \ref{exact} to deduce that every element of $\upper{2}{S}$ is $\Rs$-related to an idempotent.

(iii) By Theorem \ref{uniS}, $\uni{n}{\linz}$ is Fountain for all $n$. For $n=1$ this is clearly the trivial group. For $n=2$, it is a band. For $n \geq 3$, we have that $\uni{n}{\linz}$ is not abundant.

Thus we leave open only the question of whether $\mat{3}{\linz}$  and $\upper{3}{\linz}$ are Fountain.
\end{remark}

\section{The semigroups $\upper{n}{\lin}$}
\label{sec:upper}

For the remainder of the paper we focus on the Fountain semigroups $\upper{n}{\lin}$, for $\linz \neq \bool$.  This family of semigroups turns out to have a particularly interesting structure. In the case where $\linz =\trop$, these semigroups have been studied by \cite{TaylorThesis}. (For $\linz=\bool$ these are, of course, trivial groups.)

Recall that $D_n(\lin)$ denotes the set of invertible diagonal matrices and that for any $A\in \upper{n}{\lin}$, we write $A= \rnorm{A}\diag{A}$ and $A=\diag{A}\lnorm{A}$ where $\diag{A}$ is the element of $D_n(\lin)$ with diagonal entries equal to those of $A$, and $\rnorm{A}, \lnorm{A} \in \uni{n}{\lin}$.

\begin{theorem}
\label{thm:upper*}
Let $A, B \in \upper{n}{\lin}$. Then the following are equivalent:

\begin{enumerate}[\rm (i)]
\item $A \Rs B$ in $\upper{n}{\lin}$;
\item $A \Rs B$ in $\upper{n}{\linz}$;
\item $A \Rs B$ in $\mat{n}{\linz}$;
\item $A \R B$ in $\mat{n}{\linz}$;
\item $A \R B$ in $\upper{n}{\linz}$;
\item $A \R B$ in $\upper{n}{\lin}$;
\item $A=BX$ for some $X \in D_n(\lin)$;
\item $\rnorm{A}=\rnorm{B}$.
\end{enumerate}
\end{theorem}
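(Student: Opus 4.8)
The plan is to organise the eight conditions into two overlapping blocks meeting at (iv). The first block $\{(i),(ii),(iii),(iv)\}$ concerns the relation $\Rs$, while the second block $\{(iv),(v),(vi),(vii),(viii)\}$ concerns column spaces and the normal form $\rnorm{A}$. Several links are immediate: $(ii)\Leftrightarrow(iii)$ is Lemma~\ref{Rs}; $(iii)\Leftrightarrow(iv)$ is Theorem~\ref{exact} (since $\linz$, being a linearly ordered idempotent semifield, is exact), which together with Lemma~\ref{LR} also identifies (iv) with the equality ${\rm Col}(A)={\rm Col}(B)$; and the inclusion $\R\subseteq\Rs$ together with the containments $\upper{n}{\lin}\subseteq\upper{n}{\linz}\subseteq\mat{n}{\linz}$ gives $(vi)\Rightarrow(v)\Rightarrow(iv)$ and $(ii)\Rightarrow(i)$ for free. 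Thus it remains only to run a cycle through (iv) tying in the second block, and to close the first block with the single genuinely hard arrow $(i)\Rightarrow(iv)$.

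For the second block I would first prove $(iv)\Leftrightarrow(viii)$. The columns of any $A\in\upper{n}{\lin}$ have nested, pairwise distinct supports $\{1\},\{1,2\},\dots,\{1,\dots,n\}$, and a short argument shows they are linearly independent (among columns of index $\le j$ only the $j$th is nonzero in row $j$, while any column of larger index is nonzero in a row where $A_{\star,j}$ vanishes), hence they form the basis of ${\rm Col}(A)$. By uniqueness of bases up to scaling \cite{Wagneur}, if ${\rm Col}(A)={\rm Col}(B)$ then matching basis vectors by their scaling-invariant supports forces $A_{\star,j}$ to be a scalar multiple of $B_{\star,j}$ for each $j$; normalising the diagonal yields $\rnorm{A}=\rnorm{B}$, and the converse is clear. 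Next $(viii)\Leftrightarrow(vii)$ is the direct computation with $X=\diag{B}^{-1}\diag{A}$. The one delicate link is $(vii)\Rightarrow(vi)$: a diagonal matrix does not lie in $\upper{n}{\lin}$ for $n\ge2$, so I cannot use $X$ itself as a connecting element. Instead, given $A=BX$, I build $U\in\upper{n}{\lin}$ with $U_{j,j}=X_{j,j}$ and off-diagonal entries chosen nonzero but small enough that $B_{i,k}U_{k,j}\le B_{i,j}X_{j,j}$ for all $i\le k<j$; such scalars exist because $\lin$ is a nontrivial linearly ordered group and hence has arbitrarily small positive elements. Then $BU=BX=A$, and symmetrically one finds $V$ with $AV=B$, giving $A\R B$ in $\upper{n}{\lin}$. (For $\linz=\bool$ the semigroup $\upper{n}{\lin}$ is trivial and everything is vacuous.)

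The heart of the proof is $(i)\Rightarrow(iv)$, which I would prove by contraposition. Suppose ${\rm Col}(A)\ne{\rm Col}(B)$, say ${\rm Col}(A)\not\subseteq{\rm Col}(B)$. Exactness of $\linz$ (condition (F2)) then yields ${\rm Ker}({\rm Col}(B))\not\subseteq{\rm Ker}({\rm Col}(A))$, i.e.\ a pair of row vectors $(u,u')$ with $uB=u'B$ but $uA\ne u'A$. Were $u,u'$ installable as the first rows of two matrices in $\upper{n}{\lin}$ agreeing in all other rows, I would obtain $XB=YB$ and $XA\ne YA$, contradicting $A\Rs B$ in $\upper{n}{\lin}$. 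The obstruction is that a row of a matrix in $\upper{n}{\lin}$ must have no zero entries, whereas the distinguishing pair from exactness may contain zeros. The main obstacle is therefore to \emph{perturb} $(u,u')$ into a distinguishing pair with full support: I replace each zero entry by a single sufficiently small nonzero scalar $\epsilon$, chosen below every nonzero value among the finitely many coordinatewise maxima $\bigvee_{i\le j}u_iB_{i,j}$ and $\bigvee_{i\le j}u_iA_{i,j}$ (and their primed counterparts). Because a tiny term cannot alter a strictly larger maximum, and because coordinates that were identically zero get perturbed identically in $u$ and $u'$, the equalities $uB=u'B$ survive while the inequality in the coordinate witnessing $uA\ne u'A$ persists. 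Installing the perturbed pair as the first rows of $X,Y\in\upper{n}{\lin}$ (the first row being unconstrained apart from nonvanishing, all later rows equal and arbitrary) then delivers the contradiction.

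Putting the pieces together, (i)--(iv) are equivalent via the hinge (iv), and the cycle $(iv)\Rightarrow(viii)\Rightarrow(vii)\Rightarrow(vi)\Rightarrow(v)\Rightarrow(iv)$ ties in the remaining four, so all eight conditions coincide. I expect the perturbation argument in $(i)\Rightarrow(iv)$ to be the crux: it is precisely there that both the exactness of $\linz$ and the \emph{linearity} of its order (through the availability of arbitrarily small nonzero scalars) are indispensable.
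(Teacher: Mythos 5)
Your proposal is correct, and it rests on the same three pillars as the paper's own proof: exactness of $\linz$, uniqueness of bases up to scaling, and perturbation of zero entries by sufficiently small elements of the linearly ordered group $\lin$. The organisation differs in two places. First, the paper proves $(iv)\Leftrightarrow(v)$ directly, using anti-negativity to show that any factorisation $A=BX$ with $X\in\mat{n}{\linz}$ forces $X$ to be upper triangular, and then handles $(v)\Rightarrow(vi)$ by perturbing the zero entries of a general upper triangular $X$; you bypass this by routing $(iv)\Rightarrow(viii)\Rightarrow(vii)\Rightarrow(vi)\Rightarrow(v)\Rightarrow(iv)$, so that your perturbation in $(vii)\Rightarrow(vi)$ only has to handle diagonal $X$. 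Second, the paper's hard implication is $(i)\Rightarrow(ii)$, obtained by perturbing the zero entries of a distinguishing pair of matrices $X,Y\in\upper{n}{\linz}$; you instead prove $(i)\Rightarrow(iv)$ by contraposition, first invoking exactness (F2) to extract a distinguishing pair of row vectors and then perturbing those and installing them as first rows --- in effect composing the first-row trick from the proof of Lemma~\ref{Rs} with the same perturbation. Both routes work, and yours makes the role of exactness more visible; note that the survival of the equalities $uB=u'B$ at coordinates where both sides vanish depends on the observation that $(uB)_j=\0$ forces $u_1=\cdots=u_j=\0$, which is exactly where the full support of $B$ on and above the diagonal enters. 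The one point I would tighten is the choice of $\epsilon$: what must lie below the nonzero coordinate maxima is each product $\epsilon B_{i,j}$ (respectively $\epsilon A_{i,j}$), not $\epsilon$ itself, but since there are only finitely many constraints this is a routine adjustment in a nontrivial linearly ordered abelian group.
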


\begin{proof}
Parts (ii) and (iii) are equivalent by Lemma \ref{Rs}. Since $\linz$ is exact, parts (iii) and (iv) are equivalent by Theorem \ref{exact}. Anti-negativity of $\linz$ means that the $i$th column of $A$ (respectively, $B$) cannot be expressed as a linear combination of columns $i+1,\ldots, n$ of $B$ (respectively, $A$). Thus if $A=BX$ and $B=AY$, then we must have $X, Y \in \upper{n}{\linz}$. This shows that (iv) and (v) are equivalent. The equivalence of (vii) and (viii), that (vi) implies (i), and that (vii) implies (iv) are obvious. To complete the proof we shall show that (iv) implies (vii),  that (v) implies (vi) and that  (i) implies (ii).

To see that (iv) implies (vii), first note that each column of $A$ cannot be expressed as a linear combination of the remaining columns. This shows that every column of $A$ is contained in a basis of ${\rm Col}(A)$. Since ${\rm Col}(A)={\rm Col}(B)$ and bases are unique up to permutation and scaling, we must have $A=BX$ where $X \in D_n(\lin)$.

Suppose that (v) holds, thus $A=BX$ for some $X \in \upper{n}{\linz}$. For each $i \leq j$ let $m(i,j) = \{ s: i \leq s \leq j, A_{i,j} = B_{i,s}X_{s,j}\}$. Since $A_{i,j} \neq \pmb{0}$ for all $i \leq j$, we must have $X_{s,j} \neq \pmb{0}$ for all $s \in m(i,j)$.
Let $X'$ be the matrix obtained by replacing all of the zero entries of $X$ lying on or above the diagonal by some element $g \in \lin$ such that for all $i \leq j $ and all $t$ one has $A_{i,j} > B_{i,t} g$. Then it is straightforward to check that
$$(BX')_{i,j} = \bigvee_{i \leq s \leq j} B_{i,s} X'_{s,j} =\bigvee_{s \in m(i,j)} B_{i,s} X_{s,j} =A_{i,j}.$$

To prove that (i) implies (ii), suppose for contradiction that $A \Rs B$ in $\upper{n}{\lin}$ but not in $\upper{n}{\linz}$. By symmetry of the relation, it suffices to consider the case where for some $X, Y \in \upper{n}{\linz}$ we have $XA=YA$, but $XB \neq YB$. Without loss of generality let us further suppose that  $p, q,r,s$ are such that
\begin{equation}
\label{assumption}
(XB)_{r,s} = X_{r,p}B_{p,s} > Y_{r,q}B_{q,s} = (YB)_{r,s}.
\end{equation}
Construct a new pair of matrices $X', Y'$ by replacing all of the $\pmb{0}$ entries of $X$ and $Y$ by an element $g \in \lin$ satisfying:
\begin{itemize}
\item[(1)] $g \leqq A_{i,j}A_{k,j}^{-1}, B_{i,j}B_{k,j}^{-1}$, for all $i \leq k \leq j$ and ,
\item[(2)]$X_{r,p}B_{p,s}> g B_{t,s}$ for all $t=1, \ldots, n$.
\end{itemize}
Notice that in doing so, terms corresponding to those zero terms that did not contribute to the maximum in any non-zero entry of the products ($XA, YA, XB, YB$) also do not contribute to the corresponding maximum in any of the new products ($X'A, Y'A, X'B, Y'B$). In particular, if $(XA)_{i,j}=(YA)_{i,j}$ is non-zero, then we find $(X'A)_{i,j} = (XA)_{i,j}=(YA)_{i,j} = (Y'A)_{i,j}$.

On the other hand, if $(XA)_{i,j} = (YA)_{i,j}=0$, then it follows that $X_{i,k}=Y_{i,k} = 0$ for all $i \leq k \leq j$. Replacing each of these $0$'s by $g$ then gives $(X'A)_{i,j} = g\bigvee_{i \leq k \leq j} A_{k,j}= (Y'A)_{i,j}$. This shows that we have $X'A = Y'A$.

Finally,
$$(X'B)_{r,s}  \geq X_{r,p} B_{p,s} > \bigvee_{r \leq t \leq s} Y'_{r,t} B_{t,s} = (Y'B)_{r,s}.$$
To see that the first inequality holds, note that \eqref{assumption} implies that $X_{r,p}$ is non-zero, and hence $X'_{r,p}=X_{r,p}$. The second inequality holds by \eqref{assumption} and our choice of $g$ satisfying (2). This gives $X'B \neq Y'B$. Since $X', Y' \in \upper{n}{\lin}$ this gives the desired contradiction.
\end{proof}

Together with its obvious left-right dual, the previous result yields:
\begin{corollary}
\label{thm:upperD}
Let $A, B \in \upper{n}{\lin}$. Then $A \D B$ in $\upper{n}{\lin}$if and only if  there exists $C \in \upper{n}{\lin}$ such that $\rnorm{A}=\rnorm{C}$ and $\lnorm{C}=\lnorm{B}$.
\end{corollary}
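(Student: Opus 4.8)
The plan is to exploit the fundamental identity $\D = \R \circ \L$, valid in any semigroup, to reduce the statement to the one-sided characterisations already in hand. Since $A \D B$ in $\upper{n}{\lin}$ holds precisely when there is some $C \in \upper{n}{\lin}$ with $A \R C$ and $C \L B$, and since Theorem~\ref{thm:upper*} tells us that $A \R C$ is equivalent to $\rnorm{A} = \rnorm{C}$, the only missing ingredient is the left-right dual statement: that $C \L B$ is equivalent to $\lnorm{C} = \lnorm{B}$.

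To obtain this dual, I would invoke the involutary anti-automorphism $\Delta$ of $\upper{n}{\lin}$ from Lemma~\ref{delta}, which by Lemma~\ref{symmetry} strongly exchanges $\L$ and $\R$; thus $C \L B$ if and only if $\Delta(C) \R \Delta(B)$, and the latter is equivalent to $\rnorm{\Delta(C)} = \rnorm{\Delta(B)}$ by Theorem~\ref{thm:upper*}. It then remains to verify the compatibility $\rnorm{\Delta(X)} = \Delta(\lnorm{X})$ for all $X \in \upper{n}{\lin}$. This is routine: writing $X = \diag{X}\lnorm{X}$ and applying the anti-automorphism gives $\Delta(X) = \Delta(\lnorm{X})\Delta(\diag{X})$, where $\Delta(\diag{X}) \in D_n(\lin)$ is diagonal (reflecting a diagonal matrix along the anti-diagonal yields a diagonal matrix) and $\Delta(\lnorm{X}) \in \uni{n}{\lin}$ is unitriangular; by uniqueness of the decomposition $\Delta(X) = \rnorm{\Delta(X)}\diag{\Delta(X)}$ this forces $\rnorm{\Delta(X)} = \Delta(\lnorm{X})$. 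Since $\Delta$ is a bijection, it follows that $C \L B$ if and only if $\lnorm{C} = \lnorm{B}$, as required.

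With both characterisations established, the corollary follows directly. For the forward implication, if $A \D B$ I would choose $C$ with $A \R C$ and $C \L B$, obtaining $\rnorm{A} = \rnorm{C}$ and $\lnorm{C} = \lnorm{B}$. Conversely, given any $C \in \upper{n}{\lin}$ with $\rnorm{A} = \rnorm{C}$ and $\lnorm{C} = \lnorm{B}$, Theorem~\ref{thm:upper*} gives $A \R C$ and its dual gives $C \L B$, whence $A \D B$. I do not anticipate any serious obstacle: the content is carried entirely by Theorem~\ref{thm:upper*} and the symmetry lemmas, and the only point needing genuine (if minor) care is the interaction of $\Delta$ with the normalisations $\rnorm{\cdot}$ and $\lnorm{\cdot}$, which is the step I would write out explicitly to avoid confusion over how the anti-diagonal reflection transports the two decompositions into one another.
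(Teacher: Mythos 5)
Your proposal is correct and follows essentially the same route as the paper, which derives the corollary from Theorem~\ref{thm:upper*} "together with its obvious left-right dual" via $\D=\R\circ\L$; the only difference is that you explicitly verify the dual statement by checking $\rnorm{\Delta(X)}=\Delta(\lnorm{X})$, a detail the paper leaves implicit. That verification is sound, so nothing further is needed.
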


\begin{theorem}
\label{thm:uni*}
Let $A, B \in \uni{n}{\lin}$. Then the following are equivalent:
\begin{enumerate}[\rm (i)]
\item $A \Rs B$ in $\uni{n}{\lin}$;
\item $A \Rs B$ in $\uni{n}{\linz}$;
\item $A \J B$ in $\uni{n}{\lin}$;
\item $A \J B$ in $\uni{n}{\linz}$;
\item any of the equivalent conditions of Theorem 1;
\item $A=B$.
\end{enumerate}
\end{theorem}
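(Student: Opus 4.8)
The plan is to treat $(\mathrm{vi})$ as the hub: each of $(\mathrm{i})$--$(\mathrm{v})$ holds trivially when $A=B$, so all the work lies in showing that each relation \emph{forces} $A=B$. Two implications are immediate. First, $(\mathrm{v})\Leftrightarrow(\mathrm{vi})$: among the equivalent conditions of Theorem~\ref{thm:upper*} is $\rnorm{A}=\rnorm{B}$, and for unitriangular matrices $\diag{A}=\diag{B}=I_n$, so $\rnorm{A}=A$ and $\rnorm{B}=B$; thus this condition reads exactly $A=B$. Second, $(\mathrm{ii})\Rightarrow(\mathrm{i})$, since $\uni{n}{\lin}\subseteq\uni{n}{\linz}$ means the defining condition for $\Rs$ in $\uni{n}{\linz}$ quantifies over more test elements and so yields the finer relation. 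It then remains to prove $(\mathrm{i})\Rightarrow(\mathrm{vi})$, $(\mathrm{iii})\Rightarrow(\mathrm{vi})$ and $(\mathrm{iv})\Rightarrow(\mathrm{vi})$, after which all six conditions are equivalent.

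The two $\J$-implications are handled by monotonicity. If $P,Q$ are unitriangular (or the identity) then $B\preceq PB\preceq PBQ$ by Lemma~\ref{ordered}(ii); hence $A=PBQ$ gives $B\preceq A$, and symmetrically $B=P'AQ'$ gives $A\preceq B$, so $A=B$. This is the $\J$-triviality of $\uni{n}{\lin}$ and $\uni{n}{\linz}$ and it disposes of $(\mathrm{iii})$ and $(\mathrm{iv})$ at once.

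The crux is $(\mathrm{i})\Rightarrow(\mathrm{vi})$: triviality of $\Rs$ in $\uni{n}{\lin}$. This does \emph{not} follow from $\J$-triviality, nor can it be obtained by collapsing $\Rs$ onto $\Rt$, since by Corollary~\ref{FountainS*} the semigroup $\uni{n}{\lin}$ is a non-trivial Fountain semigroup whose $\Rt$-classes need not be singletons; so $\Rs$ must be shown to be strictly finer than $\Rt$. I argue the contrapositive directly. Suppose $A\neq B$; fix a column $q$ in which they differ and let $p$ be the \emph{largest} row index with $A_{p,q}\neq B_{p,q}$, so that $A_{k,q}=B_{k,q}$ for all $p<k<q$, and, relabelling $A,B$ if necessary, assume $A_{p,q}<B_{p,q}$. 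I then build $X,Y\in\uni{n}{\lin}$ that agree in every entry except position $(p,q)$, where $X_{p,q}=B_{p,q}$ and $Y_{p,q}=A_{p,q}$. The remaining above-diagonal entries are taken equal in $X$ and $Y$, with the ``interior'' entries $X_{p,k}$ ($p<k<q$) chosen very small and the ``right-hand'' entries $X_{p,j}$ ($j>q$) very large; this is possible because $\lin$, being a non-trivial linearly ordered abelian group, is unbounded above and below.

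Since $X$ and $Y$ differ only in row $p$, so do the products, and it suffices to examine that row. For $j<q$ the entry $(p,q)$ plays no role and the rows agree; for $j>q$ the large reference entry $X_{p,j}=Y_{p,j}$ dominates every other term in $(XA)_{p,j},(YA)_{p,j},(XB)_{p,j},(YB)_{p,j}$, so these also agree. At position $(p,q)$ the decisive point is the diagonal contribution: in $(\,\cdot\,B)_{p,q}$ the term $X_{p,p}B_{p,q}=B_{p,q}$ already attains the larger value, masking the difference between $X_{p,q}$ and $Y_{p,q}$, whereas in $(\,\cdot\,A)_{p,q}$ the diagonal contributes only $A_{p,q}$, so the larger $X_{p,q}=B_{p,q}$ shows through; the small interior entries, whose common contribution coincides for $A$ and $B$ because $A_{k,q}=B_{k,q}$ for $p<k<q$, are arranged to remain below $B_{p,q}$ and so cannot spoil this. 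Hence $XB=YB$ but $XA\neq YA$, witnessing $A\not\Rs B$. I expect the main obstacle to be precisely this control of the ``context'' created by the other entries of row $p$; it is resolved by the extremal choice of $(p,q)$, which makes the interior contributions coincide for $A$ and $B$, together with the unboundedness of $\lin$, which lets the interior entries be pushed below, and the right-hand entries above, the relevant thresholds.
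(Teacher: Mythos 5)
Your proof is correct, but it takes a genuinely different route through the one implication that carries all the weight. The paper does not argue by contrapositive: it shows that $A \Rs B$ in $\uni{n}{\lin}$ already implies $A \Rs B$ in the larger semigroup $\upper{n}{\lin}$, by taking arbitrary $X,Y\in\upper{n}{\lin}$ with $XA=YA$, writing $X=\diag{X}\lnorm{X}$ and $Y=\diag{Y}\lnorm{Y}$, reading off $\diag{X}=\diag{Y}$ from the diagonals of the products, cancelling to reduce to unitriangular test pairs, and then applying the hypothesis; this lands in condition (v), and (vi) follows since $\rnorm{A}=A$ for unitriangular $A$. You instead prove (i)$\Rightarrow$(vi) head-on by exhibiting, for $A\neq B$, a witness pair $X,Y$ with $XB=YB$ but $XA\neq YA$. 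I have checked your construction: the extremal choice of $(p,q)$ makes the interior contributions $\bigvee_{p<k<q}X_{p,k}A_{k,q}=\bigvee_{p<k<q}X_{p,k}B_{k,q}$ coincide, the constraint on the interior entries is just $X_{p,k}<B_{p,q}A_{k,q}^{-1}$, and for $j>q$ a single lower bound $X_{p,j}\geq B_{p,q}(A_{q,j}\vee B_{q,j})$ forces $(XM)_{p,j}=(YM)_{p,j}$ for $M\in\{A,B\}$ (no iterative choice of the large entries is actually needed), so the argument is sound given that $\lin$ is a non-trivial linearly ordered group. What each approach buys: the paper's reduction is shorter, reuses Theorem~\ref{thm:upper*}, and yields as a by-product that $\Rs$ on $\uni{n}{\lin}$ is the restriction of $\Rs$ on $\upper{n}{\lin}$; your construction is self-contained and makes explicit why $\Rs$ collapses to equality. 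Your direct proof of $\J$-triviality of $\uni{n}{\lin}$ and $\uni{n}{\linz}$ via Lemma~\ref{ordered}(ii) likewise replaces the paper's citation of an external lemma and is correct.
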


\begin{proof}
Since $A = \rnorm{A}$ for all $A \in \uni{n}{\lin}$, it is clear that (v) implies (vi). Hence it suffices to show that each of (i)-(iv) implies (v). It is clear from the definitions that (iii) implies (iv), which by \cite[Lemma 4.1]{JF} implies (v). Since (ii) implies condition (i), it remains to show that (i) implies (v).

 Suppose that $A \Rs B$ in $\uni{n}{\lin}$. Let $X, Y \in \upper{n}{\lin}$ with $XA=YA$. Then $\diag{X}\lnorm{X}A=\diag{Y}\lnorm{Y}A$. By comparing the diagonal entries of $XA$ and $YA$ one finds that $\diag{X}=\diag{Y}$, and hence by cancellation $\lnorm{X} A = \lnorm{Y} A$. Since $A \Rs B$ in $\uni{n}{\lin}$ this gives $\lnorm{X} B = \lnorm{Y} B$. Left multipication by $\diag{X}$ then yields $XB=YB$. By symmetry of assumption, this shows that $A \Rs B$ in $\upper{n}{\lin}$, as required.
\end{proof}

\begin{corollary}
\label{one-onecorr}
There is a one-one correspondence between $\upper{n}{\lin}/\R$ and $\uni{n}{\lin}$.
\end{corollary}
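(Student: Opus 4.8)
The plan is to exhibit the bijection explicitly via the normalisation map $A \mapsto \rnorm{A}$ and to read off every required property from Theorem~\ref{thm:upper*}, in which the substantive work has already been done. First I would define $\Phi \colon \upper{n}{\lin}/\R \to \uni{n}{\lin}$ by $\Phi([A]_\R) = \rnorm{A}$, where $[A]_\R$ denotes the $\R$-class of $A$ in $\upper{n}{\lin}$. Before anything else I would check that this genuinely lands in the claimed set: writing $A = \rnorm{A}\diag{A}$ gives $(\rnorm{A})_{i,j} = A_{i,j}(A_{j,j})^{-1}$ for $i \leq j$, and since every entry of $A$ on or above the diagonal is a nonzero element of the group $\lin$, each such $(\rnorm{A})_{i,j}$ is a product of group elements and hence nonzero, while $(\rnorm{A})_{i,i} = \1$; thus $\rnorm{A} \in \uni{n}{\lin}$.

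Next I would verify that $\Phi$ is well defined and injective in a single stroke, since both follow from the equivalence of conditions (vi) and (viii) of Theorem~\ref{thm:upper*}, namely that for $A, B \in \upper{n}{\lin}$ one has $A \R B$ in $\upper{n}{\lin}$ if and only if $\rnorm{A} = \rnorm{B}$. The forward implication shows that $\rnorm{A}$ depends only on the $\R$-class of $A$, so the rule $[A]_\R \mapsto \rnorm{A}$ is unambiguous; the reverse implication shows that equal images force $A \R B$, so distinct $\R$-classes have distinct images.

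Finally I would establish surjectivity. Given any $C \in \uni{n}{\lin}$, observe that $C$ is itself an element of $\upper{n}{\lin}$, a unitriangular matrix with nonzero super-diagonal entries having all entries on and above the diagonal nonzero. Since $C$ is unitriangular we have $\diag{C} = I_n$ and hence $\rnorm{C} = C$, so $\Phi([C]_\R) = C$. Thus every element of $\uni{n}{\lin}$ is attained, and $\Phi$ is the desired one-one correspondence.

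I expect no genuine obstacle: the entire content resides in Theorem~\ref{thm:upper*}, and the corollary is a purely formal consequence of it. The only points needing a line of justification are the routine verification that $\rnorm{A} \in \uni{n}{\lin}$ and the observation that $A \mapsto \rnorm{A}$ fixes $\uni{n}{\lin}$ pointwise.
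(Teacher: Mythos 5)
Your proof is correct and follows essentially the same route as the paper's: both exhibit the correspondence via the normalisation map $A \mapsto \rnorm{A}$ and reduce well-definedness and injectivity to the equivalence of $A \R B$ with $\rnorm{A} = \rnorm{B}$ in Theorem~\ref{thm:upper*}, with surjectivity coming from $\rnorm{C}=C$ for $C \in \uni{n}{\lin}$. The only cosmetic difference is that the paper routes one implication through Theorem~\ref{thm:uni*} rather than citing the equivalence of (vi) and (viii) directly.
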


\begin{proof}
Consider the mapping $^{\diamond}:\upper{n}{\lin}\rightarrow \uni{n}{\lin}$ where $A\mapsto \rnorm{A}$. By the definition of $\diamond$, $\rnorm {(\rnorm{A})}=\rnorm{A}$ and if $A \in \uni{n}{\lin}$ then $A=\rnorm{A}$.
        For any $A,B \in \upper{n}{\lin}$, $\rnorm{A}=\rnorm{B}$ implies that $$A=\rnorm{A}\diag{A}=\rnorm{B}\diag{A}=B (\diag{B})^{-1}\diag{A}.$$ From Theorem \ref{thm:upper*} we see that  $A$ $\R$ $B$ in $\upper{n}{\lin}$. Conversely, if $A$ $\R$ $B$ in $\upper{n}{\lin}$ then $\rnorm{A}$ $\R$ $A$ $\R$ $B$ $\R$ $\rnorm{B}$, which implies that $\rnorm{A}$ $\R$ $\rnorm{B}$. From Theorem \ref{thm:uni*}, it follows that $\rnorm{A}=\rnorm{B}$. Hence $\R$ is the kernel of the map $^{\diamond}$,  which assures the one-one correspondence.
\end{proof}

\begin{corollary}\label{regut}
For $\linz \neq \bool$, the generalised regularity properties of $\upper{n}{\lin}$ and $\uni{n}{\lin}$ can be summarised as follows:
\begin{center}
\begin{tabular}{ l |l| l| l}
& $n=1$ & $n=2$ & $n \geq 3$\\
\hline
$\upper{n}{\lin}$& Regular(group) & Regular (inverse) & Fountain\\
$\uni{n}{\lin}$& Regular (group) & Regular (semilattice) & Fountain\\
\end{tabular}
\end{center}
\end{corollary}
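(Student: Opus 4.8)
The strategy is to read off each entry of the table from the descriptions of $\Rs$ and $\R$ obtained in Theorems~\ref{thm:upper*} and~\ref{thm:uni*}, together with the Fountainicity already recorded in Corollary~\ref{FountainS*}. Since $\upper{n}{\lin}$ and $\uni{n}{\lin}$ both carry the involutary anti-automorphism $\Delta$ of Lemma~\ref{delta}, Lemma~\ref{l-r} lets me test abundance on the $\Rs$-classes alone, while regularity amounts to each $\R$-class containing an idempotent. Thus the corollary reduces to deciding, in each case, whether the relevant one-sided class of every element meets $E(T)$. The case $n=1$ is immediate, as $\upper{1}{\lin}\cong\lin$ is a group and $\uni{1}{\lin}$ is trivial.

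For $n=2$ I would argue by hand. A direct multiplication shows that $\left(\begin{smallmatrix}\1&b\\ 0&\1\end{smallmatrix}\right)\left(\begin{smallmatrix}\1&b'\\ 0&\1\end{smallmatrix}\right)=\left(\begin{smallmatrix}\1&b\vee b'\\ 0&\1\end{smallmatrix}\right)$, so $\uni{2}{\lin}\cong(\lin,\vee)$ is a semilattice; in particular every element is idempotent and these commute. For $\upper{2}{\lin}$, every full-diagonal idempotent is unitriangular, so $E(\upper{2}{\lin})=\uni{2}{\lin}$ and hence the idempotents commute. Regularity is then witnessed by checking that $A=\left(\begin{smallmatrix}a&b\\ 0&c\end{smallmatrix}\right)$ satisfies $AXA=A$ with $X=\left(\begin{smallmatrix}a^{-1}&ba^{-1}c^{-1}\\ 0&c^{-1}\end{smallmatrix}\right)\in\upper{2}{\lin}$. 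A regular semigroup with commuting idempotents is inverse, yielding the $n=2$ entries.

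For $n\geq 3$ the key point is that the generalised relations collapse. On $\uni{n}{\lin}$, Theorem~\ref{thm:uni*} gives that $\Rs$ is equality; hence any element lying in the same $\Rs$-class as an idempotent must itself be idempotent, so $\uni{n}{\lin}$ is abundant, and equivalently regular, if and only if all of its elements are idempotent. On $\upper{n}{\lin}$, Theorem~\ref{thm:upper*} gives $\R=\Rs$, so abundance coincides with regularity, and $A$ is regular precisely when its $\R$-class contains an idempotent, i.e.\ (using that idempotents are unitriangular and that $A\R B\Leftrightarrow\rnorm{A}=\rnorm{B}$) precisely when $\rnorm{A}$ is idempotent. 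In both cases it therefore suffices to exhibit a single non-idempotent witness: taking the unitriangular matrix all of whose strictly-above-diagonal entries equal a fixed $g>\1$, its square has $(1,3)$-entry at least $g^2>g$, so it is not idempotent. Combined with Corollary~\ref{FountainS*}, this shows $\upper{n}{\lin}$ and $\uni{n}{\lin}$ are Fountain but neither abundant nor regular for $n\geq 3$.

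The computations for $n=2$ are the only genuinely hands-on part, and they are short; the substance is carried by Theorems~\ref{thm:upper*} and~\ref{thm:uni*}. The main conceptual step is simply recognising that the collapses ``$\Rs=\,$equality'' on $\uni{n}{\lin}$ and ``$\R=\Rs$'' on $\upper{n}{\lin}$ convert the abstract conditions \emph{abundant} and \emph{regular} into the concrete requirement that certain normalised matrices be idempotent, a requirement that one explicit example defeats.
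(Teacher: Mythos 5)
Your proof is correct and follows essentially the same route as the paper: the $n=1$ and $n=2$ cases are handled by the same explicit computations (your witness $X$ with $AXA=A$ is exactly the inverse formula the paper writes down), and for $n\geq 3$ you use the same collapse of $\Rs$ to equality on $\uni{n}{\lin}$ and of $\Rs$ to $\R$ on $\upper{n}{\lin}$ from Theorems~\ref{thm:uni*} and~\ref{thm:upper*}, defeated by the same all-$g$ unitriangular witness. The only cosmetic difference is that you cite Corollary~\ref{FountainS*} for Fountainicity where the paper cites Theorem~\ref{thm:uppertilde}; both are valid.
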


\begin{proof}
That $\upper{n}{\lin}$ and $\uni{n}{\lin}$ are Fountain for all $n$ follows from Theorem \ref{thm:uppertilde} together with the fact that $\lidlid{A}=\lid{A}$. It is clear that $\upper{1}{\lin}$ is isomorphic to the linearly ordered abelian group $(\lin,\cdot)$, whilst $\uni{1}{\lin}$ is isomorphic to the trivial group. The semigroup $\upper{2}{\lin}$ is inverse; if $A\in \upper{2}{\lin}$ then $A^{-1}$ is given by:\\
\begin{equation*}
A^{-1} = \left(
\begin{array}{c c}
(A_{1,1})^{-1} & A_{1,2}(A_{1,1})^{-1}(A_{2,2})^{-1} \\
\pmb{0} & (A_{2,2})^{-1}
\end{array}\right).
\end{equation*}
The semilattice of idempotents of $\upper{2}{\lin}$ is $\uni{2}{\lin}$, which as a semilattice is order isomorphic to $\lin$.  Finally, let $n \geq 3$, and consider the unitriangular matrix $G$ with all entries above the diagonal equal to $g>\pmb{1}$. A straightforward calculation reveals that this matrix is not idempotent, and hence by Theorem \ref{thm:uni*} it is neither regular nor abundant. Noting that $\{AX \in \uni{n}{\lin}: X \in D_n(\lin)\} = \{A\}$ we conclude from Theorem \ref{thm:upper*} that $\upper{n}{\lin}$ is neither regular nor abundant.
\end{proof}

\begin{theorem}\label{thm:upperH}
Each maximal subgroup in $\upper{n}{\lin}$ is isomorphic to $(\lin,.)$.
\end{theorem}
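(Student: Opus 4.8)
The plan is to identify the maximal subgroups of $\upper{n}{\lin}$ with the $\H$-classes of its idempotents and then pin these down explicitly. Recall that the maximal subgroups of any semigroup are exactly its group $\H$-classes, that is, the classes $H_E=R_E\cap L_E$ of idempotents $E$. Since $\lin$ is torsion-free (Lemma \ref{semifields}) and $E_{i,i}^2=E_{i,i}$ forces $E_{i,i}=\1$, every idempotent of $\upper{n}{\lin}$ lies in $\uni{n}{\lin}$; in particular $\rnorm{E}=\lnorm{E}=E$. So I fix such an idempotent $E$ and aim to compute $H_E$.

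First I would describe $R_E$ and $L_E$ using Theorem \ref{thm:upper*}. By the equivalence of $A\R B$ with $\rnorm{A}=\rnorm{B}$, we have $A\R E$ if and only if $\rnorm{A}=E$, which says exactly that $A=E\diag{A}$ for some $\diag{A}\in D_n(\lin)$. Dually (via the anti-automorphism $\Delta$, which exchanges $\L$ and $\R$), $A\L E$ if and only if $\lnorm{A}=E$, that is, $A=\diag{A}E$. Combining these, $A\in H_E$ if and only if $A=E\diag{A}=\diag{A}E$.

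The key step, which I expect to be the crux of the argument, is to show that the condition $ED=DE$ for a diagonal matrix $D=\diag{A}$ forces $D$ to be scalar. Writing $D=\mathrm{diag}(d_1,\dots,d_n)$ and comparing $(i,j)$-entries gives $E_{i,j}d_j=d_iE_{i,j}$ for all $i\leq j$. Because every entry on or above the diagonal of $E\in\upper{n}{\lin}$ is non-zero, we may cancel $E_{i,j}$ in the group $\lin$ to obtain $d_j=d_i$ whenever $i\leq j$; running along the superdiagonal yields $d_1=\cdots=d_n=:d$, so $D=dI$ and $A=dE$. Conversely, for any $d\in\lin$ one checks $\rnorm{dE}=\lnorm{dE}=E$, so $dE\in H_E$. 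Hence $H_E=\{dE:d\in\lin\}$.

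Finally I would verify that $\varphi\colon\lin\to H_E$, $d\mapsto dE$, is a group isomorphism. It is injective (read off any diagonal entry) and surjective (by the previous step), and it is a homomorphism since $(dE)(d'E)=dd'E^2=dd'E$, using $E^2=E$ and the fact that scalars commute with matrix multiplication. Therefore $H_E\cong(\lin,\cdot)$, and since $E$ was an arbitrary idempotent, every maximal subgroup of $\upper{n}{\lin}$ is isomorphic to $(\lin,\cdot)$.
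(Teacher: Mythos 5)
Your proof is correct and follows essentially the same route as the paper: both use Theorem \ref{thm:upper*} to reduce $A\,\H\,E$ to $A=E\diag{A}=\diag{A}E$, then force $\diag{A}$ to be scalar by cancelling a non-zero entry of $E$ (the paper uses the first row, you use general $(i,j)$-entries — an immaterial difference), and conclude $H_E=\{\lambda E:\lambda\in\lin\}\cong(\lin,\cdot)$.
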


\begin{proof}
Let $A,E \in \upper{n}{\lin}$ be such that $E^2=E$ and $A \, \H \, E$. Then by Theorem \ref{thm:upper*}, $\rnorm{A}=E=\lnorm{A}$. Therefore
$$A=\rnorm{A} \diag{A}=E\diag{A} \text{ and   }A=\diag{A} \lnorm{A}=\diag{A}E.$$
For each $i\in[n]$, $A_{1,i}=E_{1,i}A_{i,i}=A_{1,1}E_{1,i}$ gives $A_{i,i}=A_{1,1}=\lambda$. Then we have $A=\lambda E$.
Conversely, every $A\in \upper{n}{\lin}$ such that $A=\lambda E$ is $\R$ and $\L$-related with $E$. Hence
$$H_E=\{\lambda E: \lambda \in \lin \}.$$
Since $\H$-classes of idempotents are precisely the maximal subgroups of $\upper{n}{\lin}$, every maximal subgroup is isomorphic to $(\lin,.)$.
\end{proof}

Theorem \ref{thm:upper*} does not generalise to give similar characterisations of $\R$ and $\Rs$ for $\upper{n}{\linz}$, as the following example illustrates.

\begin{example}
\label{notRestrict}
Let $S$ be an arbitrary semiring.

(i) For $n\geq 2$, let $A,B \in \upper{n}{S}$ be the matrices with $A_{1,n}=A_{1,n-1}=B_{1,n}=\pmb{1}$ and all other entries equal to $\pmb{0}$, we see that $A \R B$ in $\mat{n}{S}$. However, in order for $A=BX$ to hold, we require
$$\pmb{1}=A_{1,n-1} = \sum_{k=1}^nB_{1,k}X_{k,n-1} = X_{n,n-1},$$
so that $X$ is not upper triangular. Thus the $\R$-relation on $\upper{n}{S}$ is not the restriction of the $\R$-relation on $\mat{n}{S}$.

(i) For $n\geq 3$, let $A,B \in \upper{n}{S}$ be the matrices with $A_{1,n-1}=A_{2,n}=B_{1,n} = B_{2,n-1}=1$ and all other entries equal to $\pmb{0}$. It is clear that these matrices have identical row spaces and identical column spaces, and hence are $\H$-related in $\mat{n}{S}$. However, in order for $A=XB$ to hold, we require
$$\pmb{1}=A_{2,n} = \sum_{k=1}^nX_{2,k}B_{k,n} = X_{2,1},$$
so that $X$ is not upper triangular. Thus the $\H$-relation on $\upper{n}{S}$ is not the restriction of the $\R$-relation on $\mat{n}{S}$.
\end{example}

\begin{example}
\label{notRestrictD}
Let $S$ be an anti-negative semiring, and $n\geq 2$. Take $A, B \in \upper{n}{S}$ to be the matrices with $A_{2,n}=B_{1,n-1}=\pmb{1}$, and all other entries equal to $\pmb{0}$. Then it is clear that $A \L C \R B$ in $\mat{n}{S}$, where $C$ is the matrix with a single non-zero entry in position $(1,n)$. We show that there does not exist an upper triangular matrix $X$ simultaneously satisfying $A \L X$ and $X \R B$ in $\upper{n}{S}$.

If $A \L X$, then we would have $PX=A$ for some $P \in \upper{n}{S}$, giving
$$A_{i,j} = \sum_{k=i}^{j} P_{i,k} X_{k,j} \mbox{ for all } i \leq j.$$
Since $A_{2,n}=\pmb{1}$ we note that there must exist $l>1$ such that $P_{2,l} \neq \pmb{0} \neq X_{l,n}$. Since $A_{2,j}=\pmb{0}$ for each $j<n$, and $P_{2,l} \neq \pmb{0}$, we then deduce (using the anti-negativity of $S$) that $X_{l,j}=\pmb{0}$ for all $l \leq j <n$.

Now, if $X \R B$, then we would have $XQ=B$ for some $Q \in \upper{n}{S}$, giving
$$B_{i,j} = \sum_{k=i}^{j} X_{i,k} Q_{k,j} \mbox{ for all } i \leq j.$$
Since $B_{1,n-1}=\pmb{1}$ we note that there must exist $h>n$ such that $X_{1,h} \neq \pmb{0} \neq Q_{h,n-1}$. But now, the first row of $X$ contains a non-zero entry in position $h<n$, whilst each element of ${\rm Row}(A)$ does not. This tells us that $X$ cannot be written in the form $RA$ for any $R \in \mat{n}{S}$, thus contradicting that $X \L A$ in $\upper{n}{S}$. Hence the $\D$ relation in $\upper{n}{S}$ is \emph{not} the restriction of the $\D$ relation in $\mat{n}{S}$.
\end{example}

At this point it is useful to have a simplified formulation for $\lid A$ and $\rid B$, where
$A, B\in UT_n(\lin)$. Direct calculation from the formula given in \eqref{formula:Aplus}, together with the definition of the anti-isomorphism $\Delta$ yields that for $i\leq j$ we have
\begin{eqnarray}
\lid{A}_{i,j}&:=&\bigwedge \{ A_{i,k} (A_{j,k})^{-1}: j\leq k \leq n \}.
\label{Aplus}\\
\rid{B}_{i,j}&:=&\bigwedge \{ B_{k,j} (B_{k,i})^{-1}: 1\leq k\leq i \}
 \label{Astar}
\end{eqnarray}
Now suppose that $E$ is an idempotent in $\upper{n}{\lin}$ such that $\lid{A}=E=\rid{B}$. For all $1\leq j \leq n$, $E_{i,j}A_{j,j}\leq A_{i,j}$ and $B_{i,i}E_{i,j}\leq B_{i,j}$. Thus without any loss of generality, we can formulate
\begin{eqnarray}
\label{Ralpha}
A_{i,j}&:=&E_{i,j}A_{j,j}\alpha _{i,j}, \mbox{ where } \alpha_{i,j} \geq \1 \mbox{ is uniquely determined},\\
\label{Lbeta}
B_{i,j}&:=&B_{i,i}E_{i,j}\beta _{i,j}, \mbox{ where } \beta_{i,j} \geq \1 \mbox{ is uniquely determined}.
\end{eqnarray}

For each $i \in [n]$, $A_{i,i}=A_{i,i} \alpha _{i,i}$, and $E_{i,n} = A_{i,n}(A_{n,n})^{-1}$. From \eqref{Ralpha}, it follows that $\alpha_{i,i} = \alpha_{i,n} = \pmb{1}$. Similarly, it is easy to see that $\beta _{i,i}=\pmb{1}$ and $\beta _{1,i}=\pmb{1}$ for all $1\leq i\leq n$.

If $A,B \in \uni{n}{\lin}$ with $\lid{A}=E=\rid{B}$, then since $A_{i,i} = B_{i,i}=\pmb{1}$ \eqref{Ralpha} and \eqref{Lbeta}  become:
\begin{eqnarray}
A_{i,j}:=E_{i,j}\alpha _{i,j}\text{ where }\alpha _{i,j}\geq \pmb{1}
\label{Ralpha*}\\
B_{i,j}:=E_{i,j}\beta _{i,j}\text{ where }\beta _{i,j}\geq \pmb{1}.
\label{Lbeta*}
\end{eqnarray}

Since it is crucial to our later work, we record the following observation as  a lemma.

\begin{lemma}\label{lem:thetwoalphabetas} Let $A,B\in UT_n(\lin)$ and express
$A_{i,j}$ and $B_{i,j}$ as in \eqref{Ralpha} and \eqref{Lbeta}. Then the same elements
$\alpha_{i,j}$ and $\beta_{i,j}$ appear in the corresponding descriptions of
$A^{\diamond}$ and $B^{\bullet}$.
\end{lemma}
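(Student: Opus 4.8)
The plan is to reduce everything to the two defining identities $\rnorm{A}_{i,j} = A_{i,j}(A_{j,j})^{-1}$ and $\lnorm{B}_{i,j} = (B_{i,i})^{-1}B_{i,j}$, which are immediate from $A = \rnorm{A}\diag{A}$ and $B = \diag{B}\lnorm{B}$, together with the fact that passing to the unitriangular part leaves the associated idempotent unchanged. All the work happens inside the abelian group $\lin$, where cancellation is available.

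First I would treat the $\alpha$-coefficients. By Theorem \ref{thm:uppertilde} we have $\lid{(\rnorm{A})} = \lid{A} = E$, so the idempotent appearing in the description \eqref{Ralpha} of $\rnorm{A}$ is the same $E$. Since $\rnorm{A} \in \uni{n}{\lin}$ has all diagonal entries equal to $\1$, that description takes the form \eqref{Ralpha*}, namely $\rnorm{A}_{i,j} = E_{i,j}\alpha'_{i,j}$ with $\alpha'_{i,j} \geq \1$ uniquely determined. On the other hand, substituting \eqref{Ralpha} into $\rnorm{A}_{i,j} = A_{i,j}(A_{j,j})^{-1}$ and using commutativity of $\lin$ gives $\rnorm{A}_{i,j} = E_{i,j}A_{j,j}\alpha_{i,j}(A_{j,j})^{-1} = E_{i,j}\alpha_{i,j}$. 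As $E_{i,j}$ is a non-zero element of the group $\lin$, the factor $c$ in any equation $\rnorm{A}_{i,j} = E_{i,j}c$ is uniquely determined, whence $\alpha'_{i,j} = \alpha_{i,j}$ for all $i\leq j$.

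The $\beta$-coefficients are handled dually. Applying the anti-isomorphism $\Delta$ one checks $\lnorm{B} = \Delta(\rnorm{(\Delta(B))})$ and $\rid{B} = \Delta(\lid{(\Delta(B))})$, so applying $\Delta$ to the identity $\lid{(\rnorm{(\Delta(B))})} = \lid{(\Delta(B))}$ of Theorem \ref{thm:uppertilde} yields $\rid{(\lnorm{B})} = \rid{B} = E$; thus the same $E$ governs the description \eqref{Lbeta} of $\lnorm{B}$. Since $\lnorm{B}\in\uni{n}{\lin}$, this reads $\lnorm{B}_{i,j} = E_{i,j}\beta'_{i,j}$ with $\beta'_{i,j}\geq\1$ unique, while substituting \eqref{Lbeta} into $\lnorm{B}_{i,j} = (B_{i,i})^{-1}B_{i,j}$ gives $\lnorm{B}_{i,j} = (B_{i,i})^{-1}B_{i,i}E_{i,j}\beta_{i,j} = E_{i,j}\beta_{i,j}$. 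Uniqueness again forces $\beta'_{i,j} = \beta_{i,j}$.

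There is no serious obstacle here: every step is a one-line manipulation in $\lin$, and the only structural input is that the idempotent associated with $A$ (respectively $B$) coincides with that of $\rnorm{A}$ (respectively $\lnorm{B}$), which is exactly the content of Theorem \ref{thm:uppertilde} and its $\Delta$-dual. The single point worth flagging is that this preservation of $E$ is precisely what makes the phrase ``the same $\alpha_{i,j}$ and $\beta_{i,j}$'' meaningful, since otherwise the representations of $A$ and $\rnorm{A}$ would a priori be expressed relative to different idempotents.
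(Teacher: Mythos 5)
Your proof is correct and follows essentially the same route as the paper: the paper's own (very terse) argument likewise rests on the identities $(A^{\diamond})^{(+)}=\lid{A}$ and $(B^{\bullet})^{(\ast)}=\rid{B}$ together with $A=\rnorm{A}\diag{A}$ and $B=\diag{B}\lnorm{B}$, followed by cancellation in $\lin$. You have merely written out the details (including the $\Delta$-dualisation for the $\beta$'s and the uniqueness of the factor against the invertible $E_{i,j}$) that the paper leaves implicit.
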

\begin{proof} This follows from the definitions and the facts that $(A^{\diamond})^{(+)}=\lid A$
and $(B^{\bullet})^{(*)}=\rid B$, and  $A= \rnorm{A}\diag{A}$ and $B=\diag{B}\lnorm{B}$.
\end{proof}

In many of the contexts in which Fountain semigroups have arisen, such as that of
Ehresmann and adequate semigroups \cite{Kambites:2011,Branco:2015}, restriction and ample semigroups \cite{Fountain:2009}, and wider classes \cite{Gould:2012}
$\Rt$ and $\Lt$ are, respectively, left and right congruences. We now show that not only is this not true of the relations $\Rt$ and $\Lt$ in  $UT_n(\lin)$ but that it fails in the most extreme manner possible, as we now state and prove.

\begin{proposition}\label{prop:leftcong}
Let $A,B\in \upper{n}{\lin}$. Then $CA$ $\Rt$ $CB$ for all $C\in \upper{n}{\lin}$ exactly if $A$ $\R$ $B$.
\end{proposition}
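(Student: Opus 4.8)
The plan is to translate everything into the idempotent $\lid{(\,\cdot\,)}$, since by Theorem~\ref{thm:uppertilde} the relation $\Rt$ on $\upper{n}{\lin}$ is governed by it: $X \Rt Y$ if and only if $\lid{X}=\lid{Y}$. The forward implication is then immediate. If $A \R B$, then by Theorem~\ref{thm:upper*} we may write $A = BX$ with $X \in D_n(\lin)$, so for every $C$ we have $CA = (CB)X$ with $X \in D_n(\lin)$; applying Theorem~\ref{thm:upper*} once more gives $CA \R CB$, and since $\R \subseteq \Rt$ in any semigroup, $CA \Rt CB$.

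For the converse I would first reduce to the unitriangular case. Writing $A = \rnorm{A}\diag{A}$, we have $CA = (C\rnorm{A})\diag{A}$; since $\rnorm{(YD)} = \rnorm{Y}$ for $D \in D_n(\lin)$ and $\lid{Y} = \lid{(\rnorm{Y})}$ (Theorem~\ref{thm:uppertilde}), it follows that $\lid{(CA)} = \lid{(C\rnorm{A})}$, and likewise for $B$. Hence $CA \Rt CB$ if and only if $C\rnorm{A} \Rt C\rnorm{B}$, while $A \R B$ if and only if $\rnorm{A}=\rnorm{B}$. As $\rnorm{A}, \rnorm{B} \in \uni{n}{\lin}$ and $\R$ is equality on $\uni{n}{\lin}$ (Theorem~\ref{thm:uni*}), it suffices to prove the contrapositive in the reduced form: for $A, B \in \uni{n}{\lin}$ with $A \neq B$, there exists $C \in \uni{n}{\lin}$ (a fortiori in $\upper{n}{\lin}$) with $\lid{(CA)} \neq \lid{(CB)}$.

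To build such a witness, choose indices $i<j$ with $A_{i,j} \neq B_{i,j}$. The idea is to pick $C$ that, after left multiplication, leaves the $(i,j)$-entry untouched but inflates every competing term in the infimum \eqref{Aplus} defining $\lid{(CA)}_{i,j} = \bigwedge_{k \geq j}(CA)_{i,k}\bigl((CA)_{j,k}\bigr)^{-1}$. Because $\linz \neq \bool$, the group $\lin$ is a non-trivial linearly ordered abelian group, hence unbounded below and above, so we may fix $\epsilon \in \lin$ sufficiently small and $N \in \lin$ sufficiently large. Let $C \in \uni{n}{\lin}$ have all off-diagonal entries equal to $\epsilon$, except that $C_{i,k} = N$ for all $k > j$. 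Choosing $\epsilon$ small enough (finitely many conditions, applied to both $A$ and $B$) ensures that left multiplication by $C$ alters no row other than $i$ and does not disturb the initial segment of row $i$, so that $(CA)_{m,t} = A_{m,t}$ whenever $m \neq i$, and $(CA)_{i,t} = A_{i,t}$ for $t \leq j$; in particular $(CA)_{i,j} = A_{i,j}$ and $(CA)_{j,k} = A_{j,k}$. On the other hand $(CA)_{i,k} \geq C_{i,k} = N$ for $k > j$.

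It then remains to read off $\lid{(CA)}_{i,j}$: the term $k=j$ equals $(CA)_{i,j} = A_{i,j}$, while for $k>j$ the term is $(CA)_{i,k}\bigl((CA)_{j,k}\bigr)^{-1} \geq N (A_{j,k})^{-1}$, which exceeds $A_{i,j}$ once $N$ is large enough. Hence $\lid{(CA)}_{i,j} = A_{i,j}$, and the identical computation with the same $C$ gives $\lid{(CB)}_{i,j} = B_{i,j} \neq A_{i,j}$ (note that when $j = n$ the infimum has only the single term $k=n$, and no inflation is needed). Thus $\lid{(CA)} \neq \lid{(CB)}$, completing the contrapositive. The crux of the argument, and the only real obstacle, is precisely this construction: since the infimum in \eqref{Aplus} can absorb the value of an individual entry, equality of $\lid{A}$ and $\lid{B}$ does not by itself detect $A \neq B$, and the work lies in engineering $C$ so that left multiplication \emph{exposes} a chosen discrepant entry; this is exactly where the unboundedness of $\lin$ (equivalently $\linz \neq \bool$) is essential.
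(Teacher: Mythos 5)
Your proof is correct, and the converse direction takes a genuinely different route from the paper's. The paper first feeds in the single witness $C$ with $C_{i,j}=(\lid{A})_{i,j}\wedge(\lid{B})_{i,j}$ to get $CA=A$ and $CB=B$, hence $\lid{A}=\lid{B}=E$; it then parametrises $A_{i,j}=A_{j,j}E_{i,j}\alpha_{i,j}$ and $B_{i,j}=B_{j,j}E_{i,j}\gamma_{i,j}$ and runs a descending induction on columns, deriving a contradiction from any discrepancy $\alpha_{i,\ell}\neq\gamma_{i,\ell}$ by means of a delicately tailored $C$ built from $E$ and the $\gamma$'s. You instead reduce to unitriangular matrices via the identity $\lid{(CA)}=\lid{(C\rnorm{A})}$ --- a clean observation the paper does not exploit --- and then, for any discrepant entry $(i,j)$ of $\rnorm{A}\neq\rnorm{B}$, exhibit one crude witness: off-diagonal entries $\epsilon$ small enough to leave the relevant rows of $A$ and $B$ undisturbed, together with large entries $N$ in positions $(i,k)$ for $k>j$, so that every competing term in $\lid{(CA)}_{i,j}=\bigwedge_{k\geq j}(CA)_{i,k}\bigl((CA)_{j,k}\bigr)^{-1}$ is pushed above the $k=j$ term and the discrepant entry is exposed directly. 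I checked the details (in particular that rows other than $i$, and the initial segment of row $i$, are genuinely preserved, and that the same $\epsilon,N$ serve both $A$ and $B$) and the argument goes through; it is shorter than the paper's, avoids both the two-stage structure and the induction, and makes transparent that all that is needed is invertibility of entries plus finite meets and joins in the linearly ordered group $\lin$ (so your appeal to unboundedness is stronger than necessary --- a finite subset of a totally ordered set already has a maximum and minimum). What it gives up is the intermediate conclusion $\lid{A}=\lid{B}$ and the $\alpha,\gamma$ parametrisation, which the paper records because it is reused in the later analysis of the $\Rt$- and $\Ht$-classes.
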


\begin{proof}
Let $A,B\in \upper{n}{\lin}$. It is clear that if $A$ $\R$ $B$ then $CA$ $\R$ $CB$ since $\R$ is a left congruence and hence $CA$ $\Rt$ $CB$. We prove the converse. Suppose $CA$ $\Rt$ $CB$ for all $C \in \upper{n}{\lin}$. Let $C\in \upper{n}{\lin}$ be given by:
\begin{equation*}
C_{i,j}=(\lid{A})_{i,j}\wedge (\lid{B})_{i,j}.
\end{equation*}
Then, $C \preceq \lid{A}$ and $C \preceq \lid{B}$. So $C_{i,k}A_{k,j}\leq \lid{A}_{i,k}A_{k,j}\leq A_{i,j}$ and $C_{i,k}B_{k,j}\leq \lid{B}_{i,k}B_{k,j}\leq B_{i,j}$ for all $1\leq i\leq k\leq
j\leq n$. Since $C_{i,i}= \pmb{1}$ so $(CA)_{i,j}=A_{i,j}$ and $(CB)_{i,j}=B_{i,j}$. By our assumption, we have $A$\ $\Rt$\ $B$ hence $\lid{A}=\lid{B}$.
Let us write $E=\lid{A}=\lid{B}$. Using Formulae \eqref{Ralpha}, we can write
\begin{equation}
A_{i,j}=A_{j,j}E_{i,j}\alpha _{i,j} \text{ and  } B_{i,j}=B_{j,j}E_{i,j}\gamma_{i,j}
\label{alphagamma}
\end{equation}
where $\alpha _{i,j}$, $\gamma_{i,j}\geq \pmb{1}$. For each $i\in [n]$, $\alpha _{i,i}= \alpha _{i,n}=\pmb{1}$ and $\gamma _{i,i}=\gamma _{i,n}=\pmb{1}$.

\textbf{Claim:} We claim that $\alpha _{i,j}=\gamma_{i,j}$ for each $1<i<j<n$.

\textbf{Proof of Claim}: Fix $i,\ell\in [n]$ where $i< \ell <n$. Suppose contrary that $\alpha _{i,\ell}\neq \gamma _{i,\ell}$ given $\alpha _{i,s}=\gamma_{i,s}$ for all $s$ such that $i<\ell<s\leq n$. Then formulae in \eqref{alphagamma} tell us that $$A_{i,s}=E_{i,s}A_{s,s}\alpha_{i,s}=E_{i,s}\gamma_{i,s}A_{s,s}=B_{i,s}(B_{s,s})^{-1}A_{s,s}$$ so each column vector $A_{*,s}$ is a scalar multiple of the corresponding vector $B_{*,s}$ where $\ell <s\leq n$.

Now $\alpha _{i,\ell}\neq \gamma_{i,\ell}$ implies that either $\gamma _{i,\ell}>\alpha _{i,\ell}$ or $\alpha _{i,\ell}>\gamma_{i,\ell}$. Without loss of generality, assume that $\gamma_{i,\ell}>\alpha _{i,\ell}$ and let $C\in \upper{n}{\lin}$ with its $i$th row as follows:
\begin{equation*}
C_{i,j}=\left\{
\begin{array}{lll}
E_{i,j}(\gamma _{i,\ell})^{-1} &  & \text{if }i\leq j<\ell \\
E_{i,\ell}(\gamma _{i,\ell})^{-1}\alpha _{i,\ell} &  & \text{if }j=\ell \\
E_{i,j}\gamma _{i,\ell} \hdots \gamma _{i,j} &  & \text{if }
\ell<j\leq n
\end{array}
\right.
\end{equation*}
while all other rows are the same as those of $E$.

Let $X \in M_n(\lin)$ and suppose that $EX=X$. Since all rows of $C$ except row $i$ agree with the rows of $E$, we see that all rows of $CX$ except the $i$th row will be equal to the corresponding row of $X$.

For $1 \leq j \leq n$, the $(i,j)$th entry of $CX$ is given by $(CX)_{i,j} = \bigvee_{i \leq k \leq n} C_{i,k}X_{k,j}$. Using the definition of the $i$th row of $C$ and the fact that $E_{i,i}=1$, we see that $(CX)_{i,j}$ is given by
\begin{eqnarray*}
 &&C_{i,i}X_{i,j} \vee \left(\bigvee_{i < k <l} C_{i,k}X_{k,j}\right) \vee C_{i,l}X_{l,j} \vee \left(\bigvee_{l < k \leq n} C_{i,k}X_{k,j}\right)\\
&=& \gamma_{i,l}^{-1}X_{i,j} \vee \left(\bigvee_{i < k <l} E_{i,k}\gamma_{i,l}^{-1}X_{k,j}\right) \vee E_{i,l}\gamma_{i,l}^{-1}\alpha_{i,l}X_{l,j} \vee \left(\bigvee_{l < k \leq n} E_{i,k}(\gamma_{i,l} \cdots \gamma_{i,k})X_{k,j}\right)\\
&=& \gamma_{i,l}^{-1}\left[X_{i,j} \vee \left(\bigvee_{i < k <l} E_{i,k}X_{k,j}\right) \vee E_{i,l}X_{l,j}\alpha_{i,l}\right] \vee \left(\bigvee_{l < k \leq n} E_{i,k}(\gamma_{i,l}\gamma_{i,l+1} \cdots \gamma_{i,k})X_{k,j}\right).
\end{eqnarray*}
Since $EX=X$, we have $\bigvee_{i <k <l} E_{i,k}X_{k,j} \leq X_{i,j}$, and hence the expression in the square brackets rearranges and simplifies to give:
\begin{eqnarray*}
(CX)_{i,j} &=& \gamma_{i,l}^{-1}\left[E_{i,l}X_{l,j}\alpha_{i,l} \vee X_{i,j}\right] \vee \left(\bigvee_{l < k \leq n} E_{i,k}(\gamma_{i,l}\gamma_{i,l+1} \cdots \gamma_{i,k})X_{k,j}\right).
\end{eqnarray*}

Consider the effect of setting $X=A$ and $X=B$ in the above. We shall show that, in each case, the right-most non-zero term of this summation dominates. There are therefore three cases to consider:

First, if $i \leq j <l$ we have $A_{k,j}=B_{k,j}=0$ for all  $k \geq l$. Thus
$$(CA)_{i,j} = \gamma_{i,l}^{-1}A_{i,j} \mbox{ and }(CB)_{i,j} = \gamma_{i,l}^{-1}B_{i,j}.$$

Second, if $j=l$ we have $A_{k,l}=0$ and $B_{k,l}=0$ for all  $k > l$. Thus
\begin{eqnarray*}
(CA)_{i,l} = \gamma_{i,l}^{-1}\left[E_{i,l}A_{l,l}\alpha_{i,l}\vee A_{i,l}\right] = \gamma_{i,l}^{-1}A_{i,l},\\
(CB)_{i,l} = \gamma_{i,l}^{-1}\left[E_{i,l}B_{l,l}\alpha_{i,l}\vee B_{i,l}\right] = \gamma_{i,l}^{-1}B_{i,l},
\end{eqnarray*}
where right-hand equalites follow from the fact that $A_{l,l}E_{i,l}\alpha_{i,l}=A_{i,l}$ and $B_{l,l}E_{i,l}\alpha_{i,l}\leq B_{l,l}E_{i,l}\gamma_{i,l} =B_{i,l}$.

Third, if $l<j \leq n$ we have $A_{k,j}=0=B_{k,j}$ for all  $k > j$. Thus for $X=A$ or $X=B$ we have
\begin{eqnarray*}
(CX)_{i,j} &=& \gamma_{i,l}^{-1}\left[E_{i,l}X_{l,j}\alpha_{i,l}\vee X_{i,j}\right] \vee \left(\bigvee_{l < k \leq j} E_{i,k}(\gamma_{i,l}\gamma_{i,l+1} \cdots \gamma_{i,k})X_{k,j}\right)
\end{eqnarray*}
We show that the term $E_{i,j}(\gamma_{i,l}\gamma_{i,l+1} \cdots \gamma_{i,j})X_{j,j}$ dominates all others in the supremum in round brackets. First note that by assumption $\alpha_{i,j}=\gamma_{i,j}$,; let us call this common value $\delta_{i,j}$. Then, by commutativity, this final term can be rewritten as $(X_{j,j}E_{i,j}\delta_{i,j})(\gamma_{i,l}\gamma_{i,l+1} \cdots \gamma_{i,j-1})$. Since $X=A$ or $X=B$, by (17) we see that the latter is equal to $X_{i,j}(\gamma_{i,l}\gamma_{i,l+1} \cdots \gamma_{i,j-1})$. Thus we seek to prove that each term in the supremum is dominated by  $X_{i,j}\gamma_{i,l}\gamma_{i,l+1} \cdots \gamma_{i,j-1}$.

To see this, we recall that:
\begin{itemize}
\item[(i)] the $\gamma$'s were chosen so that $\gamma_{i,q} \geq 1 \geq \gamma_{i,q}^{-1}$ for all $q$;
\item[(ii)] since $EX=X$ we have $E_{i,k}X_{k,j} \leq X_{i,j}$ for all $k$;
\item[(iii)] for the fixed values $i$ and $l$ we have assumed that $\alpha_{i,l}< \gamma_{i,l}$.
\end{itemize}
From this one sees that for all $l<k<j$ we have
\begin{eqnarray*}
X_{i,j}\gamma_{i,l}^{-1} &&\leq \qquad X_{i,j}\qquad \leq\qquad X_{i,j}\gamma_{i,l}\gamma_{i,l+1} \cdots \gamma_{i,j-1},\\
\gamma_{i,l}^{-1}E_{i,l}X_{l,j}\alpha_{i,l}&&  \leq \qquad X_{i,j}\gamma_{i,l} \quad \leq \qquad X_{i,j}\gamma_{i,l}\gamma_{i,l+1} \cdots \gamma_{i,j-1},\mbox{and} \\
E_{i,k}\gamma_{i,l}\gamma_{i,l+1} \cdots \gamma_{i,k}X_{k,j} &&\leq \qquad X_{i,j}\gamma_{i,l}\gamma_{i,l+1} \cdots \gamma_{i,j-1}.
\end{eqnarray*}
Now, by our assumption $CA\, \widetilde{\mathcal{R}}\, CB$,  by Corollary \ref{FountainS*}  we must have
$(\lid{CA})_{i,\ell}=(\lid{CB})_{i,\ell}$. For $X=A, B$ we therefore have
\begin{eqnarray*}
(CX)_{i,k}((CX)_{\ell,k})^{-1} &=&\left\{
\begin{array}{lll}
X_{i,\ell}(\gamma _{i,\ell})^{-1}(X_{\ell,\ell})^{-1} &  & \text{if }k=\ell \\
X_{i,k}\gamma _{i,\ell}\hdots \gamma _{i,\ell-1}(X_{\ell,k})^{-1} &  & \text{if }
\ell<k\leq n
\end{array}
\right.  \\
&=&\left\{
\begin{array}{lll}
X_{i,\ell}(X_{\ell,\ell})^{-1}(\gamma _{i,\ell})^{-1} &  & \text{if }k=\ell \\
X_{i,k}(X_{\ell,k})^{-1}\gamma _{i,\ell}\hdots \gamma _{i,\ell-1} &  & \text{if }
\ell<k\leq n
\end{array}
\right.
\end{eqnarray*}
Since $E_{i,\ell}\leq A_{i,k}(A_{\ell,k})^{-1}$ for all $k$, where $\ell\leq k\leq n$ and $\alpha _{i,\ell}(\gamma _{i,\ell})^{-1}< \pmb{1}$, we have $A_{i,l}(A_{l,l})^{-1}(\gamma_{i,l})^{-1}= E_{i,\ell}\alpha _{i,\ell}(\gamma_{i,\ell})^{-1}<A_{i,k}(A_{\ell,k})^{-1}\gamma _{i,\ell}\hdots \gamma _{i,\ell-1}$. Thus
$$(\lid{CA})_{i,\ell}=\bigwedge \{(CA)_{i,k}((CA)_{\ell,k})^{-1}:\ell\leq k\leq n\} =E_{i,\ell}\alpha _{i,\ell}(\gamma _{i,\ell})^{-1}.$$
On the other hand, since $B_{i,l}(B_{l,l})^{-1}(\gamma_{i,l})^{-1}=E_{i,\ell}\leq B_{i,k}(B_{\ell,k})^{-1}$, we find
$$(\lid{CB})_{i,\ell}=\bigwedge \{(CB)_{i,k}((CB)_{\ell,k})^{-1}:\ell\leq k\leq n\} =E_{i,\ell}.$$
But now we must have $E_{i,\ell}\alpha _{il}(\gamma _{il})^{-1}=E_{i,\ell}$, which is true if and only if $\alpha
_{il}=\gamma _{il}$, and hence providing the desired contradiction.

Dually, if we assume $\alpha _{il}>\gamma _{il}\geq \pmb{1}$ and reverse
their roles in the formation of matrix $C$, we also arrive at a contradiction. So
our supposition is wrong and $\alpha _{il}=\gamma _{il}$ for all $1\leq
i<\ell\leq n$. Thus we conclude that
\begin{equation*}
A_{i,j}=A_{j,j}(E_{i,j}\alpha _{i,j})=A_{j,j}(E_{i,j}\gamma
_{i,j})=A_{j,j}(B_{i,j}(B_{j,j})^{-1})=\mu _{j}B_{i,j}
\end{equation*}
for every $i$ and fixed $j$, where $1\leq i\leq j\leq n$, and hence $A \R B$.
\end{proof}

\section{Structure of $\Rt$- and $\Lt$-classes in $UT_n(\lin)$}\label{sec:tildeclasses}

The aim of this section is to make a careful analysis of the relations $\Rt,\Lt$ and $\Ht$  on the semigroups $UT_n(\lin)$, and establish some facts concerning 
 $\Dt$, building on the work of Section~\ref{sec:upper}. The reader may wonder why we do not also consider an analogue  $\Jt$ of $\J$; such a relation certainly exists (see, for example \cite{jtilde}). However, since  $\J\subseteq \Jt$, the following result tells us that  $UT_n(\lin)$ is a single $\Jt$-class. 
\begin{theorem}\label{thm:j} 
 \cite[Theorem 2.3.11]{TaylorThesis}The semigroup $UT_n(\lin)$ is simple for all $n\in\mathbb{N}$.
\end{theorem}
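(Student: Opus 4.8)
The plan is to prove directly that $T:=\upper{n}{\lin}$ has no proper two‑sided ideal, equivalently that $T^1 A T^1 = T$ for every $A\in T$. The cases $\lin$ trivial (where $\linz=\bool$ and $T=\{N\}$ is a one‑element semigroup) and $n=1$ (where $T=\lin$ is a group) are immediate, so I assume $n\geq 2$ and $\lin$ nontrivial. Let $N\in T$ be the matrix with $N_{i,j}=\1$ for $i\leq j$ and $\0$ otherwise. It suffices to establish two statements: (1) $N\in T^1 A T^1$ for every $A\in T$; and (2) $B\in T^1 N T^1$ for every $B\in T$. Indeed, given $A$, statement (1) yields $N\in T^1 A T^1$, whence $T^1 N T^1\subseteq T^1 A T^1$; combined with (2) this gives $T\subseteq T^1 A T^1$, forcing equality. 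As $A$ was arbitrary, every principal ideal is all of $T$, so $T$ is simple.

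The engine of the construction is that a nontrivial linearly ordered abelian group is unbounded both above and below. I would therefore sandwich $A$ (respectively $N$) between a \emph{strongly diagonally dominant} matrix and a matrix carrying the target data. Concretely, fix $g\in\lin$ with $g\gg\1$ and $\epsilon\in\lin$ with $\epsilon\ll\1$, and set $P_{i,i}=g^{\,i}$ and $P_{i,k}=\epsilon$ for $i<k$. For (2) take $Q_{l,j}=B_{l,j}\,g^{-l}$ for $l\leq j$, and for (1) take $Q_{l,j}=g^{-l}A_{l,l}^{-1}$ for $l\leq j$; in both cases all entries on and above the diagonal of $P$ and $Q$ are non‑zero, so $P,Q\in T$. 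In the product $(PXQ)_{i,j}=\bigvee_{i\leq k\leq l\leq j}P_{i,k}X_{k,l}Q_{l,j}$ the single ``diagonal witness'' term $k=l=i$ evaluates to exactly the desired value: $g^{\,i}\,\1\,B_{i,j}g^{-i}=B_{i,j}$ in case (2), and $g^{\,i}A_{i,i}g^{-i}A_{i,i}^{-1}=\1$ in case (1). Every \emph{other} term carries either a factor $g^{\,i-l}$ with $l>i$ (hence a large negative power of $g$) or a factor $\epsilon$, so each can be driven strictly below the witness value, giving $(PNQ)_{i,j}=B_{i,j}$ and $(PAQ)_{i,j}=\1$ for all $i\leq j$, as required.

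The main technical point is to check that a \emph{single} choice of $g$ and $\epsilon$ dominates all non‑witness terms \emph{simultaneously}, over all entries $(i,j)$. This reduces to finitely many inequalities of the shape $g^{\,l-i}> B_{l,j}B_{i,j}^{-1}$ with $l-i\geq 1$ (for the terms with $k=i$, $l>i$), together with finitely many upper bounds on $\epsilon$ (for the terms with $k>i$). Because $\lin$ is unbounded, one first chooses $g$ above the finite set of relevant ratios, and then chooses $\epsilon$ below the finite set of resulting bounds; this is the only place where any care is needed, and it is routine. The genuinely interesting feature worth flagging is that here the ordinary relations satisfy $\mathcal{D}\neq\mathcal{J}$ (by Corollary \ref{thm:upperD} there are many $\mathcal{D}$‑classes), so simplicity cannot be read off from $\R$ and $\L$ alone; it is precisely the freedom to multiply on both sides by arbitrarily large and arbitrarily small group elements that collapses all of $T$ into a single two‑sided ideal.
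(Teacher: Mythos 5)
Your argument is correct. Note, though, that the paper does not give its own proof of this statement: it cites \cite[Theorem 2.3.11]{TaylorThesis} and remarks only that the proof there, written for $\lin=(\mathbb{R},+)$, carries over to general $\lin$. Your two-step sandwich through the all-$\1$ matrix $N$ is therefore a self-contained substitute, and it is worth recording exactly what it needs from $\lin$: only that a non-trivial linearly ordered abelian group contains, for any finite subset, elements strictly above and strictly below it (translate the maximum or minimum by some $h>\1$), and that $g>\1$ together with $g>c$ forces $g^{k}>c$ for all $k\geq 1$. In particular no Archimedean hypothesis is used, so the proof covers precisely the generality the paper claims. The computation itself checks out: in $(PXQ)_{i,j}=\bigvee_{i\leq k\leq l\leq j}P_{i,k}X_{k,l}Q_{l,j}$ the witness $k=l=i$ gives $B_{i,j}$ in case (2) and $\1$ in case (1); the competing terms with $k=i<l$ are suppressed by choosing $g$ above the finitely many ratios $B_{l,j}B_{i,j}^{-1}$ (respectively $A_{i,l}A_{l,l}^{-1}$ in case (1) --- you only display the case-(2) ratios, but the adjustment is immediate); and the terms with $k>i$ are suppressed by taking $\epsilon$ small after $g$ is fixed. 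Both $P$ and $Q$ visibly lie in $\upper{n}{\lin}$, and the ideal-theoretic bookkeeping ($T^1NT^1\subseteq T^1AT^1$ and $B\in T^1NT^1$) is standard. Your closing remark is also apt: by Theorem \ref{thm:uniDdeficiency} the semigroup has many $\D$-classes once $n\geq 3$, so simplicity here is genuinely a $\J$-phenomenon that cannot be read off from $\R$ and $\L$ alone.
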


We note that the proof of the above theorem in \cite{TaylorThesis} is written for the special case 
where  $\lin$ is $\mathbb{R}$ under $+$, so that the corresponding semiring $\linz$ is the tropical semiring $\mathbb{T}$, but carries over without significant adjustment to  our more general context.

\subsection{Deficiency, tightness and looseness} The notion of {\em deficiency } was introduced in \cite{TaylorThesis}, again  in the case where  $\linz$ is the tropical semiring. For $n\in\mathbb{N}$ we denote by $\Phi[n]$ the directed graph with vertices the elements of $[n]$ and an edge $(i,j)$, denoted $i\rightarrow j$, if and only if $i \leq j$. We refer to a sequence of edges $i_1 \rightarrow i_2 \rightarrow \cdots \rightarrow i_k$ in $\Phi[n]$ as a {\em path of length $k$ in $\Phi[n]$}. If $i_1<i_2 < \cdots <i_k$ we shall say that the path is {\em simple}.

\begin{definition}\label{defn:deficiency}
Let $2 \leq k \leq n$, $A\in \upper{n}{\lin}$ and let $\gamma $ denote a path $i_{1}\rightarrow
i_{2}\rightarrow \cdots \ \rightarrow i_{k}$ in $\Phi [n]$ of
length $k$. The \emph{deficiency of $\gamma $ in $A$}, denoted $
\Def_{A}(\gamma )$, is defined to be
\[
\Def_{A}(\gamma )=A_{i_{1},i_{k}}\left( \prod\limits_{2\leq t\leq
k}A_{i_{t-1},i_{t}}\right)^{-1}.
\]
\end{definition}

In the case where $M,N\in U_n(\lin)$ and $\linz=\mathbb{T}$,  our next result forms part of   \cite[Theorem 2.3.6]{TaylorThesis}. We will complete the generalisation of the latter in Theorem~\ref{thm:uniDdeficiency}.

\begin{theorem} \label{thm:deficienylength2} Let $M$, $N\in \upper{n}{\lin}$. Then the following are equivalent:

\begin{enumerate}[\rm(i)]
\item for all paths $\gamma $ in $\Phi [n]$, we  have $\Def_{M}(\gamma )= \Def_{N}(\gamma )$;

\item for all paths $\gamma $ of length $2$ in $\Phi [n]$,  we  have $\Def_{M}(\gamma)=\Def_{N}(\gamma )$;

\item for all paths $\gamma $  in $\Phi [n]$ of the form $1\rightarrow i\rightarrow j$, we have
$\Def_{M}(\gamma )=\Def_{N}(\gamma )$.
\end{enumerate}
\end{theorem}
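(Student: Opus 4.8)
The plan is to prove the cycle of implications $(i)\Rightarrow(ii)\Rightarrow(iii)\Rightarrow(i)$. The first two are immediate: every path of the form $1\rightarrow i\rightarrow j$ is in particular a path of length $2$, and every path of length $2$ is a path, so no argument is needed for $(i)\Rightarrow(ii)\Rightarrow(iii)$. All the content lies in $(iii)\Rightarrow(i)$, which I would obtain by first showing $(iii)\Rightarrow(ii)$ and then $(ii)\Rightarrow(i)$. The key structural fact used throughout is that, since $M,N\in\upper{n}{\lin}$, every entry on or above the diagonal lies in the abelian group $\lin$ and is therefore invertible; this is what makes each deficiency a genuine group element that may be freely multiplied and cancelled.

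First I would record a telescoping identity expressing an arbitrary deficiency as a product of length-$2$ deficiencies all rooted at the initial vertex. Explicitly, for a path $\gamma\colon i_{1}\rightarrow i_{2}\rightarrow\cdots\rightarrow i_{m}$ in $\Phi[n]$ I claim
\begin{equation*}
\Def_{A}(\gamma)=\prod_{t=2}^{m-1}\Def_{A}(i_{1}\rightarrow i_{t}\rightarrow i_{t+1}).
\end{equation*}
This is a one-line verification: writing each factor as $A_{i_{1},i_{t+1}}(A_{i_{1},i_{t}}A_{i_{t},i_{t+1}})^{-1}$ and using commutativity of $\lin$, the head terms $A_{i_{1},i_{t+1}}A_{i_{1},i_{t}}^{-1}$ telescope to $A_{i_{1},i_{m}}A_{i_{1},i_{2}}^{-1}$, while the tail factors $A_{i_{t},i_{t+1}}^{-1}$ multiply to $(A_{i_{2},i_{3}}\cdots A_{i_{m-1},i_{m}})^{-1}$; combining gives $A_{i_{1},i_{m}}(A_{i_{1},i_{2}}A_{i_{2},i_{3}}\cdots A_{i_{m-1},i_{m}})^{-1}=\Def_{A}(\gamma)$. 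Consequently, if $M$ and $N$ have equal deficiency on every length-$2$ path, then they have equal deficiency on every path, which is exactly $(ii)\Rightarrow(i)$.

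Next I would establish a re-rooting identity reducing length-$2$ deficiencies with arbitrary initial vertex to those based at $1$. For $1\leq i\leq j\leq k\leq n$ a direct cancellation gives
\begin{equation*}
\Def_{A}(i\rightarrow j\rightarrow k)=\Def_{A}(1\rightarrow i\rightarrow j)\,\Def_{A}(1\rightarrow j\rightarrow k)\,\Def_{A}(1\rightarrow i\rightarrow k)^{-1},
\end{equation*}
where every term on the right is legitimate because $1\leq i\leq j\leq k$ guarantees all three paths lie in $\Phi[n]$ and the entries $A_{1,i},A_{1,j},A_{1,k}$ are invertible. This too is pure abelian-group arithmetic: each first-row entry $A_{1,i},A_{1,j},A_{1,k}$ occurs once positively and once negatively and so cancels, leaving $A_{i,k}(A_{i,j}A_{j,k})^{-1}$. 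Hence agreement of $M$ and $N$ on all paths $1\rightarrow i\rightarrow j$ forces agreement on all length-$2$ paths, giving $(iii)\Rightarrow(ii)$ and thereby closing the cycle.

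The argument is essentially mechanical once these two displayed identities are in hand, so the only genuine obstacle is spotting the correct rootings: observing that a general deficiency decomposes into length-$2$ deficiencies sharing the \emph{same} initial vertex $i_{1}$, and that a length-$2$ deficiency at an arbitrary vertex can be rewritten using only deficiencies based at $1$. Both manipulations rely on invertibility of all the relevant entries, which is precisely why the statement is formulated over $\upper{n}{\lin}$ rather than over $\upper{n}{\linz}$, and I would flag this dependence explicitly. The only remaining care is in the index bookkeeping and in the harmless degenerate cases (for instance $j=k$, where the relevant diagonal entry is still invertible), none of which affect the computation.
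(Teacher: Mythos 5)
Your proof is correct and uses essentially the same idea as the paper: both arguments reduce an arbitrary deficiency to a product of length-two deficiencies rooted at a fixed vertex by cancellation in the abelian group $\lin$. The only difference is organizational — you factor $(iii)\Rightarrow(i)$ through $(ii)$ via two separate identities (a telescoping formula rooted at $i_1$ followed by a re-rooting to the vertex $1$), whereas the paper writes a single telescoping identity expressing $\Def_{M}(\gamma)$ directly as a product of deficiencies of paths of the form $1\rightarrow i\rightarrow j$.
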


\begin{proof}
Clearly $(i)\Rightarrow(ii)\Rightarrow (iii)$. It remains to show that $(iii)\Rightarrow(i)$. Let $\gamma$ denote the path $i_1 \rightarrow i_2 \rightarrow \cdots \rightarrow i_k$ in $\Phi [n]$, and let $M \in \upper{n}{\lin}$. Then

\begin{eqnarray*}
\Def_{M}(\gamma )&=& \left[M_{i_1,i_k}\right]\left[(M_{i_1,i_2})^{-1}\right] \cdots \left[(M_{i_{k-1},i_k})^{-1}\right] \\
&=& \left[M_{i_1,i_k}M_{1,i_k}(M_{1,i_k})^{-1}\right]\left[M_{1,i_1}(M_{1,i_1}M_{i_1,i_2})^{-1}\right] \cdots \left[M_{1,i_{k-1}}(M_{1,i_{k-1}}M_{i_{k-1},i_k})^{-1}\right]\\
&=& \left(M_{i_1,i_k}M_{1,i_1}(M_{i_1,i_k})^{-1}\right)\left(M_{1,i_2}(M_{1,i_1}M_{i_1,i_2})^{-1}\right) \cdots \left(M_{1,i_{k}}(M_{1,i_{k-1}}M_{i_{k-1},i_k})^{-1}\right)\\
&=&\big(\Def_{M}(1\rightarrow i_1 \rightarrow i_k)\big)^{-1} \Def_{M}(1\rightarrow i_1 \rightarrow i_2)\hdots \Def_{M}(1\rightarrow i_{k-1} \rightarrow i_k),
\end{eqnarray*}
from which it follows that if (iii) holds, then so will (i).
\end{proof}

We now recall from \cite[Theorem 2.3.6]{TaylorThesis} that deficiency characterises $\D$-class of unitriangular matrices in $\upper{n}{\trop^*}$. This is true in general for $\upper{n}{\lin}$ and follows from Theorem \ref{thm:deficienylength2} and following characterisation of the $\D$-related unitriangular matrices in $\upper{n}{\lin}$:

\begin{theorem}\label{thm:uniDdeficiency}(c.f. \cite[Theorem 2.3.6]{TaylorThesis})
Let $A$, $B\in \uni{n}{\lin}$. Then $A$ $\D$ $B$ in $\upper{n}{\lin}$ exactly if $\Def_{A}(\gamma )=\Def_{B}(\gamma )$
for all paths $\gamma $ of length $2$ in $\Phi [n]$.
\end{theorem}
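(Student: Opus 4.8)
The plan is to use Corollary \ref{thm:upperD} to turn $\D$-relatedness into a diagonal scaling between $A$ and $B$, and then to observe that deficiency is invariant under such scalings. Since $A,B\in \uni{n}{\lin}$ have all diagonal entries equal to $\1$, we have $\rnorm{A}=A$ and $\lnorm{B}=B$, so by Corollary \ref{thm:upperD} the relation $A\D B$ in $\upper{n}{\lin}$ holds if and only if there is some $C\in\upper{n}{\lin}$ with $\rnorm{C}=A$ and $\lnorm{C}=B$. Writing $d_i:=C_{i,i}\in\lin$ and using $C=\rnorm{C}\diag{C}=\diag{C}\lnorm{C}$, this reads $A\diag{C}=C=\diag{C}B$, i.e.\ $A_{i,j}=d_id_j^{-1}B_{i,j}$ for all $i\leq j$; conversely, given any $d_1,\dots,d_n\in\lin$ satisfying these identities, the matrix $C$ with $C_{i,j}=d_iB_{i,j}$ (and zeros below the diagonal) lies in $\upper{n}{\lin}$ and witnesses $A\D B$, as a direct check of $\rnorm{C}=A$ and $\lnorm{C}=B$ confirms. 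Thus the theorem reduces to showing that $A$ and $B$ are related by such a diagonal scaling if and only if $\Def_A(\gamma)=\Def_B(\gamma)$ for all length-two paths $\gamma$.

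For the forward implication I would substitute $A_{p,q}=d_pd_q^{-1}B_{p,q}$ into the defining expression for $\Def_A(\gamma)$ along an arbitrary path $\gamma:i_1\rightarrow\cdots\rightarrow i_k$. In the product $\prod_{2\le t\le k}A_{i_{t-1},i_t}^{-1}$ the scalar factors $d_{i_{t-1}}^{-1}d_{i_t}$ telescope to $d_{i_1}^{-1}d_{i_k}$, which cancels exactly against the factor $d_{i_1}d_{i_k}^{-1}$ arising from $A_{i_1,i_k}$; hence $\Def_A(\gamma)=\Def_B(\gamma)$ for every path, in particular for all length-two paths.

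For the converse, assume $\Def_A(\gamma)=\Def_B(\gamma)$ for all length-two paths. By Theorem \ref{thm:deficienylength2} this already gives equality on all paths of the form $1\rightarrow i\rightarrow j$. I would then set $d_1=\1$ and $d_j=B_{1,j}A_{1,j}^{-1}$ for $j>1$ (all the entries involved being non-zero), and verify the scaling identities $A_{i,j}=d_id_j^{-1}B_{i,j}$. The cases $i=1$ and $i=j$ are immediate from the choice of the $d_j$, and for $1<i<j$ the required identity rearranges to $A_{i,j}A_{1,i}A_{1,j}^{-1}=B_{i,j}B_{1,i}B_{1,j}^{-1}$, which is precisely $\Def_A(1\rightarrow i\rightarrow j)=\Def_B(1\rightarrow i\rightarrow j)$ after inverting. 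This holds by hypothesis, so the $d_i$ yield the desired scaling and hence $A\D B$.

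The main obstacle is the converse, and specifically the consistency of the scalars $d_i$: the local ratios $A_{i,j}B_{i,j}^{-1}$ need not a priori be the quotients $d_id_j^{-1}$ of a single family of potentials. The device that guarantees consistency is to anchor all indices at $1$, and it is exactly Theorem \ref{thm:deficienylength2} --- the reduction from all length-two paths to the paths $1\rightarrow i\rightarrow j$ --- that supplies the cocycle condition making this anchoring well-defined. Everything else is routine computation in the abelian group $\lin$.
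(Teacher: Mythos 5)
Your proposal is correct and follows essentially the same route as the paper: both directions reduce $\D$-relatedness, via Corollary \ref{thm:upperD}, to the existence of a diagonal conjugation $A=DBD^{-1}$, with the forward implication given by telescoping cancellation of the diagonal scalars in the deficiency and the converse by choosing the potentials $d_j=B_{1,j}A_{1,j}^{-1}$ anchored at the first index (the paper's matrix $G$ is exactly the inverse of your $D$). The only cosmetic difference is that your appeal to Theorem \ref{thm:deficienylength2} is in its trivial direction, since the paths $1\rightarrow i\rightarrow j$ are themselves length-two paths already covered by the hypothesis.
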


\begin{proof} Suppose that $A, B \in \uni{n}{\lin}$ and $A$ $\D$ $B$ in $\upper{n}{\lin}$. By Corollary \ref{thm:upperD}, there exists $C\in \upper{n}{\lin}$ such that $\rnorm{C}=\rnorm{A}$ and $\lnorm{C}=\lnorm{B}$. Since $A, B \in \uni{n}{\lin}$ we have $A=\rnorm{A}=\rnorm{C}$ and $B=\lnorm{B}=\lnorm{C}$ giving $A=C(\diag{C})^{-1}=\diag{C}\lnorm{C}(\diag{C})^{-1}=\diag{C}B(\diag{C})^{-1}.$ Thus for all $i \leq k \leq j$ one has
\[
\begin{array}{rcl}
\Def_{A}(i \rightarrow k \rightarrow j)&=&A_{i,j}(A_{i,k})^{-1}(A_{k,j})^{-1}\\
&=& C_{i,i}B_{i,j}(C_{j,j})^{-1}(C_{i,i})^{-1}(B_{i,k})^{-1}C_{k,k}(C_{k,k})^{-1}(B_{k,j})^{-1}C_{j,j}\\
&=& B_{i,j}(B_{i,k}B_{k,j})^{-1} =\Def_{B}(\gamma )
\end{array}\]
Conversely, suppose that $\Def_{A}(\gamma )=\Def_{B}(\gamma )$ for all paths $\gamma $ of length $2$ in $\Phi [n]$. Let $G \in D_n(\lin)$  be the diagonal matrix with $G_{i,i}=A_{1,i}(B_{1,i})^{-1}$ for all $i \in [n]$. Then for all $i,j \in [n]$ where $i\leq j$, we have
\begin{eqnarray*}
(GAG^{-1})_{i,j}&=&G_{i,i}A_{i,j}(G_{j,j})^{-1}=(A_{1,i}B_{1,i}^{-1})A_{i,j}A_{1,j}^{-1}B_{1,j}= (A_{1,i}A_{i,j}A_{1,j}^{-1})(B_{1,j}B_{1,i}^{-1})\\
&=&(\Def_{A}(1 \rightarrow i  \rightarrow j))^{-1}(B_{1,j}B_{1,i}^{-1})=(\Def_{B}(1 \rightarrow i  \rightarrow j))^{-1}(B_{1,j}B_{1,i}^{-1})\\
&=&(B_{1,j}^{-1}B_{1,i}B_{i,j})(B_{1,j}B_{1,i}^{-1})=B_{i,j}.
\end{eqnarray*}
and hence $GAG^{-1}=B$. Now by Theorem \ref{thm:upper*}
we have $A\, \L \, GA\, \R \, GAG^{-1}=B$ in $\upper{n}{\lin}$. Hence $A\, \D \, B$.
\end{proof}

In order to examine  the relations $\Rt,\Lt,\Ht$ and $\Dt$ in $\upper{n}{\lin}$ we  build on the notion of deficiency for an idempotent
$E\in\upper{n}{\lin}$. Recall that, for such an element, and any path $i\rightarrow j\rightarrow k$, we have that $\1\leq \Def_E(i\rightarrow j\rightarrow k)$; indeed, this condition is necessary and sufficient for $E=E^2$ in $\upper{n}{\lin}$.

\begin{definition}\label{defn:tight} Let $E\in \upper{n}{\lin}$ (equivalently,
$ U_n(\lin)$) be idempotent.
For any path $\gamma$ in $\Phi [n]$, we say that, $E$ is \emph{tight in $\gamma$} if $\Def_E(\gamma)=\1$ and {\em loose in $\gamma$} if $\Def_E(\gamma)>\1$.
\end{definition}

It is clear that all paths of the form $i \rightarrow i \rightarrow j$ or $i \rightarrow j \rightarrow j$ are tight in every idempotent. In what follows we consider how the different configurations of tightness and looseness of the remaining (simple) paths of length $2$ impact upon the tilde classes of an idempotent. To this end, we introduce some notation to allow us to express our results compactly. For $\alpha \in \lin$ and $1 \leq i \leq j \leq n$, let  $[\alpha]_{i,j}$ denote the element of $\upper{n}{\linz}$ with $(i,j)$th entry equal to $\alpha$ and all other entries equal to $\1$. For $A,B \in \upper{n}{\linz}$ we write $A \circ B$ to denote the Hadamard product of $A$ and $B$; this is the matrix whose $(i,j)$th entry is equal to $A_{i,j}B_{i,j}$ for all $i,j \in [n]$. Consider the set of upper triangular matrices defined by
$$\mathcal{O}_n([\0,\1]) = \{A \in \upper{n}{\lin}: \forall i \in[n],  A_{i,i} \leq A_{i,i+1} \leq \cdots \leq A_{i,n}\leq \1 \geq  A_{1,i} \geq A_{2,i} \geq \cdots  \geq A_{i,i}\}.$$
Notice that if $n=1$ then  \[\mathcal{O}_n([\0,\1])=\{ \lambda\in\lin: \lambda\leq \pmb 1\}:=\lin_{\leq \pmb 1}.\]
Clearly, $\lin_{\leq \pmb 1}$ is a commutative, cancellative submonoid of $\lin$, but (except in the trivial case) is not a subgroup.

\begin{lemma}\label{lem:ot} The set $\mathcal{O}_n([\0,\1])$ forms  a subsemigroup of $\upper{n}{\lin}$. 
\end{lemma}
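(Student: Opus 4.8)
The plan is to verify directly that $\mathcal{O}_n([\0,\1])$ is closed under multiplication, since it is already presented as a subset of $\upper{n}{\lin}$ and so being a subsemigroup requires nothing more than closure. I would take $A, B \in \mathcal{O}_n([\0,\1])$, set $C = AB$, and work from the entrywise formula $C_{i,j} = \bigvee_{i \le k \le j} A_{i,k} B_{k,j}$ valid for $i \le j$. The first step is to record that $C \in \upper{n}{\lin}$: the product of upper triangular matrices is upper triangular, and the term $k=i$ contributes $A_{i,i}B_{i,j} \neq \0$, so every entry of $C$ on and above the diagonal is non-zero. It then remains to check the defining order conditions for each row and column.

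The verification splits into three small claims, each using only that multiplication in the linearly ordered group $\lin$ is monotone (so $a \le b$ and $c \le d$ give $ac \le bd$, which is immediate from distributivity over $\vee$, in the spirit of Lemma \ref{ordered}). For the bound $C_{i,j} \le \1$: for each $k$ with $i \le k \le j$ one has $A_{i,k} \le A_{i,n} \le \1$ and $B_{k,j} \le B_{k,n} \le \1$, whence $A_{i,k}B_{k,j} \le \1$, and taking the join yields $C_{i,j} \le \1$. For the row condition $C_{i,j} \le C_{i,j+1}$: since the rows of $B$ are non-decreasing we have $B_{k,j} \le B_{k,j+1}$, so each term $A_{i,k}B_{k,j}$ of $C_{i,j}$ is dominated by the term $A_{i,k}B_{k,j+1}$ of $C_{i,j+1}$. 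For the column condition $C_{i,j} \ge C_{i+1,j}$: since the columns of $A$ are non-increasing from the top, $A_{i,k} \ge A_{i+1,k}$ for $k \ge i+1$, so each term $A_{i+1,k}B_{k,j}$ of $C_{i+1,j}$ is dominated by the term $A_{i,k}B_{k,j}$ of $C_{i,j}$. Combining the three claims gives precisely $C_{i,i} \le C_{i,i+1} \le \cdots \le C_{i,n} \le \1$ together with $\1 \ge C_{1,j} \ge \cdots \ge C_{j,j}$, i.e. $C \in \mathcal{O}_n([\0,\1])$.

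Alternatively, I would note that one can halve the bookkeeping by invoking the anti-automorphism $\Delta$ of Lemma \ref{delta}: a short check shows that $\Delta$ maps $\mathcal{O}_n([\0,\1])$ onto itself, interchanging the row and column conditions, so once the row condition is proved for every product it applies to $\Delta(B)\Delta(A) = \Delta(AB)$ and translates back into the column condition for $AB$. There is no genuine obstacle in this lemma; the entire argument is a routine domination-of-terms computation, and the only point requiring care is the index bookkeeping, namely matching each term of $C_{i,j}$ (respectively $C_{i+1,j}$) with the correct term of $C_{i,j+1}$ (respectively $C_{i,j}$) and confirming that the summation indices stay in range.
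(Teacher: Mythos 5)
Your proof is correct and follows essentially the same route as the paper's: a direct check that each entry of the product is at most $\1$, followed by term-by-term domination arguments for the row and column monotonicity conditions. The extra observations (non-vanishing of entries via the $k=i$ term, and the optional $\Delta$-symmetry shortcut) are fine but not needed beyond what the paper already does.
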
 
\begin{proof} If $A,B \in \mathcal{O}_n([\0,\1])$ then 
for $1\leq i\leq j\leq n$ we have $(AB)_{i,j}=\bigvee_{i \leq k \leq j}  A_{i,k}B_{k,j}\leq \1$. Moreover, since for each fixed $i,k$ with $i<n$  we have that  $A_{i,k} \geq A_{i+1,k}$, it follows that $(AB)_{i,j} \geq (AB)_{i+1,j}$. Similarly, if $j<n$, then $(AB)_{i,j+1} \geq (AB)_{i,j}$. Thus $AB \in\mathcal{O}_n([\0,\1]))$.
\end{proof}

\begin{proposition}\label{prop:ntightall}
Let $n>2$ and let $E$ be an idempotent of $\upper{n}{\lin}$. If $E$ is tight in all paths in  $ \Phi [n]$ then $\widetilde{H}_{E}$ is a subsemigroup isomorphic to $\lin\times \mathcal{O}_{n-2}([\0,\1])$.
\end{proposition}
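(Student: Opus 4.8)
The plan is to reduce to a canonical idempotent and then read off an explicit normal form for the elements of $\widetilde{H}_E$. Since $E$ is idempotent in $\upper{n}{\lin}\subseteq\full{n}{S}$ its diagonal entries are all $\1$, and by Theorem~\ref{thm:deficienylength2} tightness in all paths is equivalent to tightness in all length-two paths, i.e. to $E_{i,k}=E_{i,j}E_{j,k}$ for all $i\leq j\leq k$. Writing $e_i:=E_{1,i}$ (so $e_1=\1$) this forces $E_{i,j}=e_i^{-1}e_j$ for all $i\leq j$. Taking $G\in D_n(\lin)$ with $G_{i,i}=e_i$ one checks $GEG^{-1}=J$, where $J$ is the idempotent with $J_{i,j}=\1$ for all $i\leq j$. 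As conjugation by an invertible diagonal matrix is an automorphism of $\upper{n}{\lin}$ (the conjugation action appearing in Corollary~\ref{semidirect}), it maps $\widetilde{H}_E$ isomorphically onto $\widetilde{H}_J$, so it suffices to treat $E=J$.

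Next I would characterise $\widetilde{H}_J$ concretely. By Corollary~\ref{full}, $A\in\widetilde{H}_J$ if and only if $\lid{A}=J=\rid{A}$. Using \eqref{Aplus}, \eqref{Astar} and the fact that $\lin$ is linearly ordered, so finite meets are minima, the equation $\lid{A}=J$ says exactly that for each $i<j$ one has $A_{i,k}\geq A_{j,k}$ for all $k\geq j$ with equality for some such $k$, while $\rid{A}=J$ says $A_{k,j}\geq A_{k,i}$ for all $k\leq i$ with equality for some such $k$; equivalently, every column of $A$ weakly decreases from the top down to the diagonal and every row weakly increases from the diagonal to the right. Taking $j=n$ in the first condition and $i=1$ in the second shows that the last column and the first row of $A$ are constant with common value $\lambda:=A_{1,1}=A_{1,n}=A_{n,n}$; consequently $A\preceq\lambda J$, and the attainment clauses hold automatically at $k=n$ and $k=1$. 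Thus $A\in\widetilde{H}_J$ is determined by $\lambda\in\lin$ together with the core submatrix $A'$ on the index set $\{2,\dots,n-1\}$ (relabelled as $\{1,\dots,n-2\}$), where $A'_{i,j}:=\lambda^{-1}A_{i,j}$; the monotonicity together with the bound $A_{i,j}\leq\lambda$ says precisely that $A'\in\mathcal{O}_{n-2}([\0,\1])$, and conversely any such pair $(\lambda,A')$ reconstructs an element of $\widetilde{H}_J$.

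It then remains to verify that the resulting bijection $\widetilde{H}_J\to\lin\times\mathcal{O}_{n-2}([\0,\1])$, $A\mapsto(\lambda,A')$, is a semigroup isomorphism; this simultaneously establishes that $\widetilde{H}_J$ is a subsemigroup. For $A,B\in\widetilde{H}_J$ with parameters $(\lambda,A')$ and $(\mu,B')$ I would compute $AB$ directly: monotonicity forces the supremum defining $(AB)_{1,j}$ to be attained at $k=1$ and that defining $(AB)_{i,n}$ at $k=n$, giving first row and last column of $AB$ constant equal to $\lambda\mu$; and for core indices $2\leq i\leq j\leq n-1$ every summation index $k$ lies in $\{2,\dots,n-1\}$, so $(AB)_{i,j}=\lambda\mu\,(A'B')_{i,j}$. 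Hence $AB$ is in normal form with parameters $(\lambda\mu,A'B')$, so $AB\in\widetilde{H}_J$ and the map carries $AB$ to $(\lambda\mu,A'B')$, the product in the direct product $\lin\times\mathcal{O}_{n-2}([\0,\1])$, using Lemma~\ref{lem:ot} to see $A'B'\in\mathcal{O}_{n-2}([\0,\1])$.

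The main obstacle is this last closure step, which is the crux because $\Ht$-classes of idempotents need not be subsemigroups in general (indeed Proposition~\ref{prop:ht} will show they can fail to be). It is precisely tightness that makes $E$ conjugate to the constant idempotent $J$, forcing the boundary rows and columns to be constant and ensuring the relevant suprema in a product are always attained at an extreme index; this is what rigidifies the multiplication into the direct-product form, and it is exactly what is lost when $E$ is loose in some path.
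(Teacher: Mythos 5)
Your proof is correct, but it takes a genuinely different route from the paper's. The paper works in place with the given tight idempotent $E$: it shows directly that the matrices $\lambda(\overline{G}\circ E)$ with $G\in\mathcal{O}_{n-2}([\0,\1])$ are exactly the elements of $\widetilde{H}_E$, using the $\alpha_{i,j},\beta_{i,j}$ parametrisation of $\rnorm{A}$ and $\lnorm{A}$ relative to $E$ from \eqref{Ralpha*}--\eqref{Lbeta*}, extracting the core parameters as $\gamma_{i,j}=\alpha_{i,j}\alpha_{1,j}^{-1}$, and then verifying the isomorphism by multiplying Hadamard products against $E$ with tightness invoked at each step. You instead observe that tightness forces $E_{i,j}=E_{1,i}^{-1}E_{1,j}$, so $E$ is diagonally conjugate to the all-$\1$ idempotent $J$, and conjugation by an invertible diagonal matrix is an automorphism of $\upper{n}{\lin}$ carrying $\widetilde{H}_E$ onto $\widetilde{H}_J$; after this normalisation the conditions $\lid{A}=J=\rid{A}$ become the transparent ``constant first row and last column, columns decreasing downward, rows increasing rightward'' normal form, and the direct-product multiplication falls out of the fact that the suprema defining the border entries of $AB$ are attained at the extreme indices. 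Your route buys a shorter and more conceptual computation (and isolates exactly where tightness is used: it is what makes $E$ conjugate to $J$); the paper's route avoids passing through an ambient conjugation and keeps the description of $\widetilde{H}_E$ in the Hadamard-product form $\lambda(\overline{G}\circ E)$ that is used elsewhere, e.g.\ in Table~\ref{table:n=4RtandLt}. Two small points: the equivalence of tightness in all paths with tightness in all length-two paths is Lemma~\ref{tightnesslemma}(i)--(ii) rather than Theorem~\ref{thm:deficienylength2} (though you do not actually need it, since your hypothesis already includes the length-two paths); and when asserting that conjugation preserves $\Ht$-classes you should note explicitly that a semigroup automorphism permutes $E(T)$ and hence preserves $\Rt$ and $\Lt$ --- routine, but worth a sentence since $G$ itself does not lie in $\upper{n}{\lin}$.
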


\begin{proof}
For $G \in \mathcal{O}_{n-2}([\0,\1])$ let $\overline{G}$ denote the upper triangular matrix constructed by extending $G$ as follows
$$\overline{G}= \left( \begin{array}{c c c} 
\1 &\cdots& \1\\
 & G & \vdots\\
 &  & \1\end{array}\right) \in \mathcal{O}_{n}([\0,\1]).$$
Consider $C=\overline{G} \circ E$.
Then, since $E$ is tight in all paths, for any $1\leq i\leq j\leq n$ we have
\[
(\lid{C})_{i,j}=\bigwedge_{i\leq k\leq j}C_{i,k}C_{j,k}^{-1}=
\bigwedge_{j \leq k \leq n}E_{i,k}\overline{G}_{i,k}(E_{j,k}\overline{G}_{j,k})^{-1}=\bigwedge_{j \leq k \leq n}E_{i,j}\overline{G}_{i,k}\overline{G}_{j,k}^{-1}=E_{i,j},\]
where the fnal equality follows from the fact that $\overline{G} \in \mathcal{O}_{n}([\0,\1])$ with all entries in the final column equal to $\1$. This shows that $\lid{C}=E$; a dual argument yields $\rid{C}=E$ and so $(\lambda C)^+=(\lambda C)^*=E$, for any $\lambda\in \lin$.

Conversely, we must show that any matrix in $\widetilde{H}_{E}$ is a scalar multiple of a matrix  of the form $\overline{G} \circ E$, where $G \in \mathcal{O}_{n-2}([\0,\1])$. Let $A \in \widetilde{H}_{E}$. Then $(\rnorm {A})^{(+)}=\lid {A}= E$ and $(\lnorm {A})^{(*)}=\rid A = E$. By using equations \eqref{Ralpha*}, \eqref{Lbeta*} we have $\rnorm{A}_{i,j}= E_{i,j} \alpha_{i,j}$ and $\lnorm{A}_{i,j}= E_{i,j} \beta_{i,j}$ where $\alpha_{i,j},\beta_{i,j}\geq \mathbf{1}$ for $i<j$ and $\alpha_{i,i}=\alpha_{i,n}=\beta_{i,i}=\beta_{1,i}=\mathbf{1}$.
 Calculating $\lid{(\rnorm{A})}$  using Formula \eqref{formula:Aplus} and the fact that $E$ is tight for all paths (giving $E_{i,j}E_{j,k}=E_{i,k}$ for all
$i\leq j \leq k$), we obtain:
$$E_{i,j}=\bigwedge_{j \leq k \leq n}E_{i,k}\alpha_{i,k}(E_{j,k}\alpha_{j,k})^{-1}=\bigwedge_{j \leq k \leq n} E_{i,j}\alpha_{i,k}\alpha_{j,k}^{-1},$$
from which it follows that $ \alpha_{j,k} \leq \alpha_{i,k}$ for all $i \leq j \leq k $. Dually, $ \beta_{k,i} \leq \beta_{k,j}$ for all $1 \leq k \leq i$.

Since $\rnorm A \diag A = A= \diag A \lnorm A$, we then have  
\begin{equation}\label{eqna} E_{i,j}\alpha_{i,j}A_{j,j}=A_{i,i}E_{i,j}\beta_{i,j}\end{equation}
 so that $A_{j,j}\alpha_{i,j}=A_{i,i} \beta_{i,j}$ 
 for all $i\leq j$. Since  $\beta_{1,j}=\1$, we see that $A_{j,j}\alpha_{1,j}=A_{1,1}$ and so  \begin{equation}\label{eqnb}A_{j,j}=A_{1,1}\alpha_{1,j}^{-1}.\end{equation} Now, since  
$\alpha_{1,n}=\1$, we then obtain  $A_{1,1}=A_{n,n}$. Using \eqref{eqna} and \eqref{eqnb}
we calculate
\begin{equation}\label{eqnc}\beta_{i,j}=A_{j,j}\alpha_{i,j}A_{i,i}^{-1}=A_{1,1}\alpha_{1,j}^{-1}\alpha_{i,j}(A_{1,1}\alpha_{1,i}^{-1})^{-1}=\alpha_{1,i}\alpha_{i,j}\alpha_{1,j}^{-1}.\end{equation}
For $1\leq i\leq j\leq n$ we now let $\gamma_{i,j}=\alpha_{i,j}\alpha_{1,j}^{-1}$ so that
$\gamma_{i,j}\leq \1$. Using the facts that $ \alpha_{j,k} \leq \alpha_{i,k}$ and  $ \beta_{k,i} \leq \beta_{k,j}$ for all allowable subscripts, together with
\eqref{eqnc}, we see that
$(\gamma_{i,j}) = \overline{G(A)}$ for some $G(A)\in  \mathcal{O}_{n-2}([\0,\1])$. Moreover, $A_{i,j}=E_{i,j}\gamma_{i,j}A_{1,1}$, and so $A=A_{1,1}C(A)$, where $C(A)= (\overline{G(A)} \circ E)$, as required. 

Given that the decomposition of $A$ as $A=\lambda D$ where $D_{1,1}=\1$ is perforce unique, the above establishes that $\theta:\widetilde{H}_{E} 
\rightarrow \lin\times \mathcal{O}_{n-2}([\0,\1])$ given by
$\theta(A)=(A_{11}, G(A))$, is a bijection. It remains to show that $\theta$ is an isomorphism. 
 
To this end, suppose now that $A,B\in \widetilde{H}_{E}$, so that $A=A_{1,1} C(A)$ and 
$B=B_{1,1} C(B)$, where $C(A)=\overline{G}\circ E$ and $B(A)=\overline{H}\circ E$, respectively. Clearly, $AB=A_{1,1}B_{1,1} C(A)C(B)$. Let $i\leq j$; using the fact that $E$ is tight in all paths, we calculate
\[(C(A)C(B))_{i,j}=\bigvee_{i\leq k\leq j}C(A)_{i,k}C(B)_{k,j}=\bigvee_{i\leq k\leq j}
E_{i,k}\overline{G}_{i,k}E_{k,j}\overline{H}_{k,j}=E_{i,j}\bigvee_{i\leq k\leq j}\overline{G}_{i,k}\overline{H}_{k,j}.\]
Let $\kappa_{i,j}=\bigvee_{i\leq k\leq j}\overline{G}_{i,k}\overline{H}_{k,j}$ and notice that
 $\kappa_{1,i}=
\kappa_{i,n} = \1$ for all $1\leq i\leq n$. It follows (by restricting the range of $i,j$) that
\[\theta(AB)=(A_{1,1}B_{1,1}, G(A)G(B))=\theta(A)\theta(B),\]
so that $\theta$ is an isomorphism as required. 
\end{proof}

At the other extreme we have:

\begin{proposition}\label{prop:looseall}
Let $E,A\in \upper{n}{\lin}$. Suppose that $E$ is an idempotent that is 
loose in all simple paths $i\rightarrow k\rightarrow j$ of  length $2\in \Phi [n]$. Then $A \Rt E$ ($A
\Lt E$) if and only if $A \R  E$ ($A\L E$). Thus $\widetilde{H}_{E}$ is a group isomorphic to $\lin$.

\end{proposition}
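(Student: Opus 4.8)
The plan is to prove the two biconditionals separately (the $\Rt$/$\R$ version in full, the $\Lt$/$\L$ version by duality) and then intersect. Since $\R\subseteq\Rt$ always holds, the content is the reverse implication $A\Rt E\Rightarrow A\R E$. First I would reduce to a statement about a single unitriangular matrix: by Theorem~\ref{thm:uppertilde}, $A\Rt E$ is equivalent to $\lid A=E$, and $\lid A=\lid{(\rnorm A)}$, so $\rnorm A\Rt E$ as well; on the other hand, by Theorem~\ref{thm:upper*}, $A\R E$ holds if and only if $\rnorm A=\rnorm E=E$ (using that the idempotent $E$ is unitriangular, so $\rnorm E=E$). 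Hence it suffices to show $\rnorm A=E$. Writing $\rnorm A\in\uni{n}{\lin}$ with $\lid{(\rnorm A)}=E$, formula \eqref{Ralpha*} lets me record $\rnorm{A}_{i,j}=E_{i,j}\alpha_{i,j}$ for uniquely determined $\alpha_{i,j}\geq\1$, with $\alpha_{i,i}=\alpha_{i,n}=\1$. The goal is now simply $\alpha_{i,j}=\1$ for all $i<j$.

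The heart of the argument is a downward induction on the column index $j$, exploiting that $\lin$ is linearly ordered so that the meet $\bigwedge$ appearing in \eqref{Aplus} is just a minimum of finitely many group elements. Fix $i<j$ and use that, since $E=\lid{(\rnorm A)}$,
\[
E_{i,j}=\bigwedge_{j\leq k\leq n}\rnorm{A}_{i,k}(\rnorm{A}_{j,k})^{-1}.
\]
The $k=j$ term equals $E_{i,j}\alpha_{i,j}\geq E_{i,j}$. Assuming inductively that $\alpha_{i',k}=\1$ for every $k>j$ (the base case $j=n$ being immediate, as the meet then has only the $k=n$ term), each term with $k>j$ simplifies to $E_{i,k}E_{j,k}^{-1}$. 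Here looseness enters: since $i<j<k$ is a simple path and $E$ is loose in it, $\Def_E(i\rightarrow j\rightarrow k)=E_{i,k}(E_{i,j}E_{j,k})^{-1}>\1$, i.e. $E_{i,k}E_{j,k}^{-1}>E_{i,j}$ strictly. Thus every term with $k>j$ strictly exceeds $E_{i,j}$, so the minimum can equal $E_{i,j}$ only if the $k=j$ term does, forcing $\alpha_{i,j}=\1$. Completing the induction gives $\rnorm A=E$, whence $A\R E$ and $\widetilde R_E=R_E$.

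For the $\Lt$ statement I would invoke the anti-isomorphism $\Delta$ of Lemma~\ref{delta}, which strongly exchanges $\Rt$ with $\Lt$ and $\R$ with $\L$ (Lemma~\ref{symmetry}); the one routine check is that $\Delta$ preserves the hypothesis, i.e. that $\Delta(E)$ is again loose in all simple length-$2$ paths, which follows because $\Def_{\Delta(E)}(i\rightarrow k\rightarrow j)=\Def_E(n{-}j{+}1\rightarrow n{-}k{+}1\rightarrow n{-}i{+}1)$. Applying the already-proved assertion to $\Delta(E)$ then yields $\widetilde L_E=L_E$. Intersecting, $\widetilde H_E=\widetilde R_E\cap\widetilde L_E=R_E\cap L_E=H_E$, which by Theorem~\ref{thm:upperH} is the maximal subgroup $\{\lambda E:\lambda\in\lin\}\cong(\lin,\cdot)$, a group. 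The main obstacle I anticipate is purely the bookkeeping of the induction: making sure the inductive hypothesis is set up so that it applies to exactly the terms $k>j$ and that the looseness inequality is invoked for the correct simple path; the linearity of the order is essential in passing from "all other terms strictly exceed $E_{i,j}$" to "$\alpha_{i,j}=\1$", and would fail in the merely lattice-ordered case.
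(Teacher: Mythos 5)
Your proposal is correct and follows essentially the same route as the paper: a downward induction on the column index in which, because $E$ is loose in every simple path $i\rightarrow j\rightarrow k$, each term $E_{i,k}E_{j,k}^{-1}$ with $k>j$ strictly exceeds $E_{i,j}$, so linearity of the order forces the meet in \eqref{Aplus} to be attained at $k=j$, giving $E=\rnorm{A}$ and hence $A\R E$ by Theorem~\ref{thm:upper*}. Your explicit check that $\Delta$ preserves looseness, and the identification $\widetilde H_E=H_E\cong\lin$ via Theorem~\ref{thm:upperH}, match what the paper leaves implicit.
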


\begin{proof}
Suppose that $A \Rt E$. Then $E=\lid{A}$ and so for all $i \in \{1, \ldots, n\}$, $E_{i,n}=A_{i,n}(A_{n,n})^{-1}$ by formula \eqref{Aplus}. Suppose for finite induction that
 $1< \ell\leq n$ and for all $t$ with $\ell\leq t\leq n$ and for all $i\in\{ 1,\ldots, n\}$ with $i\leq t$ we have
 $E_{i,t}=A_{i,t}(A_{t,t})^{-1}$.

Let $i\in\{ 1,\hdots, n\}$ with $i\leq \ell-1$. Then by formula \eqref{Aplus} and our inductive hypothesis we see that
\begin{eqnarray*}
E_{i,\ell -1}&=& \bigwedge\limits_{\ell-1\leq k\leq n}A_{i,k}(A_{\ell-1,k})^{-1}
=A_{i,\ell-1}(A_{\ell-1,\ell-1})^{-1}\wedge \bigwedge\limits_{\ell\leq k\leq n}A_{i,k}(A_{\ell-1,k})^{-1}\\
&=&A_{i,\ell-1}(A_{\ell-1,\ell-1})^{-1}\wedge \bigwedge\limits_{\ell\leq k\leq n}E_{i,k}(E_{\ell-1,k})^{-1}.
\end{eqnarray*}
By our assumption on tightness, we have
$E_{i,\ell-1}<E_{i,k}(E_{\ell-1,k})^{-1}$ for all $i<\ell-1< k$. Since $\lin$ is linearly ordered we deduce that
$E_{i,\ell -1}=A_{i,\ell-1}(A_{\ell-1,\ell-1})^{-1}$ for all $i<\ell-1$, whilst for $i=\ell-1$ we have $\1=E_{i,\ell-1}=A_{i, \ell-1}A_{\ell-1, \ell-1}^{-1}$. Finite induction now yields that $E=A(\diag{A})^{-1}$ and hence, by Theorem \ref{thm:upper*}, 
$A\,\R\, E$. The converse is clear. 

It follows from the above that $\widetilde{H}_{E} = H_E$, which by Theorem \ref{thm:upperH} is isomorphic to $\lin$.
\end{proof}

The previous two results hint towards the properties of deficiency, tightness and looseness being important for determination of the $\Rt$- $\Lt$- and $\Ht$-classes in $UT_n(\lin)$; our subsequent investigations confirm that this is indeed the case. We begin by examining the notion of tightness in more detail.

\begin{lemma}\label{tightnesslemma}
Let $E\in \upper{n}{\lin}$ be such that $E^{2}=E$.

\begin{enumerate}[\rm(i)]
\item If $E$ is tight in the path $1\rightarrow2\rightarrow \hdots \rightarrow n$\ then $E$ is tight in all paths $\gamma \in \Phi [n]$.
\item If $E$ is tight in all paths $i\rightarrow u\rightarrow j$ of length 2, then $E$ is tight in all paths $\gamma \in \Phi [n]$.
\item If $E$ is tight in the paths $i\rightarrow u\rightarrow j$ and $u\rightarrow
v\rightarrow j$ then $E$ is tight in the paths $i\rightarrow
v\rightarrow j$ and $i\rightarrow u\rightarrow v$.

\item  For any $i<u<v<j$, if $E$ is tight in three out of the four paths
$i\rightarrow u\rightarrow v, i\rightarrow u\rightarrow j,
i\rightarrow v\rightarrow j$ and $u\rightarrow v\rightarrow j$, then
$E$ is tight in the fourth.
\end{enumerate}
\end{lemma}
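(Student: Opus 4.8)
The plan is to reduce all four parts to two elementary facts about an idempotent $E\in\upper{n}{\lin}$: the \emph{submultiplicativity} $E_{i,k}E_{k,j}\leq E_{i,j}$ (which is exactly the idempotency condition for $E$), and a single cancellation identity among length-$2$ deficiencies. Iterating submultiplicativity shows that for any path $\gamma=i_1\rightarrow\cdots\rightarrow i_k$ the product of the edge-entries is at most $E_{i_1,i_k}$, so $\Def_E(\gamma)\geq\1$, with tightness meaning equality; note in particular that if $E$ is tight in $\gamma$ then every edge-entry of $\gamma$, and hence $E_{i_1,i_k}$, is a nonzero element of $\lin$. The crux is the observation that, for any $i\leq u\leq v\leq j$, the four length-$2$ deficiencies
$$d_1=\Def_E(i\rightarrow u\rightarrow v),\ d_2=\Def_E(i\rightarrow u\rightarrow j),\ d_3=\Def_E(i\rightarrow v\rightarrow j),\ d_4=\Def_E(u\rightarrow v\rightarrow j)$$
satisfy $d_1d_3=d_2d_4$; indeed both sides equal $E_{i,j}(E_{i,u}E_{u,v}E_{v,j})^{-1}$, the factor $E_{i,v}$ cancelling in $d_1d_3$ and $E_{u,j}$ in $d_2d_4$.

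Parts (iv) and (iii) are then immediate. For (iv), if three of $d_1,d_2,d_3,d_4$ equal $\1$ then $d_1d_3=d_2d_4$ forces the fourth to equal $\1$ as well; the relevant deficiency is well-defined since any three of the four paths together use all six entries among $\{i,u,v,j\}$, so these are nonzero. For (iii), tightness in $i\rightarrow u\rightarrow j$ and $u\rightarrow v\rightarrow j$ gives $d_2=d_4=\1$, whence $d_1d_3=\1$; since $d_1,d_3\geq\1$ in the totally ordered group $\lin$, both must equal $\1$, which is exactly tightness in $i\rightarrow u\rightarrow v$ and $i\rightarrow v\rightarrow j$. Here the well-definedness of $d_1,d_3$ is guaranteed by submultiplicativity, which gives $E_{i,v}\geq E_{i,u}E_{u,v}>\0$.

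For (ii), I would argue by telescoping. Given tightness in every length-$2$ path, a short induction on $m$ shows $E_{i_1,i_m}=\prod_{t=2}^{m}E_{i_{t-1},i_t}$ for a path $i_1\rightarrow\cdots\rightarrow i_k$: the inductive step applies tightness of the length-$2$ path $i_1\rightarrow i_m\rightarrow i_{m+1}$ (valid since $i_1\leq i_m\leq i_{m+1}$) to replace $E_{i_1,i_{m+1}}$ by $E_{i_1,i_m}E_{i_m,i_{m+1}}$. Taking $m=k$ yields $\Def_E(\gamma)=\1$, so $E$ is tight in $\gamma$.

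Finally, for (i) I would reduce to (ii). Writing $P_{a,b}=E_{a,a+1}E_{a+1,a+2}\cdots E_{b-1,b}$ for the product of consecutive unit-step entries, tightness in $1\rightarrow2\rightarrow\cdots\rightarrow n$ reads $E_{1,n}=P_{1,n}$, and in particular the unit-step entries are nonzero. For any $a\leq b$, submultiplicativity along the path $1\rightarrow a\rightarrow b\rightarrow n$ and then along each of its three segments gives
$$E_{1,n}\ \geq\ E_{1,a}E_{a,b}E_{b,n}\ \geq\ P_{1,a}P_{a,b}P_{b,n}\ =\ P_{1,n}\ =\ E_{1,n},$$
so equality holds throughout; since the order on $\lin$ is translation-invariant and $\lin$ is cancellative, each factor equals its lower bound, in particular $E_{a,b}=P_{a,b}$. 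Consequently every length-$2$ path is tight, because $E_{i,u}E_{u,j}=P_{i,u}P_{u,j}=P_{i,j}=E_{i,j}$, and part (ii) then yields tightness in all paths. The only genuine subtlety throughout is the bookkeeping of the possibly-zero off-diagonal entries, so that each deficiency we manipulate really lies in $\lin$; this is handled uniformly by the remark that tightness forces the relevant edge-entries to be nonzero, together with submultiplicativity. The one spot where linearity of the order is essential is the sandwich argument in (i) (and the sign argument $d_1,d_3\geq\1\Rightarrow d_1=d_3=\1$ in (iii)); once the identity $d_1d_3=d_2d_4$ is in hand, (iii) and (iv) carry no further difficulty.
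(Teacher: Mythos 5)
Your proof is correct, and for parts (i) and (ii) it follows essentially the same route as the paper: the sandwich $E_{1,n}\leq E_{1,a}E_{a,b}E_{b,n}\leq\cdots\leq E_{1,n}$ forced to collapse to equalities by cancellativity for (i), and telescoping for (ii). (Your version of (ii) is in fact slightly cleaner: you prove tightness of an arbitrary path by a self-contained induction, whereas the paper only telescopes the full path $1\rightarrow\cdots\rightarrow n$ and then cites part (i), which as written only delivers length-$2$ paths.) Where you genuinely diverge is in (iii) and (iv): the paper proves (iii) by the direct chain $E_{i,j}=E_{i,u}E_{u,v}E_{v,j}\leq E_{i,v}E_{v,j}\leq E_{i,j}$ followed by a cancellation, and dismisses (iv) as ``checking the remaining possibilities'' case by case; your single identity $d_1d_3=d_2d_4$ (both sides being $E_{i,j}(E_{i,u}E_{u,v}E_{v,j})^{-1}$) packages all four cases of (iv) and both conclusions of (iii) into one cancellation plus the positivity $d_k\geq\1$, which is a tidier and more uniform argument. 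Two small remarks: all of your bookkeeping about possibly-zero entries is unnecessary, since every entry of $E\in\upper{n}{\lin}$ on or above the diagonal lies in $\lin$ by definition, so every deficiency is automatically a well-defined element of $\lin$; and linearity of the order is not actually needed anywhere (in (i) and (iii) you only use that $z\geq\1$ and $z\leq\1$ force $z=\1$, which holds in any ordered group), so the argument would go through for a general lattice-ordered $\lin$ as well.
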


\begin{proof}
Let $E\in \upper{n}{\lin}$ such that $E^{2}=E$, so that  for all $i,k,j\in \{1,\hdots
,n\}$ we have $E_{i,k}E_{k,j}\leq E_{i,j}$.  As already observed, if $i=k$ or $k=j$, then this inequality is tight.

(i) Suppose that $E$ is tight in the path $1\rightarrow 2\rightarrow \hdots
\rightarrow n$ and $1\leq i<j<k\leq n$. Then
\begin{eqnarray*}
E_{1,n}&=&E_{1,2}\cdots E_{i-1,i}E_{i,i+1}\cdots E_{j-1,j}E_{j,j+1}\cdots E_{k-1,k}E_{k,k+1}\cdots E_{n-1,n}\\
&\leq& E_{1,i}E_{i,j}E_{j,k}E_{k,n} \leq E_{1,i}E_{i,k}E_{k,n} \leq E_{1,n}.\end{eqnarray*}
Notice that if $E$ is loose in the path $i \rightarrow j \rightarrow k$, then the second inequality would be strict, giving a contradiction.

(ii) If $E$ is tight in all paths of length 2 in $\Phi [n]$, then
$$E_{1,2}E_{2,3}E_{3,4}\cdots E_{n-1,n}=E_{1,3}E_{3,4}\cdots E_{n-1,n}=\cdots= E_{1,n},$$
 and the result follows from part (i).

(iii) If $E_{i,u}E_{u,j}=E_{i,j}$ and $E_{u,v}E_{v,j}=E_{u,j}$ then
\begin{equation*}
E_{i,j}=E_{i,u}E_{u,v}E_{v,j}\leq E_{i,v}E_{v,j}\leq E_{i,j}.
\end{equation*}
This gives  $E_{i,j}=E_{i,v}E_{v,j}$ and hence
$$E_{i,u}E_{u,v}=(E_{i,j}(E_{u,j})^{-1})(E_{u,j}(E_{v,j})^{-1})=
E_{i,j}(E_{v,j})^{-1}=E_{i,v}.$$

(iv) This is a matter of checking the remaining possibilities, making use of part (iii).
\end{proof}

\subsection{Detailed calculations for $n<5$} \label{subsec:ut3}

We have already shown in Corollary \ref{regut} that $\upper{n}{\lin}$ is regular for $n=1$ and $n=2$ and so, in these cases, $\Rt=\Rs=\R$ and $\Lt=\Ls=\L$. In this subsection, we perform a full analysis for $n=3$ and $n=4$.
In view of Corollary \ref{one-onecorr} it is sufficient to characterise the equivalence classes in $\upper{n}{\lin}$ by computing the $\Rt$- and $\Lt$-classes of idempotents in $\uni{n}{\lin}$. We begin with the case where  $n=3$.

\begin{proposition} \label{3x3LRtildcls} Let $E$ be an idempotent of $\upper{3}{\lin}$.
\begin{enumerate}[\rm(i)]
\item  If $E$ is loose in the path $1\rightarrow 2 \rightarrow 3$ then
$$\widetilde{R}_{E}=E D_n(\lin) =R_{E}, \;\; \widetilde{L}_{E}=D_n(\lin)E  =L_{E}, \;\;  \mbox{ and } \;\; \widetilde{H}_{E}= H_E\cong \lin.$$
\item Otherwise
\begin{eqnarray*}
\widetilde{R}_{E}&=&\left\{ ([\alpha]_{1,2} \circ E)G :\alpha^{-1} \in \lin_{\leq 1},G \in D_n(\lin)\right\}\\
\widetilde{L}_{E}&=&\left\{ G([\beta]_{2,3} \circ E)G :\beta^{-1} \in \lin_{\leq 1},G \in D_n(\lin)\right\}\\
\widetilde{H}_{E}&\cong&\lin \times \lin_{\leq 1}.
\end{eqnarray*}
\end{enumerate}
\end{proposition}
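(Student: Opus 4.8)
The plan is to dichotomise on the deficiency of $E$ in the single nontrivial simple path of $\Phi[3]$. For $n=3$ the only simple path of length $2$ is $1\rightarrow 2\rightarrow 3$, so being ``loose in $1\rightarrow2\rightarrow3$'' and being ``tight in $1\rightarrow2\rightarrow3$'' coincide, respectively, with being loose in all simple paths of length $2$ and with being tight in the path $1\rightarrow2\rightarrow\cdots\rightarrow n$. Each case then reduces to a result already established, so the bulk of the work is bookkeeping rather than new computation.

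For part (i) I would apply Proposition~\ref{prop:looseall} verbatim: since $E$ is loose in the unique simple path of length $2$, it is loose in all of them, whence $A\Rt E\Leftrightarrow A\R E$ and $A\Lt E\Leftrightarrow A\L E$. Thus $\widetilde{R}_E=R_E$ and $\widetilde{L}_E=L_E$. By Theorem~\ref{thm:upper*}, $A\R E$ holds exactly when $A=EX$ for some $X\in D_n(\lin)$, giving $R_E=E\,D_n(\lin)$, and dually $L_E=D_n(\lin)\,E$; finally $\widetilde{H}_E=H_E\cong\lin$ by Theorem~\ref{thm:upperH}.

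For part (ii), tightness of $E$ in $1\rightarrow2\rightarrow3$ forces tightness in every path of $\Phi[3]$ by Lemma~\ref{tightnesslemma}(i). The claim $\widetilde{H}_E\cong\lin\times\lin_{\leq\1}$ is then immediate from Proposition~\ref{prop:ntightall} with $n=3$, since $\mathcal{O}_{n-2}([\0,\1])=\mathcal{O}_1([\0,\1])=\lin_{\leq\1}$. For the explicit forms of $\widetilde{R}_E$ and $\widetilde{L}_E$ I would use that $A\Rt E$ iff $\lid A=E$ (Corollary~\ref{full}, Theorem~\ref{thm:uppertilde}) together with $\lid A=\lid{(\rnorm A)}$; writing $A=\rnorm A\,\diag A$ with $\diag A\in D_n(\lin)$ reduces the task to identifying the unitriangular $M$ with $\lid M=E$. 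By \eqref{Ralpha*} such $M$ have $M_{i,j}=E_{i,j}\alpha_{i,j}$ with $\alpha_{i,j}\geq\1$ and $\alpha_{i,i}=\alpha_{i,3}=\1$, so for $n=3$ the only free parameter is $\alpha_{1,2}$ and $M=[\alpha_{1,2}]_{1,2}\circ E$. A short computation of $\lid M$ from \eqref{Aplus}, using the tightness identity $E_{1,3}=E_{1,2}E_{2,3}$ to collapse $E_{1,3}E_{2,3}^{-1}=E_{1,2}$, confirms that $\lid M=E$ for every $\alpha_{1,2}\geq\1$; this yields $\widetilde{R}_E=\{([\alpha]_{1,2}\circ E)G:\alpha\geq\1,\ G\in D_n(\lin)\}$, and the dual argument through $\Delta$, \eqref{Lbeta*} and \eqref{Astar} produces the stated description of $\widetilde{L}_E$.

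There is no serious obstacle, since every assertion specialises an already-proven statement; the one point requiring care is the explicit parametrisation in the tight case, where one must check both that tightness is exactly what makes $E_{1,3}E_{2,3}^{-1}=E_{1,2}$ (so that no constraint beyond $\alpha_{1,2}\geq\1$ survives) and that the passage between $\uni{3}{\lin}$ and $\upper{3}{\lin}$ via the $\diamond$- and $\bullet$-decompositions correctly accounts for the diagonal factor $G$.
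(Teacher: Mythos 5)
Your proposal is correct and follows essentially the same route as the paper: part (i) by combining Proposition~\ref{prop:looseall}, Theorem~\ref{thm:upper*} and Theorem~\ref{thm:upperH}, and the $\Rt$- and $\Lt$-descriptions in part (ii) by reducing to $\lid{(\rnorm{A})}=E$, parametrising $\rnorm{A}=[\alpha]_{1,2}\circ E$ via \eqref{Ralpha*}, and using the tightness identity $E_{1,3}=E_{1,2}E_{2,3}$ to verify $\lid{([\alpha]_{1,2}\circ E)}=E$ for all $\alpha\geq\1$. The one divergence is that you obtain $\widetilde{H}_E\cong\lin\times\lin_{\leq\1}$ by specialising Proposition~\ref{prop:ntightall} to $n=3$ (legitimate, since that result precedes this one and $\mathcal{O}_1([\0,\1])=\lin_{\leq\1}$), whereas the paper derives it directly by comparing the two decompositions $A=\rnorm{A}\diag{A}=\diag{A}\lnorm{A}$ entrywise to get the explicit form $\lambda([\mu]_{2,2}\circ E)$ and then checking closure; your shortcut is cleaner for the isomorphism type but does not produce that explicit parametrisation, which the statement does not require.
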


\begin{proof} Part (i) follows immediately from Proposition \ref{prop:looseall}, Theorem \ref{thm:upper*} (and its left-right dual statement), and Theorem \ref{thm:upperH}.

Now let $E\in \upper{3}{\lin}$ be an idempotent which is tight in the path $1 \rightarrow 2 \rightarrow 3$. Thus $E_{1,3}=E_{1,2}E_{2,3}$. If $A \in \upper{3}{\lin}$ with $A \Rt E$, then (by Theorem \ref{thm:uppertilde}) we have $E=\lid{(\rnorm{A})}$. By formula \ref{Ralpha*} we have $\rnorm{A} = [\alpha]_{1,2}\circ E$ for some $\alpha \geq 1$, and hence $A=([\alpha]_{1,2}\circ E)D_A$.

Conversely, suppose that  $A=([\alpha]_{1,2}\circ E)G$ for some $\alpha \geq \1$ and $G \in D_n(\lin)$. Since $[\alpha]_{1,2}\circ E$ is unitriangular, notice that we must have $\rnorm{A} = [\alpha]_{1,2}\circ E$ and hence
\begin{equation*}
\lid{(\rnorm{A})}=\left(
\begin{array}{ccc}
\1 & E_{12}\alpha \wedge E_{1,3}E_{2,3} & E_{1,3} \\
\0 & \1 & E_{2,3} \\
\0 & \0 & \1
\end{array}
\right) = \left(
\begin{array}{ccc}
\1 & E_{12}\alpha \wedge E_{1,2} & E_{1,3} \\
\0 & \1 & E_{2,3} \\
\0 & \0 & \1
\end{array}
\right) =E,
\end{equation*}
using the fact that $E$ is tight in the path $1 \rightarrow 2 \rightarrow 3$, and $\alpha \geq \1$. This shows that the $\Rt$-class of $E$ is as required. A similar argument holds for the $\Lt$-class.

Suppose now that $A \Ht E$ in $\upper{3}{\lin}$. Then $A=([\alpha]_{1,2} \circ E)D_A = D_A([\beta]_{2,3} \circ E), \mbox{ where } \alpha, \beta \geq \1$, giving
$$\begin{pmatrix}
    A_{1,1} & \alpha E_{1,2}A_{2,2} & E_{1,3}A_{3,3} \\
    \0 & A_{2,2} & E_{2,3}A_{3,3} \\
    \0 & \0 & A_{3,3}
    \end{pmatrix} =
    \begin{pmatrix}
    A_{1,1} & A_{1,1}E_{1,2} &A_{1,1} E_{1,3} \\
    \0 & A_{2,2} &A_{2,2}\beta E_{2,3} \\
    \0 & \0 & A_{3,3}
    \end{pmatrix}.$$
Comparing entries at the position $(1,3)$ yields $A_{1,1}=A_{3,3}$, whilst comparing entries at positions $(1,2)$ and $(2,3)$ gives $A_{2,2}\alpha=A_{1,1}$ and $A_{2,2} \beta = A_{3,3}$. Thus it follows that $\alpha=\beta$ and together with our tightness assumption this gives
\begin{equation}A =A_{1,1}([\alpha^{-1}]_{2,2} \circ E), \mbox{ where } \alpha^{-1}\leq \1, A_{1,1} \in \lin. \label{eqn:n=3tightHt}
\end{equation}
In fact, for any matrix of the form given in \eqref{eqn:n=3tightHt}, it is straightforward to check that $\lid A =E= \rid A$ in $\upper{3}{\lin}$. Hence
$$\widetilde{H}_{E}=\left\{ \lambda([\mu]_{2,2} \circ E) :\lambda ,\mu \in \lin, \mu \leq \1\right\}. $$
Also, note that for $\lambda, \lambda', \mu, \mu' \in \lin$ with $\mu, \mu' \leq \1$ we have $\mu\mu'\leq 1$ and  
\begin{eqnarray*}
\lambda([\mu]_{2,2} \circ E) \cdot \lambda'([\mu']_{2,2} \circ E) &=&\lambda\lambda'
\begin{pmatrix}
\1 & E_{1,2}\vee E_{12}\alpha _{2} & E_{1,2}E_{2,3} \\
\0 & \alpha _{1}\alpha _{2} & E_{2,3}\alpha _{1}\vee E_{2,3} \\
\0 & \0 & \1
\end{pmatrix}=\lambda\lambda'([\mu\mu']_{2,2} \circ E)
\end{eqnarray*}
Thus $\widetilde{H}_{E}$ is closed under multiplication and, moreover, it follows from the form of this product that, $\widetilde{H}_{E} \cong \lin\times \lin_{\leq \pmb 1}$.
\end{proof}

We know that  $\R$ and $\L$ commute in any semigroup so that in a regular semigroup (where $\R=\Rt$ and $\L=\Lt$), certainly $\Rt$ and $\Lt$ commute. We have remarked in Corollary~\ref{regut} that $\upper{3}{\lin}$ is not regular. Nevertheless, we can show that, in this semigroup, $\Rt$ and $\Lt$ commute.

\begin{proposition}
\label{prop:commute}
\begin{enumerate}[\rm(i)]
\item For each  $A\in \upper{3}{\lin}$ we have that
$\lid A\,\D\rid A$.
\item The relations $\Rt$ and $\Lt$ commute on $\upper{3}{\lin}$, that is, we have
 $\Lt\circ \Rt=\Rt \circ \Lt$. Consequently, $\Dt=\Lt\circ \Rt=\Rt \circ \Lt.$
\item Let $E,F$ be idempotents in $\upper{3}{\lin}$. Then $E \D F$ if and only if $E \Dt F$.
\end{enumerate}
\end{proposition}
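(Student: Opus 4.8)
The plan is to treat the three parts in order, isolating from the first a single lemma about intersections of $\Rt$- and $\Lt$-classes that will drive the other two. For part~(i) I would argue by direct computation. Since $n=3$, Theorem~\ref{thm:uniDdeficiency} reduces $\lid A \D \rid A$ to the single equality $\Def_{\lid A}(1\rightarrow 2\rightarrow 3)=\Def_{\rid A}(1\rightarrow 2\rightarrow 3)$. Reading off the relevant entries of $\lid A$ and $\rid A$ from \eqref{Aplus} and \eqref{Astar} and using the identity $(u\wedge v)^{-1}=u^{-1}\vee v^{-1}$ valid in the lattice-ordered group $\lin$, I expect both deficiencies to collapse to $\1 \vee \big(A_{1,3}A_{2,2}(A_{1,2}A_{2,3})^{-1}\big)$; as $\lin$ is totally ordered this join is well defined, so the two agree and $\lid A \D \rid A$.

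The engine for the rest is the claim that \emph{for idempotents $E,F\in\upper{3}{\lin}$ one has $\widetilde{R}_{E}\cap\widetilde{L}_{F}\neq\emptyset$ if and only if $E\D F$}. For the forward implication, any $C$ in the intersection satisfies $\lid C=E$ and $\rid C=F$ by Corollary~\ref{full}, so part~(i) applied to $C$ gives $E=\lid C \D \rid C=F$. For the converse, since $E\D F$, the diagonal matrix constructed in the proof of Theorem~\ref{thm:uniDdeficiency} furnishes $G\in D_n(\lin)$ with $F=GEG^{-1}$; taking $C=EG^{-1}$ one checks that $\rnorm C=E$ and $\lnorm C=F$, whence $\lid C=\lid{(\rnorm C)}=E$ and $\rid C=F$ by Theorem~\ref{thm:uppertilde}, so $C\in\widetilde{R}_{E}\cap\widetilde{L}_{F}$.

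For part~(ii) I would first observe that $A(\Rt\circ\Lt)B$ holds exactly when some $C$ satisfies $\lid C=\lid A$ and $\rid C=\rid B$ (again by Corollary~\ref{full}), that is, when $\widetilde{R}_{\lid A}\cap\widetilde{L}_{\rid B}\neq\emptyset$; by the claim this is equivalent to $\lid A \D \rid B$. Applying the claim to the classes taken in the other order shows that $A(\Lt\circ\Rt)B$ is equivalent to $\rid A \D \lid B$. Now part~(i) gives $\lid A \D \rid A$ and $\lid B \D \rid B$, so by transitivity of $\D$ the two conditions $\lid A \D \rid B$ and $\rid A \D \lid B$ coincide; hence $\Rt\circ\Lt=\Lt\circ\Rt$. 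Since commuting equivalence relations have composite equal to their join, and $\Dt$ is the join of $\Rt$ and $\Lt$, this gives $\Dt=\Rt\circ\Lt=\Lt\circ\Rt$.

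Part~(iii) is then formal. The inclusion $\D=\R\circ\L\subseteq\Rt\circ\Lt=\Dt$ gives $E\D F\Rightarrow E\Dt F$ for any elements. Conversely, if $E\Dt F$ for idempotents $E,F$, then $E\Rt C\Lt F$ for some $C$, so $\lid C=\lid E=E$ and $\rid C=\rid F=F$, and part~(i) yields $E=\lid C \D \rid C=F$. The one genuinely computational step is the simplification of the two deficiencies in part~(i); I expect this to be the main obstacle, since everything downstream is a formal consequence of the uniqueness of $\lid{}$ and $\rid{}$ as the idempotents of their respective tilde-classes, together with transitivity of $\D$.
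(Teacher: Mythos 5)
Your proposal is correct and follows essentially the same route as the paper: part (i) by direct computation showing both deficiencies of the length-$2$ path reduce to $\1\vee A_{1,3}A_{2,2}(A_{1,2}A_{2,3})^{-1}$ (the paper records the equivalent identity for the inverse product and invokes Theorem~\ref{thm:uniDdeficiency}), and parts (ii)--(iii) by combining (i) with the uniqueness of $\lid{}$ and $\rid{}$ in their tilde-classes and transitivity of $\D$. Your intermediate lemma on when $\widetilde{R}_E\cap\widetilde{L}_F$ is nonempty, with the explicit witness $C=EG^{-1}$, is just a tidy repackaging of the paper's step producing $Y$ with $\rid{A}\L Y\R\lid{B}$ from $\D=\L\circ\R$.
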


\begin{proof}
(i) Let $A\in \upper{3}{\lin}$. Direct calculation gives
$$\lid{A} =\begin{pmatrix}
\1&A_{1,2}A_{2,2}^{-1}\wedge A_{1,3}A_{2,3}^{-1}&A_{1,3}A_{3,3}^{-1}\\
\0&\1&A_{2,3}A_{3,3}^{-1}\\
\0&\0&\1\end{pmatrix}, \;\; \rid{A} = \begin{pmatrix}
\1&A_{1,2}A_{1,1}^{-1}&A_{1,3}A_{1,1}^{-1}\\
\0&\1&A_{1,3}A_{1,2}^{-1}\wedge A_{2,3}A_{2,2}^{-1}\\
\0&\0&\1\end{pmatrix}.$$
Noting that
$$A^{(+)}_{1,2}  A^{(+)}_{2,3}  (A^{(+)}_{1,3})^{-1}=A^{(\ast)}_{1,2} A^{(\ast)}_{2,3} (A^{(\ast)}_{1,3})^{-1} = \1\wedge A_{1,2}A_{2,3}A_{2,2}^{-1}A_{1,3}^{-1},$$
it follows from Theorem~\ref{thm:uniDdeficiency} that $\lid A\, \D\,  \rid A$ in $\upper{3}{\lin}$. 

(ii) Let $A,B\in \upper{3}{\lin}$ be such that  $A\,(\Rt\circ \Lt)\, B$. Then there exists
$X\in \upper{3}{\lin}$ with $A\,\Rt\, X\,\Lt\, B$.  It then follows from Theorem \ref{thm:uppertilde} and the above observation that
$$\rid A\,\D\, \lid A=\lid X\,\D\,\rid X=\rid B\,\D\, \lid B,$$
and so there is a $Y\in \upper{3}{\lin}$ with
$$A\,\Lt\, \rid A\,\L\, Y\,\R\, \lid B\,\Rt B$$
giving $A\,  \Lt \, Y\,\Rt\,  B$, as required. The dual argument now completes the proof.

(iii) Supppose that $E$ and $F$ are idempotents with $E$ $\Dt$ $F$. By part (ii) there exists $X\in \upper{3}{\lin}$ such that $E$ $\Rt$ $X$ $\Lt$ $F$. Since each $\Rt$-class and each $\Lt$-class contains a unique idempotent, we must have we must have $E=\lid X$ and $F=\rid X$. But now part (i) gives $E \D F$.
\end{proof}

We will see that the very regular behaviour of the $\sim$-relations in
$UT_n(\lin)$ for $n=1,2,3$ does not persist for larger $n$. In fact, even for $n=4$, we see that the characterisations of $\Rt$-,$\Lt$- and $\Ht$-classes become more complex, and $\Rt$ and $\Lt$ no longer commute. However, we still have the property that  $\widetilde{H}_E$ for an idempotent  $E\in UT_4(\lin)$ is a subsemigroup: its nature depends on the tightness or otherwise of $E$ in the paths of length 2 in $\Phi[4]$. 

In the rest of this subsection we carefully analyse the $\Rt$-,$\Lt$- and $\Ht$-classes of $E\in \upper{4}{\lin}$, depending on the  tightness and looseness patterns, as indicated. We proceed along the lines of that of Propositions~\ref{3x3LRtildcls}, but with inevitably more complex arguments.

We observe that there are four simple paths of length $2$ in $\Phi[4]$ namely, $1 \rightarrow 2 \rightarrow 3,\ 1 \rightarrow 2 \rightarrow 4,\ 1 \rightarrow 3 \rightarrow 4$ and $2 \rightarrow 3 \rightarrow 4$. We explicitly consider the $\Rt$, $\Lt$ and $\Ht$-classes of idempotents in $\upper{4}{\lin}$ depending on the tightness and looseness patterns of the four simple paths of lengths 2 in $\Phi[4]$. Apriori there are 16 possible tightness patterns; by Lemma~\ref{tightnesslemma} six of these do not arise, since  any idempotent which is tight both $1\rightarrow 2\rightarrow 3$ and $1\rightarrow 3\rightarrow 4$  (or dually in both $1\rightarrow 2\rightarrow 4$ and $2\rightarrow 3\rightarrow 4$) is tight in all four paths. This leaves ten cases to consider. Moreover, since the involutary anti-automorphism $\Delta$ strongly exchanges the relations $\Lt$ and $\Rt$, it is easy to see that there will be a duality between the following pairs of cases:
\begin{itemize}
\item `$E$ is tight in only $1\rightarrow 2\rightarrow 3$' is dual to `$E$ is tight in only $2\rightarrow 3\rightarrow 4$';
\item `$E$ is tight in only $1\rightarrow 2\rightarrow 4$' is dual to `$E$ is tight in only $1\rightarrow 3\rightarrow 4$';
\item `$E$ is tight in only $1\rightarrow 2\rightarrow 3$ and $1\rightarrow 2\rightarrow 4$' is dual to `$E$ is tight in only $2\rightarrow 3\rightarrow 4$ and $1\rightarrow 3\rightarrow 4$';
\end{itemize}
while remaining four cases are self-dual. This reduces the task to consideration of seven patterns of tightness and looseness of idempotents in $\upper{4}{\lin}$.   The following table gives the $\Rt$-, $\Lt$- and $\Ht$-classes corresponding to each of these 7 cases obtained by direct calculations (see below) and calling upon Proposition~\ref{prop:looseall} for the case where $E$ is loose in all paths.
\begingroup
\renewcommand\arraystretch{1.35}
\begin{longtable}{|l|}
   \hline
\textbf{1. $\mathbf E$ is loose in all four paths} (self-dual) \\ \hline
$\widetilde{R}_{E}=E D_4(\lin) =R_{E}$, $\widetilde{L}_{E}=D_4(\lin)E  =L_{E}$, $\widetilde{H}_{E}=H_{E}$ \\
 \hline \hline
\textbf{2. $\mathbf E$ is tight in only $\mathbf{1\rightarrow 2\rightarrow 3}$} (dual case: tight in only $2\rightarrow 3\rightarrow 4$)\\  \hline  
$\widetilde{R}_{E}=\left\{[\alpha]_{1,2} \circ E: \alpha \in \lin, \1 \leq \alpha\right\}D_4(\lin)$ \\
$\widetilde{L}_{E}=D_4(\lin)\left\{[\Def_E(2 \rightarrow 3 \rightarrow 4)]_{2,3}\circ [\alpha]_{3,4} \circ E : \alpha \in \lin, \1 \leq \alpha\right\}$\\
$\qquad\cup \; D_4(\lin)\left\{ [\alpha]_{2,3} \circ E : \alpha \in \lin, \1 \leq \alpha \leq \Def_E(2 \rightarrow 3 \rightarrow 4) \right\}$\\
$\widetilde{H}_{E}=H_E$\\
 \hline \hline
\textbf{3. $\mathbf E$ is tight in only $\mathbf{1\rightarrow 2\rightarrow 4}$} (dual case: tight in only $1\rightarrow 3\rightarrow 4$)\\ 
\hline
$\widetilde{R}_{E}=\left\{[\alpha]_{1,2} \circ E: \alpha \in \lin, \1 \leq \alpha\right\}D_4(\lin)$ \\
$\widetilde{L}_{E}=D_4(\lin)\left\{[\alpha]_{2,4} \circ E : \alpha \in \lin, \1 \leq \alpha \right\}  $ \\
$\widetilde{H}_{E}=H_E$
\\ \hline\hline
\textbf{4. $\mathbf E$ is tight in $\mathbf{1\rightarrow 3\rightarrow 4}$, $\mathbf{2\rightarrow 3\rightarrow 4}$} (dual case: tight in $1\rightarrow 2\rightarrow 4$, $1\rightarrow 2\rightarrow 3$)\\ 
\hline
$\widetilde{R}_{E}=\left\{[\alpha]_{1,3}\circ [\beta]_{2,3} \circ E : \alpha, \beta \in \lin,  \1 \leq \alpha, \1 \leq \beta \leq \Def_E(1 \rightarrow 2 \rightarrow 3)\alpha\right \}D_4(\lin)$\\
$\qquad \cup \; \left\{ [\alpha]_{1,2}\circ [\beta]_{1,3} \circ [\Def_E(1 \rightarrow 2 \rightarrow 3)\beta]_{2,3} \circ E : \alpha, \beta \in \lin,\1 \leq \alpha, \beta \right\}D_4(\lin)$ \\
$\widetilde{L}_{E}=D_4(\lin)\left\{[\alpha]_{3,4} \circ E: \alpha\in \lin,\1 \leq \alpha\right\}$ \\
$\widetilde{H}_{E}=\left\{\lambda([\mu]_{3,3} \circ E): \lambda, \mu \in \lin, \mu \leq \1 \right\}$\\
 \hline \hline
\textbf{5. $\mathbf E$ is tight in $\mathbf{1\rightarrow 2\rightarrow 3}$, $\mathbf{2\rightarrow 3\rightarrow 4}$} (self dual)\\ 
\hline
$\widetilde{R}_{E}=\left\{[\alpha]_{1,2} \circ E: \alpha \in \lin, \1 \leq \alpha\right\}D_4(\lin)$ \\
$\widetilde{L}_{E}=D_4(\lin)\left\{[\alpha]_{3,4} \circ E : \alpha \in \lin, \1 \leq \alpha \right\}  $ \\
$\widetilde{H}_{E}=H_E$\\
\hline
\hline
\textbf{6. $\mathbf E$ is tight in $\mathbf{1\rightarrow 2\rightarrow 4}$, $\mathbf{1\rightarrow 3\rightarrow 4}$} (self dual)\\ 
\hline
$\widetilde{R}_{E}=\left\{[\alpha]_{1,2} \circ[\beta]_{1,3} \circ E: \alpha, \beta \in \lin, \1 \leq \alpha, \beta\right\}D_4(\lin)$ \\
$\widetilde{L}_{E}=D_4(\lin)\left\{[\alpha]_{2,4} \circ [\beta]_{3,4} \circ E) : \alpha, \beta \in \lin, \1 \leq \alpha, \beta \right\}  $ \\
$\widetilde{H}_{E}=\left\{\lambda([\mu]_{2,2} \circ [\mu]_{2,3} \circ [\mu]_{3,3} \circ E): \lambda, \mu \in \lin, \mu \leq \1 \right\}$\\

\hline \hline
\textbf{7. $\mathbf E$ is tight in all four paths} (self dual)\\ 
\hline
$\widetilde{R}_{E}=\left\{[\alpha]_{1,2} \circ[\beta]_{1,3} \circ [\gamma]_{2,3} \circ E: \alpha, \beta, \gamma \in \lin,\1 \leq \alpha, \1 \leq \gamma \leq \beta\right\}D_4(\lin)$ \\
$\widetilde{L}_{E}=\left\{ [\alpha]_{2,3} \circ [\beta]_{2,4} \circ [\gamma]_{3,4} \circ  E) : \alpha, \beta, \gamma \in \lin,\1 \leq \gamma, \1\leq \alpha \leq \beta \right\} D_4(\lin)$ \\
$\widetilde{H}_{E}=\left\{\lambda([\alpha]_{2,2} \circ [\beta]_{2,3} \circ [\gamma]_{3,3} \circ E: \lambda, \alpha, \beta, \gamma \in \lin, \alpha \vee \gamma\leq \beta  \leq \1 \right\}$\\
\hline
  \caption{Table of $\Rt$-, $\Lt$- and $\Ht$-classes in $\upper{4}{\lin}$}
  \label{table:n=4RtandLt}
\end{longtable}
\endgroup

Notice that from the above table one can easily compute the dual results.  For example, if $E$ is tight only in the simple path $1\rightarrow 2\rightarrow 3$, we find that $\widetilde{R}_{E} =\left\{ [\alpha]_{1,2} \circ E : \1 \leq \alpha \right\} D_4(\lin)$, and hence by duality if $E$ is tight only in the simple path $2\rightarrow 3\rightarrow 4$, then $\widetilde{L}_{E}=  D_4(\lin)\left\{ [\beta]_{3,4} \circ E :\1 \leq \beta\right\}$.

Let $E,A \in \upper{4}{\lin}$ such that $E^{2}=E$. If $A \Rt E$, then by \eqref{Ralpha*} we may write $$\rnorm{A} = [\alpha_{1,2}]_{1,2} \circ [\alpha_{1,3}]_{1,3} \circ [\alpha_{2,3}]_{2,3} \circ E,$$ where  $\alpha_{1,2}, \alpha_{1,3}, \alpha_{2,3} \geq \1$. Since $A \Rt E$ if and only if $\lid{(\rnorm{A})}=E$, a description of the $\Rt$-class of $E$ can be found by computing the solutions of the following system of equations:
\begin{eqnarray}
E_{1,2} &=& \alpha_{1,2}E_{1,2} \wedge \alpha_{1,3}E_{1,3}\alpha_{2,3}^{-1}E_{2,3}^{-1} \wedge E_{1,4}E_{2,4}^{-1},\label{(a)}\\
E_{1,3} &=& \alpha_{1,3}E_{1,3} \wedge  E_{1,4}E_{3,4}^{-1},\label{(b)}\\
E_{2,3} &=& \alpha_{2,3}E_{2,3} \wedge E_{2,4}E_{3,4}^{-1}.\label{(c)}
\end{eqnarray}
Notice that equation \eqref{(a)} holds if and only if $\alpha_{2,3}\leq \Def_E(1 \rightarrow 2 \rightarrow 3) \alpha_{1,3}$ and either:  $E$ is tight in the path $1 \rightarrow 2 \rightarrow 4$; or $\alpha_{1,2}=\1$; or $\alpha_{2,3}= \Def_E(1 \rightarrow 2 \rightarrow 3) \alpha_{1,3}$. Equation \eqref{(b)} holds if and only if either $E$ is tight in the path $1 \rightarrow 3 \rightarrow 4$ or $\alpha_{1,3}=\1$; whilst \eqref{(c)} holds if and only if either $E$ is tight in the path $2 \rightarrow 3 \rightarrow 4$ or $\alpha_{2,3}=\1$.

Similarly, if $A \Lt E$, then by \eqref{Lbeta*} we may write $\lnorm{A} = [\beta_{2,3}]_{2,3} \circ [\beta_{2,4}]_{2,4} \circ [\beta_{3,4}]_{3,4} \circ E$, where  $\beta_{2,3}, \beta_{2,4}, \beta_{3,4} \geq \1$. Since $A \Lt E$ if and only if $\rid{(\lnorm{A})}=E$, the $\Lt$ class of $E$ can be found by computing the solutions to:
\begin{eqnarray}
E_{3,4} &=& \beta_{3,4}E_{3,4} \wedge \beta_{2,4}E_{2,4}\beta_{2,3}^{-1}E_{2,3}^{-1} \wedge E_{1,4}E_{1,3}^{-1},\label{(a)'}\\
E_{2,3} &=& \beta_{2,3}E_{2,3} \wedge  E_{1,3}E_{1,2}^{-1},\label{(b)'}\\
E_{2,4} &=& \beta_{2,4}E_{2,4} \wedge E_{1,4}E_{1,2}^{-1}.\label{(c)'}
\end{eqnarray}
Equation \eqref{(a)'} holds if and only if $\beta_{2,3}\leq \Def_E(2 \rightarrow 3 \rightarrow 4) \beta_{2,4}$ and either: $E$ is tight in the path $1 \rightarrow 3 \rightarrow 4$; or $\beta_{3,4}=\1$; or $\beta_{2,3}= \Def_E(2 \rightarrow 3 \rightarrow 4) \beta_{2,4}$. Similarly, \eqref{(b)'} holds if and only if either $E$ is tight in the path $1 \rightarrow 2 \rightarrow 3$ or $\beta_{2,3}=\1$; whilst \eqref{(c)'} holds if and only if either $E$ is tight in the path $1 \rightarrow 2 \rightarrow 4$ or  $\beta_{2,4}=\1$.

Suppose now that equations \eqref{(a)}-\eqref{(c)'} hold, and additionally that $A$ is equal to:
$$
\begin{pmatrix}
    A_{1,1} & \alpha_{1,2}E_{1,2}A_{2,2}  &  \alpha_{1,3}E_{1,3}A_{3,3}   & E_{1,4}A_{4,4} \\
    \0 & A_{2,2}  &  \alpha_{2,3}E_{2,3}A_{3,3}   & E_{2,4}A_{4,4} \\
    \0 & \0  & A_{3,3}   & E_{3,4}A_{4,4} \\
    \0 & \0  &  \0   & A_{4,4} \\
    \end{pmatrix}= \begin{pmatrix}
    A_{1,1} & A_{1,1}E_{1,2}  &  A_{1,1}E_{1,3}   & A_{1,1}E_{1,4} \\
    \0 & A_{2,2}  &  A_{2,2}E_{2,3}\beta_{2,3}   & A_{2,2}E_{2,4}\beta_{2,4} \\
    \0 & \0  & A_{3,3}   & A_{3,3}E_{3,4}\beta_{3,4} \\
    \0 & \0  &  \0   & A_{4,4} \\
    \end{pmatrix}.
$$
Notice that the latter equation can be summarised as: 
\begin{eqnarray}
A_{1,1} = \alpha_{1,2}A_{2,2} &=& \alpha_{1,3}A_{3,3} = A_{4,4} = A_{2,2} \beta_{2,4}= A_{3,3}\beta_{3,4},\label{(h1)}\\
 \alpha_{2,3}A_{3,3}&=&A_{2,2}\beta_{2,3},  \label{(h2)}
\end{eqnarray}
and hence that equations \eqref{(a)}-\eqref{(h2)} determine the $\Ht$-class of $E$.

Using the above equations it is now straightforward to analyse the required cases.

\noindent\textbf{1. If $\mathbf E$ is loose in all four paths:} By Proposition~\ref{prop:looseall}, $A \Rt E$ exactly if $A \R\ E$, and dually $A \Lt E$ if and only if $A \L E$. It then follows from Theorem~\ref{thm:upper*} that
$$\widetilde{R}_{E}=E D_4(\lin) =R_{E}; \;\; \widetilde{L}_{E}=D_4(\lin)E  =L_{E};\;\;\widetilde{H}_{E}=H_{E}.$$

\noindent\textbf{2. If $\mathbf E$ is tight in only $\mathbf{1\rightarrow 2\rightarrow 3}$:} It follows from the observations above that in order to satisfy equations \eqref{(b)} and \eqref{(c)} we must have $\alpha _{1,3}=\alpha _{2,3}=\1$. Since $\Def_E(1 \rightarrow 2 \rightarrow 3) = \1$, it then follows that \eqref{(a)} holds. Thus
$$\widetilde{R}_{E}=\left\{ [\alpha]_{1,2} \circ E:\1 \leq \alpha \right\}D_4(\lin).$$

Since the path $1\rightarrow 2 \rightarrow 3$ is tight, equation \eqref{(b)'} holds. In order to satisfy equation \eqref{(c)'} we must have $\beta _{2,4}=\1$. There are then two possibilities which lead to the satisfaction of \eqref{(a)'}; either $\beta_{3,4} =\1$ and $\beta_{2,3} \leq \Def_E(2 \rightarrow 3 \rightarrow 4)$, or $\beta_{2,3} = \Def_E(2 \rightarrow 3 \rightarrow 4)$. This gives  
\begin{eqnarray*}
\widetilde{L}_{E}=&&D_4(\lin)\left\{[\Def_E(2 \rightarrow 3 \rightarrow 4)]_{2,3}\circ [\alpha]_{3,4} \circ E : \1 \leq \alpha\right\}\\ &\cup& D_4(\lin)\left\{ [\alpha]_{2,3} \circ E : \1 \leq \alpha \leq \Def_E(2 \rightarrow 3 \rightarrow 4) \right\}.
\end{eqnarray*}

Finally, $A \in \widetilde{H}_{E}$ if and only if \eqref{(a)}-\eqref{(h2)} hold simultaneously.  This is the case if and only if $A_{1,1}=A_{2,2}=A_{3,3}=A_{4,4}$ and all other parameters are equal to $\1$. Hence  $\widetilde{H}_{E}=\{\lambda E: \lambda \in \lin\} =H_E$. Dually, if $E$ is tight in only $2\rightarrow 3\rightarrow 4$ then
\begin{eqnarray*}
\widetilde{R}_{E}=&&\left\{[\Def_E(1 \rightarrow 2 \rightarrow 3)]_{2,3}\circ [\alpha]_{1,2} \circ E : \1 \leq \alpha\right\}D_4(\lin)\\ &\cup& \left\{ [\alpha]_{2,3} \circ E : \1 \leq \alpha \leq \Def_E(1 \rightarrow 2 \rightarrow 3) \right\}D_4(\lin)\\
\widetilde{L}_{E}=&&D_4(\lin)\left\{ [\alpha]_{3,4} \circ E:\1 \leq \alpha \right\},\;\;\;\widetilde{H}_{E}=H_E.
\end{eqnarray*}

\noindent\textbf{3. If $\mathbf E$ is tight in only $\mathbf{1\rightarrow 2\rightarrow 4}$:} As in the previous case, in order to satisfy equations \eqref{(b)} and \eqref{(c)} we must have $\alpha _{1,3}=\alpha _{2,3}=\1$, whence \eqref{(a)} holds, giving
$$\widetilde{R}_{E}=\left\{ [\alpha]_{1,2} \circ E:\1 \leq \alpha \right\}D_4(\lin).$$

Since the path $1\rightarrow 2 \rightarrow 4$ is tight, equation \eqref{(c)'} holds. In order to satisfy equation \eqref{(b)'} we must have $\beta _{2,3}=\1$. Since $E$ is loose in the path $2 \rightarrow 3\rightarrow 4$, and $\beta_{2,4} \geq \1$ we note that $\1 < \Def_E(2 \rightarrow 3 \rightarrow 4)\beta_{2,4}$. Thus in order for \eqref{(a)'} to be satisfied we require $\beta_{3,4} =\1$, giving 
\begin{eqnarray*}
\widetilde{L}_{E}=&& D_4(\lin)\left\{ [\alpha]_{2,4} \circ E : \1 \leq \alpha \right\}.
\end{eqnarray*}

Now $A \in \widetilde{H}_{E}$ if and only if \eqref{(a)}-\eqref{(h2)} hold simultaneously, and this is the case if and only if $A_{1,1}=A_{2,2}=A_{3,3}=A_{4,4}$ and all other parameters are equal to $\1$. Thus  $\widetilde{H}_{E}=\{\lambda E: \lambda \in \lin\} =H_E$. Dually, if $E$ is tight in only $1\rightarrow 3\rightarrow 4$ then,
$$\widetilde{R}_{E}=\left\{ [\alpha]_{1,3} \circ E : \1 \leq \alpha\right\}D_4(\lin)\;\;\; \widetilde{L}_{E}=D_4(\lin)\left\{ [\alpha]_{3,4} \circ E:\1 \leq \alpha \right\}\;\;\; \widetilde{H}_{E}=H_E.$$

\noindent\textbf{4. If $\mathbf E$ is tight in only $\mathbf{1\rightarrow 3\rightarrow 4}$ and $\mathbf{2\rightarrow 3\rightarrow 4}$:} In this case \eqref{(b)} and \eqref{(c)} are satisfied. For \eqref{(a)} to hold we need  $\1 \leq \alpha_{2,3} \leq \Def_E(1 \rightarrow 2 \rightarrow 3)\alpha_{1,3}$ and either $\alpha_{1,2}=\1$ or $\alpha_{2,3}=\Def_E(1 \rightarrow 2 \rightarrow 3)\alpha_{1,3}$. Thus
\begin{eqnarray*}
\widetilde{R}_{E}&=&\left\{  [\alpha]_{1,3}\circ [\beta]_{2,3}\circ E:\1 \leq \alpha, \1 \leq \beta \leq \Def_E(1 \rightarrow 2 \rightarrow 3)\alpha \right\}D_4(\lin)\\
&\cup& \left\{ [\alpha]_{1,2} \circ [\beta]_{1,3}\circ [\Def_E(1 \rightarrow 2 \rightarrow 3)\beta]_{2,3}\circ E:\1 \leq \alpha, \beta \right\}D_4(\lin).
\end{eqnarray*}

For equations \eqref{(b)'} and \eqref{(c)'} to hold we must have $\beta _{2,3}=\beta_{2,4} =\1$. Since $E$ is tight in $1 \rightarrow 3 \rightarrow 4$, it then follows that \eqref{(a)'} is also satisfied, giving
\begin{eqnarray*}
\widetilde{L}_{E}=&&D_4(\lin)\left\{ [\alpha]_{3,4} \circ E : \1 \leq \alpha\right\}.\end{eqnarray*}

Combining our previous observations with \eqref{(h1)} and \eqref{(h2)} yields $A_{1,1}=A_{4,4}=A_{2,2}=\alpha_{2,3} A_{3,3}$, $\beta_{2,3}=\beta_{2,4}=\alpha_{1,2}= \1$ and $\alpha_{2,3}=\alpha_{1,3}=\beta_{3,4}$. Hence  $$\widetilde{H}_{E}=\{\lambda ([\mu]_{3,3}\circ E): \lambda, \mu \in \lin, \mu \leq \1\}.$$

Dually, if $E$ is tight only in $1\rightarrow 2\rightarrow 3$, $1\rightarrow 2\rightarrow 4$ then
\begin{eqnarray*}
\widetilde{R}_{E}=&& \left\{ [\alpha]_{3,4} \circ E : \1 \leq \alpha\right\}D_4(\lin) \\
\widetilde{L}_{E}=&& D_4(\lin)\left\{  [\alpha]_{2,4}\circ [\beta]_{2,3}\circ E:\1 \leq \alpha, \1 \leq \beta \leq \Def_E(2 \rightarrow 3 \rightarrow 4)\alpha \right\}\\
&\cup& D_4(\lin)\left\{ [\alpha]_{3,4} \circ [\beta]_{2,4}\circ [\Def_E(2 \rightarrow 3 \rightarrow 4)\beta]_{2,3}\circ E:\1 \leq \alpha, \beta \right\}\\
\widetilde{H}_{E}=&&\{\lambda ([\mu]_{2,2}\circ E): \lambda, \mu \in \lin, \mu \leq \1\}.
\end{eqnarray*}

\noindent\textbf{5. If $\mathbf E$ is tight only in $\mathbf{1\rightarrow 2\rightarrow 3}$, $\mathbf{2\rightarrow 3\rightarrow 4}$:} In this case \eqref{(c)} is satisfied, and for \eqref{(b)} to hold requires $\alpha_{1,3}=\1$. Since $E$ is tight in $1 \rightarrow 2 \rightarrow 3$, we find that the only way that \eqref{(a)} can hold is if $\alpha_{2,3}=\1$. Thus
\begin{eqnarray*}
\widetilde{R}_{E}&=&\left\{  [\alpha]_{1,2}\circ E:\1 \leq \alpha\right\}D_4(\lin).
\end{eqnarray*}

Similarly, \eqref{(b)'} is satisfied and for \eqref{(c)'} to hold we must have $\beta_{2,4} =\1$. Since $E$ is tight in $2 \rightarrow 3 \rightarrow 4$, it then follows that the only way that \eqref{(a)'} can hold is if $\beta_{2,3}=\1$. Thus
\begin{eqnarray*}
\widetilde{L}_{E}=&&D_4(\lin)\left\{ [\alpha]_{3,4} \circ E : \1 \leq \alpha\right\}.\end{eqnarray*}

Combining the previous observations with \eqref{(h1)} and \eqref{(h2)} then yields that $A_{1,1}=A_{4,4}=A_{3,3}= A_{4,4}$, and all other parameters are equal to $\1$, giving $\widetilde{H}_{E}=H_E.$

\noindent\textbf{6. If $\mathbf E$ is tight in only $\mathbf{1\rightarrow 2\rightarrow 4}$, $\mathbf{1\rightarrow 3\rightarrow 4}$:}  In this case \eqref{(b)} is satisfied, and for \eqref{(c)} to hold requires $\alpha_{2,3}=\1$. But then, since $E$ is tight in $1 \rightarrow 2 \rightarrow 4$, we see that \eqref{(a)} also holds, giving
\begin{eqnarray*}
\widetilde{R}_{E}&=&\left\{  [\alpha]_{1,2}\circ [\beta]_{1,3} E:\1 \leq \alpha, \beta \right\}D_4(\lin).
\end{eqnarray*}

Likewise, \eqref{(c)'} is satisfied and for \eqref{(b)'} to hold we must have $\beta_{2,3} =\1$. Since $E$ is tight in $2 \rightarrow 3 \rightarrow 4$, we then have that \eqref{(a)'} holds, giving
\begin{eqnarray*}
\widetilde{L}_{E}=&&D_4(\lin)\left\{ [\alpha]_{2,4} \circ [\beta]_{3,4} \circ E: \1 \leq \alpha, \beta \right\}.\end{eqnarray*}

Finally, $A \in \widetilde{H}_{E}$ if and only if \eqref{(a)}-\eqref{(h2)} hold simultaneously, which is the case if $A_{1,1}=A_{4,4}= \alpha_{12}A_{2,2}, A_{2,2}= A_{3,3}$, $\alpha_{1,2}=\alpha_{1,3}=\beta_{2,4}=\beta_{3,4}$ and $\alpha_{2,3}=\beta_{2,3} = \1$. It follows from this that
$$\widetilde{H}_{E}=\{\lambda([\mu]_{2,2}\circ [\mu]_{2,3}\circ [\mu]_{3,3}\circ E): \lambda,\mu, \in \lin, \mu \leq \1 \}.$$

\noindent\textbf{7. If $\mathbf E$ is tight in all four paths :} Equations \eqref{(b)}, \eqref{(c)}, \eqref{(b)'} and \eqref{(c)'} are satisfied. Since all paths are tight (and hence all deficiences are equal to $\1$) it is clear that \eqref{(a)} holds if and only if $\alpha_{2,3} \leq \alpha_{1,3}$, whilst \eqref{(a)'} holds if and only if $\beta_{2,3} \leq \beta_{2,4}$, giving
\begin{eqnarray*}
\widetilde{R}_{E}&=&\left\{  [\alpha]_{1,2}\circ [\beta]_{1,3} \circ  [\gamma]_{2,3} \circ E:\1 \leq \alpha, \1 \leq \gamma \leq \beta \right\}D_4(\lin)\\
\widetilde{L}_{E}&=&D_4(\lin)\left\{  [\alpha]_{2,3}\circ [\beta]_{2,4} \circ  [\gamma]_{3,4} \circ E:\1 \leq \gamma, \1 \leq \alpha \leq \beta \right\}.
\end{eqnarray*}
Combining these observations with \eqref{(h1)} and \eqref{(h2)} then yields 
$$\widetilde{H}_{E}=\{\lambda([\alpha]_{2,2}\circ [\beta]_{2,3}\circ [\gamma]_{3,3}\circ E): \lambda, \alpha, \beta, \gamma \in \lin, \alpha \vee \gamma \leq \beta \leq \1,  \}.$$

We now set out to confirm the fact stated earlier, that for any idempotent $E\in \upper{4}{\lin}$ we have that $\widetilde{H}_E$ is a subsemigroup.
\begin{proposition}\label{prop:htilden=4} For any idempotent 
$E\in \upper{4}{\lin}$, the $\Ht$-class $\widetilde{H}_E$ of $E$  is a subsemigroup.
\end{proposition}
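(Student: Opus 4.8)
The plan is to argue case by case using the explicit descriptions of $\widetilde{H}_E$ recorded in Table~\ref{table:n=4RtandLt}, which classify the idempotents of $\upper{4}{\lin}$ according to their pattern of tightness on the four simple paths of length $2$ in $\Phi[4]$. Since the anti-automorphism $\Delta$ strongly exchanges $\Rt$ and $\Lt$ (Lemma~\ref{symmetry}), it carries $\Ht$-classes to $\Ht$-classes, so that $\widetilde{H}_{\Delta(E)}=\Delta(\widetilde{H}_E)$; as the image of a subsemigroup under an involutary anti-automorphism is again a subsemigroup, it suffices to treat one representative of each of the seven cases in the table. In four of these (Case~1: loose in all paths; Case~2: tight only in $1\to 2\to 3$; Case~3: tight only in $1\to 2\to 4$; Case~5: tight in $1\to 2\to 3$ and $2\to 3\to 4$) the table gives $\widetilde{H}_E=H_E$. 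As $H_E$ is the maximal subgroup at $E$, isomorphic to $\lin$ by Theorem~\ref{thm:upperH}, it is certainly a subsemigroup, so nothing more is needed for these cases.

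For the three remaining cases (Cases~4, 6 and 7) I would verify closure by a direct entrywise computation, exactly in the spirit of the $n=3$ analysis in Proposition~\ref{3x3LRtildcls}. In each, a typical element of $\widetilde{H}_E$ has the form $\lambda C$, where $\lambda\in\lin$ and $C$ is obtained from $E$ by a Hadamard modification of the entries in positions $(2,2),(2,3),(3,3)$ subject to explicit inequalities on the parameters. Since the scalar $\lambda$ factors through the product, it is enough to show that the set of such matrices $C$ is closed under multiplication and that the resulting parameters again satisfy the prescribed inequalities. Two facts drive every entry computation: idempotency of $E$, giving $E_{i,k}E_{k,j}\leq E_{i,j}$ for all $i\leq k\leq j$; and the upgrade of this to an equality $E_{i,k}E_{k,j}=E_{i,j}$ whenever $E$ is tight in $i\to k\to j$ (Lemma~\ref{tightnesslemma}). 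Together with the standing sign conditions on the parameters (for instance $\mu\leq\1$ in Cases~4 and 6), these force every entry of $C C'$ outside positions $(2,2),(2,3),(3,3)$ to collapse back to the corresponding entry of $E$, while the diagonal parameters simply multiply. Concretely, in Case~4 the product of $[\mu]_{3,3}\circ E$ and $[\mu']_{3,3}\circ E$ is $[\mu\mu']_{3,3}\circ E$ with $\mu\mu'\leq\1$; in Case~6 the product of $[\mu]_{2,2}\circ[\mu]_{2,3}\circ[\mu]_{3,3}\circ E$ and its primed counterpart is the matrix of the same shape with common parameter $\mu\mu'\leq\1$; and in Case~7 the product of $[\alpha]_{2,2}\circ[\beta]_{2,3}\circ[\gamma]_{3,3}\circ E$ and $[\alpha']_{2,2}\circ[\beta']_{2,3}\circ[\gamma']_{3,3}\circ E$ has diagonal parameters $\alpha\alpha'$ and $\gamma\gamma'$ and middle parameter $\beta''=\alpha\beta'\vee\beta\gamma'$.

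The main obstacle, and the only genuinely non-mechanical point, lies in Case~7, where one must confirm that the parameters of the product still satisfy the defining inequality $(\alpha\alpha')\vee(\gamma\gamma')\leq\beta''\leq\1$. The upper bound is immediate, since $\alpha,\beta,\gamma,\alpha',\beta',\gamma'\leq\1$ give $\alpha\beta'\leq\1$ and $\beta\gamma'\leq\1$, hence $\beta''\leq\1$. For the lower bound I would invoke the input constraints $\alpha'\leq\beta'$ and $\gamma\leq\beta$: these yield $\alpha\beta'\geq\alpha\alpha'$ and $\beta\gamma'\geq\gamma\gamma'$, so that $\beta''=\alpha\beta'\vee\beta\gamma'\geq(\alpha\alpha')\vee(\gamma\gamma')$. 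This closes Case~7, and with the three direct computations above, together with the reduction to representatives via $\Delta$, the proof is complete. I expect the bookkeeping of the nine entries of each product to be routine once the two collapse principles are in hand, so the write-up can exhibit one representative entry computation (namely the $(2,3)$ entry, where the nontrivial combination $\alpha\beta'\vee\beta\gamma'$ appears) per case and merely state the remaining entries.
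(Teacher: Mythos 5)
Your proposal is correct and follows essentially the same route as the paper: reduce via Table~\ref{table:n=4RtandLt} to the four possible shapes of $\widetilde{H}_E$, dispose of the $\widetilde{H}_E=H_E$ cases via Theorem~\ref{thm:upperH}, and in the remaining cases compute the product $\lambda([\alpha]_{2,2}\circ[\beta]_{2,3}\circ[\gamma]_{3,3}\circ E)\cdot\lambda'([\alpha']_{2,2}\circ[\beta']_{2,3}\circ[\gamma']_{3,3}\circ E)=\lambda\lambda'([\alpha\alpha']_{2,2}\circ[\alpha\beta'\vee\beta\gamma']_{2,3}\circ[\gamma\gamma']_{3,3}\circ E)$ and check the parameter inequalities are preserved, with exactly the paper's Case~7 estimate $\alpha\alpha'\vee\gamma\gamma'\leq\alpha\beta'\vee\beta\gamma'$ from $\alpha'\leq\beta'$ and $\gamma\leq\beta$. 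The only cosmetic difference is that you invoke $\Delta$ to handle the dual tightness patterns, whereas the paper absorbs them into the single uniform parametrisation; both are fine.
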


\begin{proof}
Let $E \in \upper{4}{\lin}$  such that $E^{2}=E$. From Table \ref{table:n=4RtandLt}, there are four different patterns for $\widetilde{H}_E$,  depending on the tightness patterns of $E$ in paths of length $2$. In the case where $\widetilde{H}_{E}= H_{E}$, clearly $\widetilde{H}_{E}$ is a monoid (it is a maximal subgroup). In each of the remaining three cases, the elements of $\widetilde{H}_{E}$ have the form $\lambda([\alpha]_{2,2}\circ [\beta]_{2,3}\circ [\gamma]_{3,3} \circ E)$, where certain restrictions are placed on the parameters $\alpha, \beta, \gamma$. Noting that
$$\lambda([\alpha]_{2,2}\circ [\beta]_{2,3}\circ [\gamma]_{3,3} \circ E) \cdot \lambda'([\alpha']_{2,2}\circ [\beta']_{2,3}\circ [\gamma']_{3,3} \circ E) = \lambda\lambda'([\alpha\alpha']_{2,2}\circ [x]_{2,3}\circ [\gamma\gamma']_{3,3} \circ E),$$
where $x=\alpha\beta' \vee \beta\gamma'$, it then suffices to show that in each case the restrictions placed on the parameters are preserved by this product.

\noindent\textbf{Case 4. If $\mathbf E$ is tight in only $\mathbf{1\rightarrow 3\rightarrow 4}$, $\mathbf{2\rightarrow 3\rightarrow 4}$:}\\
 In this case we have $\alpha=\beta=\1$ and $\gamma \leq \1$. Since for $\gamma,\gamma' \leq \1$, we have $\gamma\gamma'\leq \1$, it follows from the form of the product above that, $\widetilde{H}_{E} \cong \lin\times \lin_{\leq \1}$.

\noindent\textbf{Case 6. If $\mathbf E$ is tight in only $\mathbf{1\rightarrow 2\rightarrow 4}$, $\mathbf{1\rightarrow 3\rightarrow 4}$:}\\
In this case we have $\alpha=\beta=\gamma \leq \1$, and as before, since $\gamma,\gamma'\leq \1$ implies that $\gamma\gamma'\leq \1$, it follows that $\widetilde{H}_{E} \cong \lin\times \lin_{\leq \1}$.

\noindent\textbf{Case 7. If $\mathbf E$ is tight in all four paths:}\\
In this case we have $\alpha \vee \gamma \leq \beta \leq\1$. Since
$\alpha' \leq \alpha' \vee \gamma' \leq \beta'$ and $\gamma \leq \alpha \vee \gamma \leq \beta$ we have
$$\alpha\alpha' \vee \gamma\gamma' \leq \alpha\beta' \vee \beta\gamma' = x,$$
and hence $\widetilde{H}_{E}$ is closed under multiplication. Indeed, by the previous lemma we have $\widetilde{H}_{E} \cong \lin\times \mathcal{O}_2([\0,\1]) $. In general this semigroup, it is not (unlike in the previous cases) commutative. 
For example, taking $\lin=\mathbb{R}$, and tuples $(\alpha_1,\beta_1,\gamma_1)=(0,1,2)$ and 
$(\alpha_2,\beta_2,\gamma_2)=(1,3,1)$, we see that 
$(\alpha_1+\beta_2)\oplus(\beta_1+\gamma_2)=3$ but
$(\alpha_2+\beta_1)\oplus(\beta_2+\gamma_1)=4$, so that $\widetilde{H}_{E}$ is not commutative.
\end{proof}

\subsection{More complex behaviour persists as $\mathbf n$ increases.} \label{subsec:expansion}

Let 
$ UT_{n}^{1,n}(\lin)$ denote the set of all matrices in $\upper{n}{\lin}$ with $\1$ at $(1,1)$ and $(n,n)$ positions. Then $ UT_{n}^{(1,n)}(\lin)$ is a subsemigroup of $\upper{n}{\lin}$ and $\uni{n}{\lin} \subseteq UT_{n}^{(1,n)}(\lin)$. Thus $UT_{n}^{(1,n)}(\lin)$ is a Fountain semigroup.
In order to use facts concerning the behaviour of Green's $\sim$-relations in  $\upper{n}{\lin}$ to inform their behaviour in $\upper{n+1}{\lin}$, we first show that 
$ UT_{n}^{1,n}(\lin)$  may be embedded in $ UT_{n+1}^{1,n+1}(\lin)$ in a way that preserves the unary operations of $^{(+)}$ and $^{(*)}$, as well as the semigroup multiplication\footnote{Formally, the embedding is of algebras of signature type $(2,1,1)$.}. 
 
\begin{definition}\label{defn:embed} Let $n\in \mathbb{N}$. We define $\theta_n: \upper{n}{\lin}
\rightarrow\upper{n+1}{\lin}$ by the rule \[A\mapsto \left( \begin{array} {c|c}
A & A_{*,n} \\ \hline
\0_{1 \times n} & \1
\end{array} \right).\]
\end{definition}

\noindent Clearly $\theta_{n}$ is an injective map;  $\theta_{n}$ is particularly well behaved when restricted to 
$UT_{n}^{(1,n)}(\lin)$.

\begin{lemma} \label{lemma:ntheta}
For all $n \in \mathbb{N}$ the map 
\[\theta_n \mid_{UT_{n}^{(1,n)}(\lin)}:UT_{n}^{(1,n)}(\lin)\rightarrow
UT_{n+1}^{(1,n+1)}(\lin) \] is an $\Rt$- and $\Lt$-preserving homomorphism.
Consequently, $\theta_n$ maps idempotents to idempotents.
\end{lemma}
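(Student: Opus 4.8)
The plan is to reduce the whole statement to two clean identities: that $\theta_n$ is multiplicative on $UT_n^{(1,n)}(\lin)$, and that it intertwines the unary operations $^{(+)}$ and $^{(*)}$ with the corresponding operations on the larger semigroup. First I would record that $\theta_n$ really does land in $UT_{n+1}^{(1,n+1)}(\lin)$: writing $B=\theta_n(A)$, the entries are $B_{i,j}=A_{i,j}$ for $i,j\le n$, $B_{i,n+1}=A_{i,n}$ for $i\le n$, $B_{n+1,n+1}=\1$, and $B_{n+1,j}=\0$ for $j\le n$; all on-or-above-diagonal entries are non-zero precisely because $A\in UT_n^{(1,n)}(\lin)$ has non-zero $n$th column, while $B_{1,1}=A_{1,1}=\1$ and $B_{n+1,n+1}=\1$ supply the two corner conditions.

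The heart of the argument is the homomorphism property. For $A,C\in UT_n^{(1,n)}(\lin)$ I would compute $\theta_n(A)\theta_n(C)$ block-by-block and compare with $\theta_n(AC)$. The top-left $n\times n$ block and the bottom row match immediately, since the bottom row of $\theta_n(C)$ vanishes off the corner. The only delicate block is the new last column: for $i\le n$ one finds
\[
(\theta_n(A)\theta_n(C))_{i,n+1}=\Bigl(\bigvee_{k\le n}A_{i,k}C_{k,n}\Bigr)\vee A_{i,n}=(AC)_{i,n}\vee A_{i,n},
\]
and this equals the required value $\theta_n(AC)_{i,n+1}=(AC)_{i,n}$ exactly when $A_{i,n}\le (AC)_{i,n}$. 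I expect this inequality to be the main (if short) obstacle, because it is precisely where the defining constraint of $UT_n^{(1,n)}(\lin)$ enters and where the embedding would fail for general upper triangular matrices: since $C_{n,n}=\1$, the term $A_{i,n}C_{n,n}=A_{i,n}$ already occurs in the supremum defining $(AC)_{i,n}$, so $(AC)_{i,n}\ge A_{i,n}$. Granting multiplicativity, the assertion that $\theta_n$ maps idempotents to idempotents is then a one-line consequence: every idempotent of $\upper{n}{\lin}$ is unitriangular and hence lies in $UT_n^{(1,n)}(\lin)$, so $E=E^2$ yields $\theta_n(E)=\theta_n(E^2)=\theta_n(E)^2$.

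For the $\Rt$- and $\Lt$-preservation I would first establish the intertwining identities $\lid{(\theta_n(A))}=\theta_n(\lid{A})$ and $\rid{(\theta_n(A))}=\theta_n(\rid{A})$ by direct computation from the explicit formulas \eqref{Aplus} and \eqref{Astar}. On the $n\times n$ block both sides agree at once; for the new last column the index range in \eqref{Aplus} collapses to $k=n+1$, giving $\lid{(\theta_n(A))}_{i,n+1}=A_{i,n}$, which equals $(\lid{A})_{i,n}$ again because $A_{n,n}=\1$, while the analogous computation for \eqref{Astar} returns $(\rid{A})_{i,n}$ automatically. I would then note that both $UT_n^{(1,n)}(\lin)$ and $UT_{n+1}^{(1,n+1)}(\lin)$ satisfy the hypotheses of Theorem~\ref{subsemigroup} (full non-zero diagonal, so $\mathrm{dom}=[m]$ and $\lid{A}$ lies in the semigroup, together with the anti-automorphism $\Delta$, whose defining condition is symmetric in the two corners), so that in each of them $A\Rt C$ iff $\lid{A}=\lid{C}$ and, dually, $A\Lt C$ iff $\rid{A}=\rid{C}$. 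Combining this characterisation with the intertwining identities and the injectivity of $\theta_n$ yields
\[
A\Rt C\iff \lid{A}=\lid{C}\iff \lid{(\theta_n(A))}=\lid{(\theta_n(C))}\iff \theta_n(A)\Rt\theta_n(C),
\]
and the dual chain for $\Lt$, which completes the proof.
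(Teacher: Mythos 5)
Your proposal is correct and follows essentially the same route as the paper: multiplicativity via the block computation whose only nontrivial point is absorbing the extra term $A_{i,n}=A_{i,n}C_{n,n}$ into $(AC)_{i,n}$, followed by the intertwining identities $\lid{(\theta_n(A))}=\theta_n(\lid{A})$ and $\rid{(\theta_n(A))}=\theta_n(\rid{A})$ computed from \eqref{Aplus} and \eqref{Astar}, and the conclusion via the unique-idempotent characterisation of $\Rt$ and $\Lt$. Your additional remarks (well-definedness of the codomain, the explicit appeal to Theorem~\ref{subsemigroup} and injectivity in the final equivalence chain) merely make explicit what the paper leaves implicit.
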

\begin{proof} Let $A,B \in UT_{n}^{(1,n)}$. If $1 \leq i \leq k \leq j\leq n$ then $\theta_{n}(A)_{i,k}\theta_{n}(B)_{k,j}= A_{i,k} B_{j,k}$. So $$(\theta_{n}(A)\theta_{n}(B))_{i,j}=\bigvee_{i\leq k \leq j} \theta_{n}(A)_{i,k}\theta_{n}(B)_{k,j} =\bigvee_{i\leq k \leq j} A_{i,k} B_{k,j}=(AB)_{i,j}=\theta_{n}(AB)_{i,j}.$$
It remains to check the case where $i \leq j=n+1$. Here we have
\[
\begin{array}{rcl}
(\theta_{n}(A)\theta_{n}(B))_{i,n+1} &=& \bigvee_{i\leq k \leq n+1} \theta_{n}(A)_{i,k}\theta_{n}(B)_{k,n+1}\\
&=& \left(\bigvee_{i\leq k \leq n} A_{i,k} B_{k,n} \right) \vee (\theta_n(A))_{i,n+1} 
(\theta_n(B))_{n+1,n+1}\\
&=&  \left(\bigvee_{i\leq k \leq n} A_{i,k} B_{k,n} \right) \vee A_{i,n} \1\\
&=& \left(\bigvee_{i\leq k \leq n} A_{i,k} B_{k,n} \right)\\
&=&(AB)_{i,n}
\end{array} \]
where the last step uses  the fact that 
$A_{i,n}=A_{i,n}\1 =A_{i,n}B_{n,n}$.  
Thus $\theta_{n}(AB)=(\theta_{n}(A)\theta_{n}(B))$.

To see that $\theta_n$ preserves  $\Rt$  and $\Lt$, we show that $\rid{\theta_{n}(A)}=\theta_{n}(\rid A)$ and $\lid{\theta_{n}(A)}=\theta_{n}(\lid A)$ for all $A \in UT_{n}^{(1,n)}(\lin)$. 

For
 $i\leq j<n+1$,
$$(\rid {\theta_{n}(A)})_{i,j} = \bigwedge_{1\leq k \leq i} \theta_{n}(A)_{k,j}(\theta_{n}(A_{k,i}))^{-1} = \bigwedge_{1\leq k \leq i} A_{k,j}(A_{k,i})^{-1} =\rid{A}_{i,j}. $$
On the other hand, for 
 $i<n+1$,
 $$(\rid {\theta_{n}(A)})_{i,n+1} = \bigwedge_{1\leq k \leq i} \theta_{n}(A)_{k,n+1}(\theta_{n}(A_{k,i}))^{-1} = \bigwedge_{1\leq k \leq i} A_{k,n}(A_{k,i})^{-1} =\rid{A}_{i,n}. $$
 Finally,
for $i=n+1, j=n+1$, we have $(\rid {\theta_{n}(A)})_{n+1,n+1}= \1=( {\theta_{n}(\rid A)})_{n+1,n+1}$. Hence $\rid{\theta_{n}(A)}=\theta_{n}(\rid A)$.

Turning our attention to $^{(+)}$, we have that if $j<n+1$ then
\[
\begin{array}{rcl}
(\lid {\theta_{n}(A)})_{i,j}&=&\bigwedge_{j\leq k \leq n+1}  \theta_{n}(A)_{i,k}(\theta_{n}(A_{j,k}))^{-1}  \\
&=&\left( \bigwedge_{j\leq k\leq n}\theta _{n}(A)_{i,k}(\theta_{n}(A_{j,k}))^{-1}\right) \wedge \theta _{n}(A)_{i,n+1}(\theta_{n}(A)_{j,n+1})^{-1}\\
&=&\left( \bigwedge_{j\leq k\leq n}A_{i,k}(A_{j,k})^{-1}\right) \wedge A_{i,n}(A_{j,n})^{-1}\\ &=&\bigwedge_{j\leq k \leq n} A_{i,k}(A_{j,k})^{-1}  =\lid{A}_{i,j}.
\end{array} \]
For $j=n+1$, recalling that  
$(\lid A)_{i,n}=A_{i,n}(A_{n,n})^{-1}=A_{i,n}$ as  $A_{n,n}=\1$, we have
\[(\lid {\theta_{n}(A)})_{i,n+1}=\theta_{n}(A)_{i,n+1}(\theta_{n}(A)_{n+1,n+1})^{-1}=A_{i,n}=(\lid A)_{i,n}.\] Hence $\lid{\theta_{n}(A)}=\theta_{n}(\lid A)$. 
\end{proof}

Our first use of Lemma~\ref{lemma:ntheta} is to show that the analogue of
Proposition~\ref{prop:commute} does not hold for $n\geq 4$.

\begin{proposition} \label{prop:RtLtnotcommut} Let  $n\geq 4$, and suppose that $\lin$ is non-trivial. Then

(i) there is an
$A\in  \upper{n}{\lin}$ such that $\rid A$ and $\lid A$ are not $\D$-related;

(ii) the relations
$\Rt$ and $\Lt$ do not commute in $\upper{n}{\lin}$. 
\end{proposition}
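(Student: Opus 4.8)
The plan is to prove (i) by reducing to a single explicit matrix in $\uni{4}{\lin}$, detecting the failure of $\D$ through deficiency and Theorem~\ref{thm:uniDdeficiency}, and then to deduce (ii) by verifying that the two canonical idempotents produced in (i) already witness $\Rt\circ\Lt\neq\Lt\circ\Rt$. Throughout I fix $g\in\lin$ with $g>\1$, which exists since $\lin$ is nontrivial. The key technical device is to work entirely with deficiencies of unitriangular matrices, and to propagate the example from $n=4$ to larger $n$ using the embedding $\theta$ of Lemma~\ref{lemma:ntheta}, which preserves $^{(+)}$, $^{(*)}$ and the deficiencies of all paths lying in the original index set.

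For (i), let $A\in\uni{4}{\lin}$ be the matrix with $A_{1,4}=A_{2,3}=g$ and all other entries on or above the diagonal equal to $\1$. Direct computation from \eqref{Aplus} and \eqref{Astar} gives
$$\lid{A}=\begin{pmatrix}\1&g^{-1}&\1&g\\ &\1&\1&\1\\ &&\1&\1\\ &&&\1\end{pmatrix},\qquad \rid{A}=\begin{pmatrix}\1&\1&\1&g\\ &\1&\1&\1\\ &&\1&g^{-1}\\ &&&\1\end{pmatrix}.$$
Then $\Def_{\lid{A}}(1\rightarrow2\rightarrow3)=g$ while $\Def_{\rid{A}}(1\rightarrow2\rightarrow3)=\1$, so by Theorem~\ref{thm:uniDdeficiency} $\lid{A}$ and $\rid{A}$ are not $\D$-related, proving (i) for $n=4$. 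For $n>4$ set $A_n:=\theta_{n-1}\circ\cdots\circ\theta_4(A)$ (and $A_4:=A$). By Lemma~\ref{lemma:ntheta} we have $\lid{A_n}=\theta_{n-1}\circ\cdots\circ\theta_4(\lid{A})$ and $\rid{A_n}=\theta_{n-1}\circ\cdots\circ\theta_4(\rid{A})$, and since each $\theta$ leaves deficiencies of paths inside the original indices unchanged, the mismatch on $1\rightarrow2\rightarrow3$ persists; Theorem~\ref{thm:uniDdeficiency} applied in $\uni{n}{\lin}$ then completes (i).

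For (ii), put $E=\lid{A_n}$ and $F=\rid{A_n}$. As $A_n\Rt E$ and $A_n\Lt F$, we have $E\,(\Rt\circ\Lt)\,F$, witnessed by $A_n$, so it remains to show $E\,(\Lt\circ\Rt)\,F$ fails, i.e.\ that there is no $w\in\upper{n}{\lin}$ with $\rid{w}=E$ and $\lid{w}=F$. The first reduction is: if such $w$ existed then $M:=\rnorm{w}$ is unitriangular with $\lid{M}=\lid{w}=F$ (Theorem~\ref{thm:uppertilde}), while $\lnorm{w}=(\diag{w})^{-1}M\,\diag{w}$ is a diagonal conjugate of $M$, so $\rid{w}=\rid{(\lnorm{w})}=(\diag{w})^{-1}(\rid{M})\,\diag{w}$ is a diagonal conjugate of $\rid{M}$; since $(GXG^{-1})_{i,j}=G_{i,i}X_{i,j}G_{j,j}^{-1}$ shows conjugation by an invertible diagonal matrix preserves every deficiency, Theorem~\ref{thm:uniDdeficiency} yields $\rid{M}\,\D\,E$. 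Thus it suffices to prove that no unitriangular $M$ satisfies both $\lid{M}=F$ and $\rid{M}\,\D\,E$.

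The obstruction I would use is the path $1\rightarrow2\rightarrow n$. Tracing the column duplications in $\theta$ gives $E_{1,n}=g$, $E_{2,n}=\1$, $E_{1,2}=g^{-1}$, whence $\Def_E(1\rightarrow2\rightarrow n)=g^2$, and likewise $F_{1,n}=g$, $F_{2,n}=\1$, $F_{1,2}=\1$. Now if $\lid{M}=F$ then \eqref{Aplus} forces the last column of $M$: $M_{1,n}=\lid{M}_{1,n}=g$ and $M_{2,n}=\lid{M}_{2,n}=\1$, while $\lid{M}_{1,2}=\1$ forces $M_{1,2}\geq\1$. A short computation (using $\rid{M}_{1,j}=M_{1,j}$ and $\rid{M}_{2,j}=\bigwedge_{k\leq 2}M_{k,j}M_{k,2}^{-1}$) gives the general identity $\Def_{\rid{M}}(1\rightarrow2\rightarrow j)=\1\vee\big(M_{1,j}(M_{1,2}M_{2,j})^{-1}\big)$, so $\Def_{\rid{M}}(1\rightarrow2\rightarrow n)=\1\vee(gM_{1,2}^{-1})\leq g<g^{2}=\Def_E(1\rightarrow2\rightarrow n)$. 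Hence $\rid{M}\not\D E$ for every unitriangular $M$ with $\lid{M}=F$, no bridging $w$ exists, and so $\Rt$ and $\Lt$ do not commute, proving (ii). The main obstacle is precisely this last step — showing $\widetilde{L}_E\cap\widetilde{R}_F=\emptyset$ uniformly in $n$ — and the trick that makes it work is to locate the obstruction on the path through the duplicated final coordinate $n$, where the hypothesis $\lid{M}=F$ pins down $M_{1,n}$ and $M_{2,n}$ exactly and bounds $M_{1,2}$ below, forcing $\Def_{\rid{M}}$ strictly below $\Def_E$; by contrast, part (i) is routine once the explicit matrix is in hand.
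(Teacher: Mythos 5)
Your proposal is correct. Part (i) is essentially the paper's argument: an explicit $4\times 4$ witness whose $\lid{A}$ and $\rid{A}$ disagree on the deficiency of a single length-$2$ path, combined with Theorem~\ref{thm:uniDdeficiency} and the embedding of Lemma~\ref{lemma:ntheta}; you merely use a different matrix (detecting the failure on $1\rightarrow 2\rightarrow 3$ rather than, as in the paper, on $1\rightarrow 2\rightarrow 4$), and I checked that your computations of $\lid{A}$, $\rid{A}$ and the two deficiencies are right. Part (ii), however, takes a genuinely different route. The paper assumes a bridging element $B$ exists, writes $\rnorm{B}$ and $\lnorm{B}$ via the parameters $\alpha_{i,j},\beta_{i,j}$ of \eqref{Ralpha*} and \eqref{Lbeta*}, and then compares entries of $B=\diag{B}\lnorm{B}=\rnorm{B}\diag{B}$ at several positions to pin down the diagonal of $B$ and force the contradiction $\alpha_{1,2}=g^{-2}<\1$. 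You instead observe that conjugation by an invertible diagonal matrix preserves every deficiency, so $\rid{w}=\rid{(\lnorm{w})}$ is a diagonal conjugate of $\rid{(\rnorm{w})}$ and hence $\D$-related to it; this reduces the nonexistence of $w$ to the single inequality $\Def_{\rid{M}}(1\rightarrow 2\rightarrow n)\leq g<g^2=\Def_E(1\rightarrow 2\rightarrow n)$ for unitriangular $M$ with $\lid{M}=F$, which you derive correctly from the identity $\Def_{\rid{M}}(1\rightarrow2\rightarrow j)=\1\vee M_{1,j}(M_{1,2}M_{2,j})^{-1}$ and the constraints $M_{1,n}=g$, $M_{2,n}=\1$, $M_{1,2}\geq\1$ imposed by $\lid{M}=F$. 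What each approach buys: the paper's is a self-contained entry-by-entry computation; yours is shorter, avoids tracking $\diag{w}$ explicitly, and makes transparent that the obstruction is a $\D$-class (i.e.\ deficiency-profile) invariant --- indeed the same invariant that drives part (i) --- at the cost of the extra lemma that $\rid{}$ commutes with diagonal conjugation, which you do justify.
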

\begin{proof} $(i)$  We begin with $n=4$. Let $g \in \lin$ with $g >\1$ and let
\[A=
 \begin{pmatrix}
 \1  &   g   &  \1 &  g^2  \\
 \0  &  \1   &  g   &  g   \\
 \0  &  \0   &  \1  &  \1   \\
 \0  &  \0   &  \0  &  \1   \\
\end{pmatrix} \mbox{ so that } \lid A=
\begin{pmatrix}
 \1  & g^{-1}&  \1 &  g^2  \\
 \0  &  \1   &  g   &  g   \\
 \0  &  \0   &  \1  &  \1   \\
 \0  &  \0   &  \0  &  \1   \\
\end{pmatrix} \mbox{ and }\rid A=
\begin{pmatrix}
 \1  &   g  &  \1  &  g^2  \\
 \0  &  \1  &g^{-1}&  g   \\
 \0  &  \0  &  \1  &  \1   \\
 \0  &  \0  &  \0  &  \1   \\
\end{pmatrix}.\]  Notice that $\Def_{\lid A}(1\rightarrow 2\rightarrow 4)= g^2(g^{-1}g)^{-1}=g^2$ whereas $\Def_{\rid A}(1\rightarrow 2\rightarrow 4)=g^2(gg)^{-1}=\1$. By Theorem \ref{thm:uniDdeficiency}, $\lid A$ and $\rid A$ are not $\D$-related in $\upper{4}{\lin}$.

 By using Lemma \ref{lemma:ntheta}, for any $n\geq 5$, setting
 $\phi_{n}:=\theta_{n-1}\theta_{n-2}\cdots \theta_4$ we have that
 $\phi_n(\lid A)=(\phi_n(A))^{+}$ and  $\phi_n(\rid A)=(\phi_n(A))^{*}$; but the same argument using deficiencies as in the case $n=4$ gives that  $(\phi_n(A))^{+}$
 and $(\phi_n(A))^{*}$ are not $\D$-related in  $\upper{n}{\lin}$.

Notice that this shows that Proposition~\ref{prop:commute} (i) and (iii) no longer holds for $3<n$.
Also note that, $\phi_n(\lid{A}) \Rt \circ \Lt \phi_n(\rid{A})$ in $\upper{n}{\lin}$ for $n\geq 4$. If there exists a matrix $B\in \upper{n}{\lin}$ such that $\phi_n(\lid{A}) \Lt B  \Rt \phi_n(\rid{A})$ then by Theorem \ref{thm:upper*} $\rid {(\lnorm B)}=\rid B=\phi_n(\lid{A})$ and $\lid {(\rnorm B)}=\lid B=\phi_n(\rid{A})$. By formulae given in \ref{Ralpha*} and \ref{Lbeta*},
\[ \lnorm B= \left(
\begin{tabular}{llll|lll}
$\1$ & $g^{-1}$  & $\1$ & $g^{2}$ & $g^{2}$ &$\hdots$ & $g^{2}$ \\
$\0$ & $\1$ & $g \beta_{2,3}$  & $g\beta_{2,4}$ & $g\beta_{2,5}$ & $\hdots$ & $g\beta_{2,n}$ \\
$\0$ & $\0$ & $\1$ & $\beta_{3,4}$ & $\beta_{3,5}$ & $\hdots$ & $\beta_{3,n}$ \\
$\0$ & $\0$ & $\0$ & $\1$ & $\beta_{4,5}$ & $\hdots$ & $\beta_{4,n}$ \\\hline
$\vdots$ & $\vdots $ & $\vdots $ & $\vdots $ & $\vdots $ & $\ddots$ & $\vdots$ \\
$\0$ & $\0$ & $\0$ & $\0$ & $\hdots$ & $\0$ & $\1$
\end{tabular} \right)\]
and
\[ \rnorm B=
\left(
\begin{tabular}{llll|lll}
$\1$ & $g\alpha_{1,2}$  & $\alpha_{1,3}$ & $g^{2}\alpha_{1,4}$ &$\hdots$ & $g^{2}\alpha_{1,n-1}$ & $g^{2}$ \\
$\0$ & $\1$ & $g^{-1} \alpha_{2,3}$  & $g\alpha_{2,4}$ & $\hdots$ & $g\alpha_{2,n-1}$ & $g$ \\
$\0$ & $\0$ & $\1$ & $\alpha_{3,4}$ & $\hdots$ & $\alpha_{3,n-1}$& $\alpha_{3,n}$ \\
$\0$ & $\0$ & $\0$ & $\1$ & $\hdots$ & $\alpha_{4,n-1}$ & $\alpha_{4,n}$ \\\hline
$\vdots$ & $\vdots $ & $\vdots $ & $\vdots $ & $\vdots $ & $\ddots$ & $\vdots$ \\
$\0$ & $\0$ & $\0$ & $\0$ & $\hdots$ & $\0$ & $\1$
\end{tabular} \right)\]
where $\1 \leq \alpha_{i,j}, \beta_{i,j}$. By comparing at positions $(2,j)$, where $4 \leq j \leq n$, we have
$$  g= \phi_n(\rid{A})_{2,j} = \rid {(\lnorm B)}_{2,j}= \lnorm B_{1,j} (\lnorm B_{1,2})^{-1} \wedge \lnorm B_{2,j} =  g^2 (g^{-1})^{-1} \wedge  g\beta_{2,j}= g\beta_{2,j}.$$
Hence $\beta_{2,j}=\1$ for all $4 \leq j \leq n$.
Now $B=\diag B \lnorm B=\rnorm B \diag B$ and then by comparison at positions $(1,n)\, (2,n)$, we must have $B_{1,1}g^2=B_{1,n}=B_{n,n}g^2$, \, $gB_{2,2}=B_{2,n}=B_{n,n}g$ respectively. So $B_{1,1}=B_{n,n}=B_{2,2}$. Again comparison at positions $(1,2)$ yields $B_{1,1} g^{-1}=B_{1,2}=B_{2,2} g \alpha_{1,2}$. So that, $\alpha_{1,2}=g^{-2} < \1$, which is a contradiction!
Thus our supposition is wrong and no such $B$ exists. So $\Rt \circ \ \Lt \neq \Lt \circ \ \Rt$ in $\upper{n}{\lin}$ where $n\geq 4$.
\end{proof}

\begin{proposition}\label{prop:ht}
We know from Proposition \ref{prop:htilden=4} that $\widetilde{H}_E$ is a monoid for each idempotent $E \in \upper{4}{\lin}$. For $n\geq 5$, however, this is not the case, as the following example illustrates.
\end{proposition}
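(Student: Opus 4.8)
The proof is by producing one explicit counterexample for $n=5$ and then transporting it to all larger $n$ along the embeddings of Section~\ref{subsec:expansion}. The plan is to first dispose of the reduction, then indicate where to hunt for the $n=5$ example, and finally describe the (purely computational) verification.

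\emph{Reduction to $n=5$.} Recall from Corollary~\ref{FountainS*} that each $\Rt$-class and each $\Lt$-class of $\upper{n}{\lin}$ contains a unique idempotent, so that for an idempotent $E$ one has $A\in\widetilde{H}_E$ if and only if $\lid{A}=E=\rid{A}$. I would choose the counterexample with $E,A,B$ unitriangular, hence lying in $\uni{n}{\lin}\subseteq UT_{n}^{(1,n)}(\lin)$. Setting $\phi_n=\theta_{n-1}\circ\cdots\circ\theta_5\colon UT_{5}^{(1,5)}(\lin)\to UT_{n}^{(1,n)}(\lin)$, Lemma~\ref{lemma:ntheta} tells us that $\phi_n$ is an injective homomorphism satisfying $\lid{(\phi_n(X))}=\phi_n(\lid{X})$ and $\rid{(\phi_n(X))}=\phi_n(\rid{X})$ and carrying idempotents to idempotents. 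Thus if $E\in UT_5^{(1,5)}(\lin)$ is idempotent with $A,B\in\widetilde{H}_E$ but $\lid{(AB)}\neq E$, then $\phi_n(E)$ is idempotent, $\phi_n(A),\phi_n(B)\in\widetilde{H}_{\phi_n(E)}$, and (since $UT_5^{(1,5)}(\lin)$ is closed under products) $\phi_n(A)\phi_n(B)=\phi_n(AB)$ has $\lid{(\phi_n(AB))}=\phi_n(\lid{(AB)})\neq\phi_n(E)$ by injectivity; hence $\phi_n(AB)\notin\widetilde{H}_{\phi_n(E)}$ and $\widetilde{H}_{\phi_n(E)}$ is not a subsemigroup. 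So it suffices to treat $n=5$.

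\emph{Where to look.} By Proposition~\ref{prop:ntightall}, whenever $E$ is tight in every path one has $\widetilde{H}_E\cong\lin\times\mathcal{O}_{n-2}([\0,\1])$, which \emph{is} a subsemigroup (Lemma~\ref{lem:ot}); consequently the desired $E$ must be loose in at least one simple path of length $2$. The plan is to rerun, for $n=5$, the analysis of Section~\ref{subsec:ut3}: a prospective member $A$ of $\widetilde{H}_E$ is written as $\rnorm{A}=[\alpha_{i,j}]\circ E$ with all $\alpha_{i,j}\geq\1$, and the conditions $\lid{(\rnorm{A})}=E$ and $\rid{(\lnorm{A})}=E$ are imposed through the closed formulas \eqref{Aplus} and \eqref{Astar}, cutting out the admissible tuples $(\alpha_{i,j})$ in terms of the deficiencies of $E$. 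Products inside $\widetilde{H}_E$ are then governed by a deficiency-twisted max-plus convolution of these tuples (reducing to ordinary max-plus, and hence to the closed region $\mathcal{O}_{n-2}([\0,\1])$, exactly in the all-tight case). I would search the tightness patterns of the ten simple length-$2$ paths in $\Phi[5]$ for one mixing tight and loose paths through the interior vertices $\{2,3,4\}$ whose admissible region is genuinely more complicated than the monotone region $\mathcal{O}_3([\0,\1])$ and, in particular, fails to be closed under this convolution.

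\emph{Verification and main obstacle.} Once $g\in\lin$ with $g>\1$, an explicit idempotent $E\in\upper{5}{\lin}$ with the chosen pattern, and explicit unitriangular $A,B$ are fixed, the remaining steps are mechanical: (i) check $E^2=E$ via $E_{i,k}E_{k,j}\leq E_{i,j}$; (ii) confirm $A,B\in\widetilde{H}_E$ by evaluating $\lid{A},\rid{A},\lid{B},\rid{B}$ through \eqref{Aplus} and \eqref{Astar} and checking each equals $E$; and (iii) form $AB$ and evaluate $\lid{(AB)}$, exhibiting one entry --- equivalently, by Theorem~\ref{thm:uniDdeficiency}, one length-$2$ deficiency --- at which $\lid{(AB)}$ differs from $E$, so that $AB$ is not $\Rt$-related to $E$ and thus $AB\notin\widetilde{H}_E$. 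The only genuine difficulty is step (2): isolating a tightness/looseness pattern whose admissible parameter region is not convolution-closed, and pinning down two boosted matrices $A,B$ whose product overshoots the constraint that $E$ imposes. Everything after that is the same entrywise bookkeeping of max-plus products and evaluations of $\lid{}$ and $\rid{}$ already carried out in Table~\ref{table:n=4RtandLt}, requiring no new ideas.
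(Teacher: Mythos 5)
Your reduction from general $n$ to $n=5$ via $\phi_n=\theta_{n-1}\cdots\theta_5$ is exactly the paper's argument and is correct, and your description of how one would verify a candidate (check $E^2=E$, check $\lid{A}=E=\rid{A}$ via \eqref{Aplus} and \eqref{Astar}, then show $\lid{(AB)}\neq E$ and invoke uniqueness of the idempotent in each $\Rt$-class) is also exactly what the paper does. The problem is that the proposition \emph{is} the example, and you never produce it: you describe where you would hunt for a tightness/looseness pattern whose admissible parameter region is not closed under the relevant convolution, and you yourself flag ``isolating a tightness/looseness pattern \dots and pinning down two boosted matrices'' as the only genuine difficulty. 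That difficulty is the entire mathematical content of the statement, so as it stands the argument has a gap rather than a proof. For the record, the paper exhibits a single matrix $A\in UT_5^{(1,5)}(\lin)$ with $\lid{A}=E=\rid{A}$ but $(A^2)^{(+)}\neq E$, so closure already fails for squaring one element.

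A second, more structural worry: you commit to taking $E,A,B$ unitriangular so as to land in $UT_n^{(1,n)}(\lin)$. That restriction is both unnecessary (the embedding only needs the $(1,1)$ and $(n,n)$ entries to equal $\1$) and potentially fatal. The paper's counterexample has interior diagonal entries $g^{-2},g^{-3},g^{-3}$, and this is not incidental: if $A$ is unitriangular with $\lid{A}=\rid{A}=E$, then the conditions at positions $(1,j)$ and $(i,n)$ force $A_{1,\star}=E_{1,\star}$, $A_{\star,n}=E_{\star,n}$ and, writing $A_{j,k}=E_{j,k}\alpha_{j,k}$, force $\1\leq\alpha_{j,k}\leq \Def_E(1\rightarrow j\rightarrow k)\wedge\Def_E(j\rightarrow k\rightarrow n)$; for the paper's $E$ these constraints (together with the remaining ones at positions $(1,2)$, $(1,3)$, $(2,3)$) collapse to $\alpha_{j,k}=\1$ for all $j,k$, i.e.\ the only unitriangular element of $\widetilde{H}_E$ is $E$ itself. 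So within the search space you propose, this counterexample simply does not exist, and you would need to either prove that some other unitriangular example exists or (better) drop the unitriangularity requirement and only insist that the corner diagonal entries be $\1$.
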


\begin{proof}
Let $\1 < g \in \lin$ and $E,A\in \upper{5}{\lin}$ be such that
$$ E=
\begin{pmatrix}
\1 & \1 & g^2 & g^2 & g^2 \\
\0 & \1 & g   & g^2 & g^2 \\
\0 & \0 & \1  & \1  & \1  \\
\0 & \0 & \0  & \1  & \1  \\
\0 & \0 & \0  & \0  & \1  \\
\end{pmatrix},\ \mbox{ and }\ A=
\begin{pmatrix}
\1 & \1    & g^2    & g^2 & g^2 \\
\0 & g^{-2}& g^{-1} & g & g^2 \\
\0 & \0 & g^{-3}  & \1  & \1  \\
\0 & \0 & \0  & g^{-3}  & \1  \\
\0 & \0 & \0  & \0  & \1  \\
\end{pmatrix}.$$
Then it is straightforward to verify that $\lid A=E=\rid A$ and hence $A \Ht\ E$. But
$$ A^{2}=
\begin{pmatrix}
\1 & \1    & g^2    & g^2 & g^2 \\
\0 & g^{-4}& g^{-3} & g^{-1} & g^2 \\
\0 & \0 & g^{-6}  & g^{-3}  & \1  \\
\0 & \0 & \0  & g^{-6}  & \1  \\
\0 & \0 & \0  & \0  & \1  \\
\end{pmatrix}\, \mbox{ and }\ (A^2)^{(+)}=
\begin{pmatrix}
\1 & \1 & g^2 & g^2 & g^2 \\
\0 & \1 & g^2 & g^2 & g^2 \\
\0 & \0 & \1  & \1  & \1  \\
\0 & \0 & \0  & \1  & \1  \\
\0 & \0 & \0  & \0  & \1  \\
\end{pmatrix}.$$

Clearly $(A^2)^{(+)} \neq \lid A$ and by uniqueness of $(A^2)^{(+)}$ it follows that $A^2 \notin \widetilde{H}_{E}$\ and hence $\widetilde{H}_{E}$\ is not a subsemigroup of $\upper{5}{\lin}$.

We can extend this example to any $n > 5$ by using the map $\phi_n = \theta_{n-1} \ldots \theta_5$. Since $A \in \widetilde{H}_{E}$ and $A \in UT_{5}^{(1,5)}(\lin)$, Lemma \ref{lemma:ntheta} yields that $\phi_n(A) \in \widetilde{H}_{\phi_n(E)}$. Now $A^2 \in UT_{5}^{(1,5)}(\lin)$, and so $\lid{\phi_n((A^2))}= \phi_n(\lid{(A^2)})$ and then by the injectivity of the maps $\theta_k$ it follows that $\phi_n(\lid{(A^2)})$ is not equal to $\phi_n(E)$. Hence $\phi_n(A^2) \notin \widetilde{H}_{\phi_n(E)}$ and $\widetilde{H}_{\phi_n(E)}$ is not a subsemigroup of $\upper{n}{\lin}$ for all $n > 5$.
\end{proof}

The above considerations prompt us to end with the following.

\begin{question} Determine the $\Dt$-classes of $UT_n(\linz^*)$ for $n\geq 3$. In particular, are they determined by tightness and looseness properties of idempotents?
\end{question}
\bibliography{arXiv_submission.bib}
\bibliographystyle{plain}
\end{document}